\documentclass{amsart}
\usepackage{tikz-cd}
\usetikzlibrary{cd}

\usepackage{amsmath,amssymb,amsthm, mathrsfs, mathtools, bm, amsfonts}
\usepackage{mathabx}
\usepackage{amscd,mathtools}
\usepackage{thmtools}
\usepackage[utf8]{inputenc}
\usepackage[english]{babel}
\usepackage{cite}
\usepackage{color}
\usepackage[pagebackref,colorlinks,citecolor=blue,linkcolor=red]{hyperref}
\usepackage[nameinlink]{cleveref}

\renewcommand*{\backrefalt}[4]{\ifcase #1 (Not cited).\or (Cited p.~#2).\else (Cited pp.~#2).\fi} 
\usepackage{float}
\usepackage{tikz}
\usepackage{lmodern}
\usetikzlibrary{decorations.pathmorphing}

\newcommand{\Mod}{{\rm{Mod}}}

\newcommand{\C}{{\mathcal C}}
\newcommand{\B}{{\mathcal B}}
\newcommand{\E}{{\mathcal E}}
\newcommand{\N}{{\mathcal N}}
\newcommand{\T}{{\mathcal T}}
\newcommand{\A}{{\mathcal A}}
\newcommand{\V}{{\mathcal V}}
\newcommand{\cH}{{\mathcal H}}

\newcommand{\Diff}{\mbox{Diff}}

\newcommand{\stab}{{\rm Stab}}

\newcommand{\lk}{{\rm Lk}}
\newcommand{\st}{{\rm St}}
\newcommand{\Sat}{{\rm Sat}}
\newcommand{\AC}{{\mathcal A}{\mathcal C}}
\newcommand{\nbhd}{\mathcal{N}}

\usepackage{multicol}
\tikzset{snake it/.style={decorate, decoration=snake}}
\usetikzlibrary{automata}
\usetikzlibrary{cd}
\usepackage[margin=3cm]{geometry}

\usepackage{comment}
\usepackage{mathtools}

\usepackage{csquotes}

\usepackage{amsthm}

\theoremstyle{plain}
\newtheorem{theorem}{Theorem}[section]
\newtheorem{corollary}[theorem]{Corollary}
\newtheorem{lemma}[theorem]{Lemma}
\newtheorem{proposition}[theorem]{Proposition}

\newtheorem{claim}[theorem]{Claim}

\newtheorem{question}[theorem]{Question}

\theoremstyle{definition}
\newtheorem{definition}[theorem]{Definition}

\newtheorem{remark}[theorem]{Remark}

\newcommand{\abs}[1]{\left\vert#1\right\vert}

\newcommand{\diam}{{\rm{diam}}}
\newcommand{\neck}{{\rm{Neck}}}
\newcommand{\ladder}{{\mathcal{L}}}
\newcommand{\Dladder}{{\mathcal{DL}}}
\newcommand{\half}{{\mathcal{H}}}
\newcommand{\tripod}{{\mathcal{T}}}
\newcommand{\uncov}{{\frak q}}
\newcommand{\Isom}{{\rm Isom}}

\title{Hierarchically Hyperbolic Surface-by-surface Groups}

\author[Dowdall]{Spencer Dowdall}
 \address{Department of Mathematics, Vanderbilt University, Nashville, TN, USA}
 \email{spencer.dowdall@vanderbilt.edu}

 \author[Durham]{Matthew Gentry Durham}
 \address{Department of Mathematics and Statistics, CUNY Hunter College, New York, NY, USA}
 \email{matthew.durham@hunter.edu}

\author[Leininger]{Chris Leininger}
    \address{Department of Mathematics, University of Notre Dame,
Notre Dame, IN, USA}
    \email{cleining@nd.edu}

\author[Russell]{Jacob Russell}
\address{Department of Mathematics and Statistics, Swarthmore College,
Swarthmore, PA, USA}
\email{jrussel2@swarthmore.edu}

\author[Sisto]{Alessandro Sisto}
	\address{Maxwell Institute and Department of Mathematics, Heriot-Watt University,     Edinburgh, UK}
	\email{a.sisto@hw.ac.uk}

\begin{document}
\begin{abstract}
 We give many examples of surface bundles over surfaces whose fundamental groups are hierarchically hyperbolic by showing that extensions of surface groups constructed by the third author and Reid (LR surface groups) are hierarchically hyperbolic. This is facilitated by a new combination theorem for bundles of hyperbolic graphs over a hyperbolic base.
  This result differs from previous combination theorems for graph bundles by being applicable to bundles where the fibers are not properly embedded. We apply our combination theorem to establish the hyperbolicity of certain graph bundles that arise naturally from extensions of parabolically geometrically finite (PGF) subgroups of mapping class groups. These subgroups include LR surface groups, finitely generated Veech groups, free products of multi-twist groups, and many other examples created by a theorem of Udall. When the PGF groups have cyclic peripherals, hyperbolicity of our graph bundle is the key milestone towards showing that the extension groups are hierarchically hyperbolic.
\end{abstract}

\maketitle

\section{Introduction}
The mapping class group $\Mod(S)$ of a closed surface $S$ plays a linchpin function in classifying surface group extensions and surface bundles (see e.g.~\cite{morita,SalterTshishiku}). A surface group extension (or $\pi_1S$--extension) of a group $G$ is a group $\Gamma$ that fits into a short exact sequence $$ 1 \to \pi_1S \to \Gamma \to G \to 1.$$
Any surface bundle gives rise to such an extension via the long exact sequence of the fibration.  When the surface has genus at least 2, the $\pi_1S$--extensions of $G$ are in one-to-one correspondence with homomorphisms of $G$ into $\operatorname{Out}(\pi_1S) \cong \Mod^{\pm}(S)$ (see e.g.~\cite{FM_convex_cocompact}). In the case of injective monodromy, $G$ may be viewed as a subgroup of $\Mod(S)$ and the Birman exact sequence \cite{birman-exact1,birman-exact2} realizes the corresponding $\pi_1S$--extension $\Gamma_G$ as a subgroup of the mapping class group of the surface $\dot S$ obtained by adding a puncture to $S$.

A premier open question on the geometry of surface group extensions is whether or not there exists a surface group $G < \Mod(S)$ so that $\Gamma_G$ is Gromov hyperbolic \cite{FM_convex_cocompact}. This is a group theoretic version of the open question of whether or not there exists a hyperbolic surface bundle over a surface. Work of Farb, Mosher, and Hamenst\"adt established that  $\Gamma_G$ is Gromov hyperbolic  if and only if $G $ is a \emph{convex cocompact} subgroup of $\Mod(S)$ \cite{FM_convex_cocompact,Hamenstadt_extensions_of_surface_groups}; see also \cite{MS:combination}. This is  equivalent to the orbit map of $G$ in the curve graph $\C(S)$ being a quasi-isometric embedding \cite{Kent_Leininger_shadows,Hamenstadt_extensions_of_surface_groups}, and also to $G$ being undistorted and purely pseudo-Anosov in $\Mod(S)$ \cite{BBKL_purely_pA}.

Recently, Kent and the third author made significant progress on this question by producing examples of purely pseudo-Anosov surface subgroups $G < \Mod(S)$ \cite{Kent_Leininger_atoroidal}, and although they conjecture that these surface subgroups are convex cocompact, that conjecture remains open.  Prior to these examples, the closest examples to convex cocompact surface groups were the surface subgroups constructed in \cite{Leininger_Reid_combination} (which we will refer to as LR surface groups), which are purely pseudo-Anosov except for a single conjugacy class of a multitwist and its powers. Since the LR surface groups are not convex cocompact, their extensions cannot be Gromov hyperbolic. The main application of the work in this paper it to prove these extensions have a generalization of Gromov hyperbolicity called \emph{hierarchically hyperbolicity}. These are the first examples of hierarchically hyperbolic surface-by-surface groups with injective monodromy, or, in fact, monodromy with infinite image. 

\begin{theorem}\label{thm:Into_LR_Groups}
    If $G <\Mod(S)$ is a LR surface subgroup, then the extension group $\Gamma_G$ is hierarchically hyperbolic.
\end{theorem}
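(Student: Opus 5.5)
The plan is to realize $\Gamma_G$ as a group acting geometrically on a space assembled from data over $G$, and to verify the hierarchically hyperbolic axioms by combining three ingredients. The first is the hierarchical structure of $\Mod(\dot S)$, into which $\Gamma_G$ embeds via the Birman exact sequence. The second is the fact that $G$ is a parabolically geometrically finite subgroup of $\Mod(S)$, with peripheral subgroups the cyclic multitwist groups $\langle T\rangle$ and their conjugates. The third is the combination theorem for bundles of hyperbolic graphs over a hyperbolic base, whose fibers need not be properly embedded.

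Concretely, I would first record the PGF structure of LR groups. $G$ is hyperbolic relative to the conjugates of the multitwist subgroup $\langle T\rangle$. Moreover, the coned-off Cayley graph $\hat G$ (coning off peripheral cosets) quasi-isometrically embeds in $\C(S)$, and the orbit of each peripheral coset stays close to a fixed multicurve $\sigma_T$.

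Next I would build the graph bundle. Over each vertex $g$ of $\hat G$ we take the fiber $\C(\dot S)$, or the fiber of the Birman map $\C(\dot S)\to\C(S)$ over $g\sigma$. Fibers over adjacent vertices are connected by the action of generators, and the cone edges are glued using the fact that $T$ acts on the fiber. The aim is to apply the combination theorem to this bundle $E\to\hat G$: fibers are uniformly hyperbolic (they are trees or curve-graph-like by Kent--Leininger--Schleimer), the base is hyperbolic, and we need a flaring condition plus the barrier/uniform-properness replacement that the combination theorem requires when fibers are not properly embedded. Flaring should follow from the convex-cocompact-like behaviour of $G$ away from peripherals, via the Farb--Mosher/Hamenst\"adt argument applied along geodesics of $\hat G$ that avoid long peripheral stretches. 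Along peripheral stretches, the flaring comes instead from $T$ being a multitwist whose twisting in annuli around $\sigma_T$ is recorded in the cone edge. Conclusion of this step: $E$ is hyperbolic. This is the step I expect to be the main obstacle, specifically checking the hypotheses near the cone points, where fibers get sheared by arbitrarily large twist powers and proper embedding fails.

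With $E$ hyperbolic, I would define the HHS structure on $\Gamma_G$. The index set consists of a maximal element with associated space $E$, together with, for each peripheral coset $g\langle T\rangle$ and each puncture-lift data, the subsurfaces of $\dot S$ lying in the complement of the preimage of $g\sigma_T$ and the annuli about those curves. The associated hyperbolic spaces are the corresponding curve graphs and annular graphs, with projections inherited from subsurface projection in $\Mod(\dot S)$. Nesting, orthogonality and transversality come from $\Mod(\dot S)$. The consistency, bounded geodesic image and large links axioms will be checked using the corresponding axioms for $\Mod(\dot S)$, with the coning construction of $E$ supplying the maximal-level versions. Uniqueness and partial realization follow from the distance formula, which I obtain by comparing the word metric on $\Gamma_G$ with the Masur--Minsky distance formula restricted to the relevant domains. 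The cocompact $\Gamma_G$ action then gives hierarchical hyperbolicity of the group.
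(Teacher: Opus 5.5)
Your overall architecture (a hyperbolic tree bundle over the coned-off base, then a hierarchical structure indexed by lifts of the peripheral multicurves) matches the paper's, but both substantive steps are missing their key ideas, and as written neither goes through.

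\textbf{Hyperbolicity of the bundle.} The combination theorem here does not ask for ``flaring plus a properness replacement.'' It needs a family of \emph{simplicial} sections with four properties:
\begin{itemize}
\item they pass through every vertex;
\item they are robust: every fiber geodesic between two of them is swept out by sections inside the ladder;
\item they satisfy fiberwise retraction;
\item they flare in the metric of the total space.
\end{itemize}
You never produce such sections. Your flaring mechanism also does not fit the situation. In $\hat G$ every peripheral coset has diameter at most $1$, so there are no long peripheral stretches to treat separately. And since fibers are not properly embedded, Farb--Mosher/Hamenst\"adt-type growth of fiber distances is useless here.

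The shearing at cone points that you flag as the main obstacle is created by your choice of base. With group elements as base vertices, the fibers over $g$ and $gT^n$ are the same tree, glued by a twist. The paper instead uses the peripheral cosets themselves as base vertices, identified with multicurves $v$. The fiber is $T_v=\Psi^{-1}(v)$, and $x\in T_v$ is joined to $x'\in T_{v'}$ exactly when the regions $\Phi_v^{-1}(x)$ and $\Phi_{v'}^{-1}(x')$ in $\widetilde S$ overlap. The remaining ingredients are then:
\begin{itemize}
\item Tight sections are $v\mapsto\Phi_v(\widetilde a)$ for $\widetilde a\in\Omega$.
\item Robustness and fiberwise retraction (with $\kappa=1$) come from convexity of these regions: geodesic segments in $\widetilde S$, and hexagons in a triangle.
\item $\E$--flaring comes from first proving flaring in $\hat\C(\dot S)$, using hyperbolicity of $\C(\dot S)$. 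Then, if two sections are $\C(\dot S)$--close but $\E$--far over some base point, Choi--Rafi gives a large projection to a proper subsurface of $\dot S$. This contradicts bounded geodesic image together with the fact that distinct base multicurves fill $S$.
\end{itemize}

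\textbf{The hierarchical structure.} The axioms cannot simply be inherited from $\Mod(\dot S)$. Your top-level space is $E$, not $\C(\dot S)$. The map $E\to\hat\C(\dot S)$ is coarsely Lipschitz, but there is no further relation between $E$--geodesics and $\C(\dot S)$--geodesics; the two distances genuinely differ, which is exactly what the flaring argument has to overcome. So top-level bounded geodesic image and consistency for your subsurface domains do not follow from Masur--Minsky. Likewise, nothing says the word metric of $\Gamma_G$ is given by the Masur--Minsky sum with $d_{\C(\dot S)}$ replaced by $d_E$ and all but your chosen domains discarded. Proving such a distance formula is essentially the whole theorem, and your plan supplies no mechanism for it.

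The missing idea is the combinatorial HHS criterion of \cite{BHMS}, which reduces everything to checkable local statements. The paper proceeds as follows:
\begin{itemize}
\item Over each base vertex it places the Hagen--Russell--Spriano--Sisto structure $(X_v,W_v)$ of the graph-manifold group $\Gamma_v$, whose quasi-trees and quasi-lines are indexed by vertices of $T_v$.
\item It joins the $W_v$ by finitely many $\Gamma_G$--orbits of edges between adjacent fibers whose associated geodesic lifts in $\widetilde S$ cross.
\item It imports the combinatorial axioms from \cite{HRSS}.
\item It gets hyperbolicity of $X^{+W}$ from its quasi-isometry with the tree bundle.
\item It proves the one remaining axiom, that the quasi-tree and quasi-line links quasi-isometrically embed in $Y_\Delta$, by explicit coarse retractions. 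These are built from the convex regions in $\widetilde S$ and a finiteness statement (up to $H_v$) for the relevant projection arcs, which rests on Udall's bounded-projection theorem.
\end{itemize}
The statement then follows because LR surface groups are PGF with cyclic peripherals \cite{Udall}.
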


 The extensive literature of on hierarchically hyperbolic groups produces many corollaries of Theorem \ref{thm:Into_LR_Groups}.  For instance, $\Gamma_G$ is semi-hyperbolic \cite{HHP,DMS1} and coarse median \cite{Bow_MCG, BHS_HHSII}, which imply it has a quadratic Dehn function and a solvable conjugacy problem. Theorem \ref{thm:HHG detailed} gives the precise  hierarchically hyperbolic structure for these $\Gamma_G$. From this, we see that the hierarchically hyperbolic structure is both \emph{short} and \emph{colorable} (see \cite[Section 2.2]{short1} and \cite[Section 9.1]{DMS2} for these definitions). This allows us to conclude that $\Gamma_G$  has many  infinite hyperbolic quotients \cite{short2}, that quasi-isometric embeddings of $\mathbb R^2$ take a particular form \cite{HHS_quasiflats} while $\mathbb R^3$ does not quasi-isometrically embed in $\Gamma_G$ \cite{HHS1}, and that $\Gamma_G$ is asymptotically CAT(0) and has a $\mathcal{Z}$--structure \cite{DMS2}. All these results are new for $\Gamma_G$ as in the theorem.

Our proof of Theorem \ref{thm:Into_LR_Groups} relies on the \emph{parabolically geometrically finite} (PGF) structure of the LR surface groups \cite{Udall}. PGF groups were defined by a subset of the authors as a first step in generalizing convex cocompactness in $\Mod(S)$ to a theory of ``geometric finiteness'' \cite{DDLS_Veech_II}; see Subsection \ref{subsec:geom_finite}. PGF groups are hyperbolic, relative to abelian subgroups generated by multitwists, and are characterized by the orbit map on the curve graph $\C(S)$ being a quasi-isometric embedding with respect to a relatively hyperbolic generating set; see Definition \ref{defn:PGF}.

Inspired by the equivalence between convex cocompactness and the Gromov hyperbolicity of extensions, Mosher speculated that there ought to be a definition of a ``geometrically finite'' subgroup $G <\Mod(S)$ that characterizes some ``generalized hyperbolicity'' of  $\Gamma_G$ \cite{Mosher_Geom_finite}; see also Hamenst\"adt \cite[Problem 5]{Hamenstadt_Geom_finite}.
As the first step to such a theory, a subset of the authors showed that  $\Gamma_G$ is hierarchically hyperbolic when $G < \Mod(S)$ is a lattice Veech group \cite{DDLS_Veech_I,DDLS_Veech_II}. Bongiovanni subsequently extended this to all finitely generated Veech groups \cite{Bongiovanni_Veech}. Veech groups are the prototypical examples of PGF groups; see \cite{Tang_Veech}. However, the proof that $\Gamma_G$ is hierarchically hyperbolic in these cases relied heavily upon on the flat geometry associated with the Veech group rather than on the PGF structure.  The current work generalizes the Veech case to rank 1 PGF groups. This allows us to produce an ``internal'' proof of the Veech case as well as cover the LR surface groups and many new examples created by a combination theorem of Udall  \cite{Udall}; see also \cite{ABDHMW_Combination}.

\begin{restatable}{theorem}{PGF}
    \label{thm:Intro_PGF_w/cyclic}
    If $G < \Mod(S)$ is a parabolically geometrically finite group with cyclic peripherals, then the $\pi_1S$--extension $\Gamma_G$ is a hierarchically hyperbolic group.
\end{restatable}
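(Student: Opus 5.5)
We will follow the strategy the abstract outlines: first build a hyperbolic graph bundle over the base of $G$, then assemble a hierarchically hyperbolic structure on $\Gamma_G$ around it, as in the Veech case \cite{DDLS_Veech_I,DDLS_Veech_II}, but using only the PGF structure.

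\emph{Step 1: the base.} Since $G$ is PGF with cyclic peripherals, each peripheral $P_i$ is generated by a multitwist $T_i$ about a multicurve $\alpha_i$. Let $\hat G$ be the coned-off Cayley graph of $G$ relative to $\{P_i\}$. This graph is hyperbolic, and by Definition \ref{defn:PGF} the orbit map $\hat G \to \C(S)$ is a quasi-isometric embedding.

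\emph{Step 2: the graph bundle.} Over $\hat G$ we build a bundle $X \to \hat G$. The fiber over $g$ is a copy of $\widetilde S$ in which the lifts of the curves of $g\alpha_i$ are coned off, for every peripheral coset near $g$. Equivalently, it is a Bass--Serre-type electrification of $\widetilde S$ along the components of $\widetilde S$ minus the lifted twisting curves. These fibers are hyperbolic but are not properly embedded in $X$, which is why the new combination theorem is needed.

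We then verify the hypotheses of that combination theorem for bundles of hyperbolic graphs over a hyperbolic base. The key one is a flaring condition. A pair of quasi-geodesic lifts over a geodesic in $\hat G$ must diverge exponentially, unless the geodesic passes through cone points, where the twisting is absorbed by the coning. Away from cone points, the geodesic tracks a quasi-geodesic in $\C(S)$ with large projections controlled. We would establish flaring by relating to Mosher--Farb/Hamenst\"adt-style flaring for convex cocompact pieces, using that subsurface projections to proper subsurfaces other than the annuli and complements of the $\alpha_i$ are uniformly bounded along $G$-orbits. This Step 2 verification is the main obstacle. The difficulty is that peripheral twisting destroys uniform flaring in fibers near cone vertices, and the coning of curves must be chosen exactly so that it kills this non-flaring without destroying fiber hyperbolicity. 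We would first prove the bounded-projection statement via the PGF characterization, then run the combination theorem to conclude that $X$ is hyperbolic.

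\emph{Step 3: the hierarchically hyperbolic structure.} The index set consists of:
\begin{itemize}
\item the top-level space $X$;
\item for each peripheral coset $gP_i$ and each component of $S \setminus g\alpha_i$ lifted to $\widetilde S$ (a $\pi_1$-coset of a subsurface group), the corresponding electrified space. Here the cyclic peripheral acts like a Dehn-twist direction, giving a product region of a quasi-line and a hyperbolic piece;
\item annular quasi-lines for each lift of a twisting curve, combined with the peripheral direction.
\end{itemize}
The relations are nesting of curve lifts inside components and orthogonality between the peripheral twist line and the fiber pieces. Projections are defined via closest-point projections in $\widetilde S$ and $\hat G$.

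The HHS axioms then reduce to three checks, which follow the Veech-case arguments with the flat geometry replaced by PGF estimates. Uniqueness and the distance formula are checked by comparing with the distance formula for $\Mod(\dot S)$ restricted to $\Gamma_G$ and its quasi-convexity properties. Bounded geodesic image follows from the hyperbolicity of $X$. Large links follows from finiteness of peripheral orbits. Finally, $\Gamma_G$ acts cofinitely on the index set, which gives the HHG structure.
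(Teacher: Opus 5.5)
Your overall architecture (a hyperbolic bundle as the top-level space, with the graph-manifold pieces for the peripheral cosets) matches the paper's. However, there are genuine gaps in both substantive steps.

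\textbf{Step 2.} Theorem~\ref{thrm:combination_theorem} is not a statement of the form ``hyperbolic base, hyperbolic fibres, flaring.'' Because the fibres are not properly embedded, its hypotheses (Definition~\ref{defn:externally flaring graph bundle}) require the following:
\begin{enumerate}
\item a prescribed family of \emph{simplicial} sections through every point;
\item closure of this family under passing to sections inside ladders (robustness);
\item uniform control of fibrewise projections across adjacent fibres;
\item flaring measured in the ambient metric $d_\E$.
\end{enumerate}
You construct no sections, and the flaring mechanism you sketch does not apply. Exponential divergence is impossible in $d_\E$: sections are isometric embeddings, so $d_\E(\Sigma_1(u),\Sigma_2(u))\le d_\E(\Sigma_1(v),\Sigma_2(v))+2d_\B(u,v)$, and divergence is at most linear. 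In the fibre metrics divergence carries no information. There are also no convex cocompact stretches to reduce to, since a geodesic in $\hat G$ can use a peripheral element at every other step.

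Your fibre is also not well defined. Coning off the lifted twist curves is not the same as electrifying the complementary regions. What is needed, and what must later match the graph-manifold structures, is exactly one Bass--Serre tree $T_v=\Psi^{-1}(v)$ per peripheral coset $v$.

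The paper obtains all of this by placing the tree bundle inside $\hat\C(\dot S)$ via \cite{KLS_Trees}:
\begin{itemize}
\item the tight sections are $\Phi(\cdot,\widetilde a)$ for $\widetilde a\in\Omega$;
\item robustness comes from geodesics in $\widetilde S$;
\item fibrewise retraction comes from the hexagon argument;
\item flaring is first proved in $\hat\C(\dot S)$ (Lemma~\ref{lem:weak flare}) and then transferred to $d_\E$.
\end{itemize}
The transfer uses that large $d_\E$ with small $d_{\dot S}$ forces a large projection to some $Y\subsetneq\dot S$ \cite{ChoiRafi}. This is excluded by bounded geodesic image in $\C(\dot S)$ along tight sections (Lemma~\ref{lem:projections from base}) together with the filling Lemma~\ref{lem:vertices filling}. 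The statement you propose to prove, bounded projections in $S$ along $G$--orbits, is not by itself enough for this.

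\textbf{Step 3} is where the theorem is actually proved, and your plan does not carry it out.
\begin{itemize}
\item The HHS axioms do not reduce to three checks.
\item Bounded geodesic image is not a consequence of hyperbolicity of the top-level space.
\item Uniqueness and the distance formula cannot be pulled back from $\Mod(\dot S)$ without knowing that $\Gamma_G$ is undistorted there with a compatible hierarchy. You do not establish this, and your domains (a tree bundle, electrified universal covers of subsurfaces of $S$) are not curve graphs of subsurfaces of $\dot S$.
\item The index set is not pinned down. On each coset of $\Gamma_v$ the structure should be the graph-manifold one of \cite{HRSS}: one quasi-tree and one quasi-line per vertex of $T_v$, with each JSJ torus the product of the two adjacent quasi-lines. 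It is unclear what your extra ``annular quasi-lines combined with the peripheral direction'' would be.
\end{itemize}
Most importantly, the real new difficulty has no counterpart in your plan. One must show that the structures on the different cosets fit together. That is, each quasi-tree and quasi-line must be quasi-isometrically embedded in the complement of its saturation in the \emph{whole} augmented space, which meets many other fibres.

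The paper handles this with the combinatorial HHS criterion of \cite{BHMS}:
\begin{itemize}
\item it assembles $X=\bigsqcup_u X_u$ from the HRSS complexes;
\item it imports items (1), (4), (5) of Definition~\ref{defn:CHHS} from \cite{HRSS};
\item it gets hyperbolicity of $X^{+W}$ from its quasi-isometry with $\E_1$, which is the only use of the combination theorem;
\item it proves the link condition by coarse retractions $Y_\Delta\to\C(\Delta)$ (Proposition~\ref{prop:links_qie}).
\end{itemize}
The coarse well-definedness and Lipschitz property of these retractions rest on the finiteness of $H_v$--orbits of pairs of long arcs (Lemma~\ref{lem:finitely many long arcs}). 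That finiteness is itself derived from Udall's bounded projections \cite{Udall}.
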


The key place in \cite{DDLS_Veech_I} where the flat geometry of the Veech group is used is in proving Gromov hyperbolicity of a natural tree bundle that $\Gamma_G$ acts on. We replace the reliance on flat geometry with a new general combination theorem for bundles of hyperbolic graphs of over a hyperbolic base. 

\begin{theorem}\label{thm:intro_combination}
    Let $p \colon \E \to \B$ be a graph bundle where $\B$ is $\delta$--hyperbolic and all fibers are $\delta$--hyperbolic (but not necessarily properly embedded). Then the total space $\E$ is hyperbolic provided there is a collection of sections $\mathcal{S}$ that satisfy the \emph{externally flaring} condition of Definition \ref{defn:externally flaring graph bundle}.
\end{theorem}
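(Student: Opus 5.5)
To prove that $\E$ is hyperbolic, I would use a guessing-geodesics criterion of Bowditch / Masur--Schleimer type. It suffices to attach to each pair of vertices $x,y \in \E$ a connected path $\eta(x,y)$ satisfying two conditions. First, $\eta(x,y)$ has uniformly bounded diameter whenever $d(x,y)\le 1$. Second, the paths are uniformly slim on triangles: $\eta(x,z)$ lies in a uniform neighborhood of $\eta(x,y)\cup\eta(y,z)$.

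\emph{Building the paths.} Let $\gamma$ be a $\B$--geodesic from $p(x)$ to $p(y)$. Externally flaring gives a section $\sigma\in\mathcal{S}$ through $x$ and one, $\sigma'$, through $y$; I would restrict both to $\gamma$, obtaining a ``ladder'' $\ladder(\sigma|_\gamma,\sigma'|_\gamma)$. Over each $b\in\gamma$ the ladder contains the fiber geodesic joining $\sigma(b)$ to $\sigma'(b)$. Here I would measure with the intrinsic fiber metric, since the fibers need not be properly embedded, so intrinsic and extrinsic distance can differ wildly. The flaring hypothesis, as I read Definition \ref{defn:externally flaring graph bundle}, says that $b\mapsto d_{F_b}(\sigma(b),\sigma'(b))$ grows exponentially, measured in the ambient metric $d_\E$ restricted to fibers rather than the intrinsic one, in at least one direction away from a bounded ``neck'' region where the sections are $\E$--close. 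I would take $\eta(x,y)$ to be the path that moves from $x$ inside the fiber to the neck, travels along $\gamma$ near the neck (following either section), and returns to $y$ inside the fiber over $p(y)$. This is the analogue of the Mj--Sardar / Bestvina--Feighn ``flow to the neck'' construction.

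\emph{Verifying the two conditions.} The first step is to show that the ladder is a uniformly quasiconvex, hyperbolic subspace and that $\eta$ is a uniform quasigeodesic inside it. I would build a Lipschitz coarse retraction $\E\to\ladder$ fiberwise, using nearest-point projection to the fiber geodesic in each $\delta$--hyperbolic fiber and extending along edges via the bundle's fiberwise quasi-isometries. The second step is to show that ladders over the three sides of a triangle coarsely fit together. Over the tripod center, flaring forces necks of the three pairs of sections to lie near each other, and fiber thin-triangles finish slimness.

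\emph{Main obstacle.} The hardest step is the Lipschitz retraction and the resulting neck estimates, because fibers are not properly embedded. Two points that are close in $\E$ may be far apart in the fiber metric, so the usual argument bounding how the retraction moves under an edge breaks down. To handle this, I would first work entirely with ambient distances between sections and use hyperbolicity of $\B$ to reduce the problem to comparing sections over geodesics. I would only invoke fiber hyperbolicity to control the rungs, and only where the flaring condition bounds the relevant distances.
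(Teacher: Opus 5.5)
Your outer frame (guessing geodesics, with preferred paths coming from ladders between tight sections through $x$ and $y$) matches the paper's, but the core argument has genuine gaps.

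First, restricting the sections to a base geodesic $\gamma=[p(x),p(y)]$ gives the wrong ladder. The region where the two sections come $\E$--close need not be anywhere near $\gamma$. If $p(x)=p(y)$, then $\gamma$ is a single vertex and your ``ladder'' is just the fiber geodesic $[x,y]_{p(x)}$. Its $\E$--diameter is controlled (Lemma \ref{lem:bounded_between}), but its length is not. An $\E$--geodesic from $x$ to $y$ will in general travel far out into the base toward where the sections meet, so this path is not close to a geodesic. The paper builds ladders over all of $\B$. It gets their undistortion in $\E$ (Corollary \ref{cor:ladder_qie}) from the fiberwise projection being coarsely Lipschitz (Lemma \ref{lem:fiberwise_proj}). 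That Lipschitz property, which you single out as the main obstacle, is simply Axiom \eqref{item:retraction}. It cannot be obtained, as you propose, from quasi-isometries between adjacent fibers, because these need not exist in this setting. Moreover, a Lipschitz retraction only gives undistortion of ladders, not their hyperbolicity, and hyperbolicity is where the real work lies.

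The genuinely hard steps are missing.

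(i) Two tight sections may never come $B$--close for the fixed uniform $B$. Axiom \eqref{item:flaring} is anchored at a point where the sections are $B$--close, and it provides only a diverging function, not exponential growth in one of two directions. So it gives no uniform control for such a pair, and ``flow to the neck'' has no neck to flow to. The paper handles this with Axiom \eqref{item:robust}, which your plan never uses. It interpolates tight sections $\Sigma_0,\dots,\Sigma_\ell$ inside the ladder so that consecutive pairs have nonempty $B$--necks and non-consecutive pairs have empty ones. It proves ladders with nonempty necks hyperbolic by gluing copies of $\B$ along the quasiconvex necks (Proposition \ref{prop:thin_neck}). It shows the outer sections of each double ladder have bounded projections onto each other (Lemma \ref{lem:double_ladder_proj}, via Lemma \ref{lem:proj_neck}). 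It then concludes with the line-of-spaces criterion, Proposition \ref{prop:malnormal_glue}.

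(ii) For slim triangles, flaring does not force the necks of the three pairs of sections to lie near the center of the tripod on $p(x),p(y),p(z)$. This fails when all three points lie in one fiber and are pairwise far apart in $\E$. Nor must the necks lie near each other: in the configuration of Lemma \ref{lem:double_ladder_proj} one neck is empty. The paper instead glues the three full ladders along half-ladders into a tripod bundle. It proves this bundle is hyperbolic with all three ladders undistorted (Lemma \ref{lem:half_ladder_QC}, Proposition \ref{prop:tripod_bundle}), and reads off thinness of the preferred triangles from that.
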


While Theorem \ref{thm:intro_combination} is similar to other results in the literature (e.g. \cite{MS:combination, BestvinaFeighn, Hamenstadt_extensions_of_surface_groups}) in using a ``flaring condition'' to encode hyperbolicity, our result differs from previous combination theorems in not requiring the fibers of the bundle to be properly embedded. This relaxation is necessary for our work as the tree bundle that $\Gamma_G$ acts on has fibers that are not properly embedded.  We are able to navigate around this lack of properness by phrasing our flaring condition in terms of distances in the total space $\E$ instead of distances in the individual fibers. 

Our accommodation of improperly embedded fibers requires us to include several additional assumptions about the existence and behavior of sections in the definition of a externally flaring graph bundle. We are able to verify these additional assumption in the case of PGF groups by viewing the associated tree bundle as a subbundle of the curve graph $\C(\dot S)$. The hyperbolicity of $\C(\dot S)$  and Masur--Minksy's bounded geodesic image theorem are then critical to verifying that the $\Gamma_G$ tree bundles satisfy our flaring condition.

Our combination theorem is applicable to any parabolically geometrically finite group, and not just those with cyclic peripherals. 

\begin{theorem}\label{thm:intro_PGF_hyp}
    Let $G < \Mod(S)$ be any parabolically geometrically finite group. Then $\Gamma_G$ acts on a hyperbolic space $\E$ that is a tree bundle whose base is the orbit of $G$ in the curve graph $\C(S)$.
\end{theorem}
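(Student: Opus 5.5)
The plan is to build $\E$ as the tree bundle over the orbit of $G$ in $\C(S)$, sitting inside $\C(\dot S)$, following the Veech-group construction of \cite{DDLS_Veech_I}, and then to verify the externally flaring condition of Definition \ref{defn:externally flaring graph bundle} so that Theorem \ref{thm:intro_combination} gives hyperbolicity.

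\textbf{Construction.} Fix a curve $\alpha$ and let $\B$ be a connected graph on the orbit $G\cdot\alpha \subset \C(S)$, adding edges between points at bounded distance. By Definition \ref{defn:PGF}, the orbit map is a quasi-isometric embedding with respect to a relatively hyperbolic generating set. Hence $\B$ is quasi-isometric to the coned-off Cayley graph of $G$, and in particular it is hyperbolic.

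For each vertex $v$ of $\B$, consider the preimage of $v$ under the forgetful map $\C(\dot S) \to \C(S)$. By Kent--Leininger--Schleimer this fiber is a tree $T_v$, namely the Bass--Serre tree of the splitting of $\pi_1 S$ dual to $v$. Take $\E$ to be the union of the trees $T_v$. Join $x \in T_v$ and $y \in T_w$ by an edge whenever $v$ and $w$ are adjacent in $\B$ and $d_{\C(\dot S)}(x,y)$ is bounded.

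Since $\Gamma_G$ is the preimage of $G$ in $\Mod(\dot S)$, it acts on $\E$, and the projection $p\colon \E \to \B$ is equivariant. The fibers $T_v$ are trees, so they are $0$--hyperbolic. They are generally not properly embedded in $\E$, which is exactly why Theorem \ref{thm:intro_combination} is needed rather than the classical Bestvina--Feighn / Mj--Sardar theorems.

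\textbf{Sections.} The natural sections come from points of $\dot S$, or more precisely from elements of $\pi_1 S$ acting on lifts. Concretely, fix a basepoint lift and, for each $v$, choose the vertex of $T_v$ nearest to a fixed point of $\C(\dot S)$. Equivalently, take $\Gamma_G$-translates of a quasi-section coming from a finite generating set of $G$. The quasi-isometric embedding of $G$ into $\C(S)$, together with the $1$--Lipschitz forgetful map, should make these sections quasi-isometric embeddings of $\B$ into $\E$ along geodesics. The remaining auxiliary existence and behavior assumptions of Definition \ref{defn:externally flaring graph bundle} should then follow from the equivariance and cocompactness of the action on the set of sections over each fiber.

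\textbf{Flaring.} I expect this step to be the main obstacle. We must show that two sections over a geodesic in $\B$ diverge exponentially, measured in $\E$-distance, unless they stay close.

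The plan is to work in the hyperbolic space $\C(\dot S)$. Distances in $\E$ between points of a common fiber $T_v$ are comparable to subsurface-projection data. By the Masur--Minsky distance formula and the bounded geodesic image theorem, these reduce to projections to the annulus around the puncture-adjacent curves and to subsurfaces containing the puncture.

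Along a geodesic in $\B$ that stays in a thick part (away from the cusps), the orbit is convex cocompact-like. Here I would follow the Farb--Mosher/Hamenstädt argument, adapted to $\E$-distances, to get flaring of pseudo-Anosov-type segments.

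Along a geodesic that travels deep into a cusp, corresponding to a parabolic multitwist subgroup, the sections are related by high powers of the multitwist. Bounded geodesic image shows that the $\E$-distance between sections grows linearly in twisting in the annuli. Since the cusp is coned off in $\B$, this traversal is uniformly short, so flaring only needs to be checked over bounded base-length there, and it can be absorbed into the constants.

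Gluing these two regimes along the relatively hyperbolic decomposition of geodesics in the coned-off graph gives the externally flaring condition. Theorem \ref{thm:intro_combination} then yields that $\E$ is hyperbolic.
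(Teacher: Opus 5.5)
Your outline has the same overall shape as the paper (KLS trees over an orbit, then Theorem \ref{thrm:combination_theorem}), but none of the hypotheses of Definition \ref{defn:externally flaring graph bundle} is actually established, and the mechanisms you suggest would not establish them.

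\textbf{Sections.} The tight sections must be \emph{simplicial} sections through every vertex. They must also be \emph{robust}: for any two of them and any choice of fiber geodesics, every vertex of every $[\Sigma_1(v),\Sigma_2(v)]_v$ lies on a tight section contained in the ladder. Finally, a uniform fiberwise retraction constant is required. Your candidates fail these requirements. ``The vertex of $T_v$ nearest a fixed point of $\C(\dot S)$'' is not coarsely well defined in $T_v$, because fibers are far from properly embedded in $\C(\dot S)$. For instance, powers of a point-push along a simple curve crossing $v$ move vertices of $T_v$ arbitrarily far in the tree but only a bounded amount in $\C(\dot S)$. Such a choice is certainly not simplicial. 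Quasi-sections do not satisfy the definition either; the paper leaves allowing them as an open question. Robustness is a strong geometric condition that does not follow from equivariance or cocompactness. Your edge rule ($d_{\C(\dot S)}(x,y)$ bounded) also leaves Axiom \eqref{item:retraction} with no evident proof.

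The missing idea is to index sections by points $\widetilde a\in\Omega\subset\widetilde S$, the complement of all lifts of simple geodesics. One sets $\Sigma_{\widetilde a}(v)=\Phi_v(\widetilde a)$ and joins $x\in T_v$ to $x'\in T_{v'}$ exactly when $\Phi_v^{-1}(x)\cap\Phi_{v'}^{-1}(x')\neq\emptyset$. With this setup:
\begin{enumerate}
\item the sections are simplicial;
\item robustness holds because $\Phi_u$ carries $[\widetilde a_1,\widetilde a_2]\cap\Omega$ onto the vertices of $[\Sigma_1(u),\Sigma_2(u)]_u$ for every $u$ simultaneously;
\item Axiom \eqref{item:retraction} holds with $\kappa=1$, by intersecting two hexagons in a triangle of $\widetilde S$.
\end{enumerate}

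\textbf{Flaring.} This step has the wrong target and misses the crux. Since tight sections are simplicial,
\[ d_\E(\Sigma_1(u),\Sigma_2(u))\le d_\E(\Sigma_1(v),\Sigma_2(v))+2d_\B(u,v), \]
and coarsely Lipschitz sections give the same up to constants. So exponential divergence in $\E$--distance is impossible. Axiom \eqref{item:flaring} only asks for some diverging $\lambda$, and the paper obtains a linear one. There is no distance formula for $\E$ to invoke. The thick/cusp gluing is also not an argument: the coned-off base has no long cusp excursions, flaring is not obviously stable under concatenation, and the Farb--Mosher/Hamenst\"adt argument measures flaring in fiber metrics, which are meaningless here.

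What is actually needed is the following. Flaring measured in $\C(\dot S)$ comes cheaply from hyperbolicity of $\C(\dot S)$ (Lemma \ref{lem:weak flare}), and $\E\to\hat\C(\dot S)$ is Lipschitz. So the real work is to rule out a long stretch of the base geodesic on which the sections are $\C(\dot S)$--close but $\E$--far. The paper does this in three steps:
\begin{enumerate}
\item $\E$--far but $\C(\dot S)$--close points in a fiber yield a proper subsurface $Y\subset\dot S$ with large projection, via intersection number and Choi--Rafi.
\item Bounded geodesic image and Lemma \ref{lem:projections from base} then bound $d_Y$ at the ends and along one side of the segment.
\item Crucially, Lemma \ref{lem:vertices filling} says distinct base vertices fill $S$, so $Y$ can have empty projection at no more than one vertex along a section.
\end{enumerate}

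That filling property, and the bounded intersection numbers of Lemma \ref{lem:cobounded edges,ex1}, require the base vertices to be the peripheral multicurves. For a generic curve $\alpha$, the curves $\alpha$ and $h\alpha$ (with $h$ a peripheral multitwist) need not fill. Moreover, keeping $\B$ quasi-isometric to $\hat G$ would then force edges of unbounded intersection number.
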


The reason that Theorem \ref{thm:intro_PGF_hyp} does not lead to an immediate proof that $\Gamma_G$ is hierarchically hyperbolic for all PGF groups is that it has not yet been established that $\Gamma_A$ is hierarchically hyperbolic when $A <\Mod(S)$ is  virtually an abelian group generated by disjoint multi-twists. When this result in established, our proof of Theorem \ref{thm:Intro_PGF_w/cyclic} will likely be readily adapted to prove that $\Gamma_G$ is hierarchically hyperbolic for any PGF group.

\subsection{Geometric Finiteness in $\Mod(S)$ and future applications}\label{subsec:geom_finite}

Farb and Mosher introduced convex cocompactness in $\Mod(S)$ in analogy with convex cocompact subgroups of $\Isom(\mathbb{H}^n)$ \cite{FM_convex_cocompact}. In $\Isom(\mathbb{H}^n)$, the convex cocompact groups are the geometrically and dynamically best behaved groups of isometries. 
The analogy between convex cocompact groups in $\Isom(\mathbb{H}^n)$ and $\Mod(S)$  is remarkably robust with many of the equivalent formulations of convex cocompactness in $\Isom(\mathbb{H}^n)$ having analogous formulations satisfied in the setting of $\Mod(S)$ \cite{Kent_Leininger_shadows,Hamenstadt_extensions_of_surface_groups}. For example, in both settings, the convex cocompact groups are those where the orbit map is a quasi-isometric embedding in a hyperbolic space, either $\mathbb{H}^n$ or the curve graph $\C(S)$ respectively.

In $\Isom(\mathbb{H}^n)$, geometric finiteness is a natural relaxation of convex cocompactness that can be thought of as convex cocompactness relative to parabolic subgroups. Parabolically geometrically finite groups are a straight forward analogy of this idea to the mapping class group. Thanks to Theorem \ref{thm:Intro_PGF_w/cyclic}, we know several prominent examples of PGF groups have hierarchically hyperbolic extensions, supporting Mosher's suggestion that geometric finiteness should correspond to a generalization of Gromov hyperbolicity. The last remaining step in the PGF case is removing the cyclic peripheral assumption in Theorem \ref{thm:Intro_PGF_w/cyclic}. As mentioned after Theorem \ref{thm:intro_PGF_hyp}, this will likely come down to proving hierarchical hyperbolicity of the extension of abelian twist groups of $\Mod(S)$.

\begin{question}\label{ques:PGF}
    If $G$ is PGF, is $\Gamma_G$ hierarchically hyperbolic?
\end{question}

There are many other subgroups of $\Mod(S)$ that ought to be considered geometrically finite, but do not fall into the PGF framework. The most notable are $\Mod(S)$ itself and the stabilizers of multicurves on the surface.  These examples have also been shown to have hierarchically hyperbolic extensions \cite{BHS_HHSII, Russell_Stabilizers}. If there is to be a unified definition of geometric finiteness for all of these examples in $\Mod(S)$, it is most likely to be the answer to this question:

\begin{question}\label{ques:geom_finite}
    What geometric property of $G < \Mod(S)$ is equivalent to $\Gamma_G$ being a hierarchically hyperbolic group?
\end{question}

Question~\ref{ques:geom_finite} is already interesting for subgroups $G<\Mod(S)$ which are hyperbolic relative to a collection of abelian subgroups generated by multitwists. In this case, the problem becomes deciding whether there are conditions on an HHG structure on $\Gamma_G$ that imply that $G$ is PGF.

There are various constructions of right-angled Artin subgroups of mapping class groups; see e.g.~\cite{CrispWiest,CLM_RAAGs,koberda2012right,runnels2021effective}. These are not always PGF, as they are usually not even hyperbolic relative to abelian subgroups, but they could be considered geometrically finite. In many cases, their hierarchically hyperbolic structure is known to interact well with the hierarchically hyperbolic structure of the ambient mapping class group. For instance, the proofs in \cite{CLM_RAAGs,runnels2021effective} rely on subsurface projections and distance formulas.

\begin{question}\label{ques:RAAGs}
What are sufficient conditions on a right-angled Artin subgroup of the mapping class group of a closed surface to guarantee it has a hierarchically hyperbolic extension?
\end{question}

 Parabolically geometrically finite groups have an even further generalization to \emph{reducibly geometrically finite} groups, where we let the peripheral subgroups be any subgroup of $\Mod(S)$ that is contained in the stabilizer of a multicurve instead of just abelian twist groups (see \cite{ABDHMW_Combination,Udall} for definitions). Our construction of tree bundles in Section \ref{sec:hyperbolic GF bundles} in the PGF case has an analogous construction in the RGF case. This provides an approach to begin tackling the following question.

\begin{question}
    For $G <\Mod(S)$ reducibly geometrically finite, what conditions on the peripheral subgroups imply that $\Gamma_G$ is hierarchically hyperbolic?
\end{question}

A particularly promising case of this question are RGF groups where the peripheral subgroups are the full stabilizers of multicurves. 

 \begin{question}
     Let $G <\Mod(S)$ be reducibly geometrically finite.  If each peripheral subgroup of $G$ is the full stabilizer of a multicurve, is $\Gamma_G$ hierarchically hyperbolic?
 \end{question}

Finally, hierarchical hyperbolicity is often helpful in establishing quasi-isometric rigidity (e.g. \cite{behrstock2012geometry, HHS_quasiflats, DDLS_Veech_II}) inspiring our final question.

\begin{question}\label{ques:QI_rigid}
    Does $\Gamma_G$ satisfy some form of quasi-isometric rigidity, when $G$ is a LR surface group?
\end{question}

A concrete version Question \ref{ques:QI_rigid} would be whether any group quasi-isometric to $\Gamma_G$ has a splitting as a graph of groups where the vertex groups are commensurable to extensions of Veech groups. The techniques from \cite[Section 5]{DDLS_Veech_II} are likely a good starting point for establishing this statement.

\subsection{Outline of the paper and summary of the proof} 

We start by laying out a number of facts about Gromov hyperbolic spaces in Section \ref{sec:background}.  Many readers can safely skip it and refer back to it as needed.

Section \ref{sec:combination theorem} contains our definition of a externally flaring graph bundle (Definition \ref{defn:externally flaring graph bundle}) and the proof of our combination theorem (Theorem \ref{thrm:combination_theorem}). This combination theorem can be taken as a ``black box'' for our applications in Section \ref{sec:hyperbolic GF bundles}. The work in Section \ref{sec:combination theorem} is heavily inspired by a combination theorem of Mj and Sardar for hyperbolic graph bundles with proper fibers \cite{MS:combination} (Mj and Sardar also acknowledge an intellectual debt to \cite{Hamenstadt_extensions_of_surface_groups}). The work in \cite{MS:combination} begins by establishing the existence and properties of quasi-isometric sections $\B \to \E$ for metric grapuh bundles $\E \to \B$ with uniformly hyperbolic base and fibers. Mj and Sardar then use these qi-sections to build ``ladders'' in the total space $\E$. These ladders are essentially the subbundles of $\E$ formed by ``flowing'' a geodesic in a single fiber over the entire base. Mj and Sardar then use their flaring condition to prove that these ladder are hyperbolic and then use the hyperbolicity of the ladders to verify the hyperbolicity of $\E$.

Our main contribution to the proof of Theorem \ref{thm:intro_combination} is isolating the right conditions for the strategy of \cite{MS:combination} to work in the absence of properly embedded fibers. Our first insight was that the flaring condition needed to be stated in terms of the ambient metric instead of the metric on the fibers (when fibers are properly embedded, flaring in the fiber metric implies flaring in the ambient metric, but this is not the case for us). The second realization was that Mj and Sardar's construction of qi-sections also fails without proper fibers because one is not guaranteed to have uniform quasi-isometries between adjacent fibers. Thus, our definition of externally flaring graph bundles includes the existence of a family of sections with several properties inspired from lemmas in \cite{MS:combination}. Armed with the right set of conditions, our proof of Theorem \ref{thm:intro_combination} can follow the strategy of \cite{MS:combination} with minor adaptions. See Section~\ref{sec:contrast_proper_case} for a larger discussion about the similarities between our work and \cite{MS:combination}.

Section \ref{sec:hyperbolic GF bundles} contains the application of our combination theorem to parabolically geometrically finite groups (Theorem \ref{thm:combination_applies}).   The starting tool here is the ``forgetful map'' between curve graphs $\Psi\colon \C(\dot S) \to \C(S)$ induced by filling in the puncture on $\dot S$. A theorem of Kent, Leininger, and Schleimer \cite{KLS_Trees} tells us that the preimage of curves on $S$ under $\Psi$ are Bass-Serre trees dual to the splitting of $\pi_1S$ determined by the curve. This gives $\C(\dot S)$ the structure of a bundle of trees over $\C(S)$.  We exploit this structure to build a tree bundle for the extension of a PGF group.

For simplicity, we will describe the case where $G$ is a PGF group with a single  peripheral subgroup $H$, where $H$ is a cyclic group generated by a twist around a single curve $v$. The general case is described in Section \ref{subsec_E_for_RGF}. Our tree bundle $\E$ for the extension group $\Gamma_G$ is constructed by taking $\Psi^{-1}(G \cdot v)$ and then adding certain edges between trees  that differ by generators of $G$. Because $G$ is PGF, the orbit $G \cdot v$ is quasi-isometric to a hyperbolic graph, making $\E$ a graph bundle with hyperbolic base and hyperbolic fibers. To verify that $\E$ is  externally flaring, we exploit the fact that $\E$ is essentially a sub-bundle of $\C(\dot S)$. This lets us use existing work \cite{KLS_Trees,LeiMjSch} to build the required sections as well as us the hyperbolicity of $\C(\dot S)$ plus Masur--Minsky's bounded geodesic image theorem to verify that $\E$ satisfies the flaring condition.

Section \ref{sec:cyclic PGF HHG} contains our proof that extension of rank 1 PGF groups are hierarchically hyperbolic (Theorem \ref{thm:Intro_PGF_w/cyclic}).  The starting observation here is that the extension of the peripheral subgroups are already known to have a hierarchically hyperbolic structure thanks to work of Hagen, Russell, Spriano, and Sisto \cite{HRSS}. In particular, when $H<\Mod(S)$ is a cyclic group generated by a twist around the curve $v$, then $\Gamma_H$ has a combinatorial hierarchically hyperbolic structure based on the tree $\Psi^{-1}(v)$. 

In the special case where $G$ has a single  peripheral subgroup $H$ generated by the twist around $v$, then every fiber of the tree bundle $\E$ is the tree used to construct the hierarchically hyperbolic structure for a coset of $\Gamma_H$. Thus, the hierarchically hyperbolic structure we build for $\Gamma_G$ is essentially a bundle of the structures on each of these cosets. The bulk of our work is therefore to show that you can move consistently between the structures on different fibers. For this we again turn to the tools in \cite{KLS_Trees,LeiMjSch} to understand the fibration $\Psi \colon \C(\dot S) \to \C(S)$.

\subsection*{Acknowledgments}
The authors would like to thank Mahan Mj for useful discussions. 
Dowdall was partially supported by NSF grants DMS-2005368 and  DMS-2405061.
 Durham was partially supported by NSF grant DMS-1906487 and NSF CAREER DMS-2441982.
Leininger was supported by NSF grants DMS-2305286 and DMS-2639932, and DMS-1928930 while in residence at the Simons Laufer Mathematical Sciences Institute in Berkeley, California, during the Spring 2026 Semester.

\subsection*{AI Disclosure}
No AI was used for any part of this work, other than that imposed by {\em Google} in its search engine.

\section{Background}\label{sec:background}

\subsection{Conventions} \label{sec:conventions}
All metric spaces considered in this paper are assumed to be \emph{geodesic}, meaning any two points may be joined by (the image of) an isometric embedding of an interval in the real line. Given points $x,y$ in the space, we write $[x,y]$ for a choice of  geodesic between. 

For subsets $A,B$ of a metric space $(X,d)$, we write $d(A,B)$ for the infimum of all distances $d(a,b)$ with $a\in A$ and $b\in B$. We also use $\nbhd_D(A)$ to denote the \emph{$D$--neighborhood} of $A$, which is the set of points $x\in X$ with $d(x,A)\le D$.
The set $A$ is \emph{$Q$--quasiconvex} if $[a,a']\subset \nbhd_Q(A)$ for all $a,a'\in A$.
In contrast to $d(A,B)$, the \emph{Hausdorff distance} between $A$ and $B$ is the (possibly infinite) infimum of all $r> 0$ for which $B\subset \nbhd_r(A)$ and $A\subset \nbhd_r(B)$. 

Finally, we will use $\diam(A,B)$ as shorthand for the diameter of the union $A\cup B$.

We sometimes consider set-valued maps and, with an abuse, still use the notation $X\to Y$ to indicate a map defined on $X$ but taking values in subsets of $Y$. 
For example, given a subset $A$ of a metric space $X$, the \emph{closest-point projection to $A$} is the (set-valued) map $\rho_A \colon X\to A$ sending $x\in X$ to the (always nonempty) set of points 
$a\in A$ with $d(x,a)\le d(x,A)+1$.

For the most part we work with \emph{(simplicial) metric graphs}, i.e.~$1$--dimensional simplicial complexes equipped with the path metric in which each edge has length $1$.

Since we generally only care about the vertices, we adapt the following convention for maps: A \emph{coarse graph map} $\phi\colon X\to Y$ between metric graphs is an assignment that sends each vertex of $X$ to a vertex of $Y$. This is then extended to a (discontinuous) set-valued map on the whole of $X$ by declaring that points in the interior of an edge joining vertices $x,x'\in X$ are sent to the set $\{\phi(x),\phi(x')\}$. 

A coarse graph map $\phi\colon X\to Y$ will be called a \emph{$K$--coarsely Lipschitz} if $d(\phi(x),\phi(x'))$ is bounded above by $K d(x,x')+K$ for all vertices $x,x'$ of $X$. If it is additionally bounded below by $\tfrac{1}{K}(d(x,x')-K)$ we moreover call $\phi$ a \emph{$K$--quasi-isometric embedding}.

\subsection{Useful facts about hyperbolic spaces}

In this subsection, we collect some basic facts about hyperbolic spaces that we need in Section \ref{sec:combination theorem}.  The proofs involve standard arguments in Gromov hyperbolic geometry; we provide proof sketches for completeness, leaving the details for the interested reader.
Recall that a metric space $X$ is \emph{$\delta$--hyperbolic} if $[x,y]\subset \nbhd_\delta([y,z]\cup[x,z])$ for all $x,y,z\in X$. We use $\triangle(x,y,z)$ to denote a triangle with geodesic sides $[x,y]$, $[y,z]$, $[z,x]$.

Quasiconvex subsets of hyperbolic spaces admit coarsely well-defined closest point-projections that are moreover coarsely Lipschitz. This is summarized in the following; see \cite[Lemma 11.53]{dructu2018geometric} and \cite[Lemmas 1.102 and 1.105]{kapovich2024trees}.

\begin{lemma}\label{lem:hyp pass close}
    Suppose $X$ is a $\delta$--hyperbolic space and $A \subset X$ is a $Q$--quasiconvex subset.  There exists $\kappa_0 = \kappa_0(\delta, Q)>0$  so that $\rho_A\colon X\to A$ satisfies the following:
    \begin{enumerate}
        \item\label{item:closet-point-coarsely-lip} 
$\rho_A$ is coarsely well-defined and coarsely Lipschitz: $\diam(\rho_A(x),\rho_A(y))\le\kappa_0 d(x,y) + \kappa_0$ for all $x,y\in X$. In particular, $\diam(\rho_A(x))\le \kappa_0$. 
        \item\label{item:pass_near_closest_point} If $x \in X$, $z \in A$, and $x' \in \rho_A(x)$, then every geodesic from $x$ to $z$ passes within $\kappa_0$ of $x'$.
    \end{enumerate}
\end{lemma}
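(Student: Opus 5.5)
The plan is to prove item (\ref{item:pass_near_closest_point}) first, since item (\ref{item:closet-point-coarsely-lip}) follows from it together with a thin-quadrilateral argument. Let $x\in X$, $z\in A$ and $x'\in\rho_A(x)$, so $d(x,x')\le d(x,A)+1$. Consider a geodesic triangle $\triangle(x,x',z)$. Since $A$ is $Q$--quasiconvex, $[x',z]\subset\nbhd_Q(A)$.

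Next take the internal point $p\in[x,x']$ of the triangle, i.e.\ the point at distance $(x'\,|\,z)_x$ from $x$, and the corresponding point $q\in[x',z]$ with $d(p,q)\le 4\delta$ (standard insize bound for $\delta$--thin triangles). Choose $a\in A$ with $d(q,a)\le Q$. Then
\[
d(x,A)\le d(x,a)\le d(x,p)+4\delta+Q .
\]
Combined with $d(x,x')\le d(x,A)+1$, this gives $d(p,x')=d(x,x')-d(x,p)\le 4\delta+Q+1$. The point $p$ also has a partner $p''\in[x,z]$ with $d(p,p'')\le4\delta$. Hence every geodesic $[x,z]$ passes within $8\delta+Q+1$ of $x'$. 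Geodesics are not unique, but the estimate is independent of the chosen geodesic because we used an arbitrary triangle.

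For item (\ref{item:closet-point-coarsely-lip}), first bound $\diam\rho_A(x)$. Let $x',x''\in\rho_A(x)$ and apply the previous step with $z=x''$: the geodesic $[x,x'']$ passes within $C:=8\delta+Q+1$ of $x'$ at some point $w$. Then
\[
d(x,w)\ge d(x,x')-C\ge d(x,A)-C,
\]
so $d(w,x'')=d(x,x'')-d(x,w)\le d(x,A)+1-d(x,A)+C=C+1$. This yields $d(x',x'')\le 2C+1$.

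Now let $x,y\in X$ with $x'\in\rho_A(x)$ and $y'\in\rho_A(y)$. The main obstacle is obtaining a Lipschitz bound rather than a bound depending on $d(x,A)$. I would split into two cases.

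\emph{Case $d(x,y)$ large compared to $C$.} The triangle inequality $d(x',y')\le d(x',x)+d(x,y)+d(y,y')$ is useless here, since $d(x,A)$ may be huge. Instead, apply the previous step twice: $[x,y']$ passes within $C$ of $x'$, and $[y,x']$ passes within $C$ of $y'$. If $d(x',y')$ were much larger than $2d(x,y)+10C$, then in the quadrilateral $x,x',y',y$ the side $[x',y']$ would stay near $A$ while $[x,x']$ and $[y,y']$ leave $A$ at nearly linear rate (by the closest-point property). The side $[x',y']$ would then have to fellow-travel $[x,y]$ along a middle segment, forcing $d(x,y)$ large, a contradiction.

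Concretely, I would use the Gromov product estimate: the defining property of $x'$ gives $(x\,|\,y')_{x'}\le C'$, and similarly $(y\,|\,x')_{y'}\le C'$. Then $4$--point hyperbolicity yields $d(x',y')\le d(x,y)+O(\delta+C')$. Taking $\kappa_0$ as the maximum of the constants obtained finishes the proof. If the coarse Lipschitz constant $\kappa_0$ is allowed, the cruder bound from this last step already suffices, and I would cite \cite{dructu2018geometric} for the precise bookkeeping.
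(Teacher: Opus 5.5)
The paper does not prove this lemma at all; it quotes it from \cite[Lemma 11.53]{dructu2018geometric} and \cite[Lemmas 1.102 and 1.105]{kapovich2024trees}. Your self-contained argument is therefore a different, and acceptable, route.

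Your proof of item (2), using the internal points of $\triangle(x,x',z)$ and the quasiconvexity of $[x',z]$, is correct. It gives the explicit constant $C=8\delta+Q+1$; the precise multiple of $\delta$ in the insize bound does not matter. Your bound $\diam\rho_A(x)\le 2C+1$ is also correct.

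The coarse-Lipschitz step needs tidying. The ``Case $d(x,y)$ large'' heuristic is not a proof, and the complementary case is never treated. Neither is needed, because the Gromov-product argument you give afterwards works for all $x,y$ at once.

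Item (2) gives $w\in[x,y']$ with $d(w,x')\le C$. Hence $d(x,y')\ge d(x,x')+d(x',y')-2C$, i.e.\ $(x|y')_{x'}\le C$. Symmetrically, $d(y,x')\ge d(y,y')+d(y',x')-2C$. Insert these into the four-point inequality
\[ d(x,y')+d(x',y)\le \max\{d(x,x')+d(y,y'),\ d(x,y)+d(x',y')\}+2\delta', \]
where $\delta'$ depends only on $\delta$. If the first term realizes the maximum, you get $d(x',y')\le 2C+\delta'$. If the second does, you get
\[ d(x',y')\le d(x,y)-d(x,x')-d(y,y')+4C+2\delta'\le d(x,y)+4C+2\delta'. \]
So delete the heuristic case split and write out this short computation. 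With $y=x$ it even recovers the diameter bound, making your separate argument for that redundant but harmless.

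Compared with the paper's citation, your route gives explicit constants and the sharper conclusion that $\rho_A$ is $1$--Lipschitz up to an additive constant depending only on $\delta$ and $Q$.
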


Another well-known fact about projections to quasiconvex subsets is that if two points project far away, then any geodesic connecting them passes close to both projections, as stated below.

\begin{lemma}\label{lem:hyperbolic_BGI} 
Let $X$ be $\delta$--hyperbolic, $A\subset X$ a $Q$--quasiconvex subset, and $x,y\in X$. There exists $\tau=\tau(\delta,Q)\ge 1$ so that if $\diam(\rho_A(x),\rho_A(y))> \tau$, then $\rho_A(y)$ and $\rho_A(y)$ both lie in $\nbhd_\tau([x,y])$.
\end{lemma}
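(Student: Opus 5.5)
Fix $x'\in\rho_A(x)$ and $y'\in\rho_A(y)$ with $d(x',y')>\tau$; such a pair exists because $\diam(\rho_A(x),\rho_A(y))>\tau$ and each projection has diameter at most $\kappa_0$, once $\tau\ge 3\kappa_0$. (The statement has a typo: it should read that $\rho_A(x)$ and $\rho_A(y)$ both lie in $\nbhd_\tau([x,y])$.) The plan is to show that the geodesic $[x,y]$ passes within a uniform distance of both $x'$ and $y'$. Since any other point of $\rho_A(x)$ is within $\kappa_0$ of $x'$, and similarly for $y$, this gives the full conclusion after enlarging $\tau$ by $\kappa_0$.

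First, consider the geodesic quadrilateral with vertices $x,x',y',y$. Lemma \ref{lem:hyp pass close}(\ref{item:pass_near_closest_point}), applied with $z=y'\in A$, shows that $[x,y']$ passes within $\kappa_0$ of $x'$. Applied to $y$ with $z=x'$, it shows that $[y,x']$ passes within $\kappa_0$ of $y'$. So it suffices to transfer this closeness from $[x,y']$ to $[x,y]$.

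To do so, use the thin triangle $\triangle(x,y,y')$. Let $p\in[x,y']$ with $d(p,x')\le\kappa_0$. Then $p$ lies within $\delta$ of $[x,y]\cup[y,y']$. If $p$ is $\delta$-close to $[x,y]$, we are done with constant $\kappa_0+\delta$.

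Otherwise $p$ is $\delta$-close to a point $q\in[y,y']$, and we must rule this out when $d(x',y')$ is large. Every point of $[y,y']$ has projection to $A$ within bounded distance of $y'$: by Lemma \ref{lem:hyp pass close}(\ref{item:pass_near_closest_point}), the geodesic from $y$ through $y'$ behaves like a closest-point path, so $\rho_A(q)$ lies near $y'$. One way to see this is that $d(q,A)\ge d(q,y')-O(\delta,Q,\kappa_0)$ along this geodesic. On the other hand, $q$ is within $\kappa_0+\delta$ of $x'\in A$. By the coarse Lipschitz property (\ref{item:closet-point-coarsely-lip}), $\rho_A(q)$ is then within $\kappa_0(\kappa_0+\delta)+2\kappa_0$ of $x'$. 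Hence $d(x',y')$ is bounded by a constant depending only on $\delta$ and $Q$, and choosing $\tau$ larger than that constant excludes this case. The symmetric argument handles $y'$.

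The main obstacle is the claim that points $q\in[y,y']$ project near $y'$. I would prove it directly: if $q'\in\rho_A(q)$, then Lemma \ref{lem:hyp pass close}(\ref{item:pass_near_closest_point}) applied to $q$ and $z=y'$, combined with the minimality of $d(y,A)\approx d(y,y')$, gives $d(q',y')\le C(\delta,Q)$ by a standard triangle-inequality estimate. All constants collected in this way depend only on $\delta$ and $Q$, and $\tau$ is taken as their maximum together with $3\kappa_0$.
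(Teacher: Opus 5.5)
Your argument is correct, but it takes a different route from the paper's.

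\textbf{The paper's route.} The paper never invokes Lemma \ref{lem:hyp pass close}. It works in the geodesic quadrilateral with vertices $x,x',y',y$ and uses only two ingredients: the defining inequality $d(x,x')\le d(x,A)+1$ and $Q$--quasiconvexity of $A$. From these it shows that any point of $[x',y']$ lying $2\delta$--close to $[x,x']$ must be within $\tau'=4\delta+2Q+2$ of $x'$, and symmetrically for $[y,y']$. Once $d(x',y')>2\tau'$, the points of $[x',y']$ at distance $\tau'$ from its endpoints can be near neither of those sides. So $2\delta$--thinness of the quadrilateral places them $2\delta$--close to $[x,y]$.

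\textbf{Your route.} You work with the triangle $\triangle(x,y,y')$ instead. You use item (2) of Lemma \ref{lem:hyp pass close} to put a point of $[x,y']$ within $\kappa_0$ of $x'$, and pass it to $[x,y]\cup[y,y']$ by thinness. You then exclude $[y,y']$ by comparing where points of $[y,y']$ project, using item (1) of Lemma \ref{lem:hyp pass close}.

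\textbf{Comparison.} Your approach is more modular: it needs only the black-box coarse properties of $\rho_A$ and thin triangles. The paper's argument is self-contained and gives explicit constants linear in $\delta$ and $Q$. Yours involve $\kappa_0^2$ through the coarse Lipschitz step, which is harmless.

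\textbf{A simplification.} The step you flag as the main obstacle is immediate. For $q\in[y,y']$ and any $a\in A$, we have $d(q,a)\ge d(y,a)-d(y,q)\ge d(y,y')-1-d(y,q)=d(q,y')-1$. Hence $y'\in\rho_A(q)$ itself, and item (1) of Lemma \ref{lem:hyp pass close} gives $\rho_A(q)\subset\nbhd_{\kappa_0}(y')$.

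You also correctly caught the typo in the statement: it should read $\rho_A(x)$ and $\rho_A(y)$.
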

\begin{proof}
Take any $x'\in \rho_A(x)$ and $y'\in \rho_A(y)$.
Consider any point $p\in [x',y']$, and choose $p_0\in A$ with $d(p,p_0)\le Q$. Suppose $p$ happened to lie within $2\delta$ of some point $w\in [x,x']$. Then by the triangle inequality $d(x,p_0) \le d(x,w) + 2\delta+Q$. Hence by definition of $x'\in \rho_A(x)$ we have
\[d(x,w)+d(w,x') = d(x,x') \le d(x,A)+1 \le d(x,p_0)+1 \le d(x,w)+2\delta+Q+1.\]
This bounds $d(w,x')$ and implies $d(x',p)< \tau' = 4\delta+2Q+2$. Symmetrically $d(p,y')< \tau'$ whenever $p$ lies within $2\delta$ of $[y,y']$. 
Now suppose $\diam(\rho_A(x), \rho_A(y))> 2\tau'$. Then the points $p,q\in [x',y']$ defined by the conditions $d(x',p) = d(q,y')=\tau'$ (exist and) cannot lie within $2\delta$ of $[x,x']$ nor of $[y,y']$. Hence by $2\delta$--thinness of the quadrilateral formed by $x,x',y',y$, we may find $p',q'\in [x,y]$ with $d(p,p'),d(q,q') \le 2\delta$. Therefore $x'$ and $y'$ lie in the $(2\delta+\tau')$--neighborhood of $[x,y]$.
\end{proof}

The following lemma says that the union of two quasiconvex subsets is quasiconvex.

\begin{lemma}\label{lem:union_of_QC_subsets}
    Let $X$ be $\delta$--hyperbolic graph and $Y,Z \subset X$ be two $Q$--quasiconvex subsets. There exists $Q' \geq 0$ depending only on $\delta$, $Q$ and $d(Y,Z)$ so that $X \cup Y$ is $Q'$--quasiconvex.
\end{lemma}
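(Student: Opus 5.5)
To show that $Y \cup Z$ is quasiconvex (the statement's ``$X\cup Y$'' is clearly a typo for $Y\cup Z$), fix $a,b \in Y\cup Z$ and bound the distance from points of $[a,b]$ to $Y\cup Z$. If $a,b$ lie in the same set, say $Y$, then $[a,b]\subset \nbhd_Q(Y)$ directly. So the only case needing work is $a\in Y$, $b\in Z$. Set $D = d(Y,Z)$ and choose $y_0\in Y$, $z_0\in Z$ with $d(y_0,z_0)\le D+1$. Such points exist by the definition of infimum; in a graph one could even take $d = D$ up to half-edges, but $D+1$ suffices.

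Consider the geodesic quadrilateral with vertices $a, y_0, z_0, b$ and sides $[a,y_0]$, $[y_0,z_0]$, $[z_0,b]$, $[b,a]$. Splitting it along the diagonal $[a,z_0]$ and applying $\delta$--thinness twice shows that every point of $[a,b]$ lies within $2\delta$ of $[a,y_0]\cup[y_0,z_0]\cup[z_0,b]$. Now estimate each of these three pieces:
\begin{itemize}
\item points of $[a,y_0]$ lie within $Q$ of $Y$, since $Y$ is $Q$--quasiconvex;
\item points of $[z_0,b]$ lie within $Q$ of $Z$ for the same reason;
\item points of $[y_0,z_0]$ lie within $(D+1)/2$ of $\{y_0,z_0\}\subset Y\cup Z$, since this geodesic has length at most $D+1$.
\end{itemize}
Combining these with the thinness bound gives $[a,b]\subset \nbhd_{Q'}(Y\cup Z)$ with $Q' = 2\delta + \max\{Q,(D+1)/2\}$. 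This $Q'$ depends only on $\delta$, $Q$ and $D$, as required.

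There is essentially no obstacle here. The one point to state carefully is the use of a near-realizing pair $y_0,z_0$ rather than an exact minimizer, since the infimum $d(Y,Z)$ need not be attained for arbitrary subsets. The only consequence is the additive constant $1$ in $Q'$.
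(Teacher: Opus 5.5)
Your proof is correct and is essentially the paper's argument: choose a near-minimizing pair $y_0\in Y$, $z_0\in Z$ with $d(y_0,z_0)\le d(Y,Z)+1$, use $2\delta$--thinness of the quadrilateral with vertices $a,y_0,z_0,b$, and bound the distance to $Y\cup Z$ along each of the three sides. Your constant $Q'=2\delta+\max\{Q,(D+1)/2\}$ is in fact slightly more careful than the paper's stated $\max\{2\delta+Q,(d(Y,Z)+1)/2\}$, which omits the $2\delta$ for points of $[a,b]$ lying near $[y_0,z_0]$.
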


\begin{proof}
Fix $y_0\in Y, z_0\in Z$ within $d(Y,Z)+1$ of each other. Clearly, any geodesic connecting points of $Y$ or points of $Z$ stays $Q$--close to $Y\cup Z$. Consider then $y \in Y$ and $z \in Z$. Any point of $[y,z]$ is either $2\delta$--close to $[y,y_0]$ or $[z,z_0]$, hence $(2\delta+Q)$--close to $Y\cup Z$, or $2\delta$--close to $[y_0,z_0]$, hence $(d(Y,Z)+1)/2$--close to $Y\cup Z$. Therefore, we can take $Q'=\max\{2\delta+Q,(d(Y,Z)+1)/2\}$.
\end{proof}

Recall that a \emph{$K$--quasigeodesic} in $X$ is a (possibly set-valued) map $\gamma\colon I\to X$ of an interval $I\subset \mathbb{R}$ so that $\diam(\gamma(t),\gamma(s))$ is bounded above by $K\left\vert t - s\right\vert+K$ and below by $\tfrac{1}{K}\left(\left\vert t-s\right\vert -K\right)$ for all $s,t\in I$. It is a fundamental feature of hyperbolicity that quasigeodesics stay close to geodesics:

\begin{lemma}[Morse lemma]\label{lem:morse}
Suppose $X$ is $\delta$--hyperbolic and $\gamma\colon [a,b]\to X$ a $K$--quasigeodesic. There exists $M = M(\delta,K)$ so that $\gamma([a,b])$ has Hausdorff distance at most $M$ from any $X$--geodesic $[\gamma(a),\gamma(b)]$.
\end{lemma}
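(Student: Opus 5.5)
We follow the standard argument (as in Bridson--Haefliger, III.H.1.7), in three steps: tame $\gamma$, show the geodesic lies near $\gamma$ using the exponential divergence of paths in $\delta$--hyperbolic spaces, then obtain the reverse containment by a connectedness argument.

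\textbf{Step 1: taming.} Since $\gamma$ may be set-valued and discontinuous, first replace it by a tame path. Choose points $x_i\in\gamma(t_i)$ for $t_i=a+i$ with $i=0,1,\dots,\lfloor b-a\rfloor$, and also $x_{\mathrm{end}}\in\gamma(b)$. Concatenate geodesic segments $[x_i,x_{i+1}]$ to form a continuous path $\gamma'$. Each segment has length at most $2K$, so $\gamma'$ has Hausdorff distance at most $2K+K$ from $\gamma([a,b])$. Moreover $\gamma'$ satisfies two properties, with constants $K_1$ depending only on $K$: it is a $K_1$--quasigeodesic, and the length of $\gamma'$ between any two of its points $p,q$ is at most $K_1 d(p,q)+K_1$. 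It therefore suffices to prove the lemma for $\gamma'$, with endpoints $x=\gamma'(a)$ and $y=\gamma'(b)$.

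\textbf{Step 2: the geodesic stays near $\gamma'$.} We use the standard fact that in a $\delta$--hyperbolic geodesic space, if $c$ is a rectifiable path from $x$ to $y$ and $p\in[x,y]$, then
\[
d(p,c)\le \delta\,\big|\log_2 \ell(c)\big|+1.
\]
This is proved by repeatedly bisecting $c$ and using thinness of the resulting triangles. Let $D=\sup_{p\in[x,y]}d(p,\gamma')$, attained at some $p$. Take points $y',z'\in[x,y]$ on either side of $p$ at distance $2D$ from $p$ (or the endpoints, if closer). Choose $y'',z''\in\gamma'$ within $D$ of $y',z'$. Form the path
\[
[y',y'']\ \cup\ \gamma'|_{[y'',z'']}\ \cup\ [z'',z'].
\]
Its length is at most $2D+K_1(6D)+K_1$, since $d(y'',z'')\le 6D$ and we use the length bound from Step 1. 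The point $p$ is at distance at least $D$ from this path, because $p$ is at distance at least $D$ from $\gamma'$ and at distance at least $D$ from the two connecting segments. Applying the logarithmic bound gives
\[
D\le \delta\log_2\big((6K_1+2)D+K_1\big)+1,
\]
which forces $D\le D_0(\delta,K)$.

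\textbf{Step 3: $\gamma'$ stays near the geodesic.} Suppose $\gamma'$ leaves $\nbhd_{D_0}([x,y])$, and consider a maximal subinterval $[s,s']$ on which it lies outside. Every point of $[x,y]$ is within $D_0$ of either $\gamma'([a,s])$ or $\gamma'([s',b])$, and each of these two sets is nonempty and closed. By connectedness of $[x,y]$, some point $w\in[x,y]$ is within $D_0$ of both $\gamma'(u)$ with $u\le s$ and $\gamma'(u')$ with $u'\ge s'$. Then $d(\gamma'(u),\gamma'(u'))\le 2D_0$, so the quasigeodesic property bounds $|u-u'|$, and hence bounds the length of $\gamma'|_{[u,u']}$ by a constant depending only on $\delta$ and $K$. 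Every point of $\gamma'([s,s'])$ is therefore within $D_0$ plus that constant of $[x,y]$. Combining with Step 1 gives $M(\delta,K)$.

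The main obstacle is Step 2. We must handle the logarithmic divergence estimate carefully, and we must arrange the detour path so that $p$ genuinely stays at distance $\ge D$ from it; this is exactly why we choose $y',z'$ at distance $2D$ from $p$. The only other care needed is in the taming of Step 1, since our quasigeodesics are allowed to be set-valued.
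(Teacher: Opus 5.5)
The paper does not prove this lemma; it quotes it as a standard fact (``a fundamental feature of hyperbolicity''), so there is no internal argument to compare against. Your proof is the textbook Bridson--Haefliger argument (III.H.1.7), correctly adapted to the paper's set-valued quasigeodesics: taming, then the logarithmic divergence estimate with the detour through points at distance $2D$ from the worst point, then the connectedness argument for the reverse containment.

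Three small points are worth making explicit:
\begin{enumerate}
\item When $y'$ (or $z'$) is an endpoint of $[x,y]$, you must take $y''=y'$. That endpoint already lies on $\gamma'$, whereas an arbitrary $y''$ within $D$ of it could give a connecting segment passing closer than $D$ to $p$.
\item You should choose $x_0\in\gamma(a)$ and $x_{\mathrm{end}}\in\gamma(b)$ to be the endpoints of the given geodesic, since $\gamma(a)$ and $\gamma(b)$ may be sets of diameter up to $K$.
\item The paper's definition bounds $\diam(\gamma(t)\cup\gamma(s))$ rather than distances between chosen points. So the lower quasigeodesic bound for the $x_i$ holds only after subtracting $2K$, which is harmless for your constant $K_1$.
\end{enumerate}
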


One important consequence of this lemma is that hyperbolicity is a quasi-isometry invariant: If metric graphs are quasi-isometric to each other, then one is hyperbolic if and only if the other is. Another  consequence is that quasi-convexity is preserved under quasi-isometric embeddings:

\begin{lemma}\label{lem:quasi-convex_perserved}
Let $\phi\colon X\to Y$ be coarse graph map. Assume $\phi$ is a $K$--quasi-isometric embedding, that $Y$ is $\delta$--hyperbolic, and that $A\subset X$ is $Q$--quasiconvex. Then there exists a constant $Q' = Q'(\delta,K,Q)$ such that $\phi(A)$ is $Q'$--quasiconvex in $Y$.
\end{lemma}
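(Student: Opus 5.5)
Fix $a,a'\in A$; we want to show that every $Y$--geodesic $[\phi(a),\phi(a')]$ lies in a uniform neighborhood of $\phi(A)$. Since $A$ is $Q$--quasiconvex, a geodesic $[a,a']$ in $X$ lies in $\nbhd_Q(A)$. Because $X$ is a metric graph, we may take $[a,a']$ to pass through vertices $a=x_0,x_1,\dots,x_n=a'$ with $d(x_i,x_{i+1})=1$ (if $a,a'$ are not vertices, first move each to a vertex at distance at most $1/2$, at the cost of an additive constant). For each $i$ choose $a_i\in A$ with $d(x_i,a_i)\le Q+1$; when $a_i$ is not a vertex, replace it by an adjacent vertex and enlarge the constant by $1$. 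The plan is to compare the geodesic in $Y$ with the image $\phi([a,a'])$ via the Morse lemma (Lemma \ref{lem:morse}), and then move from $\phi(x_i)$ to $\phi(a_i)\in\phi(A)$ using the coarse Lipschitz bound.

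First, the map $\gamma\colon [0,n]\to Y$ defined by $\gamma(i)=\phi(x_i)$, extended as a set-valued map on the intervals $(i,i+1)$ by $\{\phi(x_i),\phi(x_{i+1})\}$, is a $K'$--quasigeodesic with $K'$ depending only on $K$. The upper bound follows directly from $\phi$ being $K$--coarsely Lipschitz. The lower bound follows from the quasi-isometric embedding inequality together with the fact that $t\mapsto x_t$ is a geodesic in $X$, so that $d(x_s,x_t)=|s-t|$. The Morse lemma then gives $M=M(\delta,K')$ such that $[\phi(a),\phi(a')]$ lies in $\nbhd_M(\gamma([0,n]))=\nbhd_M(\{\phi(x_i)\})$. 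Note that $\gamma$ runs between $\phi(x_0)$ and $\phi(x_n)$; when $a,a'$ are non-vertex points, their images are the endpoint sets, and the geodesic between the correct points differs from $[\phi(x_0),\phi(x_n)]$ only by a bounded amount, which hyperbolicity absorbs via thin quadrilaterals.

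Finally, $d(\phi(x_i),\phi(a_i))\le K(Q+2)+K$ by the Lipschitz bound. Hence every point of $[\phi(a),\phi(a')]$ lies within $Q'=M+K(Q+2)+K+2\delta+O(1)$ of $\phi(A)$, and this constant depends only on $\delta,K,Q$. No step is a genuine obstacle. The only mildly fiddly point is bookkeeping the set-valued conventions for coarse graph maps, namely that $\phi(A)$ and $\gamma$ may be sets at non-vertex points, and checking that the quasigeodesic constant is independent of $n$.
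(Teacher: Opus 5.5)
Your proposal is correct and follows the paper's proof. Like the paper, you push an $X$--geodesic between points of $A$ forward to a uniform quasigeodesic lying in the $(KQ+K)$--neighborhood of $\phi(A)$ and then apply the Morse lemma. Your extra bookkeeping for non-vertex points is more careful than the paper's version but does not change the argument.
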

\begin{proof}
Consider $a'_1,a'_2\in \phi(A)$ and choose vertices $a_i\in A$ with $\phi(a_i) = a'_i$. Choose also a geodesic $\gamma' = [a'_1,a'_2]$ in $Y$.  Let $\gamma = [a_1,a_2]$ be a geodesic in $X$. By quasiconvexity, $\gamma$ is contained in $\nbhd_Q(A)$ and hence $\phi(\gamma)$ is contained in the $\nbhd_{KQ+K}(\phi(A))$. Since $\phi(\gamma)$ is a $K$--quasigeodesic, $\gamma'$ and $\phi(\gamma)$ also have Hausdorff distance at most $M = M(\delta,K)$ by Lemma \ref{lem:morse}. Therefore $[a'_1,a'_2]$ is contained in the $KQ+K+M$ neighborhood of $\phi(A)$, as required.
\end{proof}

\begin{lemma}\label{lem:coarse center}
Let $X$ be a $\delta$--hyperbolic geodesic space.  For every $\epsilon>0$ there exists $\delta_1=\delta_1(\delta, \epsilon)>0$ so that the following holds.  Let $a,b,c \in X$ and $[a,b],[a,c],[b,c]$ be geodesics between them.  If $p \in [a,b]$ is so that $d_X(p, [b,c])< \epsilon$ and $d_X(p, [a,c])<\epsilon,$ then the projection $\rho_{[a,b]}(c)$ lies in $\nbhd_{\delta_1}(p)$.
\end{lemma}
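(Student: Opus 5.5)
We reduce to the standard picture of a geodesic triangle as a coarse tripod, taking $c' \in \rho_{[a,b]}(c)$ and comparing it with the internal point of $[a,b]$. Since $[a,b]$ is $0$--quasiconvex, Lemma \ref{lem:hyp pass close} applies with $A=[a,b]$ and a constant $\kappa_0=\kappa_0(\delta,0)$. By item \eqref{item:pass_near_closest_point}, the geodesics $[c,a]$ and $[c,b]$ both pass within $\kappa_0$ of $c'$. So there are points $u\in[a,c]$ and $v\in[b,c]$ with $d(u,c'),d(v,c')\le\kappa_0$.

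Next we locate $p$. By hypothesis there are $q\in[a,c]$ and $r\in[b,c]$ with $d(p,q),d(p,r)<\epsilon$. The point is that a point of $[a,b]$ that is close to both other sides is near the internal point. We make this quantitative with Gromov products. Assume without loss of generality that $c'$ lies between $a$ and $p$ on $[a,b]$; the other case is symmetric with the roles of $a,b$ exchanged. We estimate $d(a,p)$ in two ways:
\[
d(a,p) \approx (b\cdot c)_a \approx d(a,c')
\]
up to additive errors depending on $\delta,\epsilon,\kappa_0$. Then $d(p,c')=|d(a,p)-d(a,c')|$ is bounded, because both points lie on the geodesic $[a,b]$.

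For the first estimate, we use the triangle inequality with $q$ and $r$:
\[
d(a,p)+d(p,b) = d(a,b), \qquad d(a,q)+d(q,c)=d(a,c), \qquad d(b,r)+d(r,c)=d(b,c).
\]
Combining these with $|d(a,p)-d(a,q)|<\epsilon$, $|d(b,p)-d(b,r)|<\epsilon$, and $|d(q,c)-d(r,c)|<2\epsilon$ gives $|d(a,p)-(b\cdot c)_a|\le 3\epsilon$. The same computation applied to $c'$ with $u,v$ gives $|d(a,c')-(b\cdot c)_a|\le 3\kappa_0$. Hence $d(p,c')\le 3\epsilon+3\kappa_0$.

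Finally, $\rho_{[a,b]}(c)$ has diameter at most $\kappa_0$ by Lemma \ref{lem:hyp pass close}\eqref{item:closet-point-coarsely-lip}. So we may take $\delta_1=3\epsilon+4\kappa_0$.

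The only step needing care is the bookkeeping of the Gromov product estimates. That step is elementary, so I expect no real obstacle. Hyperbolicity enters only through the constant $\kappa_0$ of Lemma \ref{lem:hyp pass close}. Note also that this computation never uses which side of $p$ the point $c'$ lies on, so the case split is actually unnecessary.
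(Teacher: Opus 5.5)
Your argument is correct, but it follows a different route from the paper. The paper works directly from the definition of $\rho_{[a,b]}$. It takes $q\in\rho_{[a,b]}(c)$, say with $q\in[a,p]$, and picks $p'\in[a,c]$ within $\epsilon$ of $p$. Thinness of the triangle $\triangle(a,p,p')$, which has a side of length at most $\epsilon$, gives $q'\in[a,c]$ with $d(q,q')\le\delta+\epsilon$. Comparing $d(c,q')=d(c,p')+d(p',q')$ with the near-minimality $d(c,q)\le d(c,p)+1$ then yields $d(p,q)\le 4\epsilon+2\delta+1$.

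You instead use Lemma~\ref{lem:hyp pass close}\eqref{item:pass_near_closest_point} to show that every projection point $c'$ is itself $\kappa_0$--close to both $[a,c]$ and $[b,c]$. A pure triangle-inequality computation then shows that any point of $[a,b]$ close to both other sides lies at distance $(b\cdot c)_a$ from $a$, up to bounded error. Your computation in fact gives errors $2\epsilon$ and $2\kappa_0$ rather than $3\epsilon$ and $3\kappa_0$.

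Your route is symmetric, needs no case split, and isolates the conceptual point that coarse centers on a fixed side are coarsely unique. The price is that all the hyperbolic geometry is hidden in the cited constant $\kappa_0$, so your $\delta_1$ is less explicit than the paper's self-contained bound. In each case the paper uses only the closeness hypothesis for the side of $p$ on which $q$ lies, whereas you use both at once.

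Two cosmetic corrections. First, in a general geodesic space $[a,b]$ need not be $0$--quasiconvex, since geodesics between its points need not be unique. It is $\delta$--quasiconvex by thinness of bigons, so take $\kappa_0=\kappa_0(\delta,\delta)$, which still depends only on $\delta$. Second, your bound on $d(p,c')$ holds for every $c'\in\rho_{[a,b]}(c)$, so the final appeal to the diameter of the projection is unnecessary.
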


\begin{proof}
Consider any projection point $q\in \rho_{[a,b]}(c)$. Up to swapping the roles of $a$ and $b$ we may assume $q\in [a,p]\subset [a,b]$. Choose $p'\in [a,c]$ with $d(p,p')\le \epsilon$. Since the triangle $\triangle(a,p,p')$ has a side $[p,p']$ of length at most $\epsilon$, thinness implies there is a point $q'\in [a,p']\subset[a,c]$ with $d(q,q')\le \delta+\epsilon$. Thus by the triangle inequality we have $d(q',p') \ge d(p,q) - \delta-2\epsilon$. This in turn implies
\[ d(c,q) +\epsilon+\delta\ge d(c,q')  = d(c,p') + d(p',q') \ge d(c,p) + d(p,q) -\delta-3\epsilon.\]
Using the fact that $q$ is a closest point to $c$ along $[a,b]$ also gives $d(c,q) \le d(c,p)+1$. Combining these inequalities and cancelling the $d(c,p)$ terms gives $d(p,q) \le 4\epsilon+2\delta+1$, as desired.
\end{proof}

\begin{lemma}\label{lem:projections_and_thin_triangles}
    Let $X$ be a $\delta$--hyperbolic graph and $x,y,z \in X$. Let $z_{x,y}$ and $y_{x,z}$ lie in the closest point projections of $z$ and $y$ onto $[x,y]$ and $[x,z]$ respectively.  There exists $\delta_2 = \delta_2(\delta)>0$ so that the following hold:
    \begin{enumerate}
        \item $d(z_{x,y},y_{x,z}) \leq \delta_2$.
        \item  $[x,y_{x,z}] \subset N_{\delta_2}([x,z_{x,y}])$.
        \item The closest point projection of $[z,y_{x,z}]$ to $[x,y]$ is contained in the $\delta_2$--neighborhood of $z_{x,y}$.
    \end{enumerate}
\end{lemma}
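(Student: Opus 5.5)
The plan is to reduce everything to Lemma~\ref{lem:coarse center}, applied to the geodesic triangle $\triangle(x,y,z)$ with sides $[x,y]$, $[x,z]$, $[y,z]$.

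\emph{Step 1: internal points.} By $\delta$--thinness there are points $p\in[x,y]$ and $p'\in[x,z]$ with $d(p,p')\le \delta$, chosen so that $p$ is within $2\delta$ of both $[x,z]$ and $[y,z]$. For instance, take $p$ on $[x,y]$ at distance equal to the Gromov product $(y|z)_x$ from $x$; the usual insize estimate puts it within $4\delta$ of the other two sides. Applying Lemma~\ref{lem:coarse center} with $\epsilon=4\delta+1$ to the triangle with $[a,b]=[x,y]$ and $c=z$ gives $d(z_{x,y},p)\le\delta_1$. Symmetrically, with $[a,b]=[x,z]$ and $c=y$, we get $d(y_{x,z},p')\le\delta_1$. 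Hence $d(z_{x,y},y_{x,z})\le 2\delta_1+\delta$, which proves (1).

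\emph{Step 2: item (2).} Since $z_{x,y}$ and $y_{x,z}$ are uniformly close, consider the triangle $\triangle(x,z_{x,y},y_{x,z})$. Its sides $[x,z_{x,y}]\subset[x,y]$ and $[x,y_{x,z}]\subset[x,z]$ are subsegments, and its third side has length at most $2\delta_1+\delta$. Thinness then places every point of $[x,y_{x,z}]$ within $\delta+2\delta_1+\delta$ of $[x,z_{x,y}]$.

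\emph{Step 3: item (3).} Take $w\in[z,y_{x,z}]$ and its projection $w'$ to $[x,y]$. The segment $[y_{x,z},z]\subset [x,z]$ is a subsegment of a geodesic. I expect this to be the main point requiring care, and the approach is as follows. Use Lemma~\ref{lem:hyp pass close}(\ref{item:closet-point-coarsely-lip}), which says projections are coarsely Lipschitz with constant $\kappa_0(\delta,0)$, but do not rely on Lipschitz bounds alone, since they do not bound things uniformly. Instead, argue directly. Any geodesic from $w$ to a point of $[x,y]$ far from $z_{x,y}$ must pass near $z_{x,y}$.

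Concretely, I apply Lemma~\ref{lem:hyperbolic_BGI} to $A=[x,y]$ and the points $w,z$. Suppose $\diam(\rho_A(w),\rho_A(z))>\tau$. Then $[w,z]\subset[y_{x,z},z]$ passes within $\tau$ of both $w'$ and $z_{x,y}$. In particular, some point $u\in[y_{x,z},z]$ is within $\tau$ of $w'\in[x,y]$.

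Now points of $[y_{x,z},z]$ near $[x,y]$ must be near $y_{x,z}$. Indeed, if $u$ is within $\tau$ of $[x,y]$, then by the argument in the proof of Lemma~\ref{lem:hyperbolic_BGI} (the closest-point estimate, applied to $[x,z]$ and the point $y$), $d(u,y_{x,z})$ is bounded by a constant depending on $\tau$ and $\delta$. Thus $w'$ is within a bounded distance of $y_{x,z}$, and hence of $z_{x,y}$ by (1).

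Otherwise $\diam(\rho_A(w),\rho_A(z))\le\tau$, so $w'$ is within $\tau$ of $z_{x,y}$ directly. Taking $\delta_2$ to be the maximum of all these constants completes the proof.
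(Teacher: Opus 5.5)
Your proof is correct in outline and uses the same lemmas as the paper. Steps 1 and 3 are organized differently, and one justification in Step 3 needs repair.

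\textbf{Item (1).} The paper does not first locate a coarse center. It uses Lemma~\ref{lem:hyp pass close}(\ref{item:pass_near_closest_point}) to see that $[z,x]$ and $[z,y]$ both pass within $\kappa_0$ of $z_{x,y}$. So a point $y'\in[x,z]$ near $z_{x,y}$ is near all three sides, and one application of Lemma~\ref{lem:coarse center} gives $d(z_{x,y},y_{x,z})\le\kappa_0+\delta_1$. You instead get a coarse center $p$ from thinness and apply Lemma~\ref{lem:coarse center} twice. This is equally valid. With the Gromov-product choice of $p$ you only get $d(p,p')\le 4\delta$ rather than $\delta$, but this only changes the constant. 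Your Step 2 matches the paper.

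\textbf{Item (3).} The paper uses (1) immediately. Points of $[y_{x,z},z]$ within $\tau$ of $z_{x,y}$ lie in an initial segment of length at most $\tau+\delta_2$, whose projection is controlled by Lemma~\ref{lem:hyp pass close}(\ref{item:closet-point-coarsely-lip}). The remaining terminal segment never comes $\tau$--close to $z_{x,y}\in\rho_A(z)$, where $A=[x,y]$. So by Lemma~\ref{lem:hyperbolic_BGI} its projections stay within $\tau$ of $z_{x,y}$.

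Your route instead needs the auxiliary claim that points of $[y_{x,z},z]$ within $\tau$ of $[x,y]$ are close to $y_{x,z}$. The claim is true, but the estimate you cite from the proof of Lemma~\ref{lem:hyperbolic_BGI} only handles points of $[x,z]$ that are close to $[y,y_{x,z}]$, not to $[x,y]$. You need one more step. Suppose $d(u,q)\le\tau$ with $q\in[x,y]$. Slimness of $\triangle(x,y,y_{x,z})$ gives two cases:
\begin{itemize}
\item If $q$ is within $\delta$ of $[y,y_{x,z}]$, the closest-point estimate gives $d(u,y_{x,z})\le 2\tau+2\delta+1$.
\item If $q$ is within $\delta$ of $[x,y_{x,z}]\subset[x,z]$, then $u$ and that nearby point lie on $[x,z]$ on opposite sides of $y_{x,z}$, so $d(u,y_{x,z})\le\tau+\delta$.
\end{itemize}

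More simply, you can skip the claim by using the other half of the conclusion of Lemma~\ref{lem:hyperbolic_BGI}. It also gives that $z_{x,y}$ is $\tau$--close to some $u'\in[w,z]$, so $d(u',y_{x,z})\le\tau+\delta_2$ by (1). Since $w$ lies between $y_{x,z}$ and $u'$ on $[x,z]$, this bounds $d(w,z_{x,y})$. Coarse Lipschitzness of $\rho_A$ then finishes, which is essentially the paper's argument.
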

\begin{proof}
For (1), by Lemma \ref{lem:hyp pass close}(\ref{item:closet-point-coarsely-lip}) there are points $y'\in [z,x]$ and $x'\in [z,y]$ within $\kappa_0$ of $z_{x,y}$. Observe that $y'$ then lies within $2\kappa_0$ of both $[x,y]$ and $[z,y]$; hence Lemma \ref{lem:coarse center} bounds  $d(y',y_{x,z})$ by a constant $\delta_1$ depending on $\delta$ and $2\kappa_0$, and thus only on $\delta$. It follows that $d(z_{x,y},y_{x,z})\le \kappa_0+\delta_1$.

Item (2) follows from the slim triangle $\triangle(x,y_{x,y},z_{x,y})$ having a side $[z_{x,y},y_{x,z}]$ of bounded length.

Towards item (3), let $\tau$ be as in Lemma \ref{lem:hyperbolic_BGI}. Because of item (1) we can regard $[z,y_{x,z}]$ as a concatenation of a subgeodesic (containing $y_{x,y}$) of length bounded in terms of $\delta_2$ and $\kappa$, and a subgeodesic that does not pass $\tau$--close to $z_{x,y}$. The first subgeodesic has uniformly bounded projection to $[x,y]$ by item \eqref{item:closet-point-coarsely-lip} of Lemma \ref{lem:hyp pass close}, and the second by Lemma \ref{lem:hyperbolic_BGI}.
\end{proof}

Lemma \ref{lem:coarse center} shows that each triangle $\triangle(x,y,z)$ admits a point $p$ that lies within $\delta_2$ of each of the three geodesics; we call such a point $p$ a \emph{coarse center} of the triangle $\triangle(x,y,z)$. More generally:
\begin{lemma}\label{lem:thin_rectangles}
 Let $X$ be $\delta$--hyperbolic graph and $C \geq 0$. There exists $\nu = \nu(C, \delta)>0$ so that for all  vertices $x,y,z,q \in X$, if $d(z,q)\leq C$, then there exist vertices $w \in [x,y]$, $x'\in [x,q]$ and $y' \in [y,z]$  so that any two are $\nu$--apart.    
\end{lemma}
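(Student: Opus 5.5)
The plan is to reduce to the coarse center of a genuine geodesic triangle by replacing $q$ with $z$. Consider the triangle $\triangle(x,y,z)$ with sides $[x,y]$, $[y,z]$ and a chosen geodesic $[x,z]$. By Lemma~\ref{lem:coarse center} and Lemma~\ref{lem:projections_and_thin_triangles}, this triangle has a coarse center: a point $w\in[x,y]$ (for instance a closest-point projection of $z$ to $[x,y]$) together with points $x''\in[x,z]$ and $y'\in[y,z]$, all pairwise within a constant $\delta_2'=\delta_2'(\delta)$. Concretely, take $w=z_{x,y}$. By Lemma~\ref{lem:hyp pass close}(\ref{item:pass_near_closest_point}), applied with $A=[x,y]$ (which is $0$--quasiconvex), the geodesics $[z,x]$ and $[z,y]$ both pass within $\kappa_0(\delta,0)$ of $w$. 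This produces $x''\in[x,z]$ and $y'\in[y,z]$ within $\kappa_0$ of $w$, hence within $2\kappa_0$ of each other.

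The second step transfers $x''$ from $[x,z]$ to $[x,q]$. Consider the triangle with vertices $x,z,q$ and sides $[x,z]$, $[x,q]$, $[z,q]$, where $[z,q]$ has length at most $C$. By $\delta$--thinness, $x''$ lies within $\delta$ of $[x,q]\cup[z,q]$. If it is $\delta$--close to some point $x'\in[x,q]$, we are done with that point. Otherwise $x''$ is within $\delta$ of $[z,q]$, so $d(x'',q)\le \delta+C$, and we take $x'=q\in[x,q]$. In either case $d(x'',x')\le \delta+C$.

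Finally, the triangle inequality gives pairwise bounds among $w,x',y'$:
\[
d(w,y')\le \kappa_0,\qquad d(w,x')\le \kappa_0+\delta+C,\qquad d(x',y')\le 2\kappa_0+\delta+C .
\]
So $\nu=2\kappa_0+\delta+C+1$ works. Since $X$ is a graph and the lemma asks for vertices, each point can be rounded to an adjacent vertex on the same geodesic; an edge has length $1$, so this costs at most $1$ per point, and we enlarge $\nu$ by $2$ to absorb it.

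There is no serious obstacle here. The only points needing care are the case split in the transfer step (whether $x''$ is close to $[x,q]$ or to the short side $[z,q]$) and the bookkeeping needed to make the constants depend only on $C$ and $\delta$.
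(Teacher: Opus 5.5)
Your proof is correct and follows the paper's argument. You take a coarse center of $\triangle(x,y,z)$, then use thinness of the triangle with vertices $x,z,q$ together with $d(z,q)\le C$ to move the point of $[x,z]$ onto $[x,q]$, and finish with the triangle inequality; the differences are cosmetic, since you build the coarse center directly from Lemma~\ref{lem:hyp pass close}(\ref{item:pass_near_closest_point}) rather than quoting it and you handle the rounding to vertices explicitly, which the paper leaves implicit.
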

\begin{proof}
Let $p$ be a coarse center for $\triangle(x,y,z)$; hence there are vertices $w\in [x,y]$, $y'\in [y,z]$, and $x''\in [x,z]$ each within $\delta_2$ of $p$. By hyperbolicity $x''$ is within $\delta$ of a point $m\in [x,q]\cup [q,z]$ which, since $d(z,q)\le C$, is in turn within $C$ of a point $x'\in [x,q]$. By the triangle inequality, we see that the distance between any pair of $w,x',y'$ is at most $2\delta_2+\delta+C$.
\end{proof}

\subsection{Hyperbolicity criteria}
We state the three hyperbolicity criteria that we will use in this paper. To set some terminology, we say that a subset $A$ of a graph is \emph{$D$--coarsely connected} if it is contained in a connected subset $B$ with $B\subset N_D(A)$.

Our first criterion is the commonly used ``guessing geodesics'' technique in $\delta$--hyperbolic geometry; see \cite[Proposition 3.1]{bowditch2006intersection}, \cite[Theorem 3.15]{masur2013geometry}, \cite[Proposition 3.5]{hamenstadt2007geometry}.  Note that the second conclusion is not explicitly stated in \cite[Proposition 3.5]{hamenstadt2007geometry}, but appears as part of the proof.

\begin{proposition}[Guessing geodesics]
\label{prop:guessing}
For all $D\geq 1$ there exists $\delta_{guess}\geq 1$ with the following property.
Let $X$ be a connected graph, and for all vertices $x,y$ of $X$ let $\varsigma(x,y)$ be a fixed $D$--coarsely connected subset containing $x$ and $y$, with $\varsigma(x,y)=\varsigma(y,x)$. Suppose that
\begin{enumerate}
    \item If $x,y$ are adjacent then the diameter of $\varsigma(x,y)$ is at most $D$.
    \item for all vertices $x,y,z$ of $X$ we have $\varsigma(x,y)\subset \N_D(\varsigma(x,z)\cup\varsigma(z,y))$.
\end{enumerate}
Then $X$ is $\delta_{guess}$--hyperbolic and for each $x,y$, the coarse path $\varsigma(x,y)$ is within uniform Hausdorff distance of any geodesic $[x,y]$.
\end{proposition}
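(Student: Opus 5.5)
The plan follows Bowditch's argument: first get a logarithmic bound on how far $\varsigma(x,y)$ strays from a geodesic, then upgrade it to a uniform bound in both directions, and finally transfer condition (2) to geodesic triangles. Throughout, $c,c',\dots$ denote constants depending only on $D$.

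\emph{Step 1 (logarithmic control).} Fix vertices $x,y$ and a geodesic $[x,y]$ with consecutive vertices $x=x_0,x_1,\dots,x_n=y$. I would bisect repeatedly. By (2), $\varsigma(x_0,x_n)\subset \nbhd_D(\varsigma(x_0,x_m)\cup\varsigma(x_m,x_n))$ with $m=\lfloor n/2\rfloor$. Iterating about $\log_2 n$ times shows that $\varsigma(x,y)$ lies in the $D(\log_2 n+1)$--neighborhood of $\bigcup_i\varsigma(x_i,x_{i+1})$. By (1), each piece $\varsigma(x_i,x_{i+1})$ contains $x_i$ and has diameter at most $D$, so
\[
\varsigma(x,y)\subset \nbhd_{c\log_2(n+2)+c}([x,y]).
\]
The same argument applies to any path of length $n$, and in particular to any subsegment of $[x,y]$.

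\emph{Step 2 (geodesics stay uniformly close to $\varsigma$).} Let $p\in[x,y]$ maximize $r=d(p,\varsigma(x,y))$; since $x,y\in\varsigma(x,y)$, the point $p$ is interior. Let $a,b$ be the points of $[x,y]$ at distance $2r$ from $p$ on either side, or the endpoints if these are closer.
\begin{itemize}
\item By Step 1 applied to the subsegment $[a,b]$, $\varsigma(a,b)\subset \nbhd_{c\log(4r+2)+c}([a,b])$.
\item Since $\varsigma(a,b)$ is $D$--coarsely connected and contains $a$ and $b$, some point $q\in\varsigma(a,b)$ lies within $c\log(4r+2)+c'$ of $p$. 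To see this, walk along the connected set $B\supset\varsigma(a,b)$ and track which side of $p$ the nearby points of $[a,b]$ lie on.
\item Apply (2) twice: $\varsigma(a,b)\subset\nbhd_{2D}(\varsigma(a,x)\cup\varsigma(x,y)\cup\varsigma(y,b))$.
\end{itemize}
Now check each of the three sets:
\begin{itemize}
\item Points of $\varsigma(x,y)$ are at distance at least $r$ from $p$.
\item If $a\neq x$, then $\varsigma(a,x)$ lies within $c\log(\cdot)$ of $[x,a]$ by Step 1, and $[x,a]$ has distance $2r$ from $p$. If $a=x$, then $\varsigma(a,x)$ is a bounded set containing $x$, and it is controlled by (1) together with iterating (2).
\item The same holds for $\varsigma(y,b)$.
\end{itemize}
Thus $q$ is within $2D$ of a point whose distance to $p$ is at least roughly $r-c\log r$, giving
\[
r\le c\log(4r+2)+c'' .
\]
This bounds $r$ uniformly, so $[x,y]\subset\nbhd_{R}(\varsigma(x,y))$ for some $R=R(D)$.

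\emph{Step 3 (reverse inclusion and hyperbolicity).} To show $\varsigma(x,y)\subset\nbhd_{R'}([x,y])$, I would run a similar maximal-distance argument from the other side. Take a point $s\in\varsigma(x,y)$ far from $[x,y]$. Use coarse connectivity to find a subchain of $B$ around $s$ leaving and re-entering the $R$--neighborhood of $[x,y]$ at points $u,v$ near $u',v'\in[x,y]$. Apply (2) to split $\varsigma(x,y)$ through $u',v'$, and apply Step 1 to $[u',v']$. I would first try comparing the resulting logarithmic bound against the length of the excursion, mimicking Step 2.

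This gives uniform Hausdorff closeness between $\varsigma(x,y)$ and any geodesic $[x,y]$. Thin triangles then follow immediately from (2):
\[
[x,y]\subset\nbhd_R(\varsigma(x,y))\subset\nbhd_{R+D}(\varsigma(x,z)\cup\varsigma(z,y))\subset \nbhd_{R+D+R'}([x,z]\cup[z,y]),
\]
which yields $\delta_{guess}$.

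The main obstacle is Step 2, specifically producing the point $q$ of $\varsigma(a,b)$ near $p$ from mere coarse connectivity. The second difficulty is the handling of the boundary cases $a=x$ or $b=y$. The reverse inclusion in Step 3 is the other delicate point, and it is where I expect the bookkeeping to be heaviest.
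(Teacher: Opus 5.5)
\textbf{Gap in Step 2.} Step 2 is where the uniform bound is supposed to appear, and it does not go through. When $a\neq x$, Step 1 applied to $[x,a]$ only gives
\[
\varsigma(a,x)\subset \nbhd_{c\log_2(d(x,a)+2)+c}([x,a]).
\]
So the points of $\varsigma(a,x)$ are known to lie at distance at least $2r-c\log_2(d(x,a)+2)-c$ from $p$. The logarithm here is in $d(x,a)$, which can be of order $n=d(x,y)$, not in $r$. A priori, Step 1 plus coarse connectivity only gives $r=O(\log n)$. Hence the inequality you actually obtain does not rule out $r$ growing like $\log n$. The phrase ``at least roughly $r-c\log r$'' silently replaces $\log d(x,a)$ by $\log r$, and nothing justifies that. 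The same term $\varsigma(y,b)$ has the same problem.

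Your Step 3 sketch inherits this defect. Splitting through $u',v'$ produces long pieces $\varsigma(x,u')$ and $\varsigma(v',y)$ with only logarithmic control. Step 3 also relies on the output of Step 2, so as written the plan is circular and unfinished. By contrast, the boundary case $a=x$ that worried you is harmless: applying (2) with a neighbour of $x$ and then (1) puts $\varsigma(x,x)$ in the $2D$--ball about $x$.

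\textbf{The fix: reverse the order.} First prove the \emph{upper} containment $\varsigma(x,y)\subset\nbhd_{R'}([x,y])$ uniformly, by induction on $d(x,y)$.
\begin{enumerate}
\item Take $s\in\varsigma(x,y)$, let $p\in[x,y]$ be a nearest point to $s$, and set $t=d(s,p)$. Choose $a,b\in[x,y]$ at distance $3t$ from $p$, or take the endpoints if these are closer.
\item By (2) applied twice, $s$ is $2D$--close to $\varsigma(x,a)\cup\varsigma(a,b)\cup\varsigma(b,y)$.
\item In the middle case, Step 1 on $[a,b]$ (length at most $6t$) gives $t\le 2D+c\log_2(6t+2)+c$, which bounds $t$.
\item In the case of $\varsigma(x,a)$ with $a\ne x$, we have $d(x,a)<n$, so by induction $\varsigma(x,a)\subset\nbhd_{R'}([x,a])$. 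Also $d(s,[x,a])\ge 3t-t=2t$. This gives $t\le D+R'/2\le R'$ once $R'\ge 2D$. The factor $3>2$ is exactly what makes the induction close. The case of $\varsigma(b,y)$ is symmetric.
\end{enumerate}
Alternatively, split $[x,y]$ at points at distances $2t,4t,8t,\dots$ from $p$, so that the piece at scale $2^it$ has error $O(i+\log t)$ but lies at distance about $2^it$ from $s$.

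With the uniform upper containment in hand, your crossing argument along a connected $B\supset\varsigma(x,y)$ works for $\varsigma(x,y)$ itself. It now runs inside the uniform neighbourhood $\nbhd_{R'+D}([x,y])$ and gives $[x,y]\subset\nbhd_{3R'+4D+1}(\varsigma(x,y))$. Your final display then yields thin triangles. Note that the paper does not prove this proposition; it quotes it from Bowditch, Masur--Schleimer and Hamenst\"adt.
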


Our second criterion is a restatement of \cite[Corollary 1.52]{MS:combination}. We say a \emph{line of spaces} consists of the following data:
\begin{itemize}
    \item A sequence $X_0,\dots,X_n$ of metric graphs called \emph{vertex spaces}.
    \item  Two sequences of connected subgraphs $$E_0\subset X_0,\dots,E_{n-1}\subset X_{n-1} \text{ and } E'_0\subset X_1,\dots,E'_{n-1}\subset X_{n}$$  where there is an isomorphism $\phi_i \colon E_i \to E'_i$ for each $i \in\{0,\dots, n-1\}$. We call the $E_i$ and $E'_i$ \emph{edge spaces}. 
\end{itemize}
The \emph{total space} $X$ of a line of spaces is obtained  from the disjoint union of the $X_i$  by adding edges connecting $x$ and $\phi_i(x)$ for each $x \in E_i$ and each $i \in \{0,\dots, n-1\}$.

Using the language of lines of spaces,  \cite[Corollary 1.52]{MS:combination} implies the following. 
Note that our formulation below includes the hypothesis that edge spaces are uniformly quasi-isometrically embedded in the whole space. This strong requirement simultaneously implies the two hypotheses of \cite[Corollary 1.52]{MS:combination} regarding edge spaces: namely that they are quasi-isometrically embedded in their associated vertex spaces and properly embedded in the whole space.

\begin{proposition}
\label{prop:malnormal_glue}    
    For every $\delta,Q,C\geq 0$ there exists $\delta_3 = \delta_3(\delta, Q, C) \geq 0$ with the following property. Consider a line of spaces, with total space $X$, where the vertex spaces are $\delta$--hyperbolic, the edge spaces are $C$--quasi-isometrically embedded in $X$, and the closest point projections onto each other of two edges spaces contained in the same vertex space have diameter bounded by $C$. Then the total space of the line of spaces is $\delta_3$--hyperbolic.
\end{proposition}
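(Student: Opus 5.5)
This statement is a restatement of \cite[Corollary 1.52]{MS:combination}, so the plan is to reduce to that corollary by checking its hypotheses; we do not reprove the combination theorem itself. That corollary treats a tree (here a line) of hyperbolic spaces and requires three things: uniformly hyperbolic vertex spaces, edge spaces that are uniformly quasi-isometrically embedded in their vertex spaces and uniformly properly embedded in the total space, and a uniform bound on the mutual projections of distinct edge spaces lying in the same vertex space. It then returns a hyperbolicity constant depending only on these parameters.

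\textbf{Step 1: edge spaces in their vertex spaces.} Let $E_i\subset X_i$ be an edge space. Its inclusion into $X_i$ is $1$--Lipschitz for the path metrics. Since $X_i\subset X$ and $X$ also carries a path metric, we have $d_X\le d_{X_i}$ on $X_i$. The hypothesis that $E_i\to X$ is a $C$--quasi-isometric embedding then gives
\[
d_{E_i}(x,y)\le C\,d_X(x,y)+C^2\le C\,d_{X_i}(x,y)+C^2 .
\]
Hence $E_i\to X_i$ is a uniform quasi-isometric embedding, and the same argument applies to each $E'_i\subset X_{i+1}$.

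\textbf{Step 2: proper embedding in the total space.} The $C$--quasi-isometric embedding of each edge space into $X$ gives a uniform proper function
\[
\eta(r)=Cr+C^2,
\]
in the sense that $d_X(x,y)\le r$ implies $d_{E}(x,y)\le \eta(r)$. By Lemma~\ref{lem:quasi-convex_perserved} applied inside the hyperbolic vertex space, each edge space is also uniformly quasiconvex in its vertex space. This makes the closest point projections $\rho_{E}$ coarsely well defined by Lemma~\ref{lem:hyp pass close}, so the projection hypothesis is meaningful.

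\textbf{Step 3: conclude.} The bounded-projection hypothesis, with constant $C$, is exactly the remaining hypothesis of the corollary. Invoking \cite[Corollary 1.52]{MS:combination} yields a constant $\delta_3$ depending only on $\delta$, the quasi-isometry constants and the proper function $\eta$, hence only on $\delta$ and $C$ (together with $Q$, if quasiconvexity constants enter).

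\textbf{Main obstacle.} The delicate point is translating between conventions. Mj--Sardar phrase their results for metric graph bundles and trees of spaces, with constants that are uniform over the whole tree, and one must confirm that $\delta_3$ does not depend on the length $n$ of the line. Their uniformity is stated in terms of the parameters alone, so this should hold, but it needs to be checked.

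If that translation becomes problematic, a fallback is a direct guessing geodesics argument via Proposition~\ref{prop:guessing}. For vertices $x\in X_i$ and $y\in X_j$, one would define $\varsigma(x,y)$ as a concatenation of $X_k$--geodesics that pass between the projections onto consecutive edge spaces, joined by the gluing edges. The bounded-projection hypothesis is what makes this concatenation coarsely well defined. Thin triangles would then be verified using Lemma~\ref{lem:hyperbolic_BGI} in each vertex space, treating separately the case of a triangle contained in one vertex space and the case where the indices diverge.
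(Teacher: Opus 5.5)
Your proposal is correct and follows the paper's route: the paper also treats this as a direct restatement of \cite[Corollary 1.52]{MS:combination}, observing only that a uniform quasi-isometric embedding of the edge spaces into $X$ yields both hypotheses that corollary needs (qi-embedding into the vertex spaces and uniform proper embedding into $X$), with the bounded mutual projections serving as its coboundedness hypothesis. Your Step 1 inequality $d_{E_i}\le C\,d_X+C^2\le C\,d_{X_i}+C^2$ is precisely the verification left implicit in the paper's remark, and you are right that $Q$ plays no role in the hypotheses.
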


Our last hyperbolicity criterion is a variant of Proposition \ref{prop:malnormal_glue}  where we relax the requirements on the edge spaces and restrict to only two vertex spaces. To set-up this situation, let $X,Y$ be graphs and  $A$ be a subgraph of $X$ equipped with a coarse graph map $\phi \colon A\to Y$.  We obtain a new graph $X\cup_\phi Y$ by adding an edge from $a$ to $\phi(a)$ for each vertex $a\in A$.  We think of this process as gluing $X$ to $Y$ along $A$.

\begin{proposition}
\label{prop:gluing_lemma}
    For all $\delta, Q,K$ there exist $\delta_3, K'$ such that the following hold. Let $X,Y$ be $\delta$--hyperbolic graphs and let $A$ be a $Q$--quasi-convex subgraph of $X$. Let $\phi\colon A\to Y$ be a coarse graph map that is a $K$--quasi-isometric embedding when $A$ is endowed with the induced metric (not its path metric). Then $X\cup_\phi Y$ is a $\delta_3$--hyperbolic graph in which $X$ and $Y$ are $K'$--quasi-isometric embedded.
\end{proposition}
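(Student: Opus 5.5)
We use the guessing geodesics criterion, Proposition \ref{prop:guessing}, applied to $Z = X\cup_\phi Y$. The first step is to control distances in $Z$. Note that $\phi$ is a $K$--quasi-isometric embedding with respect to $d_X|_A$. Since every vertex of $A$ is joined by an edge to $\phi(A)$, gluing creates no shortcuts in $X$ beyond a bounded factor.

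To see this, take a $Z$--path between $x,x'\in X$ and consider each maximal excursion into $Y$. Such an excursion enters from some $a\in A$ and leaves through some $a'\in A$, and it has length at least $d_Y(\phi(a),\phi(a'))$. That quantity is at least $\tfrac1K(d_X(a,a')-K)$, so the excursion can be replaced by $[a,a']_X$ at multiplicative and additive cost controlled by $K$. The additive cost is paid per excursion, but each excursion costs at least $2$ edges, so it is absorbed into the multiplicative constant. Hence $X\hookrightarrow Z$ is a quasi-isometric embedding.

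For $Y$ the same argument applies, with $X$--excursions between $a,a'\in A$ replaced by $[\phi(a),\phi(a')]_Y$. This uses the upper bound $d_Y(\phi(a),\phi(a'))\le K d_X(a,a')+K$. This settles the claim that $X,Y$ are $K'$--quasi-isometrically embedded.

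Next we define the guessed paths $\varsigma$. For $x,x'\in X$ let $\varsigma(x,x')=[x,x']_X$, and for $y,y'\in Y$ let $\varsigma(y,y')=[y,y']_Y$. For $x\in X$ and $y\in Y$, choose $a\in\rho_A(x)$ and $b\in\rho_{\phi(A)}(y)$ in $Y$. Here $\phi(A)$ is quasiconvex in $Y$: $A$ is $Q$--quasiconvex and $\phi$ quasi-isometrically embeds it, so an argument as in Lemma \ref{lem:quasi-convex_perserved} applies. Pick $a'\in A$ with $\phi(a')$ coarsely equal to $b$. Then set
\[\varsigma(x,y)=[x,a]_X\cup[a,a']_X\cup\{\text{edge } a'\phi(a')\}\cup[\phi(a'),y]_Y.\]
Informally, this path goes from $x$ to $A$, travels inside $A$ (coarsely along $[a,a']_X$, which stays in $\nbhd_Q(A)$), crosses, and then heads to $y$. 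An equivalent variant first crosses at $a$ and then travels along $[\phi(a),\phi(a')]_Y\cup[\phi(a'),y]_Y$; by Lemma \ref{lem:morse} the two choices are at bounded Hausdorff distance in $Z$. Coarse connectivity and the bounded-diameter condition for adjacent vertices follow from the coarse Lipschitz property of $\rho$ in Lemma \ref{lem:hyp pass close}.

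The main obstacle is the thin triangles condition (2) of Proposition \ref{prop:guessing}, which we check case by case according to which of $X$ and $Y$ contain the three vertices.

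If all three vertices lie in one space, the condition is immediate. Now take $x,x'\in X$ and $y\in Y$. If $\rho_A(x)$ and $\rho_A(x')$ are close, the two guessed paths $\varsigma(x,y)$ and $\varsigma(x',y)$ share their tails, and hyperbolicity of $X$ handles the heads. If they are far apart, Lemma \ref{lem:hyperbolic_BGI} shows that $[x,x']_X$ passes near both projections. It therefore decomposes coarsely into $[x,a]\cup[a,\tilde a]\cup[\tilde a,x']$, with $\tilde a\in\rho_A(x')$. The middle piece $[a,\tilde a]$ must be covered by the union of the $A$--segments $[a,a']$ and $[\tilde a,a']$, which holds by thinness in $X$ of the triangle $\triangle(a,\tilde a,a')$.

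The symmetric case $x\in X$ and $y,y'\in Y$ uses the projections to $\phi(A)$ in $Y$. Here one must transfer between $[a,a']_X$ and $\phi$--images of $Y$--geodesics. Since $\phi$ is a quasi-isometric embedding for the induced metric and $A$ is quasiconvex, $\phi([a,a']_X\cap \nbhd_Q(A))$ is a quasigeodesic in $Y$, so Lemma \ref{lem:morse} keeps all these transfers within bounded Hausdorff distance in $Z$.

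With the three cases checked, Proposition \ref{prop:guessing} gives $\delta_3$, depending only on $\delta,Q,K$, such that $Z$ is $\delta_3$--hyperbolic.
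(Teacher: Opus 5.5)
Your proposal is correct, but it takes a different route from the paper. The paper does not use guessing geodesics here. It enlarges $X$ to $X_1$ by adding all $Y$--geodesics between points of $\phi(A)$, and enlarges $Y$ to $X_2$ by adding all $X$--geodesics between points of $A$; these are uniformly quasi-isometric to $X$ and $Y$. It then shows that $Y_1=X_1\cap X_2$ is uniformly quasi-isometrically embedded in $Z=X\cup_\phi Y$, via a coarse retraction built from the closest-point projections to $A$ and to $\phi(A)$. Finally it invokes Mj--Sardar \cite[Corollary 1.52]{MS:combination}, which yields both the hyperbolicity of $Z$ and the undistortedness of $X$ and $Y$. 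That is short but rests on an external combination theorem.

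Your argument is self-contained. The excursion-replacement step proves the quasi-isometric embeddings directly, with multiplicative constant $K$ and using neither hyperbolicity nor quasiconvexity. Hyperbolicity then comes from Proposition \ref{prop:guessing} together with the lemmas of Section \ref{sec:background}. The price is the thin-triangle case analysis, which you only sketch. In the far case for $x,x'\in X$, $y\in Y$ you verify just one of the three required containments. Also, since $\phi$ is defined only on $A$, you must first push the points of $[a,a']_X$ into $A$ (moving them at most $Q$) before applying $\phi$.

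Both points are routine, and the analysis can be streamlined. By item (2) of Lemma \ref{lem:hyp pass close}, $[x,a]_X\cup[a,a']_X$ is uniformly Hausdorff close to $[x,a']_X$. Your transfer makes $[a,a']_X$ uniformly Hausdorff close in $Z$ to $[\phi(a),\phi(a')]_Y$. Applying the same item in $Y$, $\varsigma(x,y)$ is then also close to $[x,a]_X$ followed by the gluing edge at $a$ and $[\phi(a),y]_Y$. Every mixed triangle thus reduces to the geodesic triangle $\triangle(x,x',a')$ in $X$ or $\triangle(\phi(a),y,y')$ in $Y$, together with a common tail or head. No split according to $d_X(a,\tilde a)$ is needed.
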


\begin{proof}
In this proof we call a constant ``uniform'' if it only depends on $\delta,Q,K$.

    Let $X_1$ be the full subgraph of $Z=X\cup_\phi Y$ with the same vertex set as the union of $X$ and the union of all geodesics in $Y$ connecting points of $\phi(A)$. Similarly, let $X_2$ be the union of $Y$ and all geodesics in $X$ connecting points of $A$.

We claim that $X_1$ is quasi-isometric to $X$, with uniform constants, and similarly for $X_2$. This is because geodesics in $Y$ connecting points of $\phi(A)$ stay uniformly close to $\phi(A)$, which lies in the 1-neighborhood of $X$ in $A$, and this allows us to construct a coarse inverse of the inclusion of $X$ into $X_1$.  An analogous argument works for $X_2$.

Set $Y_1=X_1\cap X_2$, that is, the full subgraph with the same vertex set as the union of all geodesics in $X$ connecting points of $A$ and all geodesics in $Y$ connecting points of $\phi(A)$. There is a coarse retraction, with uniform constants, from $Z$ to $Y_1$ obtained combining the closest-point projections in $X$ to $A$ and in $Y$ to $\phi(A)$. In particular, $Y_1$ is uniformly quasi-isometrically embedded in $Z$.

The above shows that the hypotheses of \cite[Corollary 1.52]{MS:combination} apply (the coboundedness assumption holding vacuously in our case), so we get the desired conclusion.
\end{proof}

\section{Combination theorem}\label{sec:combination theorem}

In this section, we prove our main combination theorem.  We first formulate the notion of an \emph{externally flaring graph bundle}  in Definition \ref{defn:externally flaring graph bundle} and then prove such spaces are hyperbolic in Theorem \ref{thrm:combination_theorem}. This result will be applied in Section \ref{sec:hyperbolic GF bundles} to build hyperbolic spaces for extensions of PGF groups.  Our work is in this section inspired by Mj and Sardar's work  on metric graph bundles \cite{MS:combination}. We discuss the similarities and differences between our work our and \cite{MS:combination} in Section~\ref{sec:contrast_proper_case}.

\begin{definition} [Graph Bundle]\label{def:graph_bundle}
A \textbf{\em graph bundle} is a surjective simplicial map $p\colon \E\to \B$ between connected graphs $\E$ and $\B$, so that the subgraph $\E_v = p^{-1}(v)$ is connected for each  vertex $v\in \B$. We call  $\E_v $ the \textbf{\em fiber} over  $v$.
A \textbf{\em section} of the graph bundle is a simplicial embedding $\Sigma\colon \B \to \E$ so that $p \circ \Sigma(v) = v$ for all vertices $v \in \B$. 
\end{definition}

The \emph{metric} graph bundles used by Mj and Sardar in \cite[Definition 1.5]{MS:combination} are graph bundles with two additional assumptions: firstly, that if $v,w\in \B$ are adjacent, then each vertex of $\E_v$ is adjacent to some vertex of $\E_w$, and, secondly, each fiber $\E_v$ is uniformly properly embedded in $\E$, meaning that distance $d_v$ in $\E_v$ is bounded above by some function of the distance in $\E$. These two properties imply that adjacent fibers $\E_v$ and $\E_w$ are uniformly quasi-isometric to each other. 

We emphasize that our graph bundles have no properness assumption. In particular, vertices $x,y\in \E_v$ with arbitrarily large distance in the graph $\E_v$ may both be adjacent to the same vertex $z$ in a neighboring fiber $\E_w$. For this reason, the distance $d_v(x,y)$ in the metric graph $\E_v$ is essentially a useless quantity for us. However, we shall  need geodesics in $\E_v$ and we use $[x,y]_{v}$ to denote a choice of some geodesic in $\E_v$ between $x$ and $y$.

We now introduce our notion of an externally flaring graph bundle.  In what follows, we will refer to the conditions in the definition as ``axioms''.

\begin{definition}[Externally flaring graph bundle] \label{defn:externally flaring graph bundle} 
Let $p \colon \E \to \B$ be a graph bundle and let $\mathcal{S}$ be a family of sections of $p$. We call the sections in $\mathcal{S}$ \textbf{\em tight sections}. We say that the bundle $\E$ is an \textbf{\em externally flaring graph bundle} if $\E$ and $\mathcal{S}$ satisfy the following axioms.

\begin{enumerate}
    \item \label{item:hyperbolic} (Hyperbolicity) $\B$ and $\E_v$ are uniformly hyperbolic.
    \item \label{item:full} (Full Sections) For each $x\in \E$ there is a tight section through $x$.
    \item \label{item:robust} (Robust Sections) For every pair of tight sections $\Sigma_1,\Sigma_2 \in \mathcal{S}$  and any choice of geodesics $[\Sigma_1(u),\Sigma_2(u)]_{u}$ for each vertex   $u\in \B$, if $x$ is a vertex of $[\Sigma_1(v),\Sigma_2(v)]_{v}$ for some $v \in \B$, then     
    there exists a tight section $\Sigma_x$ through  $x$  with the property that $\Sigma_x(u)\in [\Sigma_1(u),\Sigma_2(u)]_{u}$ for all vertices $u\in \B$.
    \item\label{item:retraction} (Fiberwise retraction) There exists $\kappa \geq 1$ with the following property. 
Suppose vertices $x,y,z$ in a fiber $\E_v$ are, respectively, adjacent to vertices $x',y',z'$ in a fiber $\E_{v'}$. Let $q$ be a closest point projection of $z$ to a geodesic $[x,y]_{v}$ in $\E_v$, and define $q'$ similarly. Then $d_\E(q,q')\leq \kappa$.
   
    \item\label{item:flaring} (Flaring in $\E$) There exists $\sigma>0$ such that for all $B\geq \sigma$ there exists a diverging function $\lambda = \lambda (B)\colon \mathbb R^+\to \mathbb R^+$ such that the following holds for any two tight sections $\Sigma_1,\Sigma_2 \in \mathcal{S}$ and any geodesic $\gamma=[u,v]\subset \B$: if $d_\E(\Sigma_1(v),\Sigma_2(v))\leq B$ but $d_\E(\Sigma_1(w),\Sigma_2(w))\ge \sigma$ for all $w\in \gamma-\{v\}$, then 
    \[d_{\E}(\Sigma_1(u),\Sigma_2(u))\geq \lambda( d_\B(v,u)).\]
\end{enumerate}
\end{definition}

Given a graph bundle $p:\E\to\B$ where $\E$ is hyperbolic, it is not hard to show (using that the images of the sections are quasiconvex) that any two sections flare as in Axiom \eqref{item:flaring}. On the other hand, our main result of this section is that external flaring implies hyperbolicity of the bundle.

\begin{theorem}\label{thrm:combination_theorem}
    Externally flaring graph bundles are hyperbolic.
\end{theorem}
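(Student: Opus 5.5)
We follow the Mj--Sardar strategy of \cite{MS:combination}: build ``ladders'' from tight sections, show these are uniformly hyperbolic and uniformly quasiconvex, and then feed them to the guessing geodesics criterion, Proposition \ref{prop:guessing}.

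\textbf{Ladders.} Fix vertices $x,y\in\E$ and tight sections $\Sigma_x,\Sigma_y$ through them, which exist by Axiom \eqref{item:full}. For each vertex $u\in\B$ choose a fiber geodesic $[\Sigma_x(u),\Sigma_y(u)]_u$. The \emph{ladder} $\ladder=\ladder(\Sigma_x,\Sigma_y)$ is the full subgraph of $\E$ spanned by the union of these fiber geodesics.

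\textbf{Coarse retraction onto a ladder.} Using Axiom \eqref{item:retraction}, fiberwise closest point projection $\Pi\colon\E\to\ladder$, which sends $z\in\E_u$ to $\rho_{[\Sigma_x(u),\Sigma_y(u)]_u}(z)$, should be coarsely Lipschitz with respect to $d_\E$. To check this, let $z,z'$ be adjacent in distinct fibers $\E_v,\E_{v'}$. Take tight sections through $z$ and through the endpoints of the rung. Sections are simplicial, so they supply adjacent triples $x,y,z$ and $x',y',z'$, and Axiom \eqref{item:retraction} bounds $d_\E(\Pi(z),\Pi(z'))$ by $\kappa$. For $z,z'$ in the same fiber, Lemma \ref{lem:hyp pass close} applied in the fiber bounds the projection distance by the fiber distance, but fiber distance is not controlled by $d_\E$. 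So we only use edges of $\E$: a $d_\E$-geodesic is a concatenation of single edges, and each edge, either horizontal or a fiber edge, moves the projection a bounded amount in $\E$. Hence $\Pi$ is a coarse Lipschitz retraction, and $\ladder$ is quasi-isometrically embedded in $\E$ with its induced path metric. The same argument shows that the images of tight sections are uniformly quasi-isometrically embedded, because a section is a ladder with $\Sigma_x=\Sigma_y$.

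\textbf{Hyperbolicity of ladders (main obstacle).} This is where Axioms \eqref{item:robust} and \eqref{item:flaring} enter. By Axiom \eqref{item:robust}, every vertex of $\ladder$ lies on a tight section contained in $\ladder$. Following Mj--Sardar, we decompose $\ladder$ along the base. Given a geodesic $\gamma\subset\B$, choose points along $\gamma$ where sections starting at nearby points separate to $\E$-distance $\sigma$. Flaring, Axiom \eqref{item:flaring}, forces ``necks'' (regions where the two boundary sections are $\sigma$-close) to occur only on uniformly bounded stretches. Outside the necks the sections diverge at rate $\lambda$.

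The plan is to subdivide $\ladder$ into subladders between consecutive tight sections $\Sigma_x=\Sigma_0,\Sigma_1,\dots,\Sigma_n=\Sigma_y$. Each $\Sigma_{i+1}$ is chosen so that it is at bounded $\E$-distance $B$ from $\Sigma_i$ over some base point. This realizes $\ladder$ as a line of spaces whose vertex spaces are the sections, which are quasi-isometric to $\B$ and hence hyperbolic. We then apply Proposition \ref{prop:malnormal_glue}. Flaring is what should bound the projections between the two ``edge'' regions inside a given section, which is the malnormality hypothesis. Alternatively, we can iterate Proposition \ref{prop:gluing_lemma}, gluing along the quasiconvex neck sets. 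We expect the hardest part to be making this decomposition uniform: the number of pieces must be controlled, or the hyperbolicity constant must not grow with $n$. As in \cite{MS:combination}, we will instead build, for each pair of points of $\ladder$, explicit paths inside $\ladder$. Such a path follows a section to the point of the base where the other section comes within $B$, then crosses a short rung. We then verify the thin triangle condition of Proposition \ref{prop:guessing} inside $\ladder$, using flaring to show that two such paths cannot stay far apart for a long time.

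\textbf{Conclusion.} Define $\varsigma(x,y)$ to be the image in $\E$ of a geodesic of the ladder $\ladder(\Sigma_x,\Sigma_y)$. Given $x,y,z$, we use Axiom \eqref{item:robust} to choose sections compatibly, so that $\ladder(\Sigma_x,\Sigma_y)$ lies in the union of the other two ladders up to a tripod decomposition in each fiber. The ladders are uniformly hyperbolic and quasi-isometrically embedded with uniform retractions, so the triangle condition in Proposition \ref{prop:guessing} follows. Adjacent vertices give bounded ladders by Axiom \eqref{item:retraction}, so the first hypothesis of Proposition \ref{prop:guessing} also holds, and therefore $\E$ is hyperbolic.
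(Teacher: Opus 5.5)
The two steps that carry the weight of the theorem are not actually established: uniform hyperbolicity of ladders, and thinness of the guessed triangles.

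\textbf{Hyperbolicity of ladders.} Your line of spaces takes the sections $\Sigma_i$ as vertex spaces. But the subladder $\ladder(\Sigma_i,\Sigma_{i+1})$ need not lie within bounded distance of $\Sigma_i\cup\Sigma_{i+1}$; it fattens wherever the two sections separate. So copies of $\B$ glued along consecutive necks do not model $\ladder$, and iterating Proposition \ref{prop:gluing_lemma} along the chain gives constants that grow with $n$. Also, Axiom \eqref{item:flaring} says nothing where sections are less than $\sigma$ apart, so necks need not be bounded; flaring only makes them quasiconvex (Lemma \ref{lem:neck_qc}). Your fallback (run along $\Sigma_x$ until $\Sigma_y$ is $B$--close, then cross a rung) is the right guess only when $\neck_B(\Sigma_x,\Sigma_y)\neq\emptyset$, and the thin-triangle check you defer is the real content.

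The paper proceeds in two stages. First, suppose $\neck_B(\Sigma_1,\Sigma_2)\neq\emptyset$. Then any sections $\Sigma_x,\Sigma_y\subset\ladder(\Sigma_1,\Sigma_2)$ through arbitrary points (Axiom \eqref{item:robust}) satisfy $\neck_{KB}(\Sigma_x,\Sigma_y)\supset\neck_B(\Sigma_1,\Sigma_2)\neq\emptyset$ (Lemma \ref{lem:neck_between}, which rests on Lemma \ref{lem:bounded_between}). The guessed path is the image of a geodesic in two copies of $\B$ glued along this quasiconvex neck. Thin triangles come from hyperbolicity of the three-copy space $\Lambda_{\{x,y,z\}}$, in which the two-copy spaces are undistorted (Lemma \ref{lem:glued_graphs_hyp_and_qi_embedded}). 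The key point there is that the gluing map is a quasi-isometric embedding, proved with Lemma \ref{lem:3_necks_intersect_2}. Second, for an arbitrary ladder, $\Sigma_{k+1}$ is chosen \emph{maximally}, so that non-consecutive necks are empty. The vertex spaces are then the double ladders $\Dladder(\Sigma_{k-1},\Sigma_k,\Sigma_{k+1})$, with the sections as edge spaces. Flaring bounds the projection of $\Sigma_{k+1}$ onto $\Sigma_{k-1}$ (Lemmas \ref{lem:proj_neck} and \ref{lem:double_ladder_proj}), and Proposition \ref{prop:malnormal_glue} gives a constant independent of the number of pieces.

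\textbf{The final triangle condition.} You know that each ladder is uniformly hyperbolic and quasi-isometrically embedded, and that $\ladder(\Sigma_x,\Sigma_y)$ lies fiberwise near $\ladder(\Sigma_x,\Sigma_z)\cup\ladder(\Sigma_z,\Sigma_y)$. That is a statement about sets, not about the chosen geodesics. Since $\E$ is not yet known to be hyperbolic, the three ladder geodesics are just quasigeodesics of $\E$ sitting in three different hyperbolic spaces, and nothing forces them to form a thin triangle. You need a single uniformly hyperbolic space, mapping coarsely Lipschitz to $\E$, in which all three ladders are undistorted. This is the tripod bundle of Proposition \ref{prop:tripod_bundle}, obtained by gluing the ladders along half-ladders; their quasiconvexity (Lemma \ref{lem:half_ladder_QC}) again uses Axioms \eqref{item:robust} and \eqref{item:retraction}. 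The sections $\Sigma_x$ must also be fixed once per vertex, not chosen depending on $z$, for $\varsigma$ to be well defined.

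\textbf{A smaller point.} With $\ladder$ defined as a full subgraph of $\E$, Axiom \eqref{item:retraction} only gives $d_\E(\Pi(z),\Pi(z'))\le\kappa$. When $\kappa>1$ this does not bound distance in the ladder's own path metric, so your quasi-isometric embedding claim does not follow. The paper adds edges between ladder vertices in adjacent fibers at $\E$--distance at most $\kappa$ precisely so that $\Pi$ is Lipschitz into the ladder (Corollary \ref{cor:ladder_qie}).
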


\subsection{Contrasting the metrically proper case}
\label{sec:contrast_proper_case}
Let us briefly discuss how our Definition~\ref{defn:externally flaring graph bundle} and Theorem~\ref{thrm:combination_theorem} compares  and contrasts with the setting of \cite[Theorem 4.3]{MS:combination}, which proves that the total space $\E$ of a \emph{metric} graph bundle is hyperbolic under certain conditions. Firstly, \cite{MS:combination} requires that the base space $\B$ and fibers $\E_v$ are uniformly hyperbolic, exactly as in our Axiom (\ref{item:hyperbolic}). They additionally require that the barycenter maps $\partial^3 \E_v\to \E_v$ of the fibers are uniformly coarsely surjective. However, they do not assume our fiberwise retraction Axiom (\ref{item:retraction}), since in their context it is an automatic consequence of the metric properness condition of the bundle and the fact that adjacent fibers are uniformly quasi-isometric \cite[Lemma 1.38]{MS:combination}.
Similarly, \cite{MS:combination} does not utilize any pre-specified collection of tight sections, and does not impose any hypotheses about sections. Instead, they use properness of the metric graph bundle and the barycenter maps to \emph{prove} the following analog of our Axiom (\ref{item:full}): there is a uniform $K$ such that there exists a $K$--quasi-isometric section through any point of $\E$\cite[Proposition 2.10]{MS:combination}; indeed, obtaining this fact is the first key milestone of their proof. With this in hand, fiberwise retraction (\ref{item:retraction}) may be applied \cite[Lemma 3.1]{MS:combination} to an arbitrary qi-section to obtain robust qi-sections in analogy to our Axiom (\ref{item:robust}). 

Finally, and most importantly, \cite{MS:combination} imposes a flaring condition on qi-sections, which roughly says that for each given $K$ there exist $M,N$ and $\lambda>1$ such that if $\gamma=[u,v]\subset \B$ is a geodesic of length $2N$ with midpoint $w$ and if $\Sigma_1,\Sigma_2$ are $K$--qi sections with $d_w(\Sigma_1(w),\Sigma_2(w))\ge M$, then at least one of $d_u(\Sigma_1(u),\Sigma_2(u))$ or $d_v(\Sigma_1(v),\Sigma_2(v))$ is bigger than $\lambda d_w(\Sigma_1(w),\Sigma_2(w))$. This is similar in spirit to our flaring Axiom (\ref{item:flaring}), however we emphasize the key difference that, due to the lack of properness, our condition is formulated in terms of distance in the ambient space $\E$, rather than the (meaningless for us) distances in fibers.

Since in our application of Theorem \ref{thrm:combination_theorem}, the bundles we are concerned with come with natural sections, we did not investigate conditions under which sections exist in settings where fibers that are not properly embedded. 
Due to issues related to our sections being isometric rather than quasi-isometric, it is also not clear if \cite[Theorem 4.3]{MS:combination} can be reduced to  Theorem \ref{thrm:combination_theorem} by taking the family of sections to be the collection of sections constructed in \cite{MS:combination}. 
Exploring these matters more thoroughly suggests several natural problems to tackle, such as:
\begin{question}
Regarding the relationship between Theorem~\ref{thrm:combination_theorem} and the work of Mj--Sardar \cite{MS:combination}:
\begin{enumerate}
\item What conditions on a graph bundle imply that simplicial (or uniform quasi-isometric) sections exist through any point and in every ladder (c.f.~Axioms~\eqref{item:full} and \eqref{item:robust})?
\item Can Definition \ref{defn:externally flaring graph bundle} be weakened to allow the tight sections in $\mathcal{S}$ to be uniform quasi-isometric embeddings, rather than simplicial embeddings?
\item Can the Mj-Sardar combination theorem \cite[Theorem 4.3]{MS:combination} be deduced directly from some version of Theorem~\ref{thrm:combination_theorem} that relies on a generalization of Definition~\ref{defn:externally flaring graph bundle}?
\end{enumerate}
\end{question}

Our proof follows the basic outline set out in \cite{MS:combination}.  First we construct ``ladders'' between the tight sections in our bundles. These ladders are morally the sub-bundle of fiber geodesics between any two tight sections, and we prove they are uniformly hyperbolic. Next we construct a hyperbolic ``tripod'' bundle for any triple of tight sections by combining the ladders between each pair of tight sections. Finally, we prove the hyperbolicity of $\E$ by using the guessing geodesic criteria in Proposition \ref{prop:guessing}. Our preferred paths come from the geodesics in the ladders, which will form thin triangles by virtue of the hyperbolicity of the tripod bundle.

\subsection{Ladders and Necks}

The first construction  we need for our proof of Theorem \ref{thrm:combination_theorem} is the ladder between tight sections.  Ladders first appeared in \cite{mitra1998cannon}, inspired by \cite{CannonThurston}; see also \cite[Definition 2.3]{MS:combination}.

\begin{definition}[Ladder] \label{defn:ladder}
    Let $\kappa \geq 1$ be as in Axiom \eqref{item:retraction} of Definition \ref{defn:externally flaring graph bundle}. Suppose $\Sigma_1,\Sigma_2$ are tight sections, and for each $v\in \mathcal{B}$ choose a geodesic $[\Sigma_1(v),\Sigma_2(v)]_{v}$, which we denote $\ladder_v(\Sigma_1,\Sigma_2)$.  

The \textbf{\em ladder} between $\Sigma_1$ and $\Sigma_2$ (determined by these geodesics) is the graph $\ladder(\Sigma_1,\Sigma_2)$ obtained from
    \[ \bigcup_{v \in \B} \ladder_v(\Sigma_1,\Sigma_2)\]
by adding edges between pairs of vertices $x \in \ladder_v(\Sigma_1,\Sigma_2)$ and $y \in \ladder_w(\Sigma_1,\Sigma_2)$ with $v, w \in \B$ adjacent and so that $d_\E(x,y)\leq \kappa$.  While a ladder formally depends on the choice of geodesics $\ladder_v(\Sigma_1,\Sigma_2)$ in each fiber and not just on $\Sigma_1$ and $\Sigma_2$, we suppress this in the notation $\ladder(\Sigma_1,\Sigma_2)$.
\end{definition}

Next we define and analyze a kind of closest point projection onto the ladder:

\begin{definition}\label{defn:fiberwise projection map}
    Let $ \ladder(\Sigma_1,\Sigma_2)$ be a ladder between two tight sections $\Sigma_1,\Sigma_2$.  The \textbf{fiberwise projection map}  $$\Pi \colon \E \to \ladder(\Sigma_1,\Sigma_2)$$ is the coarse graph map defined as follows: given a vertex $x \in \E$, let $w = p(x) \in \B$, and define $\Pi(x)$ to be a closest point projection of $x\in\E_w$ to $\ladder_w(\Sigma_1,\Sigma_2)$ inside the fiber $\E_w$.
\end{definition}

\smallskip
Our first lemma illuminates why we call Axiom \eqref{item:retraction} the ``fiberwise retraction'' axiom.

\begin{lemma}\label{lem:fiberwise_proj}
    There exists $K_0\geq 1$ such that for any pair of tight sections $\Sigma_1$ and $\Sigma _2$, the fiberwise projection $\Pi \colon \E \to \ladder(\Sigma_1,\Sigma_2)$ is $K_0$--coarsely Lipschitz.
\end{lemma}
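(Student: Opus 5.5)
The plan is to reduce to adjacent vertices. Suppose there is a constant $C$ such that $d_{\ladder}(\Pi(a),\Pi(b))\le C$ for every pair of adjacent vertices $a,b\in\E$, where $d_{\ladder}$ is the path metric on $\ladder(\Sigma_1,\Sigma_2)$. Then for arbitrary vertices $a,b$ we follow an $\E$--geodesic of length $d_\E(a,b)$ and apply the triangle inequality edge by edge. This gives $d_{\ladder}(\Pi(a),\Pi(b))\le C\,d_\E(a,b)$, and we may take $K_0=\max\{C,1\}$. An edge of $\E$ either lies in a single fiber or joins two fibers over adjacent vertices of $\B$, since $p$ is simplicial. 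So there are two cases, each handled by an earlier tool.

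\emph{Edges within a fiber.} Let $x,y\in\E_w$ be adjacent. The segment $\ladder_w(\Sigma_1,\Sigma_2)$ is a geodesic in the uniformly hyperbolic graph $\E_w$ (Axiom \eqref{item:hyperbolic}), so it is $0$--quasiconvex. Lemma \ref{lem:hyp pass close}\eqref{item:closet-point-coarsely-lip} then gives $d_w(\Pi(x),\Pi(y))\le 2\kappa_0$ for a uniform $\kappa_0$. Both points lie on the fiber geodesic $\ladder_w(\Sigma_1,\Sigma_2)$, which is a subgraph of the ladder. The subsegment between them therefore has length equal to their $d_w$--distance, so $d_{\ladder}(\Pi(x),\Pi(y))\le 2\kappa_0$.

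\emph{Edges between fibers.} Let $x\in\E_v$ be adjacent to $x'\in\E_{v'}$, with $v,v'$ adjacent in $\B$. Tight sections are simplicial embeddings, so $\Sigma_i(v)$ is adjacent to $\Sigma_i(v')$ for $i=1,2$. We apply the fiberwise retraction Axiom \eqref{item:retraction} to the triples $(\Sigma_1(v),\Sigma_2(v),x)$ in $\E_v$ and $(\Sigma_1(v'),\Sigma_2(v'),x')$ in $\E_{v'}$. For the geodesics we use the ones chosen for the ladder, $[\Sigma_1(v),\Sigma_2(v)]_v=\ladder_v(\Sigma_1,\Sigma_2)$ and $[\Sigma_1(v'),\Sigma_2(v')]_{v'}=\ladder_{v'}(\Sigma_1,\Sigma_2)$. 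With $q=\Pi(x)$ and $q'=\Pi(x')$, the axiom gives $d_\E(\Pi(x),\Pi(x'))\le\kappa$. By Definition \ref{defn:ladder}, two vertices of $\ladder_v$ and $\ladder_{v'}$ over adjacent base vertices with $\E$--distance at most $\kappa$ are joined by an edge of the ladder. Hence $d_{\ladder}(\Pi(x),\Pi(x'))\le 1$.

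Combining the two cases gives $C=\max\{2\kappa_0,1\}$, which depends only on the hyperbolicity constant and not on the choice of $\Sigma_1,\Sigma_2$. The only delicate point is bookkeeping. Axiom \eqref{item:retraction} must be applied with exactly the fiber geodesics used to build the ladder. It must also apply to whichever closest point projection $\Pi$ selects; the axiom is stated for an arbitrary closest point projection and arbitrary geodesics, so this is fine. It is also exactly here that the threshold $\kappa$ in the definition of ladder edges is designed to match the axiom. Beyond this, I expect no real obstacle.
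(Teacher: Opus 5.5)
Your proof is correct and is essentially the paper's argument. You reduce to adjacent vertices, handle edges inside a fiber via uniform hyperbolicity of the fibers and Lemma~\ref{lem:hyp pass close}, and handle edges between fibers via Axiom~\eqref{item:retraction} together with the $\kappa$--threshold built into the ladder edges; you just spell out details the paper leaves implicit. One harmless slip: in a general hyperbolic graph a fiber geodesic is only $\delta$--quasiconvex, not $0$--quasiconvex, since geodesics need not be unique, but this changes $\kappa_0$ only by a uniform amount.
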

\begin{proof}
    It suffices to check that a fiberwise projection of two vertices of $\mathcal E$ connected by an edge are uniformly close. For edges that connect vertices in the same fiber, this follows from Axiom \eqref{item:hyperbolic} and the fact that projection to a geodesic in a hyperbolic space is coarsely Lipschitz (Lemma~\ref{lem:hyp pass close}). For edges connecting points in distinct fibers, this follows directly from Axiom \eqref{item:retraction}.
\end{proof}

For each ladder $\ladder(\Sigma_1,\Sigma_2)$, there is a natural coarse graph map $\iota \colon \ladder(\Sigma_1,\Sigma_2) \to \E$  defined by sending vertices in the ladder to the corresponding vertices in $\E$.

\begin{corollary}
\label{cor:ladder_qie}
 There exists $K_1 = K_1(\kappa)\geq 1$ such that for any ladder $\ladder(\Sigma_1,\Sigma_2)$, the natural map $\iota \colon \ladder(\Sigma_1,\Sigma_2) \to \E$ is a $K_1$--quasi-isometric embedding.
\end{corollary}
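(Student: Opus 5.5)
We must show $\iota$ is coarsely Lipschitz and that $\Pi$ gives a coarse inverse bound. The upper bound is immediate: adjacent vertices in the ladder are either adjacent in a fiber geodesic $\ladder_v(\Sigma_1,\Sigma_2)\subset \E_v$ (so at $\E$--distance $1$), or joined by a rung edge, which by definition joins points with $d_\E\le\kappa$. Hence $d_\E(\iota(x),\iota(y))\le \kappa\, d_{\ladder}(x,y)$.

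For the lower bound, the plan is to show that $\Pi$, viewed as a map $\E\to\ladder(\Sigma_1,\Sigma_2)$ into the ladder with its own path metric, is coarsely Lipschitz and restricts to (nearly) the identity on the ladder. Then for $x,y$ in the ladder we get $d_{\ladder}(x,y)\le d_{\ladder}(\Pi x,\Pi y)+2c\le K d_\E(x,y)+K+2c$, which is the desired inequality. The restriction claim is clear: if $x\in\ladder_w(\Sigma_1,\Sigma_2)$, then its closest point projection in $\E_w$ to this geodesic lies within distance $1$ of $x$ along the geodesic, hence within $1$ in the ladder.

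The Lipschitz claim should be checked edge by edge in $\E$, as in Lemma~\ref{lem:fiberwise_proj}, but now measuring distance in $\ladder$ rather than in $\E$.

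For an edge inside a fiber $\E_w$, both projections lie on the fiber geodesic $\ladder_w$. By hyperbolicity of $\E_w$ (Axiom~\eqref{item:hyperbolic}) and Lemma~\ref{lem:hyp pass close}, they are within $\kappa_0$ of each other in $d_w$. Because they lie on a geodesic, this bounds the length of the subsegment of $\ladder_w$ between them, and that subsegment is a path in the ladder.

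For an edge $x\sim x'$ with $x\in\E_v$, $x'\in\E_{v'}$, we apply Axiom~\eqref{item:retraction} with the three points $\Sigma_1(v),\Sigma_2(v),x$, which are adjacent to $\Sigma_1(v'),\Sigma_2(v'),x'$ since the tight sections are simplicial embeddings. This gives $d_\E(\Pi x,\Pi x')\le\kappa$. By definition of the rungs, $\Pi x$ and $\Pi x'$ are then joined by a ladder edge, so their ladder distance is at most $1$.

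The main subtlety is the fiber-edge case and, more generally, the quantifier in Axiom~\eqref{item:retraction}. The axiom refers to \emph{a} geodesic $[x,y]_v$ and \emph{a} closest point projection. We need it to apply to our chosen fiber geodesics $\ladder_v$, $\ladder_{v'}$ and to our chosen values of $\Pi$. Granting that the axiom is meant for arbitrary such choices, the argument goes through with $K_1=\max\{\kappa,\,\kappa_0+3\}$.
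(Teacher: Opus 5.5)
Your proposal is correct and follows the paper's proof. As there, $\iota$ is $\kappa$--coarsely Lipschitz by the definition of the rungs, and the fiberwise projection $\Pi$ is coarsely Lipschitz into $\ladder(\Sigma_1,\Sigma_2)$ (Lemma~\ref{lem:fiberwise_proj}, which you reprove edge by edge in the ladder metric) and restricts to the identity, up to an additive constant, on the ladder's vertices; the only slip is that for adjacent vertices in a fiber Lemma~\ref{lem:hyp pass close} gives a bound of $2\kappa_0$ rather than $\kappa_0$, which just changes the constant.
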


\begin{proof}
    By definition of the edges in the ladder, $\iota$ is $\kappa$--coarsely Lipschitz.  Let $\Pi$ be the fiberwise projection onto $\ladder(\Sigma_1,\Sigma_2)$, which is $K_0$--coarsely Lipschitz (Lemma \ref{lem:fiberwise_proj}). Since $\Pi \circ \iota$ is the identity on the vertices of the ladder, $d_{\ladder(\Sigma_1,\Sigma_2)}(x,y) \leq K_0 d_\E(x,y) + K_0$, for all vertices $x,y \in \ladder(\Sigma_1,\Sigma_2)$. Thus $\iota$ is a $K_1$--quasi-isometric embedding, where $K_1 = \max\{\kappa,K_0\}$.
\end{proof}

\begin{remark}\label{rem:laddes_as_subsets}
    Ladders are not naturally subgraphs of $\E$ because of the extra edges added between vertices in adjacent fibers.  However, the natural coarse graph map does restrict to an embedding on vertices, so we will sometimes refer to ladders as subsets of $\E$ or sections as subsets of ladders, clarifying when necessary. To emphasize:  when viewed as a subset of $\E$ the ladder $\ladder(\Sigma_1,\Sigma_2)$ is simply the image of the map $\iota$, which is the set of vertices in the ladder.
\end{remark}

The next lemma says that the geodesics in the fibers used to build ladders have $\E$--diameter comparable to the $\E$--distance between their end points.

\begin{lemma}
\label{lem:bounded_between}
There exists a $K_2 \geq 1$ so that for all vertices $v \in \B$ and for all vertices  $x,y \in \E_v$, every geodesic $[x,y]_{v}$ has diameter at most $K_2d_\E(x,y)+K_2$ in $\E$. 
\end{lemma}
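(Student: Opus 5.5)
The plan is to realize the fiber geodesic inside a ladder and to use the tight sections supplied by Axiom \eqref{item:robust} as ``separators'' that any short path from $x$ to $y$ must cross. Fix $v$, vertices $x,y\in\E_v$, and a geodesic $[x,y]_v$, and set $n=d_\E(x,y)$. By Axiom \eqref{item:full} choose tight sections $\Sigma_x,\Sigma_y$ through $x$ and $y$. Build a ladder $\ladder=\ladder(\Sigma_x,\Sigma_y)$, using the given geodesic $[x,y]_v$ as $\ladder_v$ and arbitrary fiber geodesics elsewhere. It suffices to show that every vertex $z\in[x,y]_v$ satisfies $d_\E(x,z)\le Cn+C$ for a uniform constant $C$.

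\emph{Step 1 (a short path in the ladder).} Take an $\E$--geodesic from $x$ to $y$ of length $n$ and apply the fiberwise projection $\Pi$. By Lemma~\ref{lem:fiberwise_proj}, consecutive image points are $2K_0$--close in the ladder. Joining them by ladder geodesics gives an honest edge path $\alpha$ in $\ladder$ from $x$ to $y$ of length at most $K_0'(n+1)$. The projection of $\alpha$ to $\B$ stays within distance $\lesssim K_0'(n+1)$ of $v$, because $p$ is $1$--Lipschitz on $\E$ and each ladder edge moves at most one step in $\B$. Moreover, by Corollary~\ref{cor:ladder_qie}, every vertex of $\alpha$ is within $\E$--distance $\kappa K_0'(n+1)$ of $x$.

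\emph{Step 2 (separation by a robust section).} For $z\in[x,y]_v$, Axiom \eqref{item:robust} provides a tight section $\Sigma_z$ with $\Sigma_z(u)\in\ladder_u$ for every $u$ and $\Sigma_z(v)=z$. This splits each rung $\ladder_u$ into an ``$x$--side'' $[\Sigma_x(u),\Sigma_z(u)]_u$ and a ``$y$--side''. The path $\alpha$ starts on the $x$--side and ends on the $y$--side, so it either passes through some $\Sigma_z(u)$ or traverses a ladder edge $pp'$ with $p\in\ladder_u$ on the $x$--side, $p'\in\ladder_{u'}$ on the $y$--side, $u,u'$ adjacent, and $d_\E(p,p')\le\kappa$. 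In either case I want a point $q$ of $\alpha$ with $d_\E(q,\Sigma_z(u))\le c$ for a uniform $c$. Granting this, since $\Sigma_z$ is a simplicial embedding we get $d_\E(\Sigma_z(u),z)\le d_\B(u,v)\lesssim K_0'(n+1)$. Combining with Step 1 yields $d_\E(x,z)\le \kappa K_0'(n+1)+c+K_0'(n+1)$, which gives the lemma with $K_2$ of this form.

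\emph{Main obstacle.} The key point is the crossing-edge case in Step 2. There $p$ and $p'$ are only $\E$--close, not fiber-close, so one cannot directly conclude that $\Sigma_z(u)$ lies near $p$. I would first handle it with Axiom~\eqref{item:retraction}. The sections give adjacencies $\Sigma_x(u)\sim\Sigma_x(u')$, $\Sigma_y(u)\sim\Sigma_y(u')$ and $\Sigma_z(u)\sim\Sigma_z(u')$. I would compare the position along the rungs $\ladder_u,\ladder_{u'}$ via fiber projections, using Lemma~\ref{lem:fiberwise_proj}, and show that the ``order along the rung'' is coarsely preserved under passing between adjacent fibers. Concretely, I would project $p'$ (or a vertex of $\E_u$ adjacent to a nearby point) to $\ladder_u$ and use that a point on the $y$--side of $\ladder_{u'}$ cannot retract, up to bounded error, strictly into the $x$--side of $\ladder_u$ beyond $\Sigma_z(u)$. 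This ordering argument may need to be done by applying Axiom \eqref{item:retraction} to the triples $(\Sigma_x,\Sigma_z,\cdot)$ and $(\Sigma_z,\Sigma_y,\cdot)$ separately. If it fails, the fallback is to reparametrize $\alpha$ so that it moves between fibers only along tight sections, for which the crossing is literal.
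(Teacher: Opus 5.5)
Your Step~1 and the reduction in Step~2 are sound. The gap is the crossing-edge case you flag, which carries the entire content of the lemma, and none of your proposed fixes resolves it.
\begin{itemize}
\item Axiom~\eqref{item:retraction} compares projections of points that are \emph{adjacent} across fibers, whereas $p,p'$ are only $\kappa$--close in $\E$.
\item ``Projecting $p'$ to $\ladder_u$'' is undefined, since $p'\in\E_{u'}$.
\item ``Order along the rung'' has no coarse meaning, because fiber distances are uncontrolled.
\item Lemma~\ref{lem:fiberwise_proj} applied to the full ladder $\ladder(\Sigma_x,\Sigma_y)$ gives nothing, since that projection fixes every ladder vertex.
\end{itemize}
The fallback is circular. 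Replacing the edge $pp'$ by a step along a tight section $\Sigma_p$ followed by the rung segment $[\Sigma_p(u'),p']_{u'}$ inserts a fiber geodesic whose endpoints are $(\kappa+1)$--close in $\E$. Controlling the $\E$--distance to $x$ of its vertices, in particular of $\Sigma_z(u')$ where the crossing now occurs, is exactly the statement being proved. Moreover, unless you choose $\Sigma_p$ inside $\ladder(\Sigma_x,\Sigma_z)$, $\Sigma_p(u')$ need not even lie on the $x$--side, since the axioms impose no ordering on tight sections.

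The missing idea is to project onto a \emph{subladder} whose rung is a subsegment of the given rung. In your crossing case, let $\Pi'$ be the fiberwise projection onto $\ladder(\Sigma_z,\Sigma_y)$, built from the rungs $[\Sigma_z(u),\Sigma_y(u)]_u\subset\ladder_u$. Every $x$--side vertex of $\ladder_u$, in particular $p$, has $\Sigma_z(u)$ in its projection, while $p'\in\Pi'(p')$. Lemma~\ref{lem:fiberwise_proj} and Corollary~\ref{cor:ladder_qie} then give $d_\E(\Sigma_z(u),p')\le K_1(K_0\kappa+K_0)+K_1$, which is your missing constant $c$.

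Once you have this trick, the path $\alpha$ and the separation argument are superfluous, and what remains is the paper's proof. For $z,q\in[x,y]_v$, take the ladder $\ladder(\Sigma_z,\Sigma_q)$ with $\ladder_v(\Sigma_z,\Sigma_q)=[z,q]_v$. Then $z\in\Pi(x)$ and $q\in\Pi(y)$, so $d_\E(z,q)\le K_1(K_0\,d_\E(x,y)+K_0)+K_1$ immediately.
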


\begin{proof}
  Pick two points $z,q \in [x,y]_{v}$. By Axiom \eqref{item:full} there exist tight sections $\Sigma_z$ and $\Sigma_q$ though $z$ and $q$ respectively. There is also a ladder $\ladder(\Sigma_z,\Sigma_q)$ for which $\ladder_v(\Sigma_z,\Sigma_q)$ is the subgeodesic $[z,q]_v$ of $[x,y]_v$.  Consider the fiberwise projection $\Pi \colon \E \to \ladder(\Sigma_z,\Sigma_q)$. Since $z,q \in [x,y]_v$ we can assume that $z \in \Pi(x)$ and $q \in \Pi(y)$. Since $\Pi$ is $K_0$--coarsely Lipschitz (Lemma \ref{lem:fiberwise_proj})  and the inclusion $\iota\colon \ladder(\Sigma_z,\Sigma_q)\to \E$ a is $K_1$--quasi-isometric embedding (Corollary \ref{cor:ladder_qie}), we have 
\[d_\E(z,q) \leq K_1 d_{\ladder(\Sigma_z,\Sigma_q)}(z,q) + K_1 \leq K_0 K_1d_\E(x,y)+  K_0K_1+K_1.\]
Setting $K_2 = K_0K_1+K_1$ proves the lemma.
\end{proof}

Our second construction is the neck of a ladder.  Roughly, this is the subspace of $\B$ over which the sections defining the ladder are close.

\begin{definition}\label{defn:neck}
    Let $\ladder(\Sigma_1,\Sigma_2)$ be a ladder between two tight sections. For $B \geq 0$, the \textbf{\em $B$--neck} of $\ladder(\Sigma_1,\Sigma_2)$ is the following set of vertices of $\B$:
\[\neck_B(\Sigma_1,\Sigma_2)=\big\{v\in \B: d_{\E}(\Sigma_1(v),\Sigma_2(v))\leq B\big\}.\]
This notation reflects the fact that the neck only depends on the sections, and not the ladder.
\end{definition}

We note that the $B$--neck of a given ladder may be empty for (relatively) smaller values of $B$.  Moreover, we emphasize that we think of ladders as quasi-isometrically embedded subspaces of the total space $\E$ (Corollary \ref{cor:ladder_qie}), while necks are subsets of the hyperbolic base $\B$.

The next lemma says, roughly, that thickened necks grow in a controlled way.

\begin{lemma} \label{lem:neck growth}
If  $B\geq \sigma$ as in Axiom \eqref{item:flaring} and $B' \geq B$, then there is $B'' =B''(B',B) \geq 0$ so that for any ladder $\ladder(\Sigma_1,\Sigma_2)$ we have
$$\neck_B(\Sigma_1,\Sigma_2)\subset \neck_{B'}(\Sigma_1,\Sigma_2)\subset \N_{B''}(\neck_{B}\left(\Sigma_1,\Sigma_2)\right).$$
\end{lemma}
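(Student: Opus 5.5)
The first inclusion is immediate from the definition of the neck, since $B\le B'$. For the second inclusion, I fix $v\in \neck_{B'}(\Sigma_1,\Sigma_2)$ and look for a vertex of $\neck_B(\Sigma_1,\Sigma_2)$ within a distance $B''$ of $v$ that depends only on $B$ and $B'$. The tool is the flaring Axiom \eqref{item:flaring}, applied with the constant $B'$ (allowed, since $B'\ge B\ge\sigma$). Let $\lambda=\lambda(B')$ be the corresponding diverging function, and choose $R$ such that $\lambda(r)>B'$ for all $r\ge R$. I will aim to show that $B''=R$ works, or $R$ plus a small additive constant.

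The case $\neck_B(\Sigma_1,\Sigma_2)=\emptyset$ has to be handled, or more precisely ruled out, before the main case. Suppose the $B$--neck is empty but some $v$ lies in the $B'$--neck. Take any $u$ with $d_\B(u,v)\ge R$; if $\B$ is bounded, then $v$ is trivially close to everything and this case needs a separate small argument. Every $w$ on a geodesic $[u,v]$ other than $v$ satisfies $d_\E(\Sigma_1(w),\Sigma_2(w))>B\ge\sigma$. Flaring then gives $d_\E(\Sigma_1(u),\Sigma_2(u))\ge\lambda(d(u,v))$, which is perfectly consistent. So emptiness cannot be ruled out in general, and in that case the inclusion into $\nbhd_{B''}(\emptyset)$ fails. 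I expect the intended reading either assumes the $B$--neck is nonempty, or that $\sigma$ is chosen so that necks at level $\sigma$ are nonempty whenever the sections come close at all. I will therefore state the needed hypothesis that $\neck_B$ is nonempty, and note the caveat.

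Main case: $\neck_B$ is nonempty and $v\in\neck_{B'}\setminus\neck_B$. Let $u\in\neck_B$ be a vertex closest to $v$ in $\B$, and fix a geodesic $\gamma=[v,u]$. By the choice of $u$, every $w\in\gamma-\{u\}$ has $d_\E(\Sigma_1(w),\Sigma_2(w))>B\ge\sigma$. I then apply flaring with the roles of the endpoints reversed: the endpoint in the $B'$--neck is $v$, and the condition $\ge\sigma$ has to hold off $v$. That condition holds on $\gamma$ away from $u$, but at $u$ itself we only know $d_\E(\Sigma_1(u),\Sigma_2(u))\le B$, which may be smaller than $\sigma$. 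To get around this, I truncate: let $u'$ be the vertex of $\gamma$ adjacent to $u$. On the subgeodesic $[v,u']$, every vertex other than $v$ is at distance $\ge\sigma$, and $v$ is at distance $\le B'$. Flaring then yields $d_\E(\Sigma_1(u'),\Sigma_2(u'))\ge\lambda(d_\B(v,u'))$. On the other side, $\Sigma_i$ are simplicial, so $d_\E(\Sigma_i(u'),\Sigma_i(u))\le1$, which gives $d_\E(\Sigma_1(u'),\Sigma_2(u'))\le B+2\le B'+2$. Choosing $R$ with $\lambda(r)>B'+2$ for $r\ge R$ forces $d_\B(v,u')<R$, hence $d_\B(v,u)\le R$. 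So $B''=R$ suffices, and it depends only on $B$ and $B'$ through $\lambda(B')$.

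The main obstacle is bookkeeping rather than anything substantive: the flaring axiom requires the $\ge\sigma$ condition on the whole geodesic minus the near endpoint. That is why we need the closest-point choice of $u$ and the one-step truncation, together with the simplicial $1$--Lipschitz property of tight sections, to transfer the bound from $u$ to $u'$.
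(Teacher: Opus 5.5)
Your proof is correct and is essentially the paper's (a nearest vertex $u$ of $\neck_B(\Sigma_1,\Sigma_2)$, a geodesic $\gamma=[v,u]$, then flaring). The one difference is that the paper anchors Axiom~\eqref{item:flaring} at $u$ with parameter $B$: minimality of $u$ gives exactly the $\ge\sigma$ hypothesis on $\gamma-\{u\}$, so $B'\ge d_\E(\Sigma_1(v),\Sigma_2(v))\ge\lambda(d_\B(u,v))$ with $\lambda=\lambda(B)$, and your truncation to $u'$ and the $1$--Lipschitz step are unnecessary.

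Your emptiness caveat is also legitimate. The paper's proof tacitly assumes $\neck_B(\Sigma_1,\Sigma_2)\neq\emptyset$ when it picks a nearest vertex; this is harmless because in every eventual application in the paper the relevant neck is nonempty.
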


\begin{proof}
     The first containment holds by definition. For the second containment, consider a vertex $u\in  \neck_{B'}(\Sigma_1,\Sigma_2) - \neck_B(\Sigma_1,\Sigma_2)$.  Let $v\in \neck_B(\Sigma_1,\Sigma_2)$ be a vertex minimizing the distance to $u$ (such a vertex exists because $\E$ is a simplicial graph), and let $\gamma$ be a $\B$--geodesic $[u,v]$. This ensures that for any vertex $w \in \gamma- \{v\}$, we have $d_\E(\Sigma_1(w),\Sigma_2(w)) > B \geq \sigma$. Thus we can apply Axiom \eqref{item:flaring} to conclude 
    $$B'\geq d_{\E}(\Sigma_1(u),\Sigma_2(u))\geq \lambda (d_\B(u,v))$$
    for the diverging function $\lambda = \lambda(B)\colon \mathbb{R}^+\to \mathbb{R}^+$. This gives an upper bound $B''$ on $d_\B(u,v)$ in terms of $B$ and $B'$. Therefore $u \in \N_{B''}(\neck_{B}(\Sigma_1,\Sigma_2))$ as desired.
\end{proof}

The next two lemmas describe how the necks of multiple ladders interact.

\begin{lemma}
\label{lem:neck_between}
There is $K \geq 1$ so that for any $B\ge 1$ and 
any ladders which are nested $\ladder(\Sigma_2,\Sigma_3)\subset \ladder(\Sigma_1,\Sigma_4)$ as subsets of $\E$,  we have \[ \neck_B(\Sigma_1,\Sigma_4) \subset \neck_{KB}(\Sigma_2,\Sigma_3).\] 
\end{lemma}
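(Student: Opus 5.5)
Fix a vertex $v\in\neck_B(\Sigma_1,\Sigma_4)$, so $d_\E(\Sigma_1(v),\Sigma_4(v))\le B$. The plan is to bound $d_\E(\Sigma_2(v),\Sigma_3(v))$ by a uniform multiple of $B$. The nesting hypothesis $\ladder(\Sigma_2,\Sigma_3)\subset\ladder(\Sigma_1,\Sigma_4)$ as subsets of $\E$ says that every vertex of $\ladder_v(\Sigma_2,\Sigma_3)$ is a vertex of $\ladder_v(\Sigma_1,\Sigma_4)$, since both lie in the fiber $\E_v$. In particular, $\Sigma_2(v)$ and $\Sigma_3(v)$ both lie on the fiber geodesic $[\Sigma_1(v),\Sigma_4(v)]_v=\ladder_v(\Sigma_1,\Sigma_4)$.

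We then apply Lemma~\ref{lem:bounded_between} to the fiber geodesic $[\Sigma_1(v),\Sigma_4(v)]_v$. Its $\E$--diameter is at most $K_2\, d_\E(\Sigma_1(v),\Sigma_4(v))+K_2\le K_2B+K_2$. Since $\Sigma_2(v)$ and $\Sigma_3(v)$ lie on this geodesic, this gives
\[ d_\E(\Sigma_2(v),\Sigma_3(v))\le K_2B+K_2\le 2K_2B, \]
where the last inequality uses $B\ge 1$. Taking $K=2K_2$, which depends only on the constants of Definition~\ref{defn:externally flaring graph bundle}, yields $v\in\neck_{KB}(\Sigma_2,\Sigma_3)$.

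The only point needing care is the translation of the nesting hypothesis into the statement that $\Sigma_2(v),\Sigma_3(v)$ lie on $\ladder_v(\Sigma_1,\Sigma_4)$. By Remark~\ref{rem:laddes_as_subsets}, ladders viewed as subsets of $\E$ are their vertex sets, and the vertices lying over $v$ are exactly the vertices of $\ladder_v$. So this step is immediate, and there is no genuine obstacle: the content is entirely carried by Lemma~\ref{lem:bounded_between}, and it is exactly this lemma that makes fiber distances, which are otherwise meaningless here, unnecessary.
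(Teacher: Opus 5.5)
Your proposal is correct and follows the paper's proof essentially verbatim. Nesting puts $\Sigma_2(v),\Sigma_3(v)$ on $\ladder_v(\Sigma_1,\Sigma_4)$, then Lemma~\ref{lem:bounded_between} gives $d_\E(\Sigma_2(v),\Sigma_3(v))\le K_2B+K_2\le 2K_2B$ using $B\ge 1$, so $K=2K_2$ works.
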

\begin{proof}
Let $ v\in \neck_B(\Sigma_1,\Sigma_4)$. The nested assumption implies that $\Sigma_2(v), \Sigma_3(v) \in \ladder_v(\Sigma_1,\Sigma_4)$.  Since $d_\E(\Sigma_1(v), \Sigma_4(v)) \leq B$, Lemma \ref{lem:bounded_between} then implies that $d_\E(\Sigma_2(v),\Sigma_3(v)) \leq K_2B+K_2$. Since $B\ge 1$, it follows that $v \in  \neck_{KB}(\Sigma_2,\Sigma_3)$ for $K= 2K_2$, as required.
\end{proof}

\begin{lemma} \label{lem:3_necks_intersect_2}
If $B \geq \sigma$ as in Axiom \eqref{item:flaring}, then for all $C\geq 0$ there exists $D = D(B,C)\geq 0$ such that the following holds for any triple $\Sigma_1,\Sigma_2,\Sigma_3$ of tight sections:
 \[ \N_C(\neck_B(\Sigma_1,\Sigma_2)) \cap \N_C(\neck_B(\Sigma_2,\Sigma_3)) \subset \N_D\left(\neck_B(\Sigma_1,\Sigma_3)\right).\]
\end{lemma}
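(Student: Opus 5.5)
Let $u$ be a vertex in the intersection. There are vertices $v_1\in \neck_B(\Sigma_1,\Sigma_2)$ and $v_2\in\neck_B(\Sigma_2,\Sigma_3)$ with $d_\B(u,v_i)\le C$. I will find a vertex of $\B$ within bounded distance of $u$ over which $\Sigma_1$ and $\Sigma_3$ are $\E$--close, that is, which lies in $\neck_{B'}(\Sigma_1,\Sigma_3)$ for some $B'=B'(B,C)$. Lemma \ref{lem:neck growth} then places this vertex within $B''(B',B)$ of $\neck_B(\Sigma_1,\Sigma_3)$, and we may take $D = C + B''$ (enlarged by the bounded distance from $u$).

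The key estimate is that moving a bounded distance in $\B$ changes $\E$--distances between tight sections by a bounded amount. Each tight section is a simplicial embedding, so adjacent vertices $w,w'\in\B$ satisfy $d_\E(\Sigma_i(w),\Sigma_i(w'))= 1$. Hence
\[ d_\E(\Sigma_i(w),\Sigma_j(w)) \le d_\E(\Sigma_i(w'),\Sigma_j(w')) + 2d_\B(w,w').\]
Applying this along a geodesic from $v_1$ to $u$ gives $d_\E(\Sigma_1(u),\Sigma_2(u))\le B+2C$, and along a geodesic from $v_2$ to $u$ gives $d_\E(\Sigma_2(u),\Sigma_3(u))\le B+2C$. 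The triangle inequality in $\E$ then yields $d_\E(\Sigma_1(u),\Sigma_3(u))\le 2B+4C$, so $u\in\neck_{2B+4C}(\Sigma_1,\Sigma_3)$ itself.

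Since $2B+4C\ge B\ge\sigma$, Lemma \ref{lem:neck growth} gives $u\in \N_{B''}(\neck_B(\Sigma_1,\Sigma_3))$ with $B''=B''(2B+4C,B)$, so $D=B''$ suffices.

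The only step that could be delicate is confirming that sections are $1$--Lipschitz into $\E$. This holds because a simplicial map sends edges of $\B$ to edges of $\E$, and embeddings do not collapse edges. No flaring is needed beyond its use inside Lemma \ref{lem:neck growth}. The argument is therefore a routine triangle-inequality bound, and I expect no genuine obstacle.
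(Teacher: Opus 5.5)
Your proposal is exactly the paper's proof. Simpliciality of sections gives $\N_C(\neck_B(\Sigma_i,\Sigma_j))\subset\neck_{B+2C}(\Sigma_i,\Sigma_j)$, the triangle inequality puts $u$ in $\neck_{2B+4C}(\Sigma_1,\Sigma_3)$, and Lemma~\ref{lem:neck growth} finishes.

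There is one caveat, which you share with the paper and which is not the step you flagged as delicate. The proof of Lemma~\ref{lem:neck growth} picks a nearest point of $\neck_B(\Sigma_1,\Sigma_3)$, so your argument needs $\neck_B(\Sigma_1,\Sigma_3)\neq\emptyset$. Without that hypothesis the statement genuinely fails: over a one-vertex base with fiber a simplicial line, take an integer $B\geq 1$ and the sections through $0$, $1$, $B+1$. The left side is nonempty, but the right side is $\N_D(\emptyset)=\emptyset$. Nonemptiness does hold at the one place the lemma is applied, in the proof of Lemma~\ref{lem:glued_graphs_hyp_and_qi_embedded}.
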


\begin{proof}
    By Definition \ref{defn:neck} of necks, we have $\N_C(\neck_B(\Sigma_1,\Sigma_2))\subset \neck_{B+2C}(\Sigma_1,\Sigma_2)$, and similarly for $\Sigma_2,\Sigma_3$. So, if $v\in\mathcal B$ lies in the intersection on the left-hand side, we have $$d_{\mathcal E}(\Sigma_1(v),\Sigma_2(v)), d_{\mathcal E}(\Sigma_2(v),\Sigma_3(v))\leq B+2C.$$ By the triangle inequality we have $d_{\mathcal E}(\Sigma_1(v),\Sigma_3(v))\leq 2(B+2C)$, which implies $v$ is an element of $\neck_{2B+4C}(\Sigma_1,\Sigma_3)$. Lemma \ref{lem:neck growth} now provides the desired $D$, completing the proof.
\end{proof}

We next show that the neck of a ladder is a quasiconvex subset of the hyperbolic space $\B$.

\begin{lemma}
\label{lem:neck_qc} 
    For all $B\geq \sigma$ as in Axiom \eqref{item:flaring}, there exists $Q = Q(B) \geq 0$ so that $\neck_B(\Sigma_1,\Sigma_2)$ is $Q$--quasiconvex for any tight sections $\Sigma_1,\Sigma_2$.
\end{lemma}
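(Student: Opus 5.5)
Fix $B\ge\sigma$ and tight sections $\Sigma_1,\Sigma_2$. Take vertices $u,v\in\neck_B(\Sigma_1,\Sigma_2)$, a $\B$--geodesic $\gamma=[u,v]$, and a vertex $w\in\gamma$. The goal is to bound $d_\B(w,\neck_B(\Sigma_1,\Sigma_2))$ by some $Q(B)$. The plan rests on one observation: tight sections are simplicial embeddings, so each restriction $\Sigma_i|_\gamma$ is $1$--Lipschitz into $\E$. Hence $t\mapsto d_\E(\Sigma_1(\gamma(t)),\Sigma_2(\gamma(t)))$ changes by at most $2$ per step along $\gamma$. Flaring along a subsegment then forbids this function from rising far above $B$ and coming back down.

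The first step is to locate the maximal subsegment of $\gamma$ containing $w$ that lies outside the neck. If $w$ is in the neck, there is nothing to prove. Otherwise let $[a,b]\subset\gamma$ be the maximal subsegment containing $w$ whose interior vertices all lie outside $\neck_B(\Sigma_1,\Sigma_2)$. Its endpoints satisfy $a,b\in\neck_B(\Sigma_1,\Sigma_2)$; they exist because $u$ and $v$ are in the neck. Every interior vertex $x$ of $[a,b]$ has $d_\E(\Sigma_1(x),\Sigma_2(x))>B\ge\sigma$.

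The second step applies Axiom \eqref{item:flaring} from each end. For interior $x$, the geodesic $[x,a]\subset\gamma$ satisfies the hypotheses: the endpoint $a$ is in the $B$--neck, and every other vertex has $\E$--distance at least $\sigma$. This gives $d_\E(\Sigma_1(x),\Sigma_2(x))\ge\lambda(d_\B(x,a))$, and symmetrically $\ge\lambda(d_\B(x,b))$. Since $\lambda$ only diverges, an upper bound is also needed. The Lipschitz observation supplies one: $d_\E(\Sigma_1(x),\Sigma_2(x))\le B+2\min\{d_\B(x,a),d_\B(x,b)\}$. Together these show that the midpoint $m$ of $[a,b]$, at distance $L/2$ from both ends where $L=d_\B(a,b)$, satisfies $\lambda(L/2)\le B+L$. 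This is not yet a contradiction, because $\lambda$ might grow slower than linearly.

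This is the main obstacle: the crude Lipschitz bound competes with a possibly slow-growing $\lambda$. The fix I would try first is to apply flaring with a larger threshold. Let $M$ be the maximum of $f(x)=d_\E(\Sigma_1(x),\Sigma_2(x))$ on $[a,b]$, attained at some $x_0$. Use the diverging function $\lambda_{B'}$ for $B'=B+2$, and argue through the last vertex $a'$ before $x_0$ with $f(a')\le B'$. Flaring from $a'$ to $x_0$ gives that $M$ is large once $d_\B(a',x_0)$ is large. That alone is still not a contradiction, since $f$ is only large near $x_0$.

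A cleaner route is to flare from $x_0$'s perspective, exploiting that $[a,b]$ is symmetric. Apply Axiom \eqref{item:flaring} to $[a,x]$ with $x$ chosen so that $f(x)$ is at most some fixed value $B'$; then $d_\B(a,x)$ is bounded by $\lambda_B^{-1}(B')$, so points with small $f$ lie near $a$ or near $b$. To bound $L$ itself, I would take $x$ ranging over all of $[a,b]$ and use that $f\ge\lambda(d(x,a))$ and $f\ge\lambda(d(x,b))$. Combined with hyperbolicity of $\E$ — which is unavailable — this would be immediate. Since $\E$'s hyperbolicity is not available, I expect the honest argument to use Lemma~\ref{lem:neck growth} together with a further application of flaring at two scales. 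Concretely: the set where $f\le B'$ (for suitable $B'$) contains the neighbourhoods of $a$ and $b$, and it must separate the region where $f$ is large. If that cannot be closed out, I would accept that the bound on $L$, and hence $Q(B)$, is what must come from the flaring function being diverging at both ends, via an induction on $L$ using Lemma~\ref{lem:neck growth}.

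Once $L$ is bounded by a constant depending only on $B$ (through $\lambda$), every $w$ lies within $L/2$ of $\{a,b\}\subset\neck_B(\Sigma_1,\Sigma_2)$, and we may take $Q(B)=L/2$.
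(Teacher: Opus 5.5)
Your reduction is the right one, and it is the one the paper uses: cut $\gamma$ into maximal subsegments $[a,b]$ whose interior vertices avoid $\neck_B(\Sigma_1,\Sigma_2)$, and bound $L=d_\B(a,b)$ in terms of $B$ alone. The gap is that you never produce that bound. The proposal stops at the midpoint estimate $\lambda(L/2)\le B+L$, which you correctly note is inconclusive. What follows is speculation (a second scale, an induction via Lemma~\ref{lem:neck growth}) that is never carried out. As written, no constant $Q(B)$ is obtained.

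The missing idea is to evaluate the flaring inequality at the \emph{far end} of $[a,b]$, where the $\E$--distance between the sections is already known to be small, instead of at the midpoint. The paper applies Axiom~\eqref{item:flaring} to the whole segment. Since $b\in\neck_B(\Sigma_1,\Sigma_2)$, it gets $d_\E(\Sigma_1(a),\Sigma_2(a))\ge\lambda(L)$. Since $a$ is also in the neck, this gives $\lambda(L)\le B$, which bounds $L$ because $\lambda$ diverges.

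You already have everything needed for a slightly more careful version. If $L\ge 2$, let $x$ be the interior vertex of $[a,b]$ adjacent to $b$, and apply the axiom to $[x,a]$; every vertex other than $a$ is interior, so has $\E$--distance $>B\ge\sigma$. You get $\lambda(L-1)\le d_\E(\Sigma_1(x),\Sigma_2(x))\le B+2$, where the upper bound is your $1$--Lipschitz observation. This variant is worth having: applying the axiom to all of $[a,b]$ formally also requires $d_\E(\Sigma_1(a),\Sigma_2(a))\ge\sigma$, and membership in the neck does not guarantee that. In either form you need no midpoint upper bound, no second threshold, and no appeal to Lemma~\ref{lem:neck growth}; $Q(B)$ is half the resulting bound on $L$.
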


\begin{proof}
Let $\lambda = \lambda(B)$ be the function from Axiom (\ref{item:flaring}).
If $\neck_B(\Sigma_1,\Sigma_2) = \emptyset$, then there is nothing to show.  Otherwise, let  $u_0,v_0$ be two vertices of $\neck_B(\Sigma_1,\Sigma_2)$ and let $\alpha = [u_0,v_0]$ be a $\B$--geodesic between them. We will provide a bound $Q = Q(B)$ on the length of any subgeodesic $\gamma = [u,v]$ of $\alpha$ that intersects $\neck_B(\Sigma_1,\Sigma_2)$ only at its endpoints $u,v$. This will suffice, since it implies $\alpha$ itself is contained the $Q$--neighborhood of $\neck_B(\Sigma_1,\Sigma_2)$.

The fact that $v \in \neck_B(\Sigma_1,\Sigma_2)$ implies $d_\E(\Sigma_1(v),\Sigma_2(v)) \leq B$, but our assumption on $\gamma$ means $d_\E(\Sigma_1(w),\Sigma_2(w)) \ge B\geq \sigma$ for all vertices $w \in \gamma$. Hence Axiom \eqref{item:flaring} implies
\[ B\ge d_{\E}(\Sigma_1(u),\Sigma_2(u))\geq \lambda(d_\B(u,v)).\]

Since $\lambda$ is diverging, this bounds $d_\B(u,v)$ in terms of $B$, and we are done. 
\end{proof}

Since necks are quasiconvex, they admit well-behaved closest-point projections.  The following lemma allows us to control, for a triple of tight sections, the projection of one neck to another when the third neck is empty.  It will be useful in Lemma \ref{lem:double_ladder_proj}, where we prove hyperbolicity of certain double-ladders obtained from gluing two ladders together in a natural way.

\begin{lemma}
\label{lem:proj_neck}
     For all $B \geq \sigma$ there exists $P = P(B)\geq 0$ such that the following holds. Let $\Sigma_1,\Sigma_2,\Sigma_3$ be tight sections such that $\neck_B(\Sigma_1,\Sigma_2)\ne\emptyset$, and let $\rho\colon \B\to \neck_B(\Sigma_1,\Sigma_2)$ be the closest-point projection map to this subset. If $\neck_B(\Sigma_1,\Sigma_3) = \emptyset$, then $\rho(\neck_B(\Sigma_2,\Sigma_3))$ has diameter at most $P$ in $\B$.  
\end{lemma}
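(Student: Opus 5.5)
We argue by contradiction, using the hyperbolicity of $\B$, the quasiconvexity of necks (Lemma \ref{lem:neck_qc}), and the fact that necks of different pairs can only come close near the neck of the third pair (Lemma \ref{lem:3_necks_intersect_2}). Write $N_{12}=\neck_B(\Sigma_1,\Sigma_2)$ and $N_{23}=\neck_B(\Sigma_2,\Sigma_3)$. If $N_{23}=\emptyset$ there is nothing to prove, so assume both are nonempty. By Lemma \ref{lem:neck_qc} both are $Q(B)$--quasiconvex, so Lemmas \ref{lem:hyp pass close} and \ref{lem:hyperbolic_BGI} apply to $\rho$ with constants depending only on $B$ and the hyperbolicity constant $\delta$ of $\B$.

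First, $N_{12}$ and $N_{23}$ cannot come close. If some point lies within $C$ of both, then Lemma \ref{lem:3_necks_intersect_2} gives a point of $\neck_B(\Sigma_1,\Sigma_3)$ within $D(B,C)$ of it. This contradicts $\neck_B(\Sigma_1,\Sigma_3)=\emptyset$. Hence for every $C$ we get $\N_C(N_{12})\cap \N_C(N_{23})=\emptyset$. We only use this for one specific $C=C(B)$, to be chosen below.

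Now suppose $u,v\in N_{23}$ have projections $\rho(u),\rho(v)$ that are more than $\tau$ apart, where $\tau=\tau(\delta,Q)$ is as in Lemma \ref{lem:hyperbolic_BGI}.
\begin{enumerate}
\item By Lemma \ref{lem:hyperbolic_BGI}, any geodesic $[u,v]$ passes within $\tau$ of a point of $\rho(u)\subset N_{12}$.
\item Since $N_{23}$ is $Q$--quasiconvex, that point of $[u,v]$ lies within $Q$ of $N_{23}$.
\item So there is a point within $\tau$ of $N_{12}$ and within $Q$ of $N_{23}$. Taking $C=\tau+Q$ contradicts the disjointness established above.
\end{enumerate}
Therefore $\diam(\rho(u),\rho(v))\le\tau$ for all $u,v\in N_{23}$. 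Since each $\rho(u)$ has diameter at most $\kappa_0$, we may take $P=\tau+2\kappa_0$, which depends only on $B$ (and the fixed constants of the bundle).

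I do not expect a serious obstacle. The only delicate point is bookkeeping: Lemma \ref{lem:3_necks_intersect_2} should be applied with the fixed $C=\tau+Q$, and we should check that $\tau$ and $\kappa_0$ depend only on $\delta$ and $Q(B)$, so that $P$ is uniform over all triples of tight sections.
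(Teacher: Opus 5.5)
The step that fails is your claimed disjointness $\N_C(N_{12})\cap\N_C(N_{23})=\emptyset$. Lemma \ref{lem:3_necks_intersect_2} cannot be used to produce a point of $\neck_B(\Sigma_1,\Sigma_3)$ when that neck is empty. Its proof only places a point of the coarse intersection in $\neck_{2B+4C}(\Sigma_1,\Sigma_3)$ and then invokes Lemma \ref{lem:neck growth}. That proof starts by choosing a nearest point of the $B$--neck, so it presupposes $\neck_B(\Sigma_1,\Sigma_3)\neq\emptyset$. Read literally with $\N_D(\emptyset)=\emptyset$, the lemma therefore holds only when the third neck is nonempty.

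The disjointness is genuinely false, already for $C=0$. At a vertex $v\in N_{12}\cap N_{23}$ the triangle inequality gives only $d_\E(\Sigma_1(v),\Sigma_3(v))\le 2B$. Nothing in the axioms prevents $\Sigma_1,\Sigma_3$ from having closest approach, say, $\tfrac32 B$ at $v$ and flaring apart on either side. Then $\neck_B(\Sigma_1,\Sigma_3)=\emptyset$ while $v\in N_{12}\cap N_{23}$. So your steps (1)--(3) produce a point near both necks, but no contradiction.

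The missing idea is that the coarse intersection of $N_{12}$ and $N_{23}$ need not be empty, but Axiom \eqref{item:flaring} makes it bounded. Suppose $u_1,u_2\in N_{23}$ both lie within $\tau+Q$ of $N_{12}$.
\begin{enumerate}
\item Since tight sections are $1$--Lipschitz, $d_\E(\Sigma_1(u_i),\Sigma_3(u_i))\le B':=2(B+Q+\tau)$.
\item Emptiness of $\neck_B(\Sigma_1,\Sigma_3)$ gives $d_\E(\Sigma_1(w),\Sigma_3(w))>B\ge\sigma$ at every vertex $w$ of $[u_1,u_2]$.
\item Flaring with constant $B'$ then yields $B'\ge\lambda(d_\B(u_1,u_2))$ for $\lambda=\lambda(B')$. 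This bounds $d_\B(u_1,u_2)$ in terms of $B$.
\end{enumerate}
To finish, use both halves of Lemma \ref{lem:hyperbolic_BGI}, not only the one for $\rho(u)$. If $\rho(u),\rho(v)$ are more than $\tau$ apart, every point of $\rho(u)$ and of $\rho(v)$ is within $\tau$ of $[u,v]$. Quasiconvexity of $N_{23}$ then supplies $u_1,u_2\in N_{23}$ within $\tau+Q$ of the given points of $\rho(u)$ and $\rho(v)$. Hence $\diam(\rho(u),\rho(v))\le d_\B(u_1,u_2)+2(\tau+Q)$, which is bounded by the previous step. This is exactly how the paper argues.
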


\begin{proof}
Let $\tau > 0$ be the constant from Lemma~\ref{lem:hyperbolic_BGI}. To ease notation, set $Y= \neck_B(\Sigma_1,\Sigma_2)$ and $Z = \neck_B(\Sigma_2,\Sigma_3)$. Consider any points $z_1,z_2\in Z$ with projections $y_i\in \rho(z_i)$. It suffices to bound $d_\B(y_1,y_2)$ from above in terms of $B$. We may assume, $d_\B(y_1,y_2)>\tau$, for else we are already done. By Lemma~\ref{lem:hyperbolic_BGI} there then exist points $v_1,v_2\in [z_1,z_2]$ with $d_\B(y_i,v_i)\le \tau$. And since $Z$ is $Q$--quasiconvex for some $Q=Q(B)$ (Lemma~\ref{lem:neck_qc}), there exist $u_1,u_2\in Z= \neck_B(\Sigma_2,\Sigma_3)$ with $d_\B(u_i,v_i)\le Q$. By the triangle inequality, we have $d(u_i,y_i)\le Q+\tau$ and thus
\[d_\B(u_1,u_2) \ge d_\B(y_1,y_2) - 2(\tau+Q).\]
Hence, to bound $\diam(\rho(Z))$ it suffices to give a uniform bound on $d_\B(u_1,u_2)$.

Since $u_1,u_2$ lie both in $Z = \neck_B(\Sigma_2,\Sigma_3)$ and in the $(Q+\tau)$--neighborhood of $Y = \neck_B(\Sigma_1,\Sigma_2)$, the definitions and the triangle inequality give 
\[d_\E(\Sigma_1(u_i),\Sigma_3(u_i)) \le 2B + 2(Q + \tau),\qquad\text{for i = 1,2}.\]
However, if $\neck_B(\Sigma_1,\Sigma_3) = \emptyset$, then $d_\E(\Sigma_1(w),\Sigma_2(w))>B\ge \sigma$ for all vertices $w\in B$ and, in particular, for all vertices on a $\B$--geodesic $\gamma = [u_1,u_2]$. We may therefore apply Axiom \eqref{item:flaring} to get a divergent function $\lambda = \lambda(2B+2Q+2\tau)\colon\mathbb{R}^+\to\mathbb{R}^+$ so that
 $$2(B+Q+\tau) \geq d_{\E}(\Sigma_1(u_1),\Sigma_3(u_1))\geq \lambda(d_\B(u_1,u_2)).$$ 
Since $\lambda$ is diverging, this bounds $d_\B(u_1,u_2)$, completing the proof.  
\end{proof}

\subsection{Hyperbolicity of Ladders}

In this subsection, we  prove that all ladders are uniformly hyperbolic. 
Let us first describe a general construction that will aid in the proof. 

Fix a constant $B\ge \sigma$ as in Axiom \eqref{item:flaring} and a ladder $\ladder = \ladder(\Sigma_1,\Sigma_2)$ between tight sections $\Sigma_1,\Sigma_2$. Suppose that $\mathcal{I}$ is a finite set of vertices in $\ladder$ and for each $\ast\in \mathcal{I}$ we select a tight section $\Sigma_\ast$ that goes through $\ast$ and is contained in $\ladder$. For each $\ast\in \mathcal{I}$, let $\B_\ast$ denote a copy of $\B$ and $\mu_\ast\colon \B\to \B_\ast$ the simplicial identification; for $v\in \B$ we will write $v_\ast = \mu_\ast(v)$ for the corresponding vertex in $\B_\ast$. Observe that for all pairs $x,y\in \mathcal{I}$, the fact that the sections $\Sigma_x,\Sigma_y$ are contained in $\ladder$ implies that $\neck_B(\Sigma_1,\Sigma_2)\subset \neck_{KB}(\Sigma_x,\Sigma_y)$, where $K$ is the constant from Lemma \ref{lem:neck_between}. 

Now build a graph $\Lambda_{\mathcal{I}}$ by taking the disjoint union $\bigsqcup_{\mathcal{I}} \B_\ast$ and, for each pair $x,y\in \mathcal{I}$, inserting an edge attaching $v_x$ to $v_y$ for each vertex $v\in \neck_{KB}(\Sigma_x,\Sigma_y)$. We also define a coarse graph map $\xi_\mathcal{I}\colon \Lambda_{\mathcal{I}}\to \ladder$ which on the subset $\B_\ast$ is simply the simplicial embedding $\Sigma_\ast\circ\mu_\ast^{-1}$ onto the subset $\Sigma_\ast\subset \ladder$. This defines $\xi_{\mathcal{I}}$ on all vertices, hence giving a coarse graph map.  Observe that every subset $\mathcal{J}\subset \mathcal{I}$ determines its own graph $\Lambda_{\mathcal{J}}$ that comes with a simplicial embedding $\iota_{\mathcal{J}}^{\mathcal{I}}\colon \Lambda_{\mathcal{J}}\to \Lambda_{\mathcal{I}}$ that, by construction, satisfies $\xi_{\mathcal{J}} = \xi_{\mathcal{I}}\circ \iota_{\mathcal{J}}^{\mathcal{I}}$.

\begin{lemma}\label{lem:glued_graphs_hyp_and_qi_embedded}
Assume the ladder $\ladder = \ladder(\Sigma_1,\Sigma_2)$ has nonempty $B$--neck $\neck_{B}(\Sigma_1,\Sigma_2)$, where $B\ge \sigma$. For any finite set of vertices $\mathcal{I} \subset \ladder$ and  collection of tight sections $\{\Sigma_\ast\}_{\ast \in \mathcal{I}}$ as above, there exist constants $\delta',K'$, depending only on $B$, the cardinality of $\mathcal{I}$, and the bundle constants of $\E$, such that:
\begin{enumerate}
\item \label{item:coarsely_Lipschitz} $\xi_{\mathcal{I}}\colon \Lambda_{\mathcal{I}}\to \ladder$ is $K'$--coarsely Lipschitz,
\item \label{item:unif_hyp} $\Lambda_{\mathcal{I}}$ is $\delta'$--hyperbolic, and
\item \label{item:qi_inclusion} the inclusion $\Lambda_{\mathcal{J}}\to \Lambda_{\mathcal{I}}$ is a $K'$--quasi-isometric embedding for each $\mathcal{J}\subset \mathcal{I}$.
\end{enumerate}
\end{lemma}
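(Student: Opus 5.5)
We argue by induction on $|\mathcal{I}|$, proving all three items together. Item \eqref{item:coarsely_Lipschitz} is essentially immediate. Edges inside a copy $\B_\ast$ map to edges of the section $\Sigma_\ast$, which lie in $\ladder$. Indeed, consecutive vertices $\Sigma_\ast(v),\Sigma_\ast(w)$ are adjacent in $\E$, so they are joined by a ladder edge since $\kappa\ge1$. An edge joining $v_x$ to $v_y$ with $v\in\neck_{KB}(\Sigma_x,\Sigma_y)$ maps to two points $\Sigma_x(v),\Sigma_y(v)$ of $\ladder_v$ with $d_\E\le KB$. By Corollary~\ref{cor:ladder_qie} their $\ladder$--distance is then at most $K_1KB+K_1^2$. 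So $K'$ depends only on $B$ and the bundle constants.

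For the base case $|\mathcal{I}|=1$, $\Lambda_{\mathcal{I}}$ is a copy of $\B$, hence hyperbolic. For the inductive step, write $\mathcal{I}=\mathcal{J}\sqcup\{z\}$. Then $\Lambda_{\mathcal{I}}$ is obtained from $\Lambda_{\mathcal{J}}$ and $\B_z$ by gluing: each vertex $v_z$ with $v\in A:=\bigcup_{x\in\mathcal{J}}\neck_{KB}(\Sigma_x,\Sigma_z)\subset\B$ is joined to the corresponding $v_x$. This almost has the form $X\cup_\phi Y$ of Proposition~\ref{prop:gluing_lemma}, with $X=\B_z$ and $Y=\Lambda_{\mathcal{J}}$. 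The difference is that a vertex of $A$ may be attached to several vertices $v_x$. To fix this, we pick one $x\in\mathcal{J}$ for each $v$ and argue that the other attachments are at uniformly bounded distance in $\Lambda_{\mathcal{J}}$. This holds because if $v\in\neck_{KB}(\Sigma_x,\Sigma_z)\cap\neck_{KB}(\Sigma_{x'},\Sigma_z)$, then $d_\E(\Sigma_x(v),\Sigma_{x'}(v))\le 2KB$, so $v\in\neck_{2KB}(\Sigma_x,\Sigma_{x'})$. By Lemma~\ref{lem:neck growth}, $v$ is then within bounded distance of $\neck_{KB}(\Sigma_x,\Sigma_{x'})$, where $v_x$ and $v_{x'}$ are adjacent. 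The extra edges therefore change the metric only up to uniform quasi-isometry.

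It remains to check the hypotheses of Proposition~\ref{prop:gluing_lemma}.
\begin{enumerate}
\item $A$ must be quasiconvex in $\B_z$. Each $\neck_{KB}(\Sigma_x,\Sigma_z)$ is $Q(KB)$--quasiconvex by Lemma~\ref{lem:neck_qc}, and each contains the nonempty set $\neck_B(\Sigma_1,\Sigma_2)$. Hence they pairwise intersect, and Lemma~\ref{lem:union_of_QC_subsets}, applied $|\mathcal{J}|$ times, shows the union is quasiconvex with constant depending on $B$ and $|\mathcal{I}|$. We replace $A$ by the full subgraph it spans (or its $Q$--neighbourhood) so that it is a connected subgraph.
\item The gluing map $\phi\colon A\to\Lambda_{\mathcal{J}}$, $v\mapsto v_x$, must be a quasi-isometric embedding for the induced metric of $A\subset\B$. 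It is coarsely Lipschitz because the maps into distinct copies $\B_x,\B_{x'}$ agree up to bounded error, as shown above. For the lower bound, compose with the projection $\Lambda_{\mathcal{J}}\to\B$ that collapses all copies. This projection is $1$--Lipschitz, since gluing edges join vertices over the same base point, and it restricts to the identity on $A$.
\end{enumerate}
Proposition~\ref{prop:gluing_lemma} then gives hyperbolicity of $\Lambda_{\mathcal{I}}$, together with quasi-isometric embeddings of $\Lambda_{\mathcal{J}}$ and $\B_z$ into it. Item \eqref{item:qi_inclusion} for an arbitrary subset $\mathcal{J}'\subset\mathcal{I}$ follows by composing such inclusions along a chain of subsets adding one element at a time. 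There are at most $|\mathcal{I}|$ steps, so the constants stay controlled.

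The main obstacle I expect is the multiple-attachment issue: making the glued space honestly of the form $X\cup_\phi Y$ with $A$ a quasiconvex subgraph. This requires using flaring (Lemma~\ref{lem:neck growth}) to show that the redundant edges only perturb the metric boundedly, and keeping every constant dependent solely on $B$, $|\mathcal{I}|$, and the bundle constants.
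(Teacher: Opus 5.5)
Your proof is correct in outline and uses the same inductive scheme through Proposition~\ref{prop:gluing_lemma}, but you glue in the opposite direction from the paper.

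\textbf{The paper's direction.} The paper takes $X=\Lambda_{\mathcal{H}}$ with the subset $N=\bigcup_x N_x$ and the map $\psi\colon N\to\B_\ast$, $v_x\mapsto v_\ast$. This makes $\psi$ automatically single-valued and $1$--Lipschitz, so no multiple-attachment issue arises. The costs are these:
\begin{itemize}
\item quasiconvexity of $N$ in $\Lambda_{\mathcal{H}}$ needs the inductive qi-embedding of each $\B_x$;
\item the lower bound for $\psi$ is the real work (Claim~\ref{claim:gluing_psi_a_qi}). It projects to the common neck $\neck_B(\Sigma_1,\Sigma_2)$ and uses Lemmas~\ref{lem:hyperbolic_BGI}, \ref{lem:thin_rectangles} and \ref{lem:3_necks_intersect_2} to find a crossing point $w'\in\neck_{KB}(\Sigma_x,\Sigma_y)$ near $[u_0,v_0]$.
\end{itemize}

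\textbf{Your direction.} Quasiconvexity of $A$ becomes a statement in $\B$ itself, and the lower bound comes for free from the collapse map $\Lambda_{\mathcal{J}}\to\B$. You pay with the redundant edges, which your qi-modification handles correctly, and with the upper bound for $\phi$.

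\textbf{The step you need to write out.} That upper bound is exactly the inequality of the paper's Claim. Your justification (``as shown above'') only covers a single point lying in two necks. Since $A$ carries the induced metric of $\B$, you must bound $d_{\Lambda_{\mathcal{J}}}(u_x,v_y)$ for $u\in\neck_{KB}(\Sigma_x,\Sigma_z)$ and $v\in\neck_{KB}(\Sigma_y,\Sigma_z)$ at arbitrary $\B$--distance. This does go through with what you have:
\begin{itemize}
\item If $d_\B(u,v)\le 2Q+1$, simpliciality of the sections gives $u\in\neck_{KB+4Q+2}(\Sigma_y,\Sigma_z)$, hence $u\in\neck_{2KB+4Q+2}(\Sigma_x,\Sigma_y)$. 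Lemma~\ref{lem:neck growth} then supplies a crossing point $w\in\neck_{KB}(\Sigma_x,\Sigma_y)$ within bounded distance of $u$. This bounds $d_{\Lambda_{\mathcal{J}}}(u_x,v_y)$ by $d_\B(u,w)+1+d_\B(w,v)$.
\item For general $u,v$, chain along a $\B$--geodesic $[u,v]\subset\N_Q(A)$ through points of $A$ spaced at most $2Q+1$ apart.
\end{itemize}
This is where quasiconvexity of $A$ is actually used for the map, and not merely as a hypothesis of the gluing lemma, so it should be made explicit.

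\textbf{A minor point.} Of your two fixes for connectedness of $A$, only passing to the full spanned subgraph is harmless. Replacing $A$ by a neighbourhood would change which edges are glued.
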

\begin{proof}
For (1), note that edges in $\B_\ast$ map to edges in $\ladder$, and that added edges between $\B_x$ and $\B_y$ map  to points with $\E$--distance at most $KB$ and hence, by Corollary \ref{cor:ladder_qie}, $\ladder$--distance at most $K_1(KB)+K_1$. Thus $\xi_{\mathcal{I}}$ is $K'$--coarsely Lipschitz for $K' =K_1(KB) +K_1$.

The proof of (2) and (3) is by induction on $\left\vert\mathcal{I}\right\vert$. The base case $\left\vert\mathcal{I}\right\vert = 1$ is immediate, since then $\Lambda_\mathcal{I} = \B_\ast$ is isometric to the hyperbolic space $\B$ and there are only trivial subsets $\mathcal{J} = \emptyset$ or $\mathcal{J} = \mathcal{I}$.

Let us assume $\left\vert\mathcal{I}\right\vert > 1$ and that $\mathcal{J}\subset \mathcal{I}$ is a proper nontrivial subset. Choose $\ast\in \mathcal{I}\setminus\mathcal{J}$ and let $\mathcal{H} = \mathcal{I}\setminus\{\ast\}$. By induction we may assume the inclusion $\Lambda_{\mathcal{J}}\to \Lambda_{\mathcal{H}}$ is a $K'$--quasi-isometric embedding between spaces that are both $\delta'$--hyperbolic. Since $\iota_{\mathcal{J}}^{\mathcal{I}}$ equals the composition $\iota_{\mathcal{H}}^{\mathcal{I}}\circ\iota_{\mathcal{J}}^{\mathcal{H}}$, it now suffices to prove $\Lambda_{\mathcal{I}}$ is uniformly hyperbolic and that  $\iota_{\mathcal{H}}^{\mathcal{I}}$ is a uniform quasi-isometric embedding. 

For this we will apply Proposition \ref{prop:gluing_lemma} as follows: 
For each $x\in \mathcal{H}$, let
\[N'_x = \{v_x\in \B_x \mid v\in \neck_{KB}(\Sigma_x,\Sigma_\ast)\} \subset \B_x.\]
Observe that $N'_x$ is $Q$--quasiconvex in $\B_x$ by Lemma \ref{lem:neck_qc}, where $Q\ge0$ is chosen so that all $B$ and $KB$ necks are $Q$--quasiconvex. Since the inclusion $\B_x = \Lambda_{\{x\}}\to \Lambda_{\mathcal{H}}$ is a $K'$--quasi-isometric embedding into a $\delta'$--hyperbolic space (by induction), Lemma \ref{lem:quasi-convex_perserved} implies the image $N_x = \iota_{\{x\}}^{\mathcal{H}}(N'_x)$ is uniformly quasiconvex in $\Lambda_{\mathcal{H}}$. Furthermore, observe that for each pair $x,y\in \mathcal{H}$, the sets $N_x$ and $N_y$ have distance at most $1$ in $\Lambda_{\mathcal{H}}$. Indeed, choose any $v\in \neck_{B}(\Sigma_1,\Sigma_2)$, which is nonempty by assumption; then Lemma \ref{lem:neck_between} implies we have $v_x\in N_x'$ and $v_y\in N_y'$ and, moreover that these are joined by an edge in $\Lambda_{\mathcal{H}}$ by construction since $v\in \neck_{KB}(\Sigma_x,\Sigma_y)$. Therefore, inductively applying Lemma \ref{lem:union_of_QC_subsets}, we conclude that the union $N = \bigcup_{x\in \mathcal{H}}N_x$ is a $Q'$--quasiconvex subset of $\Lambda_{\mathcal{H}}$ for some constant $Q'$ depending only on $B$ and $\left\vert\mathcal{H}\right\vert$ and the constants of the bundle $\E$. Now consider the map $\psi\colon N\to \B_\ast$ defined by $\psi(v_x) = v_\ast$ for each vertex $v_x\in N_x$.

\begin{claim}\label{claim:gluing_psi_a_qi}
The coarse graph map $\psi\colon N\to \B_\ast$ is a uniform quasi-isometric embedding, where $N$ is given the restricted metric from $\Lambda_{\mathcal{H}}$.
\end{claim}
\begin{proof}[Proof of Claim \ref{claim:gluing_psi_a_qi}]
Let us fix vertices $u,v\in N$. We may suppose $u\in N_x$ and $v\in N_y$ where $x,y\in \mathcal{H}$.
First observe that $\psi$ is $1$--Lipschitz. Indeed,  by the triangle inequality we may suppose $u$ and $v$ are joined by an edge in $\Lambda_{\mathcal{H}}$. Then  either $x = y$, so that $u,v$ are adjacent in $\B_x$ and their images are $\psi(u),\psi(v)$ are also adjacent in $\B_\ast\cong \B_x$, or else $x\ne y$, so that  by construction of $\Lambda_{\mathcal{H}}$ we have $u = w_x$ and $v = w_y$ for some $w\in \neck_{KB}(\Sigma_x,\Sigma_y)$ which implies $\psi(u) = w_\ast = \psi(v)$ in $\B_\ast$.

It remains to bound $d_{\Lambda_{\mathcal{H}}}(u,v)$ from above in terms of $d_{\B_\ast}(\psi(u),\psi(v))$. For this, let us set $u_0= \psi(u), v_0 = \psi(v)$ and view these as points in $\B \cong \B_\ast$. Let $A = \neck_{B}(\Sigma_1,\Sigma_2)$, which is a nonempty $Q$--quasiconvex subset of $\B$, and choose closest vertices $u'\in \rho_A(u_0)$ and $v' \in \rho_A(v_0)$. Note that by construction of $N_x,N_y$ we have $u_0\in \neck_{KB}(\Sigma_x,\Sigma_\ast)$ and $v_0\in \neck_{KB}(\Sigma_y,\Sigma_\ast)$.

First suppose $d_\B(u',v') > \tau$, where $\tau\ge 1$ is from Lemma \ref{lem:hyperbolic_BGI}. In this case that lemma implies $u'\in \nbhd_\tau([u_0,v_0])$. That is, the vertex $w'=u'\in A\subset \neck_{KB}(\Sigma_x,\Sigma_y)$ satisfies $d_\B(w',[u_0,v_0])\le \tau$.

Otherwise $d_\B(u',v')\le \tau$ and Lemma \ref{lem:thin_rectangles} implies there are points $u''\in [u',u_0]$, $v''\in [v',v_0]$ and $w\in [u_0,v_0]$ with pairwise distance at most $\nu = \nu(\delta,\tau)$. Recall that $A = \neck_{B}(\Sigma_1,\Sigma_2)$ is contained in both $\neck_{KB}(\Sigma_x,\Sigma_\ast)$ and in $\neck_{KB}(\Sigma_y,\Sigma_\ast)$. Since, $KB$--necks are $Q$-quasiconvex, we have
\[u''\in [u',u_0]\subset \nbhd_{Q}(\neck_{KB}(\Sigma_x,\Sigma_\ast)) \quad\text{and}\quad
v''\in [v',v_0]\subset \nbhd_Q(\neck_{KB}(\Sigma_y,\Sigma_\ast)).\]
Since $d_\B(u'',v'')\le \nu$, it now follows from Lemma \ref{lem:3_necks_intersect_2} that
\[u''\in \nbhd_{Q+\nu}(\neck_{KB}(\Sigma_x,\Sigma_\ast))\cap \nbhd_{Q+\nu}(\neck_{KB}(\Sigma_y,\Sigma_\ast))
\subset \nbhd_D(\neck{KB}(\Sigma_x,\Sigma_y)),\]
where $D$ depends only on $KB$, and $Q+\nu$ and the bundle constants of $\E$. This means there exists a vertex $w'\in \neck_{KB}(\Sigma_x,\Sigma_y)$ with $d_\B(w',u'')\le D$ and thus $d_\B(w',[u_0,v_0]))\le D+\nu$. 

In either case, we now have found a vertex $w'\in \neck_{KB}(\Sigma_x,\Sigma_y)$ with $d_\B(w',[u_0,v_0])\le D+\nu+\tau$. It follows that
\[d_\B(u_0,v_0) \ge d_\B(u_0,w') + d_\B(w',v_0) - 2(D+\nu+\tau).\]
However, the fact that $w'\in \neck_{KB}(\Sigma_x,\Sigma_y)$ means that $w'_x\in \B_x$ and $w'_y\in \B_y$ are joined by an edge in $\Lambda_{\mathcal{H}}$ (or are the same vertex if $x=y$). Therefore, by the triangle inequality, in $\Lambda_{\mathcal{H}}$ we have
\begin{align*}
d_{\Lambda_{\mathcal{H}}}(u,v) &\le d_{\Lambda_{\mathcal{H}}}(u,w'_x) + d_{\Lambda_{\mathcal{H}}}(w'_x,w'_y) + d_{\Lambda_{\mathcal{H}}}(w'_y,v)
\le d_{\B_x}(u,w'_x)+1+d_{\B_y}(w'_y,v)\\
&= d_{\B}(u_0,w') + 1 + d_{\B}(w',v_0) \le d_{\B_\ast}(\psi(u),\psi(v)) + 2(D+\nu+\tau)+1.\qedhere
\end{align*}
\end{proof}
With Claim \ref{claim:gluing_psi_a_qi} in hand, we may now complete the proof of Lemma \ref{lem:glued_graphs_hyp_and_qi_embedded}. Since $\Lambda_{\mathcal{H}}$ is hyperbolic and $\psi\colon N\to \B_\ast$ is a quasi-isometric embedding, Proposition \ref{prop:gluing_lemma} implies $\Lambda_{\mathcal{H}}\cup_\psi \B_\ast$ is a uniformly hyperbolic space in which $\Lambda_{\mathcal{H}}$ is uniformly quasi-isometrically embedded. However, by construction, $\Lambda_{\mathcal{H}}\cup_\psi \B_\ast$ is precisely the space $\Lambda_{\mathcal{I}}$ and the inclusion is just the map $\iota_{\mathcal{H}}^{\mathcal{I}}$. Hence we are done.
\end{proof}
  
With the aid Lemma \ref{lem:glued_graphs_hyp_and_qi_embedded}, we may now prove hyperbolicity for ladders with nonempty $B$--necks.

\begin{proposition}
\label{prop:thin_neck}
For all $B \ge B_0 = \max\{\sigma,\kappa+1\}$ as in Axioms \eqref{item:retraction} and \eqref{item:flaring}, there exists $\epsilon = \epsilon(B)>0$ such that any ladder $\ladder(\Sigma_1,\Sigma_2)$ with $\neck_B(\Sigma_1,\Sigma_2)\neq \emptyset$ is $\epsilon$--hyperbolic and the sections $\Sigma_1,\Sigma_2$ are both $\epsilon$--quasiconvex subsets.
\end{proposition}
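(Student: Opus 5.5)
The plan is to show that the ladder $\ladder = \ladder(\Sigma_1,\Sigma_2)$ is uniformly quasi-isometric to a graph $\Lambda_{\mathcal{I}}$ from Lemma \ref{lem:glued_graphs_hyp_and_qi_embedded}, with $|\mathcal{I}|$ bounded in terms of $B$ alone. Hyperbolicity of $\ladder$ then follows from item \eqref{item:unif_hyp}. Quasiconvexity of $\Sigma_1,\Sigma_2$ follows because each is the image of a copy $\B_\ast = \Lambda_{\{\ast\}}$, which is quasi-isometrically embedded by item \eqref{item:qi_inclusion}, and quasi-isometric embeddings preserve quasiconvexity (Lemma \ref{lem:quasi-convex_perserved}).

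\textbf{Choosing $\mathcal{I}$.} Fix a vertex $v_0 \in \neck_B(\Sigma_1,\Sigma_2)$. Subdivide the fiber geodesic $\ladder_{v_0}(\Sigma_1,\Sigma_2)$ into consecutive vertices $x_0=\Sigma_1(v_0),x_1,\dots,x_n=\Sigma_2(v_0)$ that are consecutive in $\E$-distance about $\kappa+1$. (This is where $B\ge \kappa+1$ enters.) By Lemma \ref{lem:bounded_between} the whole fiber geodesic has $\E$-diameter at most $K_2B+K_2$. The subdivision must be done so that $n$ is bounded by a function of $B$.

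The difficulty is that fiber length is not controlled by $\E$-distance. So I would instead choose the $x_i$ greedily: $x_{i+1}$ is the last vertex along the geodesic within $\E$-distance $1$ of $x_i$. Then $x_i$ and $x_{i+2}$ are more than $1$ apart in $\E$. This alone does not bound $n$, so I would replace it with a maximal $1$-separated net in $\E$ of the geodesic. Since $\E$ is not assumed locally finite, the net size must be bounded differently. Here I would use the quasi-isometric embedding of the ladder (Corollary \ref{cor:ladder_qie}): the ladder distance between the endpoints is at most $K_1B+K_1$, so within the ladder the fiber segment lies near a ladder path of bounded length. That path visits boundedly many fibers and boundedly many edges, and this gives a bounded set of ``checkpoints'' along the segment.

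By the Robust Sections axiom \eqref{item:robust}, each chosen $x_i$ has a tight section $\Sigma_{x_i}$ inside $\ladder$. Include $\Sigma_1,\Sigma_2$ as well.

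\textbf{Coarse surjectivity and the lower bound.} Next I show $\xi_{\mathcal{I}}$ is a quasi-isometry onto $\ladder$. Every vertex $y \in \ladder_w$ lies on a tight section $\Sigma_y \subset \ladder$. Its position is sandwiched: $\Sigma_y(v_0)$ lies between two consecutive $x_i, x_{i+1}$, and the fiberwise retraction together with flaring transports this ordering, so $y$ lies on the fiber subsegment between $\Sigma_{x_i}(w)$ and $\Sigma_{x_{i+1}}(w)$.

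I then need that this subsegment is uniformly close in $\ladder$ to $\Sigma_{x_i}(w)$, which is the main obstacle. The sections $\Sigma_{x_i}$ and $\Sigma_{x_{i+1}}$ are close at $v_0$ but may flare apart over $w$. To handle this I would induct on an elementary ``one-step'' ladder statement: for adjacent sections whose $\kappa$-neck is nonempty, every fiber vertex between them is close in $\ladder$ to one of the two sections.

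For the lower bound (coarse inverse), I would build a coarse Lipschitz map $\ladder \to \Lambda_{\mathcal{I}}$ sending $y$ to $p(y)_{x_i}$. Checking that it is Lipschitz across ladder edges uses the fact that $\kappa$-close points in adjacent fibers land on neighboring sections, whose necks are joined by edges in $\Lambda_{\mathcal{I}}$, via Lemma \ref{lem:3_necks_intersect_2}. Controlling the flaring region away from the neck, where this ordering argument must be made rigorous, is the part I expect to require the most care.
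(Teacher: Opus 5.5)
Your plan depends on showing that $\xi_{\mathcal I}\colon \Lambda_{\mathcal I}\to\ladder(\Sigma_1,\Sigma_2)$ is a quasi-isometry for a single index set $\mathcal I$ with $|\mathcal I|$ bounded in terms of $B$. This is false in general, so the coarse-surjectivity step cannot be completed, and neither can the quasiconvexity argument built on it.

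To see this, suppose every vertex of the ladder were within ladder distance $r$ of $\bigcup_{\ast\in\mathcal I}\Sigma_\ast$. Ladder edges join vertices in equal or adjacent fibers, and each $\Sigma_\ast\subset\ladder$ runs along ladder edges. Hence every vertex of $\ladder_w(\Sigma_1,\Sigma_2)$ lies within $2r$ of one of the $|\mathcal I|$ points $\Sigma_\ast(w)$. Since $\ladder_w(\Sigma_1,\Sigma_2)$ is an edge path in the ladder from $\Sigma_1(w)$ to $\Sigma_2(w)$, chaining these balls gives $d_{\ladder}(\Sigma_1(w),\Sigma_2(w))\le |\mathcal I|(4r+1)$. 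Therefore $d_\E(\Sigma_1(w),\Sigma_2(w))\le \kappa|\mathcal I|(4r+1)$ for \emph{every} $w\in\B$, and Lemma~\ref{lem:neck growth} then makes $\neck_B(\Sigma_1,\Sigma_2)$ coarsely dense in $\B$.

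Nothing in the hypotheses forces this. When the neck is bounded and $\B$ is not, Axiom~\eqref{item:flaring} gives $d_\E(\Sigma_1(w),\Sigma_2(w))\to\infty$ as $w$ leaves the neck. The fiber segments then have unbounded $\E$--diameter, and no bounded family of sections coarsely fills the ladder. A model picture: fiber $\mathbb{H}^2$ by horocycles over a vertical geodesic. The ladder between two vertical geodesics contains a hyperbolic half-plane, whereas a $\Lambda_{\mathcal I}$ built from finitely many copies of a line is quasi-isometric to a finite tree. The same computation refutes your ``one-step'' lemma. Two sections with nonempty $\kappa$--neck still flare apart, and over a vertex far from that neck the middle of the fiber segment between them is far from both sections.

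The missing idea is that no global model of the ladder is needed. The paper applies Lemma~\ref{lem:glued_graphs_hyp_and_qi_embedded} only to index sets of size at most $3$, built from the points at hand, and feeds the result into the guessing-geodesics criterion, Proposition~\ref{prop:guessing}.

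\begin{itemize}
\item For each vertex $x$, choose a tight section $\Sigma_x\subset\ladder(\Sigma_1,\Sigma_2)$ through $x$ (Axiom~\eqref{item:robust}). Let $\varsigma(x,y)$ be the $\xi_{\{x,y\}}$--image of a geodesic in $\Lambda_{\{x,y\}}$ joining the vertices $\hat x,\hat y$ that map to $x$ and $y$.
\item If $x,y$ are adjacent, then $\hat x,\hat y$ are at distance at most $2$, because $d_\E(\Sigma_x(u),\Sigma_y(u))\le\kappa+1\le B$ places $u$ in $\neck_{KB}(\Sigma_x,\Sigma_y)$. This is where $B\ge\kappa+1$ is actually used.
\item For thin triangles, the three paths for $x,y,z$ are images under the coarsely Lipschitz map $\xi_{\{x,y,z\}}$ of uniform quasigeodesics in the uniformly hyperbolic space $\Lambda_{\{x,y,z\}}$. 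These quasigeodesics come from the quasi-isometric inclusions $\Lambda_{\{x,y\}}\to\Lambda_{\{x,y,z\}}$.
\item For quasiconvexity of $\Sigma_i$, take $\Sigma_x=\Sigma_i$ for $x\in\Sigma_i$, so that $\varsigma(x,y)\subset\Sigma_i$. The Hausdorff-closeness conclusion of Proposition~\ref{prop:guessing} then finishes the argument.
\end{itemize}

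Only the upper (coarse Lipschitz) bound on $\xi_{\mathcal I}$ is ever used. No lower bound or coarse surjectivity is required.
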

\begin{proof}
We establish hyperbolicity by constructing the following system of preferred paths: For each vertex $x\in \ladder(\Sigma_1,\Sigma_2)$, use Axiom \eqref{item:robust} to choose a tight section $\Sigma_x\subset\ladder(\Sigma_1,\Sigma_2)$ through $x$. Now, given vertices $x,y\in \ladder(\Sigma_1,\Sigma_2)$, take the index set $\mathcal{I} = \{x,y\}$ and build the map $\xi_{\mathcal{I}}\colon \Lambda_{\mathcal{I}}\to \ladder(\Sigma_1,\Sigma_2)$ as above. Let $\hat{x}$ and $\hat{y}$ be the vertices of $\Lambda_{\mathcal{I}}$ so that $\xi_{\mathcal{I}}(\hat{x}) = x$ and $\xi_{\mathcal{I}}(\hat{y}) = y$. We define the preferred path $\varsigma(x,y)$ in $\ladder(\Sigma_1,\Sigma_2)$ to be the image under $\xi_{\mathcal{I}}$ of a $\Lambda_{\mathcal{I}}$--geodesic $[\hat{x},\hat{y}]$. This is a coarsely connected subset as it is the image of a connected subset under a coarsely Lipschitz map. 

\begin{claim}\label{claim:good_preferred_paths}
There exists  $D =D(B,\E)\geq 0$ so that:
\begin{enumerate}
\item 
$\diam_{\ladder(\Sigma_1,\Sigma_2)}(\varsigma(x,y)) \leq D$ whenever $x,y\in \ladder(\Sigma_1,\Sigma_2)$ are joined by an edge of $\ladder(\Sigma_1,\Sigma_2)$
\item $\varsigma(x,y)\subset \N_D\left(\varsigma(x,z)\cup\varsigma(z,y)\right)$ for all $x,y,z\in \ladder(\Sigma_1,\Sigma_2)$.
\end{enumerate}
\end{claim}
\begin{proof}
For (1), Since $\xi_{\mathcal{I}}$ is uniformly coarsely Lipschitz by item \eqref{item:coarsely_Lipschitz} of Lemma \ref{lem:glued_graphs_hyp_and_qi_embedded}, it suffices to bound $d_{\Lambda_{\mathcal{I}}}(\hat{x},\hat{y})$. 
If $x$ and $y$ both lie in $\E_v$ for some $v \in \B$, then $v\in \neck_{1}(\Sigma_x,\Sigma_y) \subset \neck_{KB}(\Sigma_x,\Sigma_y)$, thus forcing $\hat{x} = v_x$ and $\hat{y}=v_y$ to be joined by an edge of $\Lambda_{\mathcal{I}}$; hence $d_{\Lambda_{\mathcal{I}}}(\hat{x},\hat{y})\le 1$. Otherwise, $x \in \E_u$ and $y \in \E_v$ are in distinct fibers with $d_\B(u,v) = 1$ and $d_\E(x,y) \leq \kappa$ for $\kappa$ as in Axiom \eqref{item:retraction}. Then
\[d_{\E}(\Sigma_x(u),\Sigma_y(u)) = d_{\E}(x,\Sigma_y(u))\leq d_{\E}(x,y)  + d_{\E}(\Sigma_y(v),\Sigma_y(u))\leq \kappa +1.\]
 Since $B \geq \kappa +1$, this implies $u \in \neck_{KB}(\Sigma_x,\Sigma_y)$ so that $u_x$ and $u_y$ are joined by an edge of $\Lambda_{\mathcal{I}}$ by construction. Thus $\hat{x} = u_x$ and $\hat{y} =v_y$ are 2 apart in $\Lambda_{\mathcal{I}}$, proving $d_{\Lambda_{\mathcal{I}}}(\hat{x},\hat{y})$ is uniformly bounded.

For (2), now set $\mathcal{I} = \{x,y,z\}$. Let $\bar{x}$ denote the subset $\{y,z\}$, and let $\hat{\gamma}_{\bar{x}}$ denote the chosen geodesic in $\Lambda_{\bar{x}}$ with $\xi_{\bar{x}}(\hat{\gamma}_{\bar{x}}) = \varsigma(y,z)$ (that is, $\hat{\gamma}_{\bar{x}}$ is a geodesic between the points that map to $y$ and $z$ under $\xi_{\bar{x}}$). Define $\bar{y}$, $\hat{\gamma}_{\bar{y}}$,  $\bar{z}$, and $\hat{\gamma}_{\bar{z}}$ symmetrically. 
By item \eqref{item:qi_inclusion} of Lemma \ref{lem:glued_graphs_hyp_and_qi_embedded}, for each $\ast\in \mathcal{I}$, we have a uniform quasi-isometric embedding $\iota_{\bar{\ast}}= \iota_{\bar{\ast}}^{\mathcal{I}}\colon \Lambda_{\bar{\ast}}\to \Lambda_{\mathcal{I}}$ that sends the geodesic $\hat{\gamma}_{\bar{\ast}}$ to a uniform quasigeodesic $\gamma_{\bar{\ast}}$ in $\Lambda_{\mathcal{I}}$. Since $\Lambda_{\mathcal{I}}$ is uniformly hyperbolic by item \eqref{item:unif_hyp} of Lemma \ref{lem:glued_graphs_hyp_and_qi_embedded}, Lemma \ref{lem:morse} implies these three quasigeodesics form a thin triangle. That is, there exists $D'$ so that $\gamma_{\bar{z}}\subset \nbhd_{D'}(\gamma_{\bar{y}}\cup \gamma_{\bar{x}})$. But by construction we have $\varsigma(x,y) = \xi_{\mathcal{I}}(\gamma_{\bar{z}})$ (since $\xi_{\bar{z}} = \xi_{\mathcal{I}}\circ \iota_{\bar{z}}^{\mathcal{I}}$), and similarly $\varsigma(y,z)=\xi_{\mathcal{I}}(\gamma_{\bar{x}})$ and $\varsigma(z,x)=\xi_{\mathcal{I}}(\gamma_{\bar{y}})$. The desired conclusion therefore follows from the fact that $\xi_{\mathcal{I}}\colon \Lambda_{I}\to \ladder(\Sigma_1,\Sigma_2)$ is uniformly coarsely Lipschitz by item \eqref{item:coarsely_Lipschitz} of Lemma \ref{lem:glued_graphs_hyp_and_qi_embedded}.
\end{proof}
The claim shows that Proposition \ref{prop:guessing} applies to the family of path $\varsigma(x,y)$ and thus proves that $\ladder(\Sigma_1,\Sigma_2)$ is uniformly hyperbolic. It also implies the sections $\Sigma_1,\Sigma_2$ are uniformly quasiconvex. 
Indeed, if $x,y\in \Sigma_i$, we may take $\Sigma_x = \Sigma_i = \Sigma_y$ so that the preferred path $\varsigma(x,y)$ lies in $\Sigma_i$. Proposition \ref{prop:guessing} then ensures the geodesic $[x,y]$ in $\ladder(\Sigma_1,\Sigma_2)$ lies within uniform Hausdorff distance of $\varsigma(x,y)$ and thus within a uniform neighborhood of $\Sigma_i$.
\end{proof}

We next use hyperbolicity of ladders with nonempty necks to prove that \emph{all} ladders are in fact uniformly hyperbolic. 
To this end, consider tight sections $\Sigma_1,\Sigma_2,\Sigma_3$ so that $\Sigma_2$ is contained in $\ladder(\Sigma_1,\Sigma_3)$. Define the \emph{double ladder}, $\Dladder(\Sigma_1,\Sigma_2,\Sigma_3)$, to be the graph obtained by taking the disjoint union of $\ladder(\Sigma_1,\Sigma_2)$ and $\ladder(\Sigma_2,\Sigma_3)$ and adding edges joining each vertex of $\Sigma_2$ in $\ladder(\Sigma_1,\Sigma_2)$ to the corresponding vertex of $\Sigma_2$ in $\ladder(\Sigma_2,\Sigma_3)$.

\begin{lemma}\label{lem:double_ladder_proj}
For all $B \ge \max\{\sigma,\kappa+1\}$ as in Axioms \eqref{item:retraction} and \eqref{item:flaring}, there exists $\epsilon,D,Q \geq 1$ depending only on $B$ and $\E$ so that the following holds. Let $\Sigma_1,\Sigma_2,\Sigma_3$ be tight sections so that $\Sigma_2\subset \ladder(\Sigma_1,\Sigma_3)$.
     If $\neck_B(\Sigma_1,\Sigma_2) \neq \emptyset$ and $\neck_B(\Sigma_2,\Sigma_3) \neq \emptyset$, then $\Dladder(\Sigma_1,\Sigma_2,\Sigma_3)$ is $\epsilon$--hyperbolic and each copy of $\Sigma_1,\Sigma_2,\Sigma_3$ in $\Dladder(\Sigma_1,\Sigma_2,\Sigma_3)$ is $Q$--quasiconvex. Moreover, if additionally $\neck_B(\Sigma_1,\Sigma_3) = \emptyset$, then the closet point projection of $\Sigma_3$ onto $\Sigma_1$ in $\Dladder(\Sigma_1,\Sigma_2,\Sigma_3)$ has diameter at most $D$.
\end{lemma}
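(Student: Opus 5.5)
The plan is to view $\Dladder(\Sigma_1,\Sigma_2,\Sigma_3)$ as a line of two vertex spaces glued along $\Sigma_2$ and apply Proposition~\ref{prop:malnormal_glue} (or equivalently Proposition~\ref{prop:gluing_lemma}). By Proposition~\ref{prop:thin_neck}, both $\ladder(\Sigma_1,\Sigma_2)$ and $\ladder(\Sigma_2,\Sigma_3)$ have nonempty $B$--necks. Hence they are $\epsilon(B)$--hyperbolic, and each of their boundary sections is $\epsilon(B)$--quasiconvex.

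Each copy of $\Sigma_2$ is a section, hence a simplicial copy of $\B$. The first step is to show that $\Sigma_2$ is uniformly quasi-isometrically embedded in each ladder. The upper bound is immediate because $\Sigma_2$ is simplicial. For the lower bound, note that $p$ is simplicial, so projecting a path in $\E$ to $\B$ gives $d_\B(u,v)\le d_\E(\Sigma_2(u),\Sigma_2(v))$. Combined with Corollary~\ref{cor:ladder_qie}, this shows the inclusion $\Sigma_2\to\ladder$ is a uniform quasi-isometric embedding. The gluing map between the two copies of $\Sigma_2$ is the identity, so it is an isometry in the $\B$--metric and hence a uniform quasi-isometric embedding for the induced metrics. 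Proposition~\ref{prop:gluing_lemma}, applied with $A=\Sigma_2\subset\ladder(\Sigma_1,\Sigma_2)$ and $\phi$ the identification into $\ladder(\Sigma_2,\Sigma_3)$, then shows that $\Dladder$ is uniformly hyperbolic. It also shows that both ladders are uniformly quasi-isometrically embedded in $\Dladder$. Each $\Sigma_i$ is quasiconvex in its ladder, so Lemma~\ref{lem:quasi-convex_perserved} makes each copy of $\Sigma_i$ $Q$--quasiconvex in $\Dladder$.

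For the moreover statement, assume $\neck_B(\Sigma_1,\Sigma_3)=\emptyset$. I would first show that projecting $\Sigma_3$ to $\Sigma_1$ in $\Dladder$ coarsely factors through $\Sigma_2$. A $\Dladder$--geodesic from $x\in\Sigma_3$ to $\Sigma_1$ must cross the copy of $\Sigma_2$. By quasiconvexity and hyperbolicity, the projection of $x$ to $\Sigma_1$ is then coarsely the projection, inside $\ladder(\Sigma_1,\Sigma_2)$, of some point of $\Sigma_2$. More precisely, I will show that $\rho_{\Sigma_1}(\Sigma_3)$ lies near $\rho_{\Sigma_1}(\rho_{\Sigma_2}(\Sigma_3))$, using Lemma~\ref{lem:hyp pass close}.

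It then suffices to control two projections.

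\textbf{Projection of $\Sigma_3$ to $\Sigma_2$ in $\ladder(\Sigma_2,\Sigma_3)$.} I expect this to lie coarsely in the lift $\Sigma_2(\nbhd(\neck_B(\Sigma_2,\Sigma_3)))$. The idea is that away from the neck the sections diverge, by flaring and the preferred-path description in Proposition~\ref{prop:thin_neck}. Concretely, the preferred path $\varsigma(x,y)$ from $x\in\Sigma_3$ to $y\in\Sigma_2$ runs along $\Sigma_3$, crosses at a neck vertex, and then runs along $\Sigma_2$, where the edge inserted in $\Lambda_{\mathcal I}$ at a neck vertex gives the crossing. By Proposition~\ref{prop:guessing} this path fellow-travels geodesics, so projections of points of $\Sigma_3$ land near $\Sigma_2(\neck_{KB}(\Sigma_2,\Sigma_3))$.

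\textbf{Projection of $\Sigma_2$ to $\Sigma_1$ in $\ladder(\Sigma_1,\Sigma_2)$.} Symmetrically, the projection of a point $\Sigma_2(w)$ lies near $\Sigma_1(\rho(w))$, where $\rho\colon\B\to\neck_B(\Sigma_1,\Sigma_2)$ is the closest-point projection.

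Combining the two, $\rho_{\Sigma_1}(\Sigma_3)$ is coarsely $\Sigma_1(\rho(\neck_B(\Sigma_2,\Sigma_3)))$. This set has bounded diameter by Lemma~\ref{lem:proj_neck}, and the diameter bound transfers from $\B$ to $\Dladder$ because $\Sigma_1$ is a uniform quasi-isometric embedding of $\B$.

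The main obstacle is the step identifying projections onto a section within a ladder with the base projection onto the neck. To establish it, I would take a $\Lambda_{\{x,y\}}$--geodesic in the proof of Proposition~\ref{prop:thin_neck}. Since the neck is quasiconvex (Lemma~\ref{lem:neck_qc}) and $\Lambda$ is built as $\B\sqcup\B$ glued along the neck, such a geodesic crosses between the sheets only near the neck, and coarsely at the base projection of its endpoint onto the neck.
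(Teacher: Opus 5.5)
Your proposal is correct and follows essentially the same route as the paper: hyperbolicity and quasiconvexity come from Proposition~\ref{prop:gluing_lemma} and Lemma~\ref{lem:quasi-convex_perserved}, and you explicitly check the induced-metric hypothesis on the identification of the two copies of $\Sigma_2$, which the paper leaves implicit. The ``moreover'' part likewise rests on the neck-crossing structure of the preferred paths from Proposition~\ref{prop:thin_neck} combined with Lemma~\ref{lem:proj_neck}; the only difference is packaging. You factor the projection as $\Sigma_3\to\Sigma_2\to\Sigma_1$ and identify each step with a base projection onto a neck, whereas the paper argues directly that every geodesic from $\Sigma_3$ to $\Sigma_1$ passes near the lifts $N_2^3,N_2^1\subset\Sigma_2$ of the two necks, and hence near the bounded set $\rho_{N_2^1}(N_2^3)$.
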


\begin{proof}
Proposition \ref{prop:thin_neck} implies $\ladder(\Sigma_1,\Sigma_3)$ and $\ladder(\Sigma_2,\Sigma_3)$ are both $\epsilon'$--hyperbolic and that $\Sigma_2$ is $\epsilon'$--quasiconvex in each, for some constant $\epsilon'$ depending on $B$. Lemma \ref{prop:gluing_lemma} now ensures that $\Dladder(\Sigma_1,\Sigma_2,\Sigma_3)$ is $\epsilon$--hyperbolic and that $\ladder(\Sigma_1,\Sigma_2)$ and $\ladder(\Sigma_2, \Sigma_3)$ are each $K'$--quasi-isometrically embedded in it, for some constants $\epsilon,K'$ depending only on $B$.  Since $\Sigma_i$ is $\epsilon$'--quasiconvex in $\ladder(\Sigma_i,\Sigma_j)$ when $\abs{i-j} =1$, Lemma \ref{lem:quasi-convex_perserved} therefore implies $\Sigma_1,\Sigma_2,\Sigma_3$ are each $Q$--quasiconvex in $\Dladder(\Sigma_1,\Sigma_2,\Sigma_3)$ for some $Q$ determined again only by $B$.

    To prove the ``moreover'' part of the statement, it will be convenient to work with a related object.  Let $\Dladder'$ be the space obtained from  $\Dladder(\Sigma_1,\Sigma_2,\Sigma_3)$  by collapsing each of the edges between a vertex of $\ladder(\Sigma_1,\Sigma_2)$ and a vertex of $\ladder(\Sigma_2,\Sigma_3)$ to a single vertex. Equivalently, $\Dladder'$ is the space obtain from  $\ladder(\Sigma_1,\Sigma_2)$ and $\ladder(\Sigma_2,\Sigma_3)$ by identifying the vertices in the two copies of $\Sigma_2$. Note, $\Dladder'$ is $2$--quasi-isometric to $\Dladder(\Sigma_1,\Sigma_2,\Sigma_3)$, thus it suffices to prove the ``moreover'' part of the statement for $\Dladder'$ instead. Similarly, by increasing $\epsilon$ and $Q$ a uniform amount, we have that  $\Dladder'$ is $\epsilon$--hyperbolic and each $\Sigma_i$ is $Q$--quasiconvex in $\Dladder'$.

    Assume $\neck_B(\Sigma_1,\Sigma_3) = \emptyset$. For distinct $i,j \in \{1,2,3\}$, let $N^j_{i}$  be the set of vertices $x$ of $\Sigma_i$ where $x \in \E_v$ for some $v \in \neck_B(\Sigma_i,\Sigma_j)$. Let $\rho_i \colon \Dladder' \to \Sigma_i$ be the closest point projection map.

    \begin{claim}
        For each $i,j \in \{1,2,3\}$ with $|i-j| = 1$, every geodesic in $\Dladder'$ from $\Sigma_i$ to $\Sigma_j$ passes $r$--close to $N_i^j$ and $N_j^i$, where $r$ is determined by $B$ and the constants of $\E$.
    \end{claim}

    \begin{proof}
       Let $x \in \Sigma_i$ and $y \in \Sigma_j$. In the proof that $\ladder(\Sigma_i,\Sigma_j)$ is hyperbolic (Proposition \ref{prop:thin_neck}), the preferred path $\varsigma(x,y)$ we build between $x$ and $y$ travels along $\Sigma_i$ until it reaches a point $\Sigma_i(v)$ where $v \in \neck_{KB}(\Sigma_i,\Sigma_j)$, then jumps across to $\Sigma_j(v)$ before traveling to $y$ along $\Sigma_j$.  Lemma \ref{lem:neck growth} tells us that there is some $B''>0$ (depending only on $K$ and $B$, and hence only on $B$ and $\E$) so that $\Sigma_i(v) \in \N_{B''}(N_i^j)$ and $\Sigma_j(v) \in \N_{B''}(N_j^i)$. Since the preferred path $\varsigma(x,y)$ ends up being uniformly close to a geodesic between $x$ and $y$ in $\ladder(\Sigma_i,\Sigma_j)$ (Proposition \ref{prop:guessing}) and $\ladder(\Sigma_i,\Sigma_j)$ is uniformly quasi-isometrically embedded in $\Dladder'$, the claim is true.
    \end{proof}

    To complete the proof, let $x \in \Sigma_3$ and $y \in \Sigma_1$ and let $\gamma$ be any geodesic between them in $\Dladder'$. Since $\gamma$ must pass through $\Sigma_2$, the claim ensures that $\gamma$ passes $r$--close to $N_2^3$ and $N_2^1$. Now Lemma \ref{lem:proj_neck} ensures that the projection $\rho_{N_2^1}(N_2^3)$ of $N_2^3$ onto $N_2^1$ has uniformly bounded diameter (controlled by $B$ and $\E$), and hence any geodesic between $N_2^1$ and $N_2^3$ must pass uniformly close to $\rho_{N_2^1}(N_2^3)$ by Lemma \ref{lem:hyp pass close}.  Hyperbolicity then forces $\gamma$ to passes uniformly close to $\rho_{N_2^1}(N_2^3)$, and it follows that the diameter of the full projection must also be uniformly bounded.    
\end{proof}

\begin{proposition}
    There exists $\epsilon>0$ depending only on $\E$ such that for any two tight sections $\Sigma,\Sigma'$ in $\E$, any ladder $\ladder(\Sigma,\Sigma')$ is $\epsilon$--hyperbolic.
\end{proposition}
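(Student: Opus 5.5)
Fix $B = B_0 = \max\{\sigma,\kappa+1\}$. If $\neck_B(\Sigma,\Sigma')\neq\emptyset$, Proposition \ref{prop:thin_neck} already gives $\epsilon(B)$--hyperbolicity, so assume the $B$--neck is empty. The plan is to cut $\ladder(\Sigma,\Sigma')$ into a line of spaces whose vertex spaces are sub-ladders with nonempty $B$--necks, and then apply Proposition \ref{prop:malnormal_glue}. Fix any vertex $v\in\B$ and subdivide the fiber geodesic $\ladder_v(\Sigma,\Sigma')$ by vertices $\Sigma(v)=x_0,x_1,\dots,x_n=\Sigma'(v)$ with consecutive points at fiber distance $1$. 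By Axiom \eqref{item:robust} there are tight sections $\Sigma_{x_i}\subset\ladder(\Sigma,\Sigma')$ through $x_i$, taking $\Sigma_{x_0}=\Sigma$ and $\Sigma_{x_n}=\Sigma'$. I will thin this sequence greedily. Start at $\Sigma_0=\Sigma$, and let $\Sigma_{k+1}$ be the section $\Sigma_{x_j}$ with the largest index $j$ such that $v\in\neck_B(\Sigma_k,\Sigma_{x_j})$. Since consecutive $x_i$ are adjacent and $B\ge 1$, this always advances. The resulting sections $\Sigma=\Sigma_0,\Sigma_1,\dots,\Sigma_m=\Sigma'$ are nested inside the ladder, consecutive pairs have $v$ in their $B$--neck, and maximality gives $d_\E(\Sigma_k(v),\Sigma_{k+2}(v))>B$. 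The ladder $\ladder(\Sigma,\Sigma')$ (with the given fiber geodesics) is, up to uniform quasi-isometry, the union of the sub-ladders $\ladder(\Sigma_k,\Sigma_{k+1})$ built from subsegments of the same fiber geodesics, glued along the sections $\Sigma_k$. That is, it is a line of spaces with vertex spaces $X_k=\ladder(\Sigma_k,\Sigma_{k+1})$ and edge spaces the copies of $\Sigma_k$.

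I then check the hypotheses of Proposition \ref{prop:malnormal_glue}. Each $X_k$ is $\epsilon(B)$--hyperbolic by Proposition \ref{prop:thin_neck}. Each edge space $\Sigma_k$ is a copy of $\B$ and is uniformly quasi-isometrically embedded in the total space: the composition of the fiberwise projection $\Pi$ to $\ladder(\Sigma_k,\Sigma_k)=\Sigma_k$ (Lemma \ref{lem:fiberwise_proj}) with the bundle projection $p$ is a $1$--Lipschitz retraction onto $\Sigma_k\cong\B$, since $p$ is simplicial. It remains to bound the projections of $\Sigma_k$ and $\Sigma_{k+1}$ onto each other inside $X_k$. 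That is not quite the right statement; what is really needed is that within $X_k$ the projection of $\Sigma_{k+1}$ to $\Sigma_k$ is the whole relevant part. The correct "malnormality" check concerns the two edge spaces $\Sigma_{k}$ and $\Sigma_{k+1}$ lying in the \emph{same} vertex space $X_k$. Since these have nonempty neck, their mutual projections are not bounded. So I will instead group the pieces into double ladders: use vertex spaces $\Dladder(\Sigma_k,\Sigma_{k+1},\Sigma_{k+2})$, or equivalently apply Lemma \ref{lem:double_ladder_proj}. For the triple $\Sigma_k,\Sigma_{k+1},\Sigma_{k+2}$, both consecutive necks contain $v$, so they are nonempty. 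If in addition $\neck_B(\Sigma_k,\Sigma_{k+2})=\emptyset$, the lemma bounds the projection of $\Sigma_{k+2}$ to $\Sigma_k$ by $D$.

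The main obstacle is exactly this emptiness. Maximality only gives $d_\E(\Sigma_k(v),\Sigma_{k+2}(v))>B$ at the single vertex $v$, not on all of $\B$. My plan is to enlarge the greedy threshold: run the greedy selection with a larger constant $B_1\gg B$, and apply Lemma \ref{lem:double_ladder_proj} and Proposition \ref{prop:thin_neck} at level $B_1$. Then relate the necks at different levels using Lemma \ref{lem:neck growth}, Lemma \ref{lem:neck_between} and flaring. The aim is to show that the $B$--neck of $(\Sigma_k,\Sigma_{k+2})$ is empty, or else, if it is nonempty, to merge those pieces. With that in place, the line of spaces with vertex spaces $X_k=\ladder(\Sigma_k,\Sigma_{k+1})$ and edge spaces $\Sigma_k$ satisfies the bounded-projection hypothesis. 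The reason is that the two edge spaces in $X_k$ are far apart in the necessary sense, as certified by the double-ladder bound, and uniformity of the constants, independent of $m$, comes from Proposition \ref{prop:malnormal_glue} being a uniform statement about lines of arbitrary length. If the neck-emptiness cannot be arranged directly, the fallback is to coarsen the chain by merging consecutive pieces whose outer neck is nonempty, since those unions are again ladders with nonempty neck and so are hyperbolic by Proposition \ref{prop:thin_neck}.
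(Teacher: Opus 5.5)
Your overall architecture matches the paper's: a nested chain of sections with nonempty consecutive $B$--necks, double ladders glued along every other section, Proposition \ref{prop:malnormal_glue}, then comparison back to $\ladder(\Sigma,\Sigma')$. But the step you call the main obstacle is left open, and it is the heart of the proof.

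\textbf{The greedy rule.} Your rule only looks at the fiber over $v$, so it gives $d_\E(\Sigma_k(v),\Sigma_{k+2}(v))>B$. Since sections are simplicial, $d_\E(\Sigma_k(u),\Sigma_{k+2}(u))$ changes by at most $2$ per edge of $\B$. A large value at $v$ therefore only pushes $\neck_B(\Sigma_k,\Sigma_{k+2})$ away from $v$. No threshold $B_1$ imposed at a single fiber can force that neck to be empty.

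The missing idea is to change what the greedy step optimizes. Stop at stage $k$ if $\neck_B(\Sigma_k,\Sigma')\neq\emptyset$. Otherwise consider \emph{all} tight sections $\Sigma''$ in the subladder $\ladder(\Sigma_k,\Sigma')$ with $\Sigma''(b)\neq\Sigma_k(b)$ and $\neck_B(\Sigma_k,\Sigma'')\neq\emptyset$ anywhere in $\B$. Such sections exist by Axiom \eqref{item:robust} applied to the vertex after $\Sigma_k(b)$, since $B\ge 1$. Take $\Sigma_{k+1}$ maximizing the length of $[\Sigma_k(b),\Sigma''(b)]_b$. Every later $\Sigma_j$ with $j\ge k+2$ lies in $\ladder(\Sigma_k,\Sigma')$ strictly beyond $\Sigma_{k+1}(b)$ at $b$. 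Maximality then gives $\neck_B(\Sigma_k,\Sigma_j)=\emptyset$ immediately.

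\textbf{Nesting.} Choosing each new section inside $\ladder(\Sigma_k,\Sigma')$ also fixes a second gap. Your sections $\Sigma_{x_i}$ all come from Axiom \eqref{item:robust} for the single pair $(\Sigma,\Sigma')$. They need not be monotonically ordered in fibers other than $\E_v$. So your asserted nesting, and the decomposition of $\ladder(\Sigma,\Sigma')$ into subladders glued along sections, are unjustified.

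Your merging fallback can be made rigorous once sections are chosen recursively in this nested way. Whenever some pair $\Sigma_i,\Sigma_k$ with $k\ge i+2$ (not just $k=i+2$) has nonempty $B$--neck, delete the sections strictly between them. This terminates and yields the same three properties.

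\textbf{The line of spaces.} Your last paragraph reverts to vertex spaces $\ladder(\Sigma_k,\Sigma_{k+1})$ whose two edge spaces are $\Sigma_k$ and $\Sigma_{k+1}$. As you noted earlier, these have nonempty neck and hence unbounded mutual projections, so Proposition \ref{prop:malnormal_glue} does not apply to that line. The vertex spaces must instead be $\Dladder(\Sigma_{2j},\Sigma_{2j+1},\Sigma_{2j+2})$, plus possibly one final single ladder, with edge spaces $\Sigma_{2j}$. Then the two edge spaces in a vertex space are $\Sigma_{2j}$ and $\Sigma_{2j+2}$. Their projections are bounded by Lemma \ref{lem:double_ladder_proj} precisely because $\neck_B(\Sigma_{2j},\Sigma_{2j+2})=\emptyset$.

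\textbf{Comparison with the original ladder.} The uniform quasi-isometry between $\ladder(\Sigma,\Sigma')$ and the glued space is not automatic. An edge of $\ladder(\Sigma,\Sigma')$ may join vertices of $\ladder(\Sigma_{i-1},\Sigma_i)$ and $\ladder(\Sigma_{j-1},\Sigma_j)$, whose copies in the glued space are at distance at least $|i-j|$. First move one endpoint into the other's fiber along a section. Then bounding $|i-j|$ again uses global emptiness of non-consecutive necks together with Lemma \ref{lem:bounded_between}. After that, Lemma \ref{lem:bounded_between} and Corollary \ref{cor:ladder_qie} give the uniform bound.
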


\begin{proof}
    Let $\Sigma,\Sigma'$ be tight sections, and consider a ladder $\ladder(\Sigma,\Sigma')$. Fix a constant $B > \max\{\sigma,\kappa+1\}$ that is  large enough to apply Proposition \ref{prop:thin_neck} and Lemma \ref{lem:double_ladder_proj}. Such a choice only depends on the bundle constants for $\E$. Let $\delta$ be the hyperbolicity constant from Proposition \ref{prop:thin_neck} for this  fixed $B$.
    
We now recursively construct a sequence of tight sections $\Sigma_0,\dots, \Sigma_\ell$ in $\ladder(\Sigma,\Sigma')$ beginning with $\Sigma_0 = \Sigma$ and terminating with $\Sigma_\ell = \Sigma'$. These will satisfy the following for any $0 \le i\le j\le k\le \ell$:
\begin{itemize}
\item $\Sigma_j$ is contained in the subladder $\ladder(\Sigma_i,\Sigma_k)$ of $\ladder(\Sigma,\Sigma')$ determined by $\Sigma_i,\Sigma_k$.
\item If $\abs{i-k} =1$, then $\neck_B(\Sigma_i,\Sigma_k) \ne \emptyset$, and hence the subladder $\ladder(\Sigma_i,\Sigma_k)$ is $\delta$--hyperbolic.
\item If $\abs{i-k} > 1$, then $\neck_B(\Sigma_i,\Sigma_k) = \emptyset$.
\end{itemize}
To begin, fix a vertex $b\in \B$ and set $\Sigma_0 = \Sigma$. Now for $k \ge 0$ recursively assume $\Sigma_0,\dots,\Sigma_{k}$ have been chosen. We assume $\Sigma_k \ne \Sigma'$, for otherwise we terminate the process with $k = \ell$. Consider the subladder $\ladder(\Sigma_k,\Sigma')$ of $\ladder(\Sigma,\Sigma')$. If $\neck_B(\Sigma_{k},\Sigma') \ne \emptyset$, then set $\Sigma_{k+1} = \Sigma'$. Otherwise, the subgeodesic $\ladder_b(\Sigma_{k},\Sigma')$ of $\ladder_b(\Sigma,\Sigma')$ must have positive length. Therefore, since $B\ge 1$, Axiom \eqref{item:robust} ensures there exist sections $\Sigma''$ in $\ladder(\Sigma_{k},\Sigma')$ satisfying the dual conditions that $\Sigma''(b) \ne \Sigma_{k}(b)$ and $\neck_{B}(\Sigma_{k},\Sigma'')\ne \emptyset$. Hence, among all such sections, we may choose one maximizing the (positive) length of the geodesic $[\Sigma_{k}(b),\Sigma''(b)]_b$ of $\ladder_b(\Sigma_k,\Sigma')$ and then declare $\Sigma_{k+1} = \Sigma''$. In this way we produce a sequence $\Sigma_0,\Sigma_1,\dots$ that necessarily terminates with $\Sigma_\ell = \Sigma'$ since the lengths of the geodesics $\ladder_b(\Sigma_k,\Sigma')$ are strictly monotonically decreasing. The first two bullets are automatic from the construction, and the third follows from the maximality condition in the choice of $\Sigma_{k+1}$.

    Let $\ladder_k$ denote the subladder $\ladder(\Sigma_{k-1},\Sigma_{k})$ and construct a space $\ladder'$ by taking the disjoint union of $\ladder_1,\dots, \ladder_\ell$ and adding an edge between the two copies of each vertex of $\Sigma_k(\B)$ that are contained in $\ladder_{k-1}$ and $\ladder_{k}$ for each $k \in\{2,\dots \ell\}$.

\begin{claim}\label{claim:L'_hyp}
    $\ladder'$ is $\epsilon$--hyperbolic for some uniform $\epsilon$.
\end{claim}

\begin{proof}
    We will show that $\ladder'$ is a line of hyperbolic spaces and then apply Proposition \ref{prop:malnormal_glue}.

    For each $k \in \{1,\dots,\ell-1\}$, let $\Dladder_k$ be the subset of $\ladder'$ spanned by $\ladder_{k}$ and $\ladder_{k+1}$. Notice that $\Dladder_k$ is precisely the double ladder $\Dladder(\Sigma_{k-1},\Sigma_k,\Sigma_{k+1})$ considered in Lemma \ref{lem:double_ladder_proj}. Hence each $\Dladder_k$ is $\delta_3$--hyperbolic for some $\delta_3$ depending on constants from the bundle. Hence, we can view $\ladder'$ as a line of $\delta_3$--hyperbolic spaces as follows:
    \begin{itemize}
        \item  if $\ell$ is even, then the vertex spaces are $\Dladder_1,\Dladder_3,\dots,\Dladder_{\ell-1}$ and the edges spaces are $\Sigma_2,\Sigma_4, \dots, \Sigma_{\ell-2}$ contained in $\ladder_2,\ladder_4,\dots,\ladder_{\ell-2}$;
        \item if $\ell$ is odd, then  the vertex spaces are $\Dladder_1,\Dladder_3,\dots,\Dladder_{\ell-2}, \ladder_\ell$ and the edges spaces are the copies of $\Sigma_2,\Sigma_4, \dots, \Sigma_{\ell-2}$ contained in $\ladder_2,\ladder_4,\dots,\ladder_{\ell-1}$.
    \end{itemize}

Notice that each edge space is uniformly quasi-isometrically embedded in the total space $\ladder'$, due to the fact that $\ladder'$ admits a coarsely Lipschitz retraction onto the edge space given by mapping to $\B$ and then using the tight section defining the edge space.
Furthermore, in either case, Lemma \ref{lem:double_ladder_proj} ensures that the two edge spaces in each vertex space have uniformly bounded closest-point projections to each other. Therefore this line of hyperbolic spaces satisfies the hypotheses of Proposition \ref{prop:malnormal_glue}, and we conclude that $\ladder'$ is $\epsilon$--hyperbolic where $\epsilon$ ultimately depend only on the bundle constants.
    \end{proof}

Now, Let $\ladder$ be the ladder $\ladder(\Sigma,\Sigma')=\ladder(\Sigma_0,\Sigma_\ell)$. The remainder of the proof is to show that $\ladder$ and $\ladder'$ are uniformly quasi-isometric.

There is a natural $1$--Lipchitz map $\ladder'\to \ladder$ obtained by mapping the edge joining the the two copies of a vertex of $\Sigma_k(\B)$ in $\ladder'$ to the single copy of that vertex in $\ladder$. This map is surjective on the vertices of $\ladder$ and the preimage of each vertex is either a vertex or an edge. To conclude that this is a quasi-isometry with uniform constants, it suffices to argue that if two vertices of $\ladder$ are joined by an edge, then the corresponding vertices of $\ladder'$ lie within bounded distance of each other. 

Let $x,y$ be two vertices of $\ladder$ that are joined by an edge.  
Without loss of generality, assume $x \in \ladder_{i}$ and $y \in \ladder_{j}$ for some $i\leq j$. Let $x'$ and $y'$ be the copies of the vertices $x$ and $y$ in the copies of $\ladder_i$ and $\ladder_j$ contained in $\ladder'$. We need to show that $d_{\ladder'}(x',y')$ is uniformly bounded.

 Being joined by an edge of $\ladder$ means that $d_\E(x,y) \leq \kappa$ and $x$ and $y$ are either in the same fiber of $\E$ or adjacent fibers. If $x$ and $y$ are in the same fiber, let $z = y$, otherwise, Axiom \eqref{item:robust} says  there is a vertex $z \in \ladder_j$ that is in the same fiber as $x$ and joined by an edge of $\ladder_j$ to $y$. Let $z'$ be the copy of the vertex $z$ that is in the copy of $\ladder_j$ in $\ladder'$. Observe that $d_{\ladder'}(z',y')\le 1$.

 Now $d_\E(x,z) \leq d_\E(x,y) + d_\E(y,y') \leq \kappa +1 \leq B$. Thus $\neck_B(\Sigma_i,\Sigma_j) \neq \emptyset$. By construction of the $\Sigma_k$, this implies that $|i -j| \leq 1$. If $i =j$,  then $x'$ and $y'$ are both in the copy of $\ladder_i$ in $\ladder'$. Thus $d_\E(x,y) = d_\E(x',y')\leq \kappa$ implies that $x'$ and $y'$ are either equal or joined by an edge of $\ladder_i$ inside of $\ladder'$.
 
 Now assume $i =j+1$, so that $z \in \ladder_{i+1}$. Let $\E_w$ be the fiber containing $x$ and $z$. Since $d_\E(x,z) \leq B$, Lemma \ref{lem:bounded_between} implies there is $K_2\geq 1$ depending only on the bundle constants so that  $$d_\E(x,\Sigma_{i}(w)) \leq K_2B+K_2 \text{ and } d_\E(\Sigma_{i}(w),z)  \leq K_2B+K_2.$$ Since $\ladder_i$ and $\ladder_{i+1}$  uniformly quasi-isometrically embedded in $\E$, this produces a uniform bound on the distance between $x'$ and $z'$ in $\ladder'$. 
Since $d_{\ladder'}(z',y')\le 1$, as noted above, we obtain the desired uniform bound on $d_{\ladder'}(x',y')$.

Since $\ladder$ and $\ladder'$ are uniformly quasi-isometric, Claim \ref{claim:L'_hyp} makes $\ladder$ uniformly hyperbolic.
\end{proof}

\subsection{Half-ladders, tripod bundles, and the hyperbolicity of $\E$}
We will now describe how to combine three ladders to make a hyperbolic tripod bundle. We will then use the hyperbolicity of these bundles to prove hyperbolicity of the total space $\E$.

Let $\Sigma_1,\Sigma_2,\Sigma_3$ be three tight sections.  For each ladder $\ladder(\Sigma_i,\Sigma_j)$, let $\Pi_{i,j} \colon \E \to \ladder(\Sigma_i,\Sigma_j)$ be the fiberwise projection onto the ladder (Definition \ref{defn:fiberwise projection map}). For each vertex $v\in \B$, let $x_v = \Sigma_1(v)$, $y_v = \Sigma_2(v)$, and $z_v = \Sigma_3(v)$, then let $h_v$ be the vertex in $\Pi_{1,2}(z_v)$ that is furthest along the geodesic $[x_v,y_v]_v$ from $x_v$.  In particular, we note that $h_v$ a coarse closest point projection of $z_v$ to $[x_v,y_v]_v$.  Define $\half_v(\Sigma_1,\Sigma_2; \Sigma_3)$ to be the subgeodesic $[x_v,h_v]_v$ of $[x_v,y_v]_v$, then let $\half(\Sigma_1,\Sigma_2; \Sigma_3)$ denote the union of the $\half_v(\Sigma_1,\Sigma_2; \Sigma_3)$ over all vertices $v \in \B$.  We note that these {\em half-ladders} are collections of vertices and therefore naturally subsets of both $\E$ and their respective ladders. 
\smallskip

We next show that half-ladders are quasiconvex in their ladders. 

 \begin{lemma}\label{lem:half_ladder_QC}
     There exists $Q_1>0$ depending only on the parameters for $\E$ so that for any three tight sections  $\Sigma_1,\Sigma_2,\Sigma_3$, $\half(\Sigma_1,\Sigma_2; \Sigma_3)$ is a $Q_1$--quasiconvex subset of $\ladder(\Sigma_1,\Sigma_2)$.
 \end{lemma}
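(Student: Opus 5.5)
The plan is to exhibit a coarsely Lipschitz retraction $r\colon \ladder(\Sigma_1,\Sigma_2)\to \half(\Sigma_1,\Sigma_2;\Sigma_3)$ and then use the previous proposition, which says $\ladder(\Sigma_1,\Sigma_2)$ is uniformly $\epsilon$--hyperbolic. In a hyperbolic graph, a subset admitting a $K$--coarsely Lipschitz retraction is $Q_1(K,\epsilon)$--quasiconvex. To see this, take a geodesic $[a,b]$ with $a,b$ in the subset. Then $r([a,b])$ is a coarse path in the subset joining $a$ to $b$. A point of $[a,b]$ far from the subset would give a long detour, and the standard argument (as in the proof that retracts of hyperbolic spaces are quasiconvex) bounds this. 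The retraction is the obvious one. For a vertex $p\in \ladder_v(\Sigma_1,\Sigma_2)=[x_v,y_v]_v$, set $r(p)=p$ if $p\in[x_v,h_v]_v$ and $r(p)=h_v$ otherwise. Thus $r(p)$ is exactly the fiberwise closest point projection of $p$ onto the subgeodesic $[x_v,h_v]_v$. By Corollary \ref{cor:ladder_qie}, it suffices to bound $d_\E(r(p),r(p'))$ whenever $p,p'$ are joined by an edge of the ladder.

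For $p,p'$ in the same fiber this is immediate, since projection to a subgeodesic is $1$--Lipschitz along the geodesic. The real case is $p\in \E_v$, $p'\in\E_w$ with $v,w$ adjacent and $d_\E(p,p')\le\kappa$. The first step is to show that $v\mapsto h_v$ is coarsely continuous in $\E$. Since $\Sigma_1,\Sigma_2,\Sigma_3$ are simplicial sections, $x_v,y_v,z_v$ are adjacent to $x_w,y_w,z_w$. Axiom \eqref{item:retraction} then gives $d_\E(h_v,h_w)\le \kappa+c$, where $c$ absorbs the coarse ambiguity in choosing the furthest projection point; this ambiguity is controlled via Lemma \ref{lem:bounded_between}.

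Next I apply Axiom \eqref{item:robust} to choose tight sections $\Sigma_h$ through $h_v$ and $\Sigma_p$ through $p$, both inside $\ladder(\Sigma_1,\Sigma_2)$. Apply Axiom \eqref{item:retraction} to the adjacent triples $(x_v,\Sigma_h(v),p)$ and $(x_w,\Sigma_h(w),\Sigma_p(w))$. This bounds the $\E$--distance between $r(p)$ and the fiber projection of $\Sigma_p(w)$ onto $[x_w,\Sigma_h(w)]_w$. It then remains to make two replacements, each at bounded cost.

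The first replacement swaps $\Sigma_p(w)$ for $p'$. These are $\E$--close, since $d_\E(\Sigma_p(w),p')\le \kappa+1$. The fiber projection onto a ladder is coarsely Lipschitz in $\E$ by Lemma \ref{lem:fiberwise_proj}, applied to $\ladder(\Sigma_1,\Sigma_h)$. The second replacement swaps the segment $[x_w,\Sigma_h(w)]_w$ for $[x_w,h_w]_w$. Both are initial subsegments of the same fiber geodesic $[x_w,y_w]_w$. Their endpoints $\Sigma_h(w)$ and $h_w$ are $\E$--close, since $\Sigma_h(w)$ is adjacent to $h_v$, which is close to $h_w$. By Lemma \ref{lem:bounded_between}, the portion of $[x_w,y_w]_w$ between these endpoints has bounded $\E$--diameter. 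Therefore the two projections differ by a bounded amount in $\E$. Altogether $d_\E(r(p),r(p'))$ is uniformly bounded, and Corollary \ref{cor:ladder_qie} converts this into a bound in the ladder metric.

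I expect the main obstacle to be this comparison across adjacent fibers. The points $h_v$ and $h_w$ are not adjacent, and $p$ and $p'$ need not lie on a common section, so Axiom \eqref{item:retraction} cannot be applied directly. The device of inserting the auxiliary tight sections $\Sigma_h,\Sigma_p$ and then correcting via Lemma \ref{lem:bounded_between} is what I would try first. Once $r$ is known to be uniformly coarsely Lipschitz, quasiconvexity in the $\epsilon$--hyperbolic ladder follows, with $Q_1$ depending only on the bundle constants.
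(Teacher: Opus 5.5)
Your proposal is correct and follows essentially the paper's argument: the same fiberwise projection onto $[x_v,h_v]_v$, a robust section through $h_v$ to obtain its neighbor $\Sigma_h(w)$, Axiom~\eqref{item:retraction} across adjacent fibers, coarse continuity of $v\mapsto h_v$, and Lemma~\ref{lem:bounded_between} to compare projections onto the nested segments $[x_w,\Sigma_h(w)]_w$ and $[x_w,h_w]_w$. The only difference is that the paper defines the retraction on all of $\E$ and checks it on $\E$--edges, where Axiom~\eqref{item:retraction} applies directly to the adjacent pair. You instead restrict to the ladder, whose cross-fiber edges are not $\E$--edges, so you need the extra section $\Sigma_p$ and Lemma~\ref{lem:fiberwise_proj}; this restriction is also what makes the final appeal to hyperbolicity legitimate, since at this stage only the ladder, not $\E$, is known to be hyperbolic.
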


\begin{proof}
    Let $\half = \half(\Sigma_1,\Sigma_2;\Sigma_3)$ and $\half_v =\half(\Sigma_1,\Sigma_2;\Sigma_3) \cap \E_v$ for each $v\in \B$ .   We will prove the $\half$ is a quasiconvex subset of $\E$. This will imply $\half$ is a quasiconvex subset of $\ladder(\Sigma_1,\Sigma_2)$ because of the uniform quasi-isometric embedding $\ladder(\Sigma_1,\Sigma_2) \to \E$ (Corollary \ref{cor:ladder_qie}), which preserves $\half$ identically.

    To prove $\half$ is a quasiconvex subset in $\E$, it suffices to prove there is a coarsely Lipschitz retraction of the vertex set of $\E$ onto $\half$. For each vertex $v \in \B$, let $\Upsilon_v \colon \E_v \to \half_v$ denote the closest point projection to the geodesic segment $[x_v,h_v]_v \subset \E_v$. Define $\Upsilon\colon \E \to \half$ by applying the map $\Upsilon_v$ in each fiber $\E_v$. While this is similar to the fiberwise closest point projection onto a ladder (Definition \ref{defn:fiberwise projection map}), Lemma \ref{lem:fiberwise_proj} does not immediately imply $\Upsilon$ is coarsely Lipschitz because $h_v$ and $h_{v'}$ are not necessarily joined by an edge of $\E$ (or even an edge of $\ladder(\Sigma_1,\Sigma_2)$) when $v,v'$ are joined by an edge of $\B$.
    
    Let $a,a'$ be vertices of $\E$ that are joined by an edge. If $a,a'$ are in a fiber $\E_v$, then $d_{\E_v}(\Upsilon(a),\Upsilon(a'))$ is uniformly bounded because each $\Upsilon_v$ is a coarsely Lipschitz map. Thus we can assume $a \in \E_v$ and $a'\in \E_{v'}$ where $v$ and $v'$ are joined by an edge of $\B$. Let $b\in \Upsilon(a) = \Upsilon_v(a)$ and $b' \in \Upsilon(a')= \Upsilon_{v'}(a')$. Axiom (\ref{item:robust}) ensures that $h_v$ is joined by an edge to a vertex $h'$ of $\ladder_{v'}(\Sigma_1,\Sigma_2)$. Let $c'$ be contained in the closest point projection of $a'$ to the geodesic $[x_{v'},h']_{v'} \subset \E_{v'}$. Axiom (\ref{item:retraction}) ensures that $d_\E(b,c')\leq \kappa$, so we need to verify that $c'$ is uniformly close to $b'$. Since $\ladder_{v'}(\Sigma_1, \Sigma_2)$ is just a geodesic in $\E_{v'}$ between $x_{v'}, y_{v'}$, we have two cases depending on the order in which $h', h_{v'}$ appear.
    
    \smallskip

    \underline{\textbf{Case 1,  $[x_{v'},h']_{v'}$ contains $h_{v'}$:}} If $c' \in [x_{v'},h_{v'}]$, then we are done as $c', b' \in \Upsilon_{v'}(a)$ (which has uniformly bounded diameter). If $c' \in [h_{v'},h']$ instead, then $|d_{\E_{v'}}(c',b') - d_{\E_{v'}}(c',h_{v'})|$ is uniformly bounded by item (2) of Lemma \ref{lem:hyp pass close}.  Now Axiom (\ref{item:retraction}) says $d_\E(h_v,h_{v'}) \leq \kappa$ since they are the images of $z_v$ and $z_{v'}$ under $\Pi_{1,2}$, which implies that $d_\E(h_{v'},h') \leq \kappa+1$. Hence Lemma \ref{lem:bounded_between} says that $d_{\E}(c',h_{v'})$ is uniformly bounded, which in turn bounds $d_{\E}(c',b')$, as required.

    \smallskip
    
    \underline{\textbf{Case 2, $[x_{v'},h_{v'}]_{v'}$ contains $h'$:}} If $c' \in [h',h_{v'}]_{v'}$, then $c',b'$ are uniformly close again by item (2) of Lemma \ref{lem:hyp pass close}. So we may assume that $c \in [x_v,h']$ and $b' \in [h',h_{v'}]_{v'} $.   As in case 1, item (2) of the same lemma again implies that $d_{\E_{v'}}(c',h')$ is uniformly bounded, and hence $d_\E(h_v,h_{v'}) \leq \kappa$, so $d_\E(h_v,h') \leq \kappa+1$. This in turn bounds the diameter of $[h',h_{v'}]_v$ in $\E$ by Lemma \ref{lem:bounded_between}. Since $c'$ is close to $h'$ and $h'$ is close to $b'$, $d_\E(c',b')$ is uniformly bounded, completing the proof of the lemma.
\end{proof}

 We now build the {\em tripod bundle} $\tripod(\Sigma_1,\Sigma_2,\Sigma_3)$ by gluing together the three ladders for these sections.  To begin, let $\delta>0$ be the hyperbolicity constant for ladders and  fix $\delta_T> 3\max\{\delta_1, \delta_2\}$, where $\delta_2 = \delta_2(\delta)>0$ comes from Lemma \ref{lem:projections_and_thin_triangles} and $\delta_1 = \delta_1(\delta, \delta_2)>0$ comes from Lemma \ref{lem:coarse center}.  Now define a coarse map $\phi_1 \colon \half(\Sigma_1,\Sigma_2;\Sigma_3) \to \ladder(\Sigma_1,\Sigma_3)$ by taking each vertex $a \in \half_v(\Sigma_1,\Sigma_2;\Sigma_3)$ and letting $\phi_1(a)$ be the set of vertices in $\half_v(\Sigma_1,\Sigma_3;\Sigma_2)$ that are at distance at most $\delta_T$  in $\E_v$.  This allows us to form a graph $\ladder(\Sigma_1, \Sigma_2) \cup_{\phi_1} \ladder(\Sigma_1, \Sigma_3)$ by attaching  $\ladder(\Sigma_1, \Sigma_2)$ to $\ladder(\Sigma_1, \Sigma_3)$ by adding an edge between each vertex $a \in \half_v(\Sigma_1, \Sigma_2; \Sigma_3)$ and $\phi_1(a) \in \half_v(\Sigma_1,\Sigma_3; \Sigma_2)$, as in Proposition \ref{prop:gluing_lemma}.
 \smallskip
 
 Next define $\phi_2\colon \ladder(\Sigma_2,\Sigma_3) \to \ladder(\Sigma_1,\Sigma_2) \cup_{\phi_1} \ladder(\Sigma_1,\Sigma_3)$ by taking each vertex $a \in \ladder_v(\Sigma_2,\Sigma_3)$ and mapping it to the vertices of $\half_v(\Sigma_2,\Sigma_1;\Sigma_3) \cup \half_v(\Sigma_3,\Sigma_1;\Sigma_2)$ that are at distance at most $\delta_T$ from $a$ in $\E_v$. Then the tripod bundle is the graph
 $$\tripod(\Sigma_1,\Sigma_2,\Sigma_3) = (\ladder(\Sigma_1,\Sigma_2) \cup_{\phi_1} \ladder(\Sigma_1,\Sigma_3)) \cup_{\phi_2} \ladder(\Sigma_2,\Sigma_3).$$
 
 The natural maps of the ladders $\iota \colon\ladder(\Sigma_i,\Sigma_j) \to \E$ induces a natural coarse graph map of $\tripod(\Sigma_1,\Sigma_2,\Sigma_3)$ into $\E$.  The next proposition is the last key step in setting up our proof of Theorem \ref{thrm:combination_theorem}.

\begin{proposition}\label{prop:tripod_bundle}
Using the above setup and notation, the following hold:
\begin{enumerate}
    
    \item The natural map $\tripod(\Sigma_1,\Sigma_2,\Sigma_3) \to \E$ is uniformly coarsely Lipschitz.
    \item  Tripod bundles $\tripod(\Sigma_1,\Sigma_2,\Sigma_3)$ are uniformly hyperbolic.
    
    \item  Each of the ladders $\ladder(\Sigma_i,\Sigma_j)$ is uniformly undistorted in $\tripod(\Sigma_1,\Sigma_2,\Sigma_3)$.
\end{enumerate}
 
\end{proposition}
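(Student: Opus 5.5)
The plan is to apply the gluing criterion Proposition \ref{prop:gluing_lemma} twice, once for each of the two gluings $\phi_1$ and $\phi_2$ in the definition of $\tripod(\Sigma_1,\Sigma_2,\Sigma_3)$. Item (1) is the easy part. The natural maps $\iota$ of each ladder into $\E$ are $K_1$--coarsely Lipschitz by Corollary \ref{cor:ladder_qie}. Each added edge joins two vertices in a common fiber $\E_v$ at $\E_v$--distance at most $\delta_T$, hence at $\E$--distance at most $\delta_T$. So the induced map on $\tripod$ is coarsely Lipschitz with constant $\max\{K_1,\delta_T\}$.

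\textbf{First gluing.} Set $X=\ladder(\Sigma_1,\Sigma_2)$, $Y=\ladder(\Sigma_1,\Sigma_3)$, $A=\half(\Sigma_1,\Sigma_2;\Sigma_3)$.
\begin{enumerate}
\item The ladders $X$ and $Y$ are uniformly hyperbolic by the previous proposition, and $A$ is uniformly quasiconvex in $X$ by Lemma \ref{lem:half_ladder_QC}.
\item Next check that $\phi_1(a)$ is nonempty and of uniformly bounded diameter. In the fiber, the triangle $x_v,y_v,z_v$ is $\delta$--thin in $\E_v$ (fibers are hyperbolic; after increasing $\delta$ we may use the same constant). By Lemma \ref{lem:projections_and_thin_triangles}(1),(2), every point of $[x_v,h_v]_v$ lies within $\delta_2$ of $[x_v,h'_v]_v$, where $h'_v$ is the projection of $y_v$ to $[x_v,z_v]_v$. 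Since $\delta_T>3\delta_2$, $\phi_1(a)$ is nonempty. Its diameter is bounded because $[x_v,h'_v]_v$ is a geodesic.
\item The main task is to show $\phi_1\colon A\to Y$ is a uniform quasi-isometric embedding for the metric induced from $X$. Both ladders quasi-isometrically embed in $\E$ by Corollary \ref{cor:ladder_qie}, and $\phi_1$ moves points by at most $\delta_T$ in $\E$. Hence
\[d_Y(\phi_1(a),\phi_1(b))\asymp d_\E(\phi_1(a),\phi_1(b))\asymp d_\E(a,b)\asymp d_X(a,b)\]
up to uniform additive and multiplicative constants.
\end{enumerate}
Proposition \ref{prop:gluing_lemma} then gives that $Z=X\cup_{\phi_1}Y$ is uniformly hyperbolic, with $X$ and $Y$ uniformly undistorted in it. The natural map $Z\to\E$ is coarsely Lipschitz by item (1).

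\textbf{Second gluing.} Now glue $W=\ladder(\Sigma_2,\Sigma_3)$ to $Z$ via $\phi_2$. Here the glued subset is all of $W$, which is trivially quasiconvex in itself. So Proposition \ref{prop:gluing_lemma} applies with the roles arranged so that $A=W$, provided two things hold.
\begin{enumerate}
\item $\phi_2(a)$ is nonempty and of bounded diameter. Each point $a$ of $[y_v,z_v]_v$ lies on one of the two halves split at the projection of $x_v$. By Lemma \ref{lem:projections_and_thin_triangles} applied to the triangle with vertex $y_v$, respectively $z_v$, that half is $\delta_2$--close to $\half_v(\Sigma_2,\Sigma_1;\Sigma_3)$, respectively $\half_v(\Sigma_3,\Sigma_1;\Sigma_2)$. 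Near the coarse center both may occur, but they are then within $\delta_1$ of each other by Lemma \ref{lem:coarse center}, so the diameter stays bounded.
\item $\phi_2$ is a quasi-isometric embedding into $Z$. The argument is the same comparison through $\E$ as before: $d_Z$ dominates, up to constants, $d_\E$ via the coarsely Lipschitz map $Z\to\E$; and $d_\E$ on $W$ is comparable to $d_W$.
\end{enumerate}
The upper bound needs care, because the two halves sit in different ladders $X$ and $Y$. Take points $\phi_2(a)\in X$ and $\phi_2(b)\in Y$. Bound $d_Z$ by passing through the coarse center region over some fiber, where the halves meet via the $\phi_1$ edges. Alternatively, bound $d_Z$ by $d_W$ directly: walk along a $W$--geodesic and note that consecutive images are uniformly close in $Z$. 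This uses the $\kappa$--edges of the ladders, Axiom \eqref{item:retraction}, and the bound $d_\E(h_v,h_{v'})\le\kappa$ used in Lemma \ref{lem:half_ladder_QC}.

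I expect this consecutive-images estimate to be the main obstacle. Adjacent points of $W$ over adjacent fibers may map into different halves, or the projections $h_v,h_{v'}$ may shift. I would handle it with Lemma \ref{lem:bounded_between}, which controls the $\E$--diameter of the short fiber geodesics between nearby centers, combined with the fact that $X,Y$ are undistorted in $Z$.

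With both gluings done, Proposition \ref{prop:gluing_lemma} gives (2) and also (3): $W$ is undistorted in $\tripod$, and $Z$ is undistorted in $\tripod$, hence so are $X$ and $Y$ by composing the two undistortion statements.
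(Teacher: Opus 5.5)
Your architecture is exactly the paper's. You apply Proposition~\ref{prop:gluing_lemma} twice, first along $\half(\Sigma_1,\Sigma_2;\Sigma_3)$ and then along all of $\ladder(\Sigma_2,\Sigma_3)$, and get (3) by composing undistortion statements. Item (1) is correct. The first gluing is correct; your comparison through $\E$ via Corollary~\ref{cor:ladder_qie} is a fine substitute for the paper's coarse-inverse argument. The lower bound for $\phi_2$ is also correct.

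\textbf{The gap.} The one step with real content is the upper bound for $\phi_2$, i.e.\ that $\phi_2$ is coarsely Lipschitz into $Z=\ladder(\Sigma_1,\Sigma_2)\cup_{\phi_1}\ladder(\Sigma_1,\Sigma_3)$. You only flag it as ``the main obstacle''. The same difficulty is glossed over in your claim that $\phi_2(a)$ has bounded diameter. The issue is that $Z$ is not yet known to be undistorted in $\E$. So $\E$--closeness upgrades to $Z$--closeness only for two points of the \emph{same} ladder, via Corollary~\ref{cor:ladder_qie} plus undistortion of that ladder in $Z$. A point of $\ladder(\Sigma_1,\Sigma_2)$ and a point of $\ladder(\Sigma_1,\Sigma_3)$ that are within $\delta_1$ or $2\delta_T$ in $\E_v$ are not thereby close in $Z$.

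Two situations put the images in different ladders: a single $a$ near the coarse center, or adjacent $a\in\E_v$ and $a'\in\E_{v'}$ lying in opposite halves of $\ladder(\Sigma_2,\Sigma_3)$. In both you must route through a specific $\phi_1$--edge. Show that the image in $\ladder(\Sigma_1,\Sigma_2)$ is $\E$--close to $h_{v',3}$, the projection of $z_{v'}$ to $[x_{v'},y_{v'}]_{v'}$. Show that the image in $\ladder(\Sigma_1,\Sigma_3)$ is $\E$--close to $h_{v',2}$, the projection of $y_{v'}$ to $[x_{v'},z_{v'}]_{v'}$. These two points are joined by a $\phi_1$--edge because $d_{\E_{v'}}(h_{v',2},h_{v',3})\le\delta_2<\delta_T$.

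\textbf{How to fill it.} For a single $a$, the needed closeness follows from Lemma~\ref{lem:coarse center} and Lemma~\ref{lem:projections_and_thin_triangles}(1). The cross case is the heart of the proposition, and the missing estimate goes as follows. Let $c_v$ be the projection of $x_v$ to $[y_v,z_v]_v$, and suppose $a$ lies on the $y_v$--side of $c_v$. Then Lemma~\ref{lem:projections_and_thin_triangles}(3) puts $\Pi_{1,3}(a)$ within $\delta_2$ of $h_{v,2}$. The fiberwise projection $\Pi_{1,3}$ is coarsely Lipschitz (Lemma~\ref{lem:fiberwise_proj}) and $d_\E(a,a')$ is bounded, so $\Pi_{1,3}(a')$ is $\E$--close to $h_{v,2}$. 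By Axiom~\eqref{item:retraction} it is then $\E$--close to $h_{v',2}$. Also $\Pi_{1,3}(a')$ is within $2\delta_T+1$ of any $b'\in\phi_2(a')\cap\ladder(\Sigma_1,\Sigma_3)$. The symmetric argument with $\Pi_{1,2}$, applied to $a'$ on the $z_{v'}$--side, places any $b\in\phi_2(a)\cap\ladder(\Sigma_1,\Sigma_2)$ near $h_{v',3}$.

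Lemma~\ref{lem:bounded_between}, which you propose to use, will not do this by itself. It controls fiber segments between points already known to be $\E$--close. It does not place $a$ or $a'$ near the center, which is exactly what must be shown. By contrast, the same-half cases are immediate from your comparison through $\E$, since both images then lie in one ladder.
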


\begin{proof}
The map $\tripod(\Sigma_1,\Sigma_2,\Sigma_3) \to \E$  is coarsely Lipschitz because vertices joined by an edge in  $\tripod(\Sigma_1,\Sigma_2,\Sigma_3)$ are either joined by an edge in at least one of the ladders and hence are mapped to vertices with uniformly bounded distance in $\E$ by Corollary \ref{cor:ladder_qie}, or else they are joined by an edge during the gluing process and hence are mapped to vertices with distance at most $\delta_T$ in a fiber of $\E$. This proves item (1).

We prove item (2) by applying Proposition \ref{prop:gluing_lemma} to the two different gluings. This will also imply that each of the ladders are undistorted in $\tripod(\Sigma_1,\Sigma_2,\Sigma_3)$, completing the proof of item (3).

First, recall $\half(\Sigma_1,\Sigma_2;\Sigma_3)$ is quasiconvex in $\ladder(\Sigma_1,\Sigma_2)$ by Lemma \ref{lem:half_ladder_QC}. The map $$\phi_1 \colon \half(\Sigma_1,\Sigma_2;\Sigma_3) \to \ladder(\Sigma_1,\Sigma_3)$$ is coarsely Lipschitz by construction and we can verify that it is a quasi-isometry, by noticing that the analogously defined map  from $\half(\Sigma_1,\Sigma_3;\Sigma_2)$ to $\ladder(\Sigma_1,\Sigma_2)$ is a coarse inverse to $\phi_1$ (using Lemma \ref{lem:projections_and_thin_triangles}.(2)). Thus $\ladder(\Sigma_1,\Sigma_2) \cup_{\phi_1} \ladder(\Sigma_1,\Sigma_3)$ is uniformly hyperbolic by Proposition \ref{prop:gluing_lemma} and both ladders are undistorted in $\ladder(\Sigma_1,\Sigma_2) \cup_{\phi_1} \ladder(\Sigma_1,\Sigma_3)$.

Since $\ladder(\Sigma_2,\Sigma_3)$ is quasiconvex in itself, we only need to verify that  $\phi_2:\ladder(\Sigma_2,\Sigma_3) \to \ladder(\Sigma_1,\Sigma_2) \cup_{\phi_1} \ladder(\Sigma_1,\Sigma_3)$ is a quasi-isometric embedding.

\begin{claim}\label{claim:coarsely well-defined}
    For each vertex $a \in \ladder(\Sigma_2,\Sigma_3)$, $\phi_2(a)$ has uniformly bounded diameter. 
\end{claim}

\begin{proof}
First, observe that if $a \in \ladder_v(\Sigma_2, \Sigma_3)$ and $\phi_2(a)$ is contained entirely in one of $\ladder(\Sigma_1,\Sigma_2)$ or $\ladder(\Sigma_1,\Sigma_3)$, then the diameter of $\phi_2(a)$ in $\E$ is at most $2\delta_T$. Since $\ladder(\Sigma_1, \Sigma_2)$ and $\ladder(\Sigma_1,\Sigma_3)$ uniformly quasi-isometrically embed in $\E$ (Corollary \ref{cor:ladder_qie}) and in $\ladder(\Sigma_1,\Sigma_2) \cup_{\phi_1} \ladder(\Sigma_1,\Sigma_3)$ (Proposition \ref{prop:gluing_lemma}), this give a uniform diameter for $\phi_2(a)$ in $\ladder(\Sigma_1,\Sigma_2) \cup_{\phi_1} \ladder(\Sigma_1,\Sigma_3)$ in this case.

To handle the case where  $\phi_2(a)$ overlaps both $\ladder(\Sigma_1,\Sigma_2)$ and $\ladder(\Sigma_1,\Sigma_3)$, recall that our choice of the vertices used to define the half-ladders $\half_v(\Sigma_2,\Sigma_3;\Sigma_1)$ and $\half_v(\Sigma_3,\Sigma_2;\Sigma_1)$ ensures that
    $$ \half_v(\Sigma_2,\Sigma_3;\Sigma_1) \cup  \half_v(\Sigma_3,\Sigma_2;\Sigma_1) = \ladder_v(\Sigma_2,\Sigma_3)$$
and moreover that the set of vertices in
$$ \half_v(\Sigma_2,\Sigma_3;\Sigma_1) \cap  \half_v(\Sigma_3,\Sigma_2;\Sigma_1)$$
is exactly the set of vertices in the closest point projection of $\Sigma_1(v)$ onto $\ladder_v(\Sigma_2,\Sigma_3)$. So if $\phi_2(a)$ contains vertices on both $\ladder(\Sigma_1,\Sigma_2)$ and $\ladder(\Sigma_1,\Sigma_3)$, then item (2) of Lemma \ref{lem:coarse center} says that $a$ is within $\delta_1 = \delta_1(\delta, \delta_2)>0$ of the closest point projection of $\Sigma_1(v)$ onto $\ladder_v(\Sigma_2,\Sigma_3)$. Let $h_2$ and $h_3$ be vertices in the closest point projections of $\Sigma_2(v)$ onto $\ladder_v(\Sigma_1,\Sigma_3)$ and $\Sigma_3(v)$ onto $\ladder_v(\Sigma_1,\Sigma_2)$, respectively. By Lemma  \ref{lem:projections_and_thin_triangles}, $h_2$, and $h_3$ are at distance at most $\delta_2$ in $\E_v$ for $\delta_2= \delta_2(\delta)>0$.  Since $\delta_T>3 \max\{ \delta_1, \delta_2\}$ and $h_3 \in \half_v(\Sigma_2,\Sigma_1;\Sigma_3) \cap \half_v(\Sigma_1,\Sigma_2;\Sigma_3)$ and $h_2 \in \half_v(\Sigma_3,\Sigma_1;\Sigma_2) \cap \half_v(\Sigma_1,\Sigma_3;\Sigma_2)$, we have that $h_2$ and $h_3$ are joined by an edge in $\tripod(\Sigma_1,\Sigma_2,\Sigma_3)$, as are $h_2,h_3$ with $a$ since $h_2,h_3 \in \phi_2(a)$. Since the portion of $\phi_2(a)$ that are entirely contained in either $\ladder(\Sigma_1,\Sigma_2)$ or $\ladder(\Sigma_1,\Sigma_3)$ are uniformly bounded, this uniform bound on the overlap case provides a uniform bound for $\phi_2(a)$.
\end{proof}

\begin{claim}\label{claim:coarsely lipschitz}
    $\phi_2$ is uniformly coarsely Lipschitz.
\end{claim}

\begin{proof}
It suffices to deal with the case of two vertices $a,a'$ which are joined by an edge of $\ladder(\Sigma_2,\Sigma_3)$. There are two cases, depending on whether the edge is comes from the fiber or the base:
\begin{enumerate}
    \item $a$ and $a'$ are contained in a single fiber $\E_v$, and
    \item There are vertices $v,v' \in \B$ that are joined by an edge in $\B$ and $a \in \E_v$ while $a'\in \E_{v'}$.
\end{enumerate}
In both cases we prove that the distance between $\phi_2(a)$ and $\phi_2(a')$ is uniformly bounded.

\medskip

\textbf{\underline{Case 1}:} Suppose that $a$ and $a'$ are in single fiber $\E_v$.  Since $$ \half_v(\Sigma_2,\Sigma_3;\Sigma_1) \cup  \half_v(\Sigma_3,\Sigma_2;\Sigma_1) = \ladder_v(\Sigma_2,\Sigma_3)$$ and $$\half_v(\Sigma_2,\Sigma_3;\Sigma_1) \cap  \half_v(\Sigma_3,\Sigma_2;\Sigma_1) $$ must contain at least one vertex of $\ladder_v(\Sigma_2,\Sigma_3)$, we must have either $$a,a' \in \half_v(\Sigma_2,\Sigma_3;\Sigma_1) \text{ or } a,a' \in \half_v(\Sigma_3,\Sigma_2;\Sigma_1).$$

Without loss of generality, assume $a,a' \in \half_v(\Sigma_2,\Sigma_3;\Sigma_1)$. Then there is $b \in \phi_2(a)$ and $b'\in \phi_2(a')$ so that $b,b' \in \ladder(\Sigma_1,\Sigma_2)$. Thus $d_{\E}(b,b') \leq 2\delta_T +1$. Since  $\ladder(\Sigma_1,\Sigma_2)$ uniformly quasi-isometrically embeds in $\E$, this implies the distance between $\phi_2(a)$ and $\phi_2(a')$ is uniformly bounded in $\ladder(\Sigma_1,\Sigma_2) \cup_{\phi_1} \ladder(\Sigma_1,\Sigma_3)$.  

\medskip

\textbf{\underline{Case 2}:}  Suppose instead that $a \in \E_v$ and $a' \in \E_{v'}$, where $v$ and $v'$ are joined by an edge of $\B$.  First assume that $a \in \half_v(\Sigma_2,\Sigma_3;\Sigma_1)$ and $a'\in \half_{v'}(\Sigma_2,\Sigma_3;\Sigma_1)$. Then Lemma \ref{lem:projections_and_thin_triangles}.(2) and our choice of $\delta_T> \delta_2$ provides $b \in \phi_2(a) \cap \ladder_v(\Sigma_1,\Sigma_2)$ and $b' \in \phi_2(a') \cap \ladder_{v'}(\Sigma_1,\Sigma_2)$. Let $\Pi_{1,2}$ be the fiberwise closest point projection $\Pi_{1,2} \colon \E \to \ladder(\Sigma_1,\Sigma_2)$ (Definition \ref{defn:fiberwise projection map}). Then $\Pi_{1,2}(a)$ and $\Pi_{1,2}(a')$ are contained in the $\delta_T$--neighborhoods of $a$ and $a'$  in $\E$, respectively. Thus 
    $$d_\E(b, \Pi_{1,2}(a)) \leq 2 \delta_T \text{ and } d_\E(b', \Pi_{1,2}(a')) \leq 2 \delta_T.$$
Since $a$ and $a'$ are joined by an in edge between adjacent fibers in $\ladder(\Sigma_2,\Sigma_3)$, their distance in $\E$ is at most $K_1$ (Corollary~\ref{cor:ladder_qie}). Since $\Pi_{1,2}$ is $K_0$--coarsely Lipschitz (Lemma~\ref{lem:fiberwise_proj}) it follows that $d_\E( \Pi_{1,2}(a),  \Pi_{1,2}(a')) \leq K_0K_1+K_0$. 
This uniformly bounds $d_\E(b,b')$ which in turn bounds the distance between $\phi_2(a)$ and $\phi_2(a')$ in $\ladder(\Sigma_1,\Sigma_2) \cup_{\phi_1} \ladder(\Sigma_1,\Sigma_3)$ by the uniform quasi-isometric embedding of ladders (Corollary \ref{cor:ladder_qie}).

Now assume that $a \in \half_v(\Sigma_2,\Sigma_3;\Sigma_1)$ and $a'\not\in \half_{v'}(\Sigma_2,\Sigma_3;\Sigma_1)$ so that $\phi_2(a) \subset \ladder_v(\Sigma_1,\Sigma_2)$ and $\phi_2(a') \subset \ladder_v(\Sigma_1,\Sigma_3)$. Let $b \in \phi_2(a) \cap \half_v(\Sigma_2,\Sigma_1;\Sigma_3)$ and $b' \in \phi_2(a') \in \half_{v'}(\Sigma_3, \Sigma_1; \Sigma_2)$.  Let $\Pi_{i,j}\colon \E \to \ladder(\Sigma_i,\Sigma_j)$ be the fiberwise closest point projection for $i,j \in \{1,2,3\}$. For $i \in \{1,2,3\}$ and $\ast \in \{v,v'\}$, let $h_{\ast,i}$ be a point in  $\Pi_{i+1,i+2}\left(\Sigma_i(\ast)\right)$  to where the indices are taken mod 3.

Using Lemma \ref{lem:projections_and_thin_triangles}, $\Pi_{1,3}(b)$ and $\Pi_{1,3}(a)$ are both $\delta_2$--close to $h_{v,2}$ in $\E_v$ for $\delta_2=\delta_2(\delta)>0$. By Axiom (\ref{item:retraction}), we have
$$\diam_\E(\Pi_{1,3}(a), \Pi_{1,3}(a')) \leq \kappa.$$

Since $d_\E(b', \Pi_{1,3}(a'))\leq 2\delta_T$,  we have $d_\E(b',h_{v,2}) \leq \kappa + 2\delta_T + \delta_2$.  Repeating this calculation with $b'$, $a'$, $\ladder(\Sigma_1,\Sigma_2)$ and $h_{v',3}$ we get 
\begin{equation}\label{eq:b,h_{v',3}}
    d_\E(b,h_{v',3}) \leq \kappa +2 \delta_T + \delta_2.
\end{equation}
For all $i,j \in \{1,2,3\}$, Axiom (\ref{item:retraction}) says $d_\E(h_{v,i}, h_{v',i}) \leq \kappa$ and Lemma \ref{lem:projections_and_thin_triangles} says
    $$d_{\E_v}(h_{v,i},h_{v,j}) \leq \delta_2\text{ and } d_{\E_{v'}}(h_{v',i}, h_{v',j}) \leq \delta_2.$$
Thus we have
    \begin{equation}\label{eq:b',h_{v',2}}
        d_\E(b', h_{v',2}) \leq d_\E(b',h_{v,2}) + d_\E( h_{v,2},h_{v',2}) \leq 2\kappa + 2\delta_T + \delta_2.
    \end{equation}

Let $d_{\phi_1}(\cdot,\cdot)$ be the distance in $\ladder(\Sigma_1,\Sigma_2) \cup_{\phi_1} \ladder(\Sigma_1,\Sigma_3)$. The triangle inequality provides
    \begin{equation}\label{eq:b',b}
        d_{\phi_1}(b',b) \leq d_{\phi_1}(b', h_{v',2}) + d_{\phi_1}(h_{v',2},h_{v',3}) + d_{\phi_1}(h_{v',3},b).
    \end{equation}
The quasi-isometric embedding of ladders (Corollary \ref{cor:ladder_qie}) and \eqref{eq:b,h_{v',3}} implies that $d_{\phi_1}(b,h_{v',3})$ is uniformly bounded, and similarly for $d_{\phi_1}(b',h_{v',2})$ using \eqref{eq:b',h_{v',2}}.  On the other hand, we have $d_{\phi_1}(h_{v',2},h_{v',3}) =1$ because $h_{v',2}$ and $h_{v',3}$ are joined by an edge of $\ladder(\Sigma_1,\Sigma_2) \cup_{\phi_1} \ladder(\Sigma_1,\Sigma_3)$ by definition of $\phi_1$.  This shows that \eqref{eq:b',b} is uniformly bounded, completing the proof of the claim.
 \end{proof}

To finish the proof that $\phi_2$ is a quasi-isometric embedding, let $\Pi \colon \E \to \ladder(\Sigma_2,\Sigma_3)$ be the fiberwise closest point projection (Definition \ref{defn:fiberwise projection map}). For each vertex $a \in \ladder(\Sigma_2,\Sigma_3)$, we have $d_\E(a, \Pi\circ \phi_2(a)) \leq 2\delta_T$ by definition. Since both $\Pi$  and $\tripod(\Sigma_1,\Sigma_2,\Sigma_3) \to \E$ are coarsely Lipschitz (by Lemma \ref{lem:fiberwise_proj} and Claim \ref{claim:coarsely lipschitz} respectively), the above implies that $\phi_2$ is actually a quasi-isometric embedding.

We can now apply Proposition \ref{prop:gluing_lemma} to conclude that $\tripod(\Sigma_1,\Sigma_2,\Sigma_3)$ is uniformly hyperbolic and $\ladder(\Sigma_2,\Sigma_3)$ and $\ladder(\Sigma_1,\Sigma_2) \cup_{\phi_1} \ladder(\Sigma_1,\Sigma_3)$ are uniformly undistorted subsets. Since we also have that $\ladder(\Sigma_1,\Sigma_2)$ and $\ladder(\Sigma_1,\Sigma_3)$ are uniformly undistorted in  $\ladder(\Sigma_1,\Sigma_2) \cup_{\phi_1} \ladder(\Sigma_1,\Sigma_3)$ (again by Proposition \ref{prop:gluing_lemma}), all three ladders are undistorted in  $\tripod(\Sigma_1,\Sigma_2,\Sigma_3)$.  This completes the proof of the proposition.
\end{proof}

We are now ready to prove our main hyperbolicity statement:

\begin{theorem}
    $\E$ is hyperbolic.
\end{theorem}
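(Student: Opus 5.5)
We will apply the guessing geodesics criterion, Proposition \ref{prop:guessing}, to $\E$, using preferred paths coming from ladders. For each vertex $x\in\E$ fix, by Axiom \eqref{item:full}, a tight section $\Sigma_x$ through $x$. Given vertices $x,y\in\E$, fix a ladder $\ladder(\Sigma_x,\Sigma_y)$ and let $\varsigma(x,y)$ be the image in $\E$ of an $\ladder(\Sigma_x,\Sigma_y)$--geodesic from $x$ to $y$. We choose these symmetrically, so that $\varsigma(x,y)=\varsigma(y,x)$. By Corollary \ref{cor:ladder_qie}, $\varsigma(x,y)$ is the image of a geodesic under a uniform quasi-isometric embedding. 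In particular it is uniformly coarsely connected.

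For condition (1) of Proposition \ref{prop:guessing}, suppose $x,y$ are adjacent in $\E$. Since $\iota\colon\ladder(\Sigma_x,\Sigma_y)\to\E$ is a $K_1$--quasi-isometric embedding, we get $d_{\ladder}(x,y)\le K_1(1+K_1)$. Hence the geodesic between them in the ladder has bounded length, and its image in $\E$ has uniformly bounded diameter.

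For condition (2), take vertices $x,y,z$ and form the tripod bundle $\tripod=\tripod(\Sigma_x,\Sigma_y,\Sigma_z)$ using the three chosen ladders. A subtlety is that the ladder used for the pair $\{x,y\}$ must be the same one inside $\tripod$. The tripod construction allows arbitrary ladders for each pair, so this is consistent. By Proposition \ref{prop:tripod_bundle}(3), each ladder is uniformly undistorted in $\tripod$. The three ladder geodesics $[x,y]$, $[y,z]$, $[z,x]$ are therefore uniform quasigeodesics in $\tripod$. By Proposition \ref{prop:tripod_bundle}(2), $\tripod$ is uniformly hyperbolic, so the Morse Lemma \ref{lem:morse} makes these three quasigeodesics form a uniformly thin triangle in $\tripod$. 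Finally, the natural map $\tripod\to\E$ is uniformly coarsely Lipschitz by Proposition \ref{prop:tripod_bundle}(1), and it restricts on each ladder to $\iota$. So the thinness pushes forward to give $\varsigma(x,y)\subset\N_D(\varsigma(x,z)\cup\varsigma(z,y))$ with $D$ depending only on the bundle constants. Proposition \ref{prop:guessing} then yields hyperbolicity of $\E$.

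The main obstacle is bookkeeping rather than new geometry. We must make sure that the preferred path for each pair is defined once, independently of the third point, while still living inside every tripod bundle containing that pair. This holds because the tripod bundle is built from whatever ladders are fixed for each pair. The remaining check is that the ladder geodesic, viewed inside $\tripod$, has endpoints at the copies of $x$ and $y$. When $x$ lies on several of the sections, its copies in different ladders are joined by gluing edges only up to bounded distance. That bounded discrepancy is absorbed into the constant $D$.
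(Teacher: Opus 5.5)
Your proposal is correct and is essentially the paper's own proof: guessing geodesics (Proposition \ref{prop:guessing}) with ladder geodesics as preferred paths, condition (1) from Corollary \ref{cor:ladder_qie}, and thin triangles obtained by viewing the three ladder geodesics as uniform quasigeodesics in the uniformly hyperbolic tripod bundle and then pushing forward along the coarsely Lipschitz map of Proposition \ref{prop:tripod_bundle}. Your bookkeeping remarks are points the paper leaves implicit, and you handle them correctly: you reuse the same fixed ladders inside each tripod, and the two copies of a shared endpoint in different ladders are joined by a gluing edge, so their discrepancy is bounded.
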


\begin{proof}
We will use the guessing geodesics Proposition \ref{prop:guessing}, beginning by describing our guessed geodesics $\varsigma(\cdot,\cdot)$. For each vertex $x\in\mathcal E$, fix a tight section $\Sigma_x$ through $x$. For each pair of vertices $x,y\in \E$, consider the ladder $\ladder_{x,y}=\ladder(\Sigma_x,\Sigma_y)$, which with a slight abuse of notation we think of as containing $x$ and $y$. We then define $\varsigma(x,y)$ to be the image under the natural map $\ladder_{x,y}\to \mathcal E$ (as in Corollary \ref{cor:ladder_qie}) of a geodesic in $\ladder_{x,y}$ connecting $x$ to $y$.

The fact that the natural maps from ladders to $\mathcal E$ are uniform quasi-isometric embeddings (Corollary \ref{cor:ladder_qie}) implies the existence of a uniform constant $D\geq 0$ such that all $\varsigma(x,y)$ are $D$--coarsely connected, and such that whenever $x,y$ are adjacent the diameter of $\varsigma(x,y)$ is at most $D$.

We are left to show the second condition of Proposition \ref{prop:guessing}, namely thinness of $\varsigma(\cdot,\cdot)$ triangles. Consider vertices $x,y,z\in\mathcal E$. Since ladders are uniformly undistorted inside tripod bundles by item (3) of Proposition \ref{prop:tripod_bundle}, a geodesic in $\ladder_{x,y}$ from $x$ to $y$ can be seen as a uniform quasi-geodesic $\gamma_{x,y}$ inside the tripod bundle $\tripod_{x,y,z}=\tripod(\Sigma_x,\Sigma_y,\Sigma_z)$, and similarly for the other pairs. Since $\tripod_{x,y,z}$ is uniformly hyperbolic (item (2) of Proposition \ref{prop:tripod_bundle}), $\gamma_{x,y}$ is contained in a uniform neighborhood of $\gamma_{x,z}\cup\gamma_{z,y}$. Since the natural map $\tripod_{x,y,z}\to \mathcal E$ is uniformly coarsely Lipschitz (item (1) of Proposition \ref{prop:tripod_bundle}), and the images of the $\gamma_{\cdot,\cdot}$ are the $\varsigma_{\cdot,\cdot}$, we conclude that $\varsigma_{x,y}$ is contained in a uniform neighborhood of $\varsigma_{x,z}\cup\varsigma_{z,y}$, as required.
\end{proof}

\section{Hyperbolic bundles over geometrically finite groups} 
\label{sec:hyperbolic GF bundles}

In this section we consider surface bundles whose monodromy is injective with {\em parabolically geometrically finite} image; see Definition \ref{defn:PGF}.  We construct a space on which the fundamental group of this bundle acts, and in Theorem \ref{thm:combination_applies} prove that Theorem~\ref{thrm:combination_theorem} applies to this space, and consequently that the space is hyperbolic.  This will be a key ingredient in Section~\ref{sec:cyclic PGF HHG} where we prove that such fundamental groups of bundles are hierarchically hyperbolic whenever the parabolic subgroups are cyclic.  Before we begin, we describe some of the preliminary definitions and results we will use.

\subsection{Curve complexes and subsurface projections} \label{subsec:curve complexes}
We suppose $S$ is a closed surface of genus $g \geq 2$ throughout.
We let $\star \in S$ denote a fixed basepoint, and write $\dot S = S \smallsetminus \{\star\}$.  Fix a hyperbolic metric on $S$ and let $\uncov \colon \widetilde S \to S$ denote the universal cover, which we may identify with the hyperbolic plane so that $\uncov$ is a local isometry.  We also let $\widetilde \star \in \uncov^{-1}(\star) \subset \widetilde S$ denote a fixed choice of basepoint.

For $R = S$ or $\dot S$, we let $\C(R)$ denote the curve complex of $R$.  This is a simplicial complex with vertex set given by the set of isotopy classes of essential simple closed curves on $R$, and a $k$--simplex defined by a set of $k+1$ distinct isotopy classes of simple closed curves which have pairwise disjoint representatives.  It is classical that $\C(R)$ is connected, and we often consider the geodesic metric on the {\em curve graph}, which is the $1$--skeleton, $\C^1(R)$, of the curve complex in which each edge is endowed with length $1$.

We realize all vertices of $\C(S)$ by their geodesic representatives with respect to the fixed hyperbolic metric.  For each point $v \in \C(S)$, we view $v$ as both a point in the curve complex, as well as a weighted multicurve (weighted according to the barycentric coordinates), also realized by its geodesic representative.  We similarly conflate a point $v \in \C(\dot S)$ with an isotopy class of weighted multicurve, as well as some representative of the isotopy class.

It will be convenient to take the barycentric subdivision of both $\C(S)$ and $\C(\dot S)$, denoted $\hat \C(S)$ and $\hat \C(\dot S)$.  
Every vertex of $\hat \C(S)$ or $\hat \C(\dot S)$ is the barycenter of a $k$--simplex for some $k$; we call such a vertex a $k$--vertex (so that a $0$--vertex is a vertex of the unsubdivided complex).  Given a vertex $v \in \hat \C^0(S)$, we will also write $v \subset \C(S)$ for the simplex it determines, as well as $v \subset S$ for the (unweighted) geodesic multicurve in $S$ that $v$ determines.  We similarly abuse notation for a vertex $x \in \hat \C(\dot S)$ (though we do not assume the multicurve $x$ in $\dot S$ is geodesic, but rather identify it with its isotopy class, or a representative, when convenient).

We also consider the $1$--skeleton $\hat \C^1(R)$ as a geodesic metric space for which all edge lengths are equal to $1$.  The inclusion $\C^1(R) \to \hat \C^1(R)$ is then clearly $2$--Lipschitz and $1$--dense.  Moreover, for any two $0$--vertices in $\hat \C^1(R)$ and any $\hat \C^1(R)$--geodesic between them, we can easily find a path in $\C^1(R)$ between these vertices which is no longer than the $\hat \C^1(R)$--geodesic (by replacing each $k$--vertex $\{v_0,\ldots,v_k\}$ of the $\hat \C^1(R)$--geodesic with any vertex $v_i$ contained in it).   Consequently, the inclusion $\C^1(R) \to \hat \C^1(R)$ is a $2$--quasi-isometry.

We will need the following fundamental result of Masur and Minsky \cite{MM:CC1}.

\begin{theorem} [Masur-Minsky] \label{thm:MM hyperbolic}
The curve graph of a finite type surface is Gromov hyperbolic.
\end{theorem}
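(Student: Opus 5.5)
The plan is to follow Masur and Minsky's original strategy: find a family of preferred paths in $\C^1(R)$ and check the guessing-geodesics criterion, Proposition \ref{prop:guessing}. By the quasi-isometry $\C^1(R)\to\hat\C^1(R)$ noted above, and the quasi-isometry invariance of hyperbolicity (a consequence of Lemma \ref{lem:morse}), it makes no difference which of the two graphs we use. The sporadic cases, where $\C(R)$ is empty, a point, or a Farey-type graph after the usual modification, are handled directly. So assume $\xi(R)=3g-3+n\ge 2$.

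\textbf{Step 1: the paths.} Fix a complete finite-area hyperbolic metric on $R$, or work with quadratic differentials. For curves $\alpha,\beta$, let $\varsigma(\alpha,\beta)$ be a ``hierarchy-free'' path through intermediate curves. One option is surgery sequences: repeatedly surger $\alpha$ along arcs of $\beta$, which lowers intersection number with $\beta$ at every step. A cleaner option is the Teichm\"uller-geodesic approach: take the Teichm\"uller geodesic whose horizontal and vertical foliations are suitably built from $\alpha$ and $\beta$, and record the systole curves along it. I would choose the Teichm\"uller path, since its properties are the best documented.

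\textbf{Step 2: coarse connectivity.} Show that $\varsigma(\alpha,\beta)$ is uniformly coarsely connected, and that it has bounded diameter when $\alpha$ and $\beta$ are adjacent. Both follow from the collar lemma: curves that are short at nearby times intersect a bounded number of times, hence lie at bounded distance in $\C(R)$.

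\textbf{Step 3: thin triangles (the main obstacle).} We need $\varsigma(\alpha,\beta)\subset\N_D(\varsigma(\alpha,\gamma)\cup\varsigma(\gamma,\beta))$. The Masur--Minsky method first builds a coarse projection from $\C(R)$ onto each path using balance times, that is, the time at which $\gamma$ has balanced horizontal and vertical length. One then proves a contraction property: a ball disjoint from a neighbourhood of the path projects to a set of uniformly bounded diameter. I expect the contraction estimate to be the hardest step. It needs a careful analysis of how curves that remain long along a Teichm\"uller geodesic segment intersect the short curves, together with Minsky's product-region and thick-part estimates. My first attempt would be the Bowditch-style alternative: reduce contraction to a uniform bound on intersection numbers relative to the balance times. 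Once contraction holds, standard arguments give the thin-triangle condition, and Proposition \ref{prop:guessing} finishes the proof. The other route is to accept Theorem \ref{thm:MM hyperbolic} as a cited result, which the paper in fact does. The plan above is only the outline of the original proof.
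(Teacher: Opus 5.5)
The paper gives no proof of Theorem~\ref{thm:MM hyperbolic}. It simply quotes the result from \cite{MM:CC1}, with uniform versions cited immediately afterwards. Your closing remark, to accept it as a cited result, is exactly what the paper does, and nothing more is needed here.

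Read as an independent proof, however, your outline has a genuine gap, and it is where you flag it. Step~3 is not one step among several: the contraction property for the systole paths of Teichm\"uller geodesics \emph{is} the content of \cite{MM:CC1}. Describing what it ``needs'' and appealing to ``standard arguments'' does not supply it. Steps~1 and~2 are routine by comparison. Note also that Masur--Minsky only need balls of radius $b\,d(x,\gamma)$, for some fixed $b<1$, to have bounded projection. Your ``ball disjoint from a neighbourhood of the path'' version is stronger than what they prove.

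The paths themselves are also set up with a concrete defect.
\begin{itemize}
\item A Teichm\"uller geodesic whose horizontal and vertical foliations are $\alpha$ and $\beta$ exists only when $\alpha\cup\beta$ fills $R$, i.e.\ when $d(\alpha,\beta)\ge 3$.
\item Adjacent curves never fill, so condition~(1) of Proposition~\ref{prop:guessing} refers to paths you have not defined.
\item You could declare $\varsigma(\alpha,\beta)=\{\alpha,\beta\}$ for nearby pairs. But then condition~(2), for triangles mixing the two kinds of sides, is again only available after contraction is proved.
\end{itemize}
Masur and Minsky avoid this in two ways. They use Teichm\"uller segments between points of $\mathcal{T}(R)$ on which $\alpha$ and $\beta$ are short. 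They also use their own contraction criterion, which directly outputs hyperbolicity and quasi-geodesicity of the paths. So routing the argument back through Proposition~\ref{prop:guessing} afterwards is redundant.

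If you want a self-contained proof using Proposition~\ref{prop:guessing}, pursue the surgery option you set aside instead. Use paths of unicorn/bicorn curves, that is, curves $a'\cup b'$ where $a'\subset\alpha$ and $b'\subset\beta$ are subarcs.
\begin{itemize}
\item Condition~(1) holds trivially: disjoint curves have no intersection points, so the path is just $\{\alpha,\beta\}$.
\item Condition~(2) follows from a short combinatorial argument, with constants independent of $R$ \cite{HPW:Uniform}.
\end{itemize}
By contrast, the Teichm\"uller route amounts to redoing most of \cite{MM:CC1}.
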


In fact, uniform hyperbolicity of the curve graphs was subsequently and independently proved by multiple authors; see \cite{Aougab:Uniform,Bowditch:Uniform,CRS:Uniform,HPW:Uniform}.  This also implies uniform hyperbolicity of the $1$--skeleta of the barycentric subdivisions.  We will thus sometimes refer to {\em the hyperbolicity constant} of curve complexes/graphs, and we mean any fixed bound for the slimness of triangles in curve graphs as well as that of the $1$--skeleta of the barycentric subdivisions.

We will consider essential subsurfaces $Y \subset R = S$ or $\dot S$. For our purpose, an {\em essential subsurface} is a component of the complement of an open neighborhood of an essential multicurve in $R$, or a closed annular neighborhood of an essential simple closed curve in $R$.  We also assume $Y$ is not a three-holed sphere.   If $Y$ is proper, then it has compact boundary (and possibly one puncture if $R = \dot S$). 
Essential subsurfaces are well-defined up to isotopy.

For non-annular surfaces, we write $\AC(Y)$ for the arc-and-curve complex of $Y$, whose vertices are proper isotopy classes of essential arcs and curves in $Y$ with simplices defined by disjointness as in the case of curve complexes. Here an essential arc is any properly embedded arc (i.e.~endpoints in the boundary of $Y$ and open ends limiting to punctures), which cannot be properly homotoped rel endpoints (if any) into an arbitrary neighborhood of the boundary/puncture.  We note that isotopies are proper, but are not {\em fixed} on the boundary, and thus arcs do not have well-defined endpoints on the boundary components they meet.
Being (uniformly) quasi-isometric to the curve graph of $Y$, the arc-and-curve complex $\AC^1(Y)$ is also (uniformly) hyperbolic (c.f.~Theorem~\ref{thm:MM hyperbolic}).  When $Y$ is an annulus, we let $\A(Y)$ denote the arc graph of $Y$, which is defined as in \cite{MM:CC2} by taking the natural visual compactification of the cover of $R$ corresponding to $\pi_1(Y) < \pi_1(R)$, whose vertices are properly embedded arcs with endpoints on distinct boundary components, up to isotopy {\em fixing the boundary pointwise}.  We refer the reader to \cite{MM:CC2} for a full description.

Given an essential subsurface $Y \subset R$, we let $\pi_Y$ denote the {\em arc-and-curve graph subsurface projection}.  When $Y$ is not an annulus, $\pi_Y(\alpha)$ assigns to the isotopy class of a multicurve $\alpha$ on $R$ the simplex in $\AC(Y)$ defined as follows.  After realizing $\alpha$ as a multicurve $\alpha_0 \subset R$ intersecting $\partial Y$ transversely, and meeting $Y$ in the minimal number of components, the vertex set of $\pi_Y(\alpha)$ is the set of proper isotopy classes of components of intersection $\alpha_0 \cap Y$.  This differs slightly from subsurface projection defined in \cite{MM:CC2}, but it is straightforward to translate between the two.  Subsurface projections for annuli $Y \subset R$ are defined as exactly as in \cite{MM:CC2}.  

Given isotopy classes $\alpha,\beta$ of multicurves in $R$ and an essential subsurface $Y \subset R$ for which $\pi_Y(\alpha),\pi_Y(\beta) \neq \emptyset$, we write
\[ d_Y(\alpha,\beta) = \diam(\pi_Y(\alpha) \cup \pi_Y(\beta)).\]
Here we are taking the diameter of the union of the vertices of $\pi_Y(\alpha)$ and $\pi_Y(\beta)$ in $\AC^1(Y)$.  When $Y$ is an annulus around a core curve $v$, we also use the notation $d_v(\alpha,\beta) = d_Y(\alpha,\beta)$.

Two key features we will need are the following due to Masur and Minksy in \cite{MM:CC2}.  The first is immediate from our definition of projections since a pair of adjacent vertices is a (two component) multicurve.

\begin{lemma} [Lipschitz curve complex projection] \label{lem:cc projection lipschitz}
    Given any essential subsurface $Y \subset R$, if $\alpha,\beta \in \C^0(R)$  are adjacent and have $\pi_Y(\alpha),\pi_Y(\beta) \neq \emptyset$, then  $d_Y(\alpha,\beta)\leq 1$.
\end{lemma}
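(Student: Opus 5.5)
The plan is to argue directly from the definition of $\pi_Y$ given above, treating the non-annular and annular cases separately. Throughout, $\alpha,\beta \in \C^0(R)$ are adjacent, so they are distinct isotopy classes of essential simple closed curves that can be realized disjointly. The union $\alpha\cup\beta$ is therefore an essential multicurve, and the strategy is to put this multicurve in a single good position with respect to $\partial Y$. Then the arcs and curves of $\alpha$ and of $\beta$ in $Y$ are pairwise disjoint, which means they span a simplex of $\AC(Y)$ (respectively of $\A(Y)$), so every vertex of $\pi_Y(\alpha)\cup\pi_Y(\beta)$ is within distance $1$ of every other.

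\emph{Non-annular case.} First realize $\alpha\cup\beta$ as a multicurve $\mu_0$ that is transverse to $\partial Y$ and meets $Y$ in the minimal number of components. Concretely, take geodesic representatives for some complete hyperbolic metric on $R$ and a geodesic (or suitably convex) representative of $\partial Y$; geodesic representatives of disjoint curves are disjoint and minimize intersection with $\partial Y$. The key observation is that minimal position for the union restricts to minimal position for each component. Hence $\alpha_0=\mu_0\cap\alpha$ and $\beta_0=\mu_0\cap\beta$ are minimal-position representatives of $\alpha$ and $\beta$ individually. Consequently $\pi_Y(\alpha)$ and $\pi_Y(\beta)$ are represented by the components of $\alpha_0\cap Y$ and $\beta_0\cap Y$. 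These are pairwise disjoint essential arcs and curves in $Y$; essentiality follows from minimal position, since there are no bigons with $\partial Y$. So $\pi_Y(\alpha)\cup\pi_Y(\beta)$ spans a simplex of $\AC(Y)$, and its diameter is at most $1$.

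\emph{Annular case.} Let $\widetilde Y$ be the compactified annular cover of $R$ associated to $Y$. The vertices of $\pi_Y(\alpha)$ are the lifts of $\alpha$ that join the two boundary components of $\widetilde Y$, and similarly for $\beta$. Realizing $\alpha$ and $\beta$ as disjoint geodesics makes all of their lifts pairwise disjoint. Disjoint arcs have distance at most $1$ in $\A(Y)$, so again the union has diameter at most $1$.

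The main obstacle is the claim that a single simultaneous representative realizes the projection of each curve separately. The issue is whether the ``minimal number of components'' normalization in the definition of $\pi_Y$ is achieved jointly for $\alpha$ and $\beta$. Using geodesic representatives for a hyperbolic metric handles this cleanly, because geodesics are simultaneously in minimal position with each other and with $\partial Y$. The lemma's statement itself flags this point as immediate, namely that adjacent vertices form a two-component multicurve.
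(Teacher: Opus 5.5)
Your proposal is correct and follows the paper's reasoning. The paper simply notes that adjacent vertices form a two-component multicurve, so $\pi_Y(\alpha)$ and $\pi_Y(\beta)$ both lie in the single simplex $\pi_Y(\alpha\cup\beta)$. Your simultaneous geodesic realization of $\alpha\cup\beta$, together with the lift-based version in the annular cover, is a careful unpacking of exactly that observation.
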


\begin{theorem}
    [Bounded Geodesic Image] \label{thm:MM BGI} There exists a constant $M > 0$ with the following property.  Given any essential subsurface $Y \subset R$ and geodesic in $\C(R)$ with vertex set $\{v_i\}$, if $\pi_Y(v_i) \neq \emptyset$ for all $i$, then
    \[ \diam\left(\bigcup_i \pi_Y(v_i)\right) < M.\]
\end{theorem}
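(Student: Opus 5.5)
The statement is Masur--Minsky's Bounded Geodesic Image theorem, quoted here as a cited result. The plan is to prove it for non-annular $Y$ and treat annuli by the same scheme in the annular cover. The model is the more recent uniform argument (Webb's, or Bowditch's), which uses only unicorn-type surgery and the hyperbolicity of Theorem~\ref{thm:MM hyperbolic}, rather than the original hierarchy machinery.

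First, I would reduce to a statement about the geodesic's vertices meeting $Y$. Since every $v_i$ has $\pi_Y(v_i)\neq\emptyset$, each $v_i$ intersects $Y$ essentially, i.e.\ it is not disjoint from $Y$. Consecutive vertices are disjoint, so by Lemma~\ref{lem:cc projection lipschitz} consecutive projections are within $1$. Thus the union of the projections is a $1$--Lipschitz path in $\AC^1(Y)$, and the task is to bound its diameter independently of the length of the geodesic.

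Second, I would argue by contradiction using the $\C(R)$--geometry of $\partial Y$. Every curve that misses $Y$, in particular each component of $\partial Y$, is at distance $1$ from any curve disjoint from it. I would apply a unicorn/surgery construction between the endpoints $v_0$ and $v_n$, performed relative to $Y$. Whenever $d_Y(v_0,v_n)$ is large, I would surger arcs of $v_0\cap Y$ and $v_n\cap Y$ to produce a uniform quasigeodesic path in $\C(R)$ from $v_0$ to $v_n$. I would then show that, once the projection distance exceeds a uniform threshold, this path must pass through a curve disjoint from $Y$, hence within distance $1$ of $\partial Y$.

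Third, I would compare with the given geodesic. Unicorn paths are uniformly Hausdorff close to geodesics (the Morse lemma, Lemma~\ref{lem:morse}, together with uniform hyperbolicity). So the geodesic $\{v_i\}$ passes within a uniform distance $c$ of $\partial Y$. That alone does not contradict the hypothesis. The refinement is to localize: I would apply the argument to every subsegment $[v_i,v_j]$ whose projection diameter is large, obtaining a vertex $v_k$ within $c$ of $\partial Y$ in each such subsegment. Using the $1$--Lipschitz property and tightness of geodesics, I would then show that a geodesic passing near $\partial Y$ on many disjoint windows, all at bounded distance from a single point, must have length bounded in terms of $c$ along those windows. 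This bounds the total projection diameter by roughly $M \approx$ (a constant times $c$) plus the surgery threshold. Annuli are handled via the annular cover $\A(Y)$, where a large twisting forces some $v_i$ to be disjoint from the core curve.

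The main obstacle is the second step: proving that large projection distance forces the surgery path between the endpoints to contain a curve missing $Y$, with constants uniform in $R$ and $Y$. My first attempt would be to copy Webb's unicorn argument, which tracks how each surgered curve meets $Y$ through the arcs of $v_0\cap Y$ and $v_n\cap Y$ it contains.
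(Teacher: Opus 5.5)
The paper gives no proof of this statement; it is quoted from Masur--Minsky \cite{MM:CC2}, so your proposal has to stand on its own. Its overall architecture is sound, and your localization step is correct. Suppose every geodesic $[a,b]$ with $d_Y(a,b)\ge T_0$ passes within $c$ of $\partial Y$, and take $T>\max\{T_0,2c+2\}$. If the projection diameter along $\{v_i\}$ exceeds roughly $3T$, you get three disjoint consecutive subsegments whose endpoints have $d_Y\ge T$, and hence vertices $v_{k_1},v_{k_2},v_{k_3}$ within $c$ of $\partial Y$. Since $\{v_i\}$ is a geodesic, $k_3-k_1\le 2c+1$. The middle subsegment is therefore too short, by Lemma~\ref{lem:cc projection lipschitz}, to have endpoint projections $T$ apart. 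No tightness is used, which matters because the statement concerns arbitrary geodesics.

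The gap is the lemma you single out as the main obstacle: that for large $d_Y(v_0,v_n)$ the surgery path must contain a curve missing $Y$. This is stronger than what surgery yields, and it already fails for unicorn paths. The reason is that a surgered curve can meet $Y$ essentially only in arcs through its corners. For example, suppose $v_0$ and $v_n$ are arcs that are disjoint outside $Y$ and each meet $Y$ in a single arc. Then every corner lies in $Y$, and every intermediate unicorn meets $Y$ essentially in one corner arc, however large $d_Y(v_0,v_n)$ is. So tracking components as in Webb's argument will not produce a curve missing $Y$.

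What that bookkeeping does give is the following, and it is all your localization needs. A surgered curve containing both a full component of $v_0\cap Y$ and a full component of $v_n\cap Y$ forces $d_Y(v_0,v_n)$ to be small. So do two consecutive surgered curves containing such components, since consecutive curves intersect boundedly. Hence some curve of the path meets $Y$ only in arcs through its (at most two) corners. That curve meets $\partial Y$ at most four times, so it is uniformly close to $\partial Y$ in $\C(R)$. This formulation needs no minimal position of surgered curves with $\partial Y$.

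Two further inputs are asserted rather than justified. First, closeness of surgery paths to geodesics does not follow from Lemma~\ref{lem:morse} and Theorem~\ref{thm:MM hyperbolic}, since you do not know a priori that these paths are quasigeodesics. It comes instead from slim surgery triangles plus Proposition~\ref{prop:guessing}, as in \cite{HPW:Uniform}. Second, the annular case is only restated, not argued. The component bookkeeping does not transfer verbatim there, because the annular projection is read off a lift of the whole curve. The twisting must be localized, e.g.\ in a standard collar, with the errors introduced at corners controlled.
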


\subsection{Curve complex fibrations}
\label{subsec:cc fibrations}
The puncture filling map $\dot S \to S$ induces a forgetful projection
\[ \Psi \colon \C(\dot S) \to \C(S).\] 
This simplicial map is equivariant with respect to the forgetful homomorphism in the Birman exact sequence
\begin{equation} \label{E:BES} 1 \to \pi_1S \to \Mod(\dot S) \to \Mod(S) \to 1.
\end{equation}
The following theorem of \cite{KLS_Trees} describes the fibers of $\Psi$.
\begin{theorem}
For every point $v \in \C(S)$, $T_v = \Psi^{-1}(v)$ is $\pi_1S$--equivariantly isometric to the weighted Bass-Serre tree dual to the weighted multicurve represented by $v$.
\end{theorem}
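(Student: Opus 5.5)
The plan is to build an explicit $\pi_1S$--equivariant map from the dual tree to the fiber. We work in the universal cover $\uncov\colon \widetilde S\to S$, where the point-pushing subgroup $\pi_1S<\Mod(\dot S)$ from \eqref{E:BES} acts by deck transformations, viewed as moving the lift $\widetilde\star$. We first treat a $0$--vertex $v$, a single geodesic curve. Let $T$ be the Bass--Serre tree dual to $\uncov^{-1}(v)$. Its vertices are the complementary regions of $\uncov^{-1}(v)$ in $\widetilde S$, and its edges are the lifts $\tilde v$.

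To each edge $\tilde v$ of $T$ we associate a curve $\alpha_{\tilde v}\subset \dot S$. Push $v$ off itself to a parallel copy $v'$ whose lift $\tilde v'$ lies between $\tilde v$ and $\widetilde \star$, immediately adjacent to $\tilde v$, and let $\alpha_{\tilde v}=\uncov(\tilde v')$ regarded in $\dot S$. Points of the open edge interpolate between the two sides of $\tilde v$. The same recipe applied to a complementary region (a vertex of $T$) produces the two boundary-parallel copies on either side, and these are disjoint in $\dot S$.

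The key steps, in order, are as follows.
\begin{enumerate}
\item Show that every curve $\alpha\subset\dot S$ with $\Psi(\alpha)=v$ arises this way. Filling in $\star$ makes $\alpha$ isotopic to $v$, so $\alpha$ together with $v$ cobounds an annulus in $S$ containing $\star$ (or $\alpha=v$ up to the choice of side). Lifting this annulus to $\widetilde S$ singles out a lift $\tilde v$ adjacent to the region containing $\widetilde\star$, or more generally a lift encoding the homotopy class of the arc from $\star$ to $v$.
\item Show injectivity. Distinct lifts give non-isotopic curves in $\dot S$, since an isotopy in $\dot S$ lifts to one that fixes $\widetilde\star$ and preserves the separation of $\widetilde S$ by $\tilde v$.
\item Match simplices. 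Two curves $\alpha_{\tilde v},\alpha_{\tilde w}$ are disjoint in $\dot S$ exactly when $\tilde v,\tilde w$ are edges of a common complementary region, i.e.\ when the corresponding tree edges share a vertex. The simplex they span in $\C(\dot S)$ then corresponds to the star of that vertex. This would be argued by noting that disjoint curves with the same image cobound a once-punctured annulus, whose lift is exactly such a region.
\item Check equivariance. Point-pushing along $g\in\pi_1S$ replaces $\widetilde\star$ by $g\widetilde\star$, which is the deck action on $T$.
\end{enumerate}

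For a general point $v$ in the interior of a simplex with weights $t_i$ on components $v_i$, a point of $\Psi^{-1}(v)$ is a weighted multicurve on $\dot S$ whose components fill in to the $v_i$ with total weight $t_i$. We run the same construction using all lifts of all $v_i$, giving the edges of $\uncov^{-1}(v)$ the lengths $t_i$. Barycentric coordinates in $\C(\dot S)$ then correspond to positions along those weighted edges. Finally we check that the path metric on $\Psi^{-1}(v)$ induced from the simplicial structure agrees with the weighted tree metric, which amounts to checking it simplex by simplex.

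I expect the main obstacle to be the combinatorial identification in steps (1) and (3): showing the fiber contains no unexpected simplices, and that the simplicial structure of $\Psi^{-1}(v)$ is exactly that of the (suitably subdivided) tree. This includes ruling out curves that fill to $v$ but are related to it only through more complicated configurations around the puncture. I would attack it by lifting isotopies to $\widetilde S$ and using that $\tilde v$ separates $\widetilde S$, so that the position of $\widetilde\star$ relative to the lifts is a complete isotopy invariant in $\dot S$.
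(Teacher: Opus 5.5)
The paper does not prove this statement; it quotes it from \cite{KLS_Trees}. Your proposal nonetheless has genuine gaps, so it does not establish the result.

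\textbf{The map is constant.} The curve $\alpha_{\tilde v}=\uncov(\tilde v')$ is just a small push-off $v'$ of $v$ in $S$. The thin annulus between $v$ and $v'$ misses $\star$, so $\alpha_{\tilde v}$ is isotopic to $v$ in $\dot S$ for every lift $\tilde v$. Hence your map takes a single value, and step (2) is false as stated. Projecting by $\uncov$ discards exactly the information that distinguishes points of the fiber, namely the position of $\widetilde\star$ relative to the lifts.

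That information has to be carried back to the fixed puncture by an isotopy of $S$. Choose $f\in\Diff_0(S)$ whose canonical lift $\widetilde f$ sends $\widetilde\star$ into a given complementary region $R$ of $\uncov^{-1}(v)$, and take the curve $f^{-1}(v)\subset \dot S$. Equivalently, point-push $\star$ along the image of a path from $\widetilde\star$ into $R$ and drag $v$ along. This is the paper's map $\Phi_v$. Your step (1) gestures at this (``the homotopy class of the arc from $\star$ to $v$''), but the construction you actually define does not use it. You would still have to prove that the isotopy class of $f^{-1}(v)$ in $\dot S$ depends exactly on $R$. This uses that the canonical lift of an element of $\Diff_0(S)$ preserving $v$ extends to the identity on $\partial\widetilde S$, and hence preserves every lift of $v$ and every region.

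\textbf{The vertex/edge dictionary is reversed, and step (3) cannot hold.} Curves $\alpha$ with $\Psi(\alpha)=v$ correspond to complementary regions, i.e.\ vertices of $T_v$; their point-pushing stabilizers are conjugates of $\pi_1$ of a component of $S\smallsetminus v$. Edges of the fiber are pairs of disjoint curves cobounding a once-punctured annulus. These correspond to lifts of $v$, i.e.\ edges of $T_v$, with stabilizers conjugate to $\langle v\rangle$. The preimage of such an annulus in $\widetilde S$ is a union of strips around lifts of $v$, not a complementary region.

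With curves matched to lifts, step (3) would force the infinitely many boundary lifts of a single region to give infinitely many pairwise disjoint, pairwise non-isotopic curves in $\dot S$, all filling to $v$. This is impossible. Three disjoint curves in $S$ isotopic to $v$ are nested parallel copies, and $\star$ lies in at most one of the two annuli between consecutive ones. So at most two of them remain distinct in $\dot S$, and the fiber over a vertex is a graph. In particular, no simplex of $\C(\dot S)$ can correspond to the star of an infinite-valence vertex of $T_v$.

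Once the dictionary is corrected, the real content you need is this: $f^{-1}(v)$ and $g^{-1}(v)$ are disjoint and distinct in $\dot S$ exactly when $\widetilde f(\widetilde\star)$ and $\widetilde g(\widetilde\star)$ lie in distinct regions sharing a boundary lift. Nothing in your outline addresses this, and your weighted case inherits the same gap.
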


The map $\Psi \colon \C(\dot S) \to \C(S)$ is not simplicial with respect to the barycentric subdivisions of domain and range (though it is when restricted to any simplex on which $\Psi$ is injective).  It is straightforward to construct a $\Mod(\dot S)$--invariant subdivision which is simplicially isomorphic to the barycentric subdivision (and which agrees with it on every simplex for which $\Psi$ is injective) so that $\Psi$ is simplicial.  We do so, and continue to denote this by $\hat C(\dot S)$ to avoid adding even more notation.  In particular, $\Psi \colon \hat C(\dot S) \to \hat \C(S)$ simplicial, and for every vertex $v$ of $\hat \C(S)$, $\Psi^{-1}(v)$ is a subcomplex.  In fact, $\Psi^{-1}(v)$  is $\pi_1S$--equivariantly isomorphic to the barycentric subdivision $\hat T_v$ of the Bass-Serre tree, $T_v$, defined by the multicurve $v$.

For every $v \in \C(S)$, there is a $\pi_1S$--equivariant map
\[ \Phi_v \colon \widetilde S \smallsetminus \uncov^{-1}(v) \to \Psi^{-1}(v) \subset \C(\dot S),\]
constant on each complementary component of $\uncov^{-1}(v)$, defined as follows.  Given any $f \in \Diff_0(S)$ such that $f(\star) \not \in v$, $f^{-1}(v)$ is a weighted multicurve representing a point in $\Psi^{-1}(v)$.  There is a unique lift $\widetilde f \colon \widetilde S \to \widetilde S$ such that the isotopy of $f$ to the identity lifts to an isotopy of $\widetilde f$ to the identity.  Because the (isotopy class of the) multicurve $f^{-1}(v)$ depends only on $v$ and the component of $ \widetilde S \smallsetminus \uncov^{-1}(v)$ containing $\widetilde f(\widetilde \star)$, we can define  $\Phi_v$ by
\[ \Phi_v(\tilde{a}) = \Phi_v(\widetilde f(\widetilde \star)) = f^{-1}(v)\]
where $f$ is any map in $\Diff_0(S)$ so that $\tilde{f}(\star)$ is in the same component of $  \widetilde S \smallsetminus \uncov^{-1}(v)$ as $\tilde{a}$.

Geometrically, it can be useful to think of picking a point $a \in S \smallsetminus v$, and dragging the point back by an isotopy to $\star$.  The weighted multicurve $v$ can be dragged along by the isotopy to produce a weighted multicurve $x$ on $\dot S$.  Lifting the isotopy to an isotopy of the identity on $\widetilde S$, there is a point $\widetilde a \in \widetilde S \smallsetminus \uncov^{-1}(v)$ so that the final map of the lifted isotopy sends $\widetilde a$ to $\widetilde \star$, and then $\Phi_v(\widetilde a) = x$.
For a vertex $v$, $\Phi_v$ is surjective onto the vertices in the fiber over $v$.

Let $\Omega$ denote the preimage under $\uncov$ of the complement of the set of all simple geodesics on $S$.  By the Birman-Series Theorem \cite{BirmanSeries}, $\Omega$ is an open, dense, full measure set.  For every $v \in \C(S)$, the map $\Phi_v$ is defined on $\Omega$, and thus $\Phi(v,\tilde{a}) = \Phi_v( \tilde{a})$  gives a well-defined map \[ \Phi \colon \C(S) \times \Omega \to \C(\dot S).\]
Furthermore, for all $\widetilde a \in \Omega$, the restriction $\Phi|_{\C(S) \times \{\widetilde a\}}$ is a simplicial isomorphism to a subcomplex of $\C(\dot S)$, and post-composing this restriction with $\Psi$ is the ``identity" on $\C(S) = \C(S) \times \{\widetilde a\}$.  Thus we view these restrictions, $\Phi|_{\C(S) \times \{\widetilde a\}}$ as sections of $\Psi$.

From the construction of $\Phi$ described above, we see that for each $\widetilde a \in \Omega$, $\Phi(\C(S) \times \{\widetilde a\})$ in fact provides representatives of the isotopy classes that intersect in precisely the same pattern as the geodesic representatives in the domain;
\begin{equation} \label{Eqn:same i}
    i(v,v') = i(\Phi(v,\widetilde a),\Phi(v',\widetilde a)).
\end{equation}
Indeed, if $f \in \Diff_0(S)$ has $\widetilde f(\widetilde \star) = \widetilde a$, then $\Phi(v,\widetilde a) = f^{-1}(v)$, and since $f^{-1}$ is a diffeomorphism, it sends each pair of geodesics to a pair of curves that intersect minimally in the exact same number of points. In particular, $v \mapsto \Phi(v,\widetilde a)$ is simplicial for all $\widetilde a \in \Omega$.

\begin{remark}
    This is a variation on the construction in \cite[Subsection 2.2]{LeiMjSch}, which produces a continuous map defined on all of $\C(S) \times \widetilde S$.   Continuity is less important for us, and the fact that $\Phi_v^{-1}(x) \subset \widetilde S$ is an open convex set bounded by components of $\uncov^{-1}(v)$ is convenient.  
\end{remark}

We note that $\Phi$ respects barycentric subdivisions.  Specifically, for all $\widetilde a \in \Omega$, $\Phi|_{\hat C(S) \times \{\widetilde a\}}$ is a simplicial isomorphism to a subcomplex of $\hat \C(\dot S)$ (in fact, this is true even without modifying the subdivision of $\C(\dot S)$ as above).

\subsection{PGF groups and base spaces}
\label{subsec_E_for_RGF}

Given a finitely generated group $G$ and collection of {\em peripheral subgroups}, $H_1,\ldots,H_k$, recall that the {\em coned-off Cayley graph}, $\hat G$, of $G$ is the Cayley graph with respect to the generating set which is the union a finite generating set together with all elements of the peripheral subgroups.
 If $G$ is  \emph{hyperbolic relative} to $H_1,\ldots,H_k$, then $\hat G$ is hyperbolic and its quasi-isometry type is not impacted by the choice of finite generating set (see \cite{Bowditch-Rel} for the complete definition of relative hyperbolicity).  We are interested in relatively hyperbolic subgroups of $\Mod(S)$ in conjunction with the following notion of geometric finiteness.

\begin{definition}[PGF]\label{defn:PGF}
    A finitely subgroup $G <\Mod(S)$ is \emph{parabolically geometrically finite} if $G$ is hyperbolic relative to a  collection of  subgroups $H_1,\dots,H_k$ where \begin{enumerate}
        \item each $H_i$ contains a finite index, abelian subgroup consisting entirely of multi-twists, and
        \item any orbit map from $G$ to $\C(S)$ is a quasi-isometric embedding of the coned-off Cayley graph $\hat G$ into $\C(S)$.
    \end{enumerate}
\end{definition}

From now on we fix a PGF subgroup $G < \Mod(S)$ and let $H_1,\dots, H_k$ be the peripheral subgroups of $G$. We define two graphs $\B_1$ and $\B_2$ with $\B_1 \subset \B_2$, so that both are $G$--equivariantly quasi-isometric to the coned off Cayley graph, $\hat G$, and there is a $G$--equivariant retraction $\B_2 \to \B_1$.  The first graph $\B_1$ is any connected graph with vertex set given by the set of cosets of $H_1,\ldots,H_k$ and which is $G$--equivariantly quasi-isometric to $\hat G$. We also assume (as we may) that $\B_1$ has finitely many $G$--orbits of edges. For example, we can define edges between $v,w \in \B^0_1$ if the corresponding cosets contain elements that are distance at most $1$ in the Cayley graph of $G$ with respect to a fixed finite generating set.  

Note that each vertex $v \in \B_1^0$ is stabilized by a conjugate, denoted $H_v$, of one of the subgroups $H_1,\ldots,H_k$. 
The union of the multi-curves supporting multi-twists in $H_v$ form a single multi-curve, and thus a vertex of $\hat \C(S)$, which we also denote $v$; note that $H_v$ stabilizes $v$ (setwise).
We thus obtain a $G$--equivariant identification of $\B_1^0$ with a subset of the vertex set of the barycentric subdivision of the curve complex, $\B_1^0 \subset \hat \C^0(S)$.

The second graph $\B_2$ is defined from $\B_1$ as follows.  For each $v \in \B_1^0 \subset \hat \C^0(S)$, we consider all submulticurves $v_0 \subset v$ which are supports for multitwists in the stabilizer of $v$ in $G$.  The set of all such submulticurves of $v$ forms a poset (by inclusion) with $v$ as a maximal element, and the {\em cluster of $v$} is the graph whose vertices are the elements of the poset, with edges whenever there is a containment of multicurves.  We call the edges in this graph {\em cluster edges}.  The cluster of $v$ naturally sits as a subgraph of the barycentric subdivision of the simplex $v \subset \C(S)$.  The graph $\B_2$ is obtained by replacing each $v \in \B_1$ by its vertex cluster, joining vertices from distinct vertex clusters whenever maximal elements from those clusters are joined by an edge in $\B_1$.  By construction, there is an inclusion $\B_1 \subset \B_2$, and a simplicial retraction $\B_2 \to \B_1$ sending each vertex to the maximal element in its cluster.  

The product of two multitwists from distinct parabolic subgroups is loxodromic (on either of $\B_1$ or $\B_2$), and hence must be pseudo-Anosov.  This implies the following (c.f.~\cite{Loa_free_products}).

\begin{lemma} \label{lem:vertices filling}
    For any two vertices $v,v' \in \B_2^0$ which are in different clusters, $v \cup v'$ fills $S$.  Consequently, any two vertices of $\B_1^0$ fill $S$. \qed
\end{lemma}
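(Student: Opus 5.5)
The argument reduces the first assertion to a statement about mapping classes and then derives the second from the first.

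Let $v,v'\in\B_2^0$ lie in different clusters, with maximal elements $w\ne w'\in\B_1^0$. By construction, $v$ is the support of a multitwist $T_v$ lying in the stabilizer of $w$ in $G$; after passing to a power we may assume $T_v$ lies in the finite index abelian subgroup of multitwists in $H_w$. In the same way we obtain a multitwist $T_{v'}\in H_{w'}$ with support $v'$. The subgroups $H_w$ and $H_{w'}$ are distinct maximal parabolic subgroups, being conjugates of peripherals attached to distinct cosets.

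The first step is to show that $T_vT_{v'}$ is loxodromic on $\B_1$. This is the standard fact for relatively hyperbolic groups: if $g,h$ are infinite order elements of distinct maximal parabolic subgroups, then $g^nh^m$ is loxodromic on the coned-off graph for all large $n,m$. One proves it with a ping-pong argument in $\hat G$, or in the Bowditch fine hyperbolic graph, which is quasi-isometric to $\B_1$. The point is that $H_w$ and $H_{w'}$ fix the distinct cone points $w,w'$, and the powers $g^n,h^m$ act with large angle at those cone points, so the concatenated path through $w$ and $w'$ is a quasigeodesic. Replacing $T_v,T_{v'}$ by suitable powers still gives multitwists with the same supports $v,v'$, so this causes no difficulty.

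The second step uses the PGF hypothesis. By Definition~\ref{defn:PGF}(2) the orbit map $\hat G\to\C(S)$ is a quasi-isometric embedding, so a loxodromic on $\hat G$, equivalently on $\B_1$, acts loxodromically on $\C(S)$. By Masur--Minsky, an element acting loxodromically on $\C(S)$ is pseudo-Anosov. Hence $\phi=T_v^nT_{v'}^m$ is pseudo-Anosov.

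The third step shows that $v\cup v'$ fills. Suppose instead that some essential curve $c$ is disjoint from $v\cup v'$. Then both $T_v^n$ and $T_{v'}^m$ fix $c$, so $\phi$ fixes $c$. This is impossible, since a pseudo-Anosov fixes no curve. Thus $v\cup v'$ fills $S$. For the consequence, take distinct vertices $w,w'\in\B_1^0$. These are the maximal elements of distinct clusters, so the first assertion applies directly and shows that $w\cup w'$ fills $S$.

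The main obstacle is the first step, namely justifying loxodromicity of products of elements from distinct parabolic subgroups in the coned-off graph. I would cite this fact from Bowditch's treatment of relatively hyperbolic groups, or from \cite{Loa_free_products}, rather than reprove it. The only real care needed is that the specific multitwists, or their powers, are nontrivial elements of $H_w$ and $H_{w'}$, and that these two parabolic subgroups are genuinely distinct.
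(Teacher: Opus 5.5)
Your proposal is correct and follows the paper's argument. The paper justifies the lemma only by the remark preceding it: a product of multitwists from distinct parabolic subgroups is loxodromic on $\B_1$, hence pseudo-Anosov via the PGF quasi-isometric embedding into $\C(S)$, and filling follows exactly as in your third step. Passing to large powers $T_v^nT_{v'}^m$ to guarantee loxodromicity refines the paper's unqualified claim at no cost, since powers of a multitwist have the same support.
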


We extend both inclusions $\B_j^0 \subset \hat \C^0(S)$ to $G$--equivariant maps $\B_j \to \hat \C^1(S)$ sending each edge to a geodesic in the $1$--skeleton $\hat \C^1(S)$.  In order to do this, we may need to replace each edge of $\B_j$ between vertices in different clusters with a finite set of edges so that the action of $G$ on $\B_j$ has trivial edge stabilizers.  That this is possible follows from Lemma \ref{lem:vertices filling}, since the stabilizer in $\Mod(S)$, hence $G$, of a pair of filling multicurves $v \cup v'$ is finite. 

\begin{remark} When the peripheral subgroups $H_j$ are virtually cyclic, we have $\B_1 = \B_2$.  In particular, if each $H_j$ is virtually an infinite cyclic group generated by a Dehn twist, then these graphs are the same and $\B = \B_1 = \B_2$ admits an equivariant map into $\C^1(S)$, without the barycentric subdivision.
\end{remark}

\subsection{Tree bundles for extensions of PGF groups}

We let $\Gamma = \Gamma_G$ be the associated surface group extension
\[ 1 \to \pi_1S \to \Gamma \to G \to 1.\]
Viewing $G < \Mod(S)$, this sequence can be defined by its natural embedding into the Birman exact sequence \eqref{E:BES}, so that $\Gamma_G$ is the preimage of $G$ in $\Mod(\dot S)$.  Alternatively, we can view $G$ as the isomorphic image of the monodromy of a bundle over a space with fundamental group $G$, and $\Gamma_G$ as the fundamental group of the bundle.

For each $j =1,2$, construct a graph $\E_j$ together with a map
\[ p \colon \E_j \to \B_j, \]
equivariant with respect to $\Gamma \to G$,
as follows.  The vertex set of $\E_j$ is the subset $\E_j^0 \subset \hat \C^0(\dot S)$ which is the union of
\[ \bigcup_{v \in \B_j^0} T_v^0 \subset \Psi^{-1}(\B_j^0).\]  Here we are viewing $\B_j^0 \subset \hat \C^0(S)$, and we recall that $ \Psi^{-1}(v) \cong \hat T_v$, the barycentric subdivision of $T_v$ (thus we are omitting all valence $2$ vertices of all $\hat T_v$).  The map $p$ on the vertex set is just the restriction of $\Psi$, and so is equivariant with respect to $\Gamma \to G$.

There are two or three types of edges in $\E_j$.  We include all edges of $T_v$, over all $v \in \B_j^0 \subset \hat \C^0(S)$, and call these {\em vertical edges}.  We view $T_v$ as embedded in $\hat \C(\dot S)$ via the equivariant isomorphism $\Psi^{-1}(v) \cong \hat T_v$. We also add edges between  $x,x' \in \E_j^0 \subset \hat \C^0(\dot S)$ whenever $v= \Psi(x)$ and $v' = \Psi(x')$ are joined by an edge in $\B_j$ and \[ \Phi_{v}^{-1}(x) \cap \Phi_{v'}^{-1}(x') \neq \emptyset,\]   We call such edges {\em primary horizontal edges}. Finally, for $\B_2$, we include all edges in the $\Psi$--preimage of cluster edges of $\B_2$ between vertices of $\E_2^0$, and call these {\em horizontal cluster edges}. These last two types of edges make up the set of {\em horizontal edges}. 

Note that the intersection $\Phi_{v}^{-1}(x) \cap \Phi_{v'}^{-1}(x')$ above is an open, bounded, convex set.  Since edges are only defined between preimages of vertices connected by edges, the map $p$ on vertices of $\E_j$ naturally extends to a simplicial map on $p \colon \E_j \to \B_j$.

\begin{lemma}
\label{lem:E_1-orbits}
    Each of the graphs $\E_1$ and $\E_2$ have finitely many $\Gamma$--orbits of edges.
\end{lemma}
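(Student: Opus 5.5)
We sort the edges into the three types (vertical, primary horizontal, horizontal cluster) and show that each type consists of finitely many $\Gamma$--orbits. Throughout we use that $p$ is equivariant with respect to $\Gamma \to G$, and that $\pi_1S = \ker(\Gamma\to G)$ preserves each fiber $T_v$.

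\emph{Vertical edges.} Since $\B_j$ is built from finitely many $G$--orbits, there are finitely many $G$--orbits of vertices of $\B_j$. So it suffices to fix one vertex $v$ from each orbit and show that the edges of $T_v$ fall into finitely many $\Gamma$--orbits.
\begin{itemize}
\item $T_v$ is the Bass--Serre tree dual to the multicurve $v$, so $\pi_1S$ acts on it cocompactly.
\item Its edge orbits correspond to the components of $v$.
\item Its vertex orbits correspond to the complementary components of $v$.
\end{itemize}
This holds both before and after the barycentric subdivision and the removal of valence-two vertices. Hence the vertical edges give finitely many orbits.

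\emph{Horizontal cluster edges.} Each cluster is finite, and there are finitely many $G$--orbits of clusters, so there are finitely many $G$--orbits of cluster edges $v_0\subset v$ in $\B_2$. Fix one such cluster edge. Its $\Psi$--preimage consists of edges between $T_{v_0}$ and $T_v$, where the fibers are realized inside $\hat\C(\dot S)$. I would check that this preimage is $\pi_1S$--cocompact. Each edge in it is determined by a pair $x\in T_v$ and $x'\in T_{v_0}$ with $x_0\subset x$ as multicurves on $\dot S$. For a fixed $x$ there are only finitely many such $x'$, namely the submulticurves of $x$ mapping to $v_0$. Combined with the cocompactness of $T_v$, this gives finitely many $\pi_1S$--orbits.

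\emph{Primary horizontal edges.} This is the main case. Again there are finitely many $G$--orbits of edges $vv'$ of $\B_j$, so fix one such edge. The primary horizontal edges over it correspond to pairs of complementary components $C$ of $\uncov^{-1}(v)$ and $C'$ of $\uncov^{-1}(v')$ with $C\cap C'\neq\emptyset$. These are assigned via $\Phi_v$ and $\Phi_{v'}$, and the assignment is $\pi_1S$--equivariant. So it suffices to show that there are finitely many $\pi_1S$--orbits of such intersecting pairs.

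By Lemma~\ref{lem:vertices filling}, $v\cup v'$ fills $S$ when $v,v'$ lie in different clusters. In that case each intersection $C\cap C'$ is an open bounded convex set, which is a union of lifts of complementary regions of $v\cup v'$ in $S$. There are finitely many such regions, and $\pi_1S$ acts cocompactly on $\widetilde S$, so only finitely many orbits of intersecting pairs occur. More precisely, fix a compact fundamental domain $F$. Every pair is translated to one with $C\cap C'$ meeting $F$. By local finiteness of the lifted geodesic multicurves, only finitely many components meet $F$.

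In the remaining case $v,v'$ lie in the same cluster (relevant only for $\B_2$). The argument is the same: local finiteness of the lines in $\uncov^{-1}(v)\cup\uncov^{-1}(v')$ near $F$ still gives finitely many components meeting $F$. So filling is not actually needed, only local finiteness.

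The main obstacle is checking that the correspondence (pair of components) $\mapsto$ (edge) is $\pi_1S$--equivariant and surjective onto the primary horizontal edges. This should follow from the equivariance of $\Phi_v$ and its surjectivity onto the vertices of the fiber. Combining the three cases gives finitely many $\Gamma$--orbits of edges.
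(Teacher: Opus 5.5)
Your proposal is correct and follows essentially the same route as the paper. Both split the edges into vertical, horizontal cluster and primary horizontal edges, use the finitely many $G$--orbits in $\B_j$, and count $\pi_1S$--orbits over each edge; a primary horizontal edge over $vv'$ corresponds to an intersection $C\cap C'$ that is exactly one lift of a complementary component of $v\cup v'$, and your fundamental-domain count is a more explicit version of the paper's one-orbit-per-component observation. Two cosmetic slips: ``$x_0\subset x$'' should read $x'\subset x$, and $C\cap C'$ is convex and hence connected, so it is a single lift rather than a union of lifts.
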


\begin{proof}
There are finitely many $\pi_1S$--orbits of vertical edges in each fiber over $v \in \B_i$, and finitely many $G$--orbits of vertices in $\B_i$, hence finitely many $\Gamma$--orbits of vertical edges, for $i=1,2$.  Similarly, for every cluster edge in $\B_2$, there are finitely many $\pi_1S$--orbits in the preimage, hence finitely many $\Gamma$--orbits of cluster edges.

Now consider any primary horizontal edge between $x,x' \in \E_i^0$, for $i=1,2$.  Such an edge is defined by the intersection of two convex regions, which are preimages in the universal cover of components of the intersections of the complements of $v = p(x)$ and $v'=p(x')$.  There are finitely many such components for each $v,v' \in \E_i^0$, each giving rise to a single $\pi_1S$--orbit of intersections of convex regions. We can now conclude the required finiteness since there are finitely many $G$--orbits of pairs of adjacent vertices $v,v'$ in $\mathcal B_i$.
\end{proof}

\bigskip

Suppose $\widetilde a \in \Omega$, $v,v' \in \B_j^0 \subset \hat \C^0(S)$, $x = \Phi(v,\widetilde a)$, $x' = \Phi(v',\widetilde a)$,  and $e \subset \B_j$ is an edge from $v$ to $v'$, viewed as a geodesic in $\hat \C^1(S)$.
Then there is an edge between $x,x'$, and $\Phi(e \times \{\widetilde a\})$ is a geodesic between $x,x' \subset \hat \C^1(\dot S)$. For each edge $\frak e$ between a pair of vertices $x,x' \in \E^0_j$, we make a choice of $\widetilde a \in \Phi_v^{-1}(x) \cap \Phi_{v'}^{-1}(x')$ and thus define a map $\E_j \to \hat \C^1(\dot S)$, sending the edge to the geodesic $\Phi(e \times \{\widetilde a\})$, where $e = p(\frak e)$. By construction, the following diagram commutes

\begin{center}
\begin{tikzcd}
\E_j \arrow[r] \arrow[d, "p" left] & \hat \C^1(\dot S) \arrow[d, "\Psi"]\\
\B_j \arrow[r] & \hat \C^1(S).
\end{tikzcd}
\end{center}

We will also use this map to view $\E_j$ as a subspace of $\hat \C^1(\dot S)$ when convenient (though again, disjoint edges need not have disjoint images, so it is not really a subspace). The map $p \colon \E_j \to \B_j$ is just the restriction of $\Psi$, via this ``embedding''.

\subsection{Hyperbolicity of tree bundles for PGF groups} 

We now prove that the tree bundles $\E_j$ from the previous section are externally flaring graph bundles. We adopt the convention of using $\B$ (resp. $\E$) to denote either $\B_1$ or $\B_2$ (resp. $\E_1$ or $\E_2$) when the distinction is unimportant.

We define the tight sections $\{\Sigma_{\widetilde a} \colon \B \to \E\}_{\widetilde a \in \Omega}$, by
\[ \Sigma_{\widetilde a}(v) = \Phi(v,\widetilde a) = \Phi_v(\widetilde a),\]
for all $\widetilde a \in \Omega$ and $v \in \B^0 \subset \hat \C^0(S)$.  By definition of the edges in $\B$ and $\E$, this extends over the edges to a simplicial (hence isometric) section.  We denote the fiber over $v \in \B^0$ by $\E_v$.

\begin{remark}
    The definition of tight sections uses a fixed hyperbolic structure on $S$, and it is unclear (and probably not true) that the sections are invariant by the actions of $\Gamma$ in general.  The reason is that the intersection pattern of more than two geodesics on the fixed hyperbolic structure is not likely to be invariant under the action $G$.  In addition, while the map of the vertex set of $\E$ into $\hat \C(\dot S)$ is equivariant with respect to the injection $\Gamma \to \Mod(\dot S)$ it is unclear whether edges map in $\Gamma$--equivariantly (since they are defined, as with the sections, using the intersection configurations of the curves).
\end{remark}

The main result of this section is the following.

\begin{theorem}
\label{thm:combination_applies}
Suppose $(\E,\B) = (\E_j,\B_j)$, $j=1$ or $2$.  The map $p \colon \E \to \B$ together with the tight sections $\{\Sigma_{\widetilde a}\}_{\widetilde a \in \Omega}$ is an externally flaring graph bundle.
\end{theorem}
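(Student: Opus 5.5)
We verify the five axioms of Definition \ref{defn:externally flaring graph bundle} in turn for $p\colon\E\to\B$ with the sections $\Sigma_{\widetilde a}$. The first three are essentially formal consequences of the construction and of \cite{KLS_Trees}.

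\textbf{Hyperbolicity.} $\B$ is $G$--equivariantly quasi-isometric to $\hat G$, which is hyperbolic since $G$ is relatively hyperbolic. Each fiber $\E_v$ is a tree $T_v$ (with valence-two vertices omitted), hence $0$--hyperbolic.

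\textbf{Full sections.} Every vertex $x\in\E_v$ is $\Phi_v(\widetilde a)$ for some $\widetilde a$ in the open set $\Phi_v^{-1}(x)$. Since $\Omega$ is dense, we may take $\widetilde a\in\Omega$.

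\textbf{Robust sections.} Geodesics in trees are unique, so $[\Sigma_{\widetilde a}(u),\Sigma_{\widetilde b}(u)]_u$ is the path in $T_u$ crossed by the hyperbolic segment $[\widetilde a,\widetilde b]\subset\widetilde S$. The complementary regions of $\uncov^{-1}(u)$ met by this segment correspond exactly to the vertices of that tree geodesic, because $T_u$ is dual to $\uncov^{-1}(u)$ and each lift of $u$ separates. Given $x$ on $[\Sigma_{\widetilde a}(v),\Sigma_{\widetilde b}(v)]_v$, pick $\widetilde c\in\Omega$ on (or arbitrarily near) $[\widetilde a,\widetilde b]$ inside $\Phi_v^{-1}(x)$. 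Then $\Sigma_{\widetilde c}(u)$ lies on the tree geodesic for every $u$, since the segment $[\widetilde a,\widetilde b]$ passes through the region containing $\widetilde c$; convexity of regions handles the approximation.

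\textbf{Fiberwise retraction.} Projection in a tree is the median, so $q=\mathrm{med}(x,y,z)$ and $q'=\mathrm{med}(x',y',z')$. Choose $\widetilde a,\widetilde b,\widetilde c$ in the open convex sets witnessing the edges $xx'$, $yy'$, $zz'$, and let $w$ be the center of the hyperbolic triangle $\triangle(\widetilde a,\widetilde b,\widetilde c)$ in $\widetilde S$, perturbed into $\Omega$. By the duality above, the region of $\widetilde S\smallsetminus\uncov^{-1}(v)$ containing a point of the triangle that lies on all three sides gives the median $q$. The trouble is that $w$ need not lie on the sides, only near them. I would handle this by arguing that every lift of $v$ separating $w$ from the triangle's center region must cross two sides, so up to a bounded number of ``thin'' lifts the medians are determined by $w$. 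This bounded discrepancy should be controlled in $\E$ via the section $\Sigma_w$: $\Sigma_w(v)$ and $\Sigma_w(v')$ are adjacent, and both lie within bounded $\E$--distance of $q$ and $q'$. I expect that last bound to follow from Lemma \ref{lem:vertices filling}: adjacent $v,v'$ fill, so lifts of $v$ crossing a small region of $\widetilde S$ are crossed by lifts of $v'$, and each step in $T_v$ can be realized by a uniformly short path through $\E_{v'}$. This yields the constant $\kappa$.

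\textbf{Flaring.} This is the main obstacle. The plan is to transport the problem to $\C(\dot S)$. The coarse map $\E\to\hat\C^1(\dot S)$ is Lipschitz, and sections map to isometric copies of the quasi-geodesic image of $\B$ in $\C(S)$. Suppose $d_\E(\Sigma_{\widetilde a}(v),\Sigma_{\widetilde b}(v))\le B$ while the sections stay $\sigma$--apart along $[u,v]$. A large $\E$--distance between $\Sigma_{\widetilde a}(w)$ and $\Sigma_{\widetilde b}(w)$ should be detected by the tree distance in $T_w$, that is, by many lifts of $w$ separating $\widetilde a$ from $\widetilde b$. It should equally be detected by large projections $d_Y$ to subsurfaces $Y$ (annuli or complements of the peripheral multicurve) of $\dot S$ containing the puncture. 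This is the content of the Kent--Leininger--Schleimer / Leininger--Mj--Schleimer analysis, where $Y$ is a component of $\dot S\smallsetminus\Sigma_{\widetilde a}(w)$ and the relevant arcs come from $\Sigma_{\widetilde b}(w)$.

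The flaring function is then produced in two steps. First, the image of $\Sigma_{\widetilde a}([u,v])$ is a quasigeodesic in $\C(\dot S)$, and hyperbolicity of $\C(\dot S)$ together with Theorem \ref{thm:MM BGI} shows that subsurface projections change by a bounded amount along it away from the relevant $\partial Y$. Second, the separating lifts in $\widetilde S$ between $\widetilde a$ and $\widetilde b$ grow as one moves along $[u,v]$, because the curves along a $\C(S)$--quasigeodesic eventually intersect the segment $[\widetilde a,\widetilde b]$ more and more, and the count is controlled by the distance to the neck. Making the lower bound $\lambda(d_\B(u,v))$ diverge is the delicate part. I would argue by contradiction using a compactness/finiteness of $\Gamma$--orbits argument (Lemma \ref{lem:E_1-orbits}) together with the fact that a bounded $\E$--distance at both ends forces the segment $[\widetilde a,\widetilde b]$ to be short relative to both $u$ and $v$. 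A short segment would then yield $d_\E$ bounded throughout, via filling (Lemma \ref{lem:vertices filling}), contradicting the $\sigma$ lower bound once $\sigma$ is chosen large.
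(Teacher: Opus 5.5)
Your verification of Axioms \eqref{item:hyperbolic}--\eqref{item:robust} agrees with the paper. The flaring axiom, however, has a genuine gap. You propose to detect growth through the number of lifts of $w$ separating $\widetilde a$ from $\widetilde b$, i.e.\ through tree distance in $T_w$. Because fibers are not properly embedded, tree distance gives no lower bound on $d_\E$; this is exactly the difficulty the combination theorem was designed around. The lower bound has to come from $\hat\C(\dot S)$. The map $\E\to\hat\C^1(\dot S)$ is $\eta$--Lipschitz, tight sections are uniform quasigeodesics there, and $\Psi$ is $1$--Lipschitz. A four-point approximating-tree argument (Lemma~\ref{lem:weak flare}) then gives $d_{\dot S}(\Sigma_1(u),\Sigma_2(u))\ge d_S(u,w')-\lambda_0$, where $w'$ is the point of $\gamma$ nearest $u$ with $d_{\dot S}(\Sigma_1(w'),\Sigma_2(w'))\le \eta B$.

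The real work is to bound $d_S(w',v)$ in terms of $B$, with $\sigma$ fixed \emph{before} $B$. Your ``contradiction once $\sigma$ is chosen large'' violates this quantifier order. A compactness argument over $\Gamma$--orbits is also unavailable, since tight sections depend on the fixed hyperbolic metric and are not known to be $\Gamma$--invariant. The paper argues as follows.
If $w'$ and $v$ were far apart, a point $w''$ far from both would have $d_{\dot S}(\Sigma_1(w''),\Sigma_2(w''))\le R$ with $R$ uniform (Lemma~\ref{lem:close sections close}), while $d_\E\ge\sigma$ there. The constant $\sigma$ is chosen so that this combination forces $d_Y(\Sigma_1(w''),\Sigma_2(w''))>M+2K$ for some proper $Y\subset\dot S$: large tree distance gives large intersection number, hence a large projection by Choi--Rafi. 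Theorem~\ref{thm:MM BGI} bounds $d_Y$ by $M$ at $w'$ and at $v$, since otherwise the corresponding $\hat\C(\dot S)$--geodesics would both pass near $\partial Y$, contradicting their separation. Lemma~\ref{lem:projections from base} together with Lemma~\ref{lem:vertices filling} shows that on one side of $w''$ both sections move $\pi_Y$ by at most $K$, and the triangle inequality gives the contradiction. Your sketch names projections and bounded geodesic image but never assembles this argument. Moreover, the assertion that bounded $\E$--distance makes $[\widetilde a,\widetilde b]$ ``short relative to $u$'' has no content, because bounded $d_\E$ does not bound the number of lifts of $u$ crossed.

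The fiberwise retraction argument also does not close as written, and it misses the observation that makes the center point unnecessary. By the duality you used for robust sections, the median region $\Phi_v^{-1}(q)$ meets all three sides of $\triangle(\widetilde a,\widetilde b,\widetilde c)$, and so does $\Phi_{v'}^{-1}(q')$. Two convex sets that each meet all three sides of a triangle must intersect, because a separating geodesic would cut off a corner missing the opposite side. Hence there is a point of $\Omega$ lying in both regions, and it witnesses a horizontal edge between $q$ and $q'$. This is the paper's argument, and it gives $\kappa=1$. Your detour through a center $w$ does not produce a uniform constant. The number of lifts of $v$ separating $w$ from the median region is not bounded independently of $v\in\B^0$, since the hyperbolic metric is fixed while $gv$ has unbounded length. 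Filling of $v,v'$ also does not explain why these many steps in $T_v$ should collapse to a bounded distance in $\E$.
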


Most of the conditions of Definition \ref{defn:externally flaring graph bundle} are straightforward, and we verify those first.

\begin{lemma} \label{lem:PGF-Dehn-twist-basic}
    Conditions \eqref{item:hyperbolic}, \eqref{item:full}, \eqref{item:robust}, and \eqref{item:retraction} are satisfied by $p \colon \E \to \B$ and $\{\Sigma_{\widetilde a}\}_{\widetilde a \in \Omega}$.
\end{lemma}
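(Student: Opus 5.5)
I would verify the four axioms one at a time, using the identification of fibers with (barycentric subdivisions of) Bass--Serre trees and the geometry of the sets $\Phi_v^{-1}(x)\subset\widetilde S$.

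\emph{Axiom \eqref{item:hyperbolic}.} Each fiber $\E_v$ is a tree: the barycentric subdivision of $T_v$ with the valence-two vertices removed, so it is $0$--hyperbolic. For $\B_1$ and $\B_2$, each is $G$--equivariantly quasi-isometric to the coned-off Cayley graph $\hat G$. That graph is hyperbolic because $G$ is relatively hyperbolic. In fact Definition~\ref{defn:PGF} gives a quasi-isometric embedding into $\C(S)$, whose image is then quasiconvex, so hyperbolicity also follows from Theorem~\ref{thm:MM hyperbolic}. Only one base is involved, so uniformity is automatic.

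\emph{Axiom \eqref{item:full}.} Let $x\in\E_v$. Since $\Phi_v$ surjects onto the vertices of the fiber and the set $\Phi_v^{-1}(x)$ is open, it meets the dense set $\Omega$. Any $\widetilde a$ in this intersection gives a tight section $\Sigma_{\widetilde a}$ with $\Sigma_{\widetilde a}(v)=x$.

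\emph{Axiom \eqref{item:robust}.} Take tight sections $\Sigma_{\widetilde a},\Sigma_{\widetilde b}$. In each fiber $\E_u$ the tree geodesic from $\Phi_u(\widetilde a)$ to $\Phi_u(\widetilde b)$ is unique, up to the subdivision. Its vertices are exactly the complementary regions of $\uncov^{-1}(u)$ met by the hyperbolic geodesic segment $[\widetilde a,\widetilde b]\subset\widetilde S$, because the Bass--Serre tree is dual to the lifts of $u$. If $x$ lies on this tree geodesic in the fiber over $v$, then the region $\Phi_v^{-1}(x)$ meets $[\widetilde a,\widetilde b]$ in an open subsegment. That subsegment is not covered by the countably many lifts of simple geodesics crossing it, so it contains a point $\widetilde c\in\Omega$. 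Every point of $[\widetilde a,\widetilde b]$ lies in a region crossed by the segment, so $\Sigma_{\widetilde c}(u)\in[\Sigma_{\widetilde a}(u),\Sigma_{\widetilde b}(u)]_u$ for all $u$, while $\Sigma_{\widetilde c}(v)=x$.

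\emph{Axiom \eqref{item:retraction}.} This is the main obstacle, because adjacency between fibers is defined through overlapping convex regions rather than by a tree map. Suppose $x,y,z\in\E_v$ are adjacent to $x',y',z'\in\E_{v'}$. Choose $\widetilde a,\widetilde b,\widetilde c$ in the respective intersections $\Phi_v^{-1}\cap\Phi_{v'}^{-1}$, moved slightly into $\Omega$ using openness. Then $x,x'$ lie on the section $\Sigma_{\widetilde a}$, and similarly for the other two pairs.

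Next, let $q$ be the tree median of $x,y,z$, which is the projection of $z$ to $[x,y]_v$. It corresponds to a region $R$ of $\widetilde S\smallsetminus\uncov^{-1}(v)$ that meets all three sides of the geodesic triangle with vertices $\widetilde a,\widetilde b,\widetilde c$. Equivalently, $R$ contains a point of that triangle through which a coarse center passes. Likewise $q'$ corresponds to a region $R'$ for $v'$ meeting all three sides.

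The plan is to pick a point $\widetilde d\in\Omega$ in the ideal-triangle core shared by both regions, so that $\Sigma_{\widetilde d}(v)=q$ and $\Sigma_{\widetilde d}(v')$ is within bounded tree distance of $q'$. Then $d_\E(q,q')\le 1+{}$bounded.

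Where I expect difficulty is that $R$ and $R'$ need not overlap. To handle this I would instead argue in $\hat\C(\dot S)$. Both $q$ and $q'$ are represented by curves through the same medians, and any alternative I would bound via Lemma~\ref{lem:vertices filling} and finiteness of $\Gamma$--orbits of edges (Lemma~\ref{lem:E_1-orbits}). Concretely, regions of $v$ and $v'$ are uniformly bounded, since $v\cup v'$ fills and there are finitely many orbits of adjacent pairs. So every point of $R$ lies within uniformly bounded hyperbolic distance, call it $r$, of a point of $R'$. Moving $\widetilde d$ by at most $r$ changes $\Sigma_{\widetilde d}(v')$ by uniformly boundedly many vertical edges, since a segment of length $r$ crosses boundedly many lifts of $v'$. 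This gives the uniform constant $\kappa$. Horizontal cluster edges in $\E_2$ are handled in the same way, using containment of multicurves in place of the filling property.
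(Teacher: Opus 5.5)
Your verification of Axioms \eqref{item:hyperbolic}, \eqref{item:full} and \eqref{item:robust} follows the paper. There is one small correction in \eqref{item:robust}. $\Omega$ is the complement of the lifts of \emph{all} complete simple geodesics, which is an uncountable family, so countability is not why the subsegment $\Phi_v^{-1}(x)\cap[\widetilde a,\widetilde b]$ meets $\Omega$. What you need is the finer Birman--Series statement that this union meets a transverse geodesic arc in a set with empty interior.

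The genuine gap is in Axiom \eqref{item:retraction}. It stems from your worry that $R=\Phi_v^{-1}(q)$ and $R'=\Phi_{v'}^{-1}(q')$ need not overlap. They always overlap, and this is the missing idea:
\begin{itemize}
\item Perturb $\widetilde a,\widetilde b,\widetilde c$ inside their open regions so that the triangle $\Delta$ they span is nondegenerate.
\item You already observed that $R\cap\Delta$ and $R'\cap\Delta$ are convex and each meets all three sides of $\Delta$.
\item Choose one point of $R'$ on each side. Their convex hull $T'$ lies in $R'$.
\item $\Delta\smallsetminus T'$ consists of at most three corner pieces, each missing the side opposite its vertex.
\item The connected set $R\cap\Delta$ meets all three sides, so it cannot lie in one corner. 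Hence it meets $T'$.
\end{itemize}
So $R\cap R'$ is a nonempty open set and contains some $\widetilde d\in\Omega$. Then $\Phi_v(\widetilde d)=q$ and $\Phi_{v'}(\widetilde d)=q'$, so $q$ and $q'$ are joined by a primary horizontal edge. This is the paper's hexagon argument. It gives $\kappa=1$ directly, with no uniform estimates required.

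The detour you propose instead would not work, for three reasons.
\begin{itemize}
\item The complementary regions of the lifts of $v$ are not bounded. They are lifts of the components of $S\smallsetminus v$: unbounded convex sets with infinitely many boundary geodesics. Only the intersection of a $v$--region with a $v'$--region is bounded, because $v\cup v'$ fills.
\item Hence the claim that every point of $R$ lies within distance $r$ of $R'$ is false. $R$ meets infinitely many $v'$--regions, and its points go arbitrarily far from $R'$.
\item The hyperbolic metric on $S$ is fixed, and $G$ does not act isometrically on $\widetilde S$. So neither the diameters of these bounded polygons nor the number of lifts of $v'$ crossed by an arc of length $r$ is uniformly bounded over adjacent pairs $v,v'$ in $\B$. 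For example, apply powers of a multitwist in $H_v$ to $v'$. This keeps it adjacent to $v$ while making its geodesic representative spiral arbitrarily often in a collar of a component of $v$.
\end{itemize}
Finiteness of $\Gamma$--orbits of edges (Lemma~\ref{lem:E_1-orbits}) cannot repair this. The quantities you want to bound are measured in the fixed metric on $\widetilde S$, which $G$ does not preserve.
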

\begin{proof}
    Since $\B$ is quasi-isometric to the coned off Cayley graph of the relatively hyperbolic group $G$, it is hyperbolic.  The fibers $\E_v$, over $v \in \B^0$ are trees, hence are $0$--hyperbolic, and thus \eqref{item:hyperbolic} is satisfied.

    Given a vertex $x \in \E^0$, there exists $\widetilde a \in \Phi_{p(x)}^{-1}(x) \cap \Omega$ and then $\Sigma_{\widetilde a}(p(x)) = x$, and so \eqref{item:full} is satisfied.

    Next, suppose we have tight sections $\Sigma_i = \Sigma_{\widetilde a_i}$, $i=1,2$, defined by $\widetilde a_1,\widetilde a_2 \in \Omega$, together with $v \in \B$ and $x \in [\Sigma_1(v),\Sigma_2(v)]_v$.  Consider the geodesic $[\widetilde a_1,\widetilde a_2] \subset \widetilde S$ and observe that for all $u \in \B$, we have
    \[ [\Sigma_1(u),\Sigma_2(u)]_u \cap \E^0 = \Phi_u([\widetilde a_1,\widetilde a_2] \cap \Omega).\]
    In particular, there exists $\widetilde b \in [\widetilde a_1,\widetilde a_2] \cap \Omega$ such that $\Sigma_{\widetilde b}(v) = \Phi_v(\widetilde b) = x$, and for all $u \in \B^0$, $\Sigma_{\widetilde b}(u) \in [\Sigma_1(u),\Sigma_2(u)]_u$.  Therefore, \eqref{item:robust} is also satisfied. 

    Finally, we claim that \eqref{item:retraction} holds with $\kappa = 1$.  Suppose $x_1,x_2,x_3 \in \E_v$ and $x_1',x_2',x_3' \in \E_{v'}$ with $x_i$ and $x_i'$ connected by an edge of $\E$, for each $i=1,2,3$.  Since $p$ is simplicial, it follows that $v,v'$ are connected by an edge $e$ of $\B$.  For each $i=1,2,3$, let \[ \widetilde a_i \in \Phi_v^{-1}(x_i) \cap \Phi_{v'}^{-1}(x_i') \cap \Omega,\] which exists since $x_i,x_i'$ are connected by an edge.  Since $\Omega$ is an open dense set we assume  (as we may) that $\widetilde a_1,\widetilde a_2,\widetilde a_3$ do not all lie on a single geodesic segment.

    Consider the (nondegenerate) geodesic triangle $\Delta$ in $\widetilde S$ between $\widetilde a_1,\widetilde a_2,\widetilde a_3$ (we view this as a $2$--simplex in $\widetilde S$).  Let $y$ be the closest point projection of $x_3$ to $[x_1,x_2]_v$ in $\E_v$ (and similarly for $y'$).
    Since $\E_v$ is the tree dual to $\uncov^{-1}(v) \subset \widetilde S$, the open convex set $\Phi_v^{-1}(y)$ has non-empty intersection with all three sides of $\Delta$.  See Figure~\ref{Fig:closest point projection}.
\begin{figure}[h]
\begin{center}
    \begin{tikzpicture}
    \node at (-1,0) {\includegraphics[width=7cm]{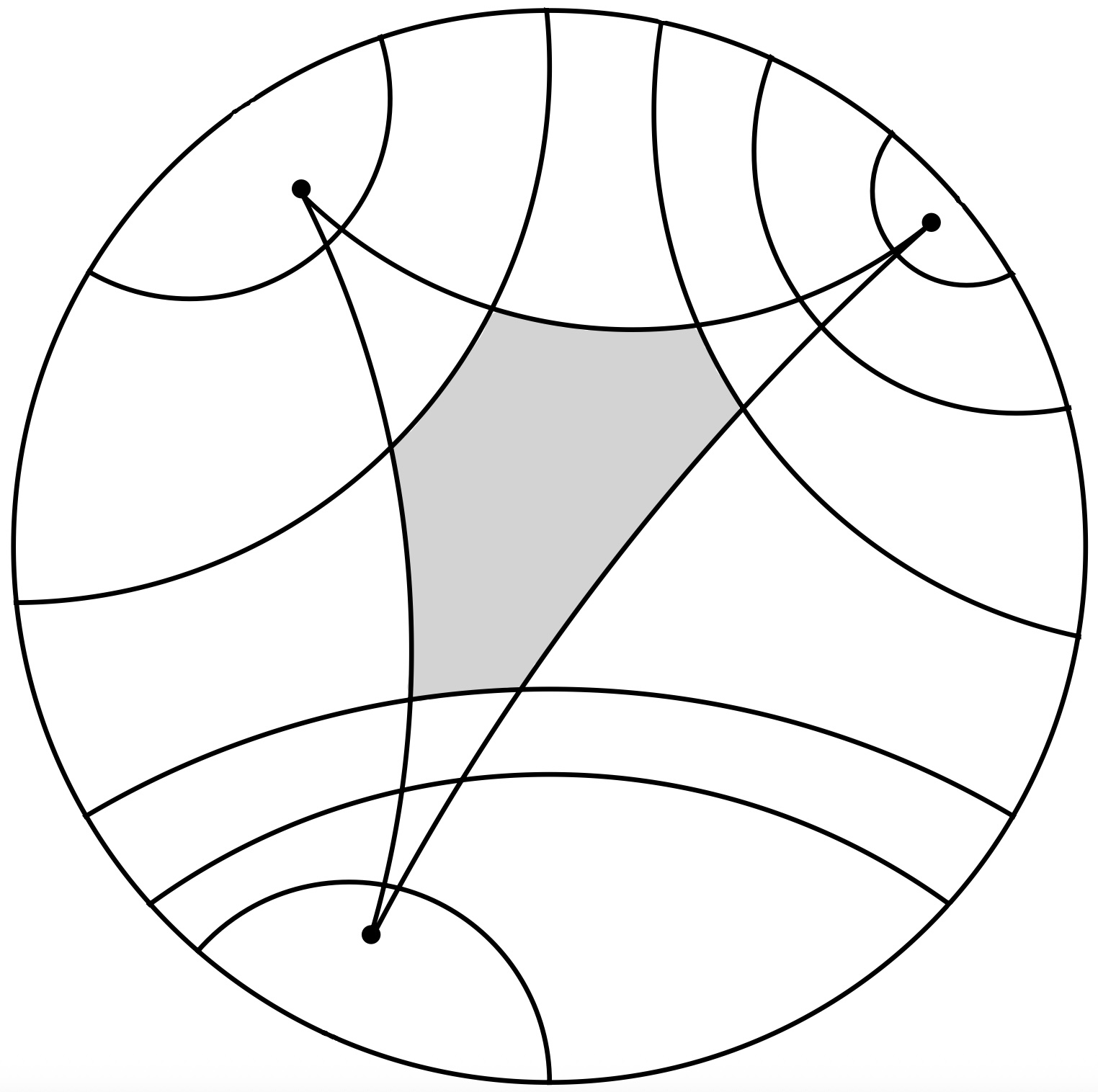}};
    \draw[thick] (6,-3) -- (7,0) -- (6,2);
    \draw[thick] (7,0) -- (9,2);
    \draw[thick, - stealth] (3.5,0) -- (5.5,0);
    \node at (-1,.5) {$H$};
    \node at (4.5,.5) {$\Phi_v$};
    \node at (1,-.7) {$\Phi_v^{-1}(y)$};
    \node at (-2.8,2.5) {$\widetilde a_1$};
    \node at (-2,-2.8) {$\widetilde a_2$};
    \node at (1.3,2.3) {$\widetilde a_3$};
    \node at (5.7,-3) {$x_2$};
    \node at (9.3,2) {$x_3$};
    \node at (5.7,2) {$x_1$};
    \node at (6.7,0) {$y$};
    \node at (-2.5,1) {$\Delta$};
    \node at (-3.6,3) {$\widetilde S$};
    \node at (5,3) {$\E_v$};
    \filldraw (6,-3) circle (1.5pt);
    \filldraw (6.3333,-2) circle (1.5pt);
    \filldraw (6.6666,-1) circle (1.5pt);
    \filldraw (7,0) circle (1.5pt);
    \filldraw (9,2) circle (1.5pt);
    \filldraw (7.66666,.66666) circle (1.5pt);
    \filldraw (8.3333333,1.33333) circle (1.5pt);
    \filldraw (6,2) circle (1.5pt);
    \filldraw (6.5,1) circle (1.5pt);
    \end{tikzpicture}    
\caption{Closest point projection $y$ of $x_3$ to $[x_1,x_2]$ and corresponding regions in $\widetilde S$.} \label{Fig:closest point projection}
\end{center}
\end{figure}
    
    In particular,
    \[ H = \overline{\Phi_v^{-1}(y)} \cap \Delta \]
    is a (possibly degenerate) hexagon, with sides alternating between arcs of sides of $\Delta$ and arcs of the boundary of $\overline{\Phi_v^{-1}(y)}$ (where the latter type of side may degenerate to a vertex of $\Delta$).
    There is similarly a hexagon $H' \subset \Delta$ of intersection of $\overline{\Phi_{v'}^{-1}(y')}$ with $\Delta$.  
    
    By inspection, $H \cap H' \neq \emptyset$; indeed, any two such hexagons in a triangle necessarily have nonempty intersection, and so there is some point $\widetilde b \in H \cap H' \cap \Omega$ with $\Phi_v(\widetilde b) = y$ and $\Phi_{v'}(\widetilde b) = y'$.  It follows that $y$ and $y'$ are connected by an edge in $\E$.
\end{proof}

Before proving condition \eqref{item:flaring} holds, we need to prove a few preliminary facts. {\bf In all of the following, we assume we have fixed $G$, $p \colon \E \to \B$, and $\{\Sigma_{\widetilde a}\}_{\widetilde a \in \Omega}$ as above.}

\begin{lemma} \label{lem:cobounded edges,ex1}
    There exists $C > 0$ so that all horizontal edges of $\E$, viewed as geodesics in $\hat \C(\dot S)$ are $C$--cobounded; that is, $d_Y(x,y) \leq C$, for all subsurfaces $Y \subset \dot S$ and pairs $x,y \in \E^0$ joined by a horizontal edge. 
\end{lemma}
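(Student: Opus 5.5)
The plan is a finiteness argument via Lemma~\ref{lem:E_1-orbits}, combined with the equivariance of subsurface projections. The difficulty is that horizontal edges are sent into $\hat \C^1(\dot S)$ through choices of points $\widetilde a$ that need not be $\Gamma$--equivariant. However, the statement only involves the endpoints $x,y\in \E^0\subset \hat\C^0(\dot S)$, and the vertex map is $\Gamma$--equivariant, since $\Gamma<\Mod(\dot S)$ acts on $\hat\C(\dot S)$. For each $g\in\Mod(\dot S)$ we have $d_{gY}(gx,gy)=d_Y(x,y)$. So the quantity $\sup_Y d_Y(x,y)$ depends only on the $\Gamma$--orbit of the pair $\{x,y\}$. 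By Lemma~\ref{lem:E_1-orbits} there are finitely many $\Gamma$--orbits of horizontal edges. It therefore suffices to show that for each individual horizontal edge $\{x,y\}$ the supremum $\sup_Y d_Y(x,y)$ is finite; we then take $C$ to be the maximum over orbit representatives.

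Fix a horizontal edge between $x,y$, viewed as multicurves on $\dot S$. If $Y$ is a subsurface missed by $x$ or by $y$, then $d_Y$ is not defined, so we only consider $Y$ that both $x$ and $y$ cut. The key input is the standard fact (Masur--Minsky, \cite{MM:CC2}) that $d_Y(x,y)$ is bounded by a function of the intersection number $i(x,y)$: each arc of $\pi_Y(x)$ meets each arc of $\pi_Y(y)$ at most $i(x,y)$ times, and arc-and-curve distance is bounded by roughly $2\log_2 i+2$. For annuli one gets $d_Y(x,y)\le i(x,y)+1$ in the same way. This bound holds for every $Y$ simultaneously, so it suffices to see that $i(x,y)<\infty$, which is automatic for two multicurves. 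It is even cleaner to note that the bound is uniform.

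For a primary horizontal edge, choose $\widetilde a$ in the common region. Then $x=\Phi(v,\widetilde a)$ and $y=\Phi(v',\widetilde a)$, and by \eqref{Eqn:same i} we have $i(x,y)=i(v,v')$. This depends only on the $G$--orbit of the edge of $\B$, of which there are finitely many. For a horizontal cluster edge, $p(x)\subset p(y)$ up to the containment direction, and $x,y$ lie in a common simplex of the fiber structure. The natural claim is that $x$ and $y$ are disjoint (one a submulticurve of the other), in which case $i(x,y)=0$ and $d_Y(x,y)\le 2$ directly. Even without checking this, finitely many orbits and the generic intersection bound suffice.

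The main obstacle is merely bookkeeping. We must confirm that subsurface projection distances are invariant under the $\Gamma$--action on endpoints, even though edges as paths are not equivariant; this is harmless, since only endpoints enter. We must also state the intersection-number bound on $d_Y$ in the paper's arc-and-curve convention, including the punctured and annular cases. I would cite \cite[Lemma 2.1]{MM:CC2}, or argue directly via surgery of arcs, to get $d_Y(x,y)\le 2i(x,y)+2$ for all $Y$.
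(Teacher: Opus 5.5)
Your proof is correct and uses the same key input as the paper's: subsurface projection distance is bounded by a function of intersection number, and only finitely many edge configurations occur. The paper gets a uniform bound on $i(x,x')$ directly from Equation~\eqref{Eqn:same i} and the finitely many $G$--orbits of edges of $\B$ (your ``cleaner'' alternative). Your main route, through Lemma~\ref{lem:E_1-orbits} and the $\Mod(\dot S)$--invariance of $d_Y$ with a per-edge finiteness bound, is an equally valid repackaging of that argument.
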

\begin{proof} By assumption, there are only finitely may $G$--orbits of edges of $\B$, which we view as finitely many $G$--orbits of geodesics in $\hat \C(S)$, by $G$--equivariance.  Consequently, there is a $C' >0$ so that $i(v,v') \leq C'$ for any two vertices $v,v' \in \B^0 \subset \hat \C^0(S)$ which are connected by an edge.  By Equation (\ref{Eqn:same i}), we have $i(x,x') \leq C'$ for any two vertices $x,x' \in \E^0 \subset \hat \C(\dot S)$ which are connected by an edge.  Since projection distance is bounded by a linear function of intersection number, it follows that there is some $C$, depending only on $C'$, so that for every essential subsurface $Y \subset \dot S$, we have $d_Y(x,x') \leq C$, as required. 
\end{proof}

\begin{lemma} \label{lem:projections from base}
There is a constant $K >0 $ with the following property.  Given a tight section $\Sigma_{\widetilde a}$ and vertices $v,v' \in \B$, we have
\[ d_Y(\Sigma_{\widetilde a}(v),\Sigma_{\widetilde a}(v')) \leq K,\]
for all subsurfaces $Y \subsetneq \dot S$, except possibly when $\pi_Y(x) = \emptyset$ for some vertex $x$ in $\Sigma_{\widetilde a}([v,v'])$ where $[v,v']$ is a geodesic in $\B$.  
\end{lemma}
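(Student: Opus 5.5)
The plan is to push a $\B$--geodesic into $\hat\C(\dot S)$ via the section $\Sigma_{\widetilde a}$, control its geometry there, and then invoke the Bounded Geodesic Image Theorem~\ref{thm:MM BGI}. Fix a geodesic $[v,v']$ in $\B$ and consider its image $\Sigma_{\widetilde a}([v,v'])$. Using the commutative diagram, this is a concatenation of the geodesic segments $\Phi(e\times\{\widetilde a\})$ in $\hat\C^1(\dot S)$, one for each edge $e$ of $[v,v']$. The natural route to a bound is to show that this concatenation is a uniform quasigeodesic in $\hat\C(\dot S)$. Once that is known, the Morse Lemma~\ref{lem:morse} together with the uniform hyperbolicity of curve graphs lets us replace it by an honest geodesic.

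The first step is to establish this quasigeodesic property. The composition $\Psi\circ\Sigma_{\widetilde a}$ equals the map $\B\to\hat\C^1(S)$, which is a quasi-isometric embedding by the PGF hypothesis (Definition~\ref{defn:PGF}), using that $\B$ is $G$--equivariantly quasi-isometric to $\hat G$. Since $\Psi$ is simplicial, it is $1$--Lipschitz. Since $\Sigma_{\widetilde a}$ followed by the map into $\hat\C^1(\dot S)$ is coarsely Lipschitz (edges of $\B$ go to geodesics of uniformly bounded length, by finiteness of $G$--orbits of edges), we get the two-sided bound
\[ d_{\hat\C(\dot S)}\bigl(\Sigma_{\widetilde a}(u),\Sigma_{\widetilde a}(w)\bigr) \;\ge\; d_{\hat\C(S)}(u,w) \;\succeq\; d_\B(u,w). \]
Hence $\Sigma_{\widetilde a}([v,v'])$, parametrized along $[v,v']$, is a uniform quasigeodesic in $\hat\C^1(\dot S)$, and it lies within uniform Hausdorff distance $M'$ of a $\hat\C(\dot S)$--geodesic $\gamma$ from $\Sigma_{\widetilde a}(v)$ to $\Sigma_{\widetilde a}(v')$.

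Next, suppose $Y\subsetneq\dot S$ and $\pi_Y(x)\neq\emptyset$ for every vertex $x$ of $\Sigma_{\widetilde a}([v,v'])$. The task is to bound $d_Y$ between the endpoints, and this is the main obstacle: the hypothesis concerns the quasigeodesic, whereas the Bounded Geodesic Image Theorem needs it for the geodesic $\gamma$. I would handle this by working directly along the quasigeodesic. Its vertices are consecutive curves, and consecutive vertices are adjacent, so their projections change by at most $1$ by Lemma~\ref{lem:cc projection lipschitz}. Horizontal edges are additionally $C$--cobounded by Lemma~\ref{lem:cobounded edges,ex1}. The standard argument then runs as follows. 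If $d_Y$ between the endpoints were huge, the quasigeodesic would have to spend a long time in the $1$--neighborhood of $\partial Y$, because away from that neighborhood it fellow-travels geodesic subsegments whose vertices all project nontrivially. Each such fellow-travelled piece contributes at most $M+2M'+O(1)$, by applying the Bounded Geodesic Image Theorem to short connecting geodesics chosen to avoid $\partial Y$, and the bounded jumps are controlled by Lemma~\ref{lem:cc projection lipschitz}. It therefore suffices to show that the quasigeodesic cannot linger near $\partial Y$ for long.

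To rule out lingering near $\partial Y$, use the bundle structure. Every vertex $\Sigma_{\widetilde a}(u)$ has $\Psi$--image $u$, and $\partial Y$ has $\Psi$--image at distance at most $1$ from $\Psi(\partial Y)$ or fills with it. Because $\Psi\circ\Sigma_{\widetilde a}$ is a quasi-isometric embedding, only a uniformly bounded number of vertices $u$ lie within distance $2$ of a fixed point of $\hat\C(S)$. If $\Psi(\partial Y)$ is nonperipheral, this bounds the number of vertices near $\partial Y$. If instead $\partial Y$ becomes inessential after filling the puncture, as happens for a once-punctured disk or annulus around the puncture, then $Y$ contains the puncture and projections to it are governed by the fibers; in that case I expect the cobounded-edge lemma, combined with the fact that the section is the single lift $\widetilde a$, to give the bound directly. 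Summing these bounded contributions yields the desired $K$.
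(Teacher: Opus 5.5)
This is essentially the paper's argument. You make the section a uniform quasigeodesic in $\hat\C(\dot S)$ using the PGF quasi-isometric embedding and the $1$--Lipschitz map $\Psi$, pass to a nearby geodesic by the Morse lemma, and apply bounded geodesic image to the pieces of that geodesic and to short connectors that stay away from $\partial Y$. The cobounded-edge lemma then bounds the contribution of the boundedly many section steps near $\partial Y$.

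Your final case split is unnecessary, however. An essential curve in $\dot S$ bounds neither a disk nor a once-punctured disk, so $\Psi(\partial Y)$ is always an essential multicurve; once-punctured disks and peripheral annuli are not essential subsurfaces, so that case is empty. More simply, the no-lingering bound needs no bundle structure: the $r$--neighborhood of $\partial Y$ in $\hat\C(\dot S)$ has bounded diameter, so a uniform quasigeodesic has only boundedly many vertices in it. That is exactly how the paper bounds $|j-i|$.
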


\begin{proof}
The statement when $\B = \B_2$ easily follows from the case when $\B = \B_1$, so we assume $\B = \B_1$ in the proof.  Suppose $v = v_0,\ldots,v_n = v'$ are the vertices in a geodesic in $\B$ between $v$ and $v'$, and write $x = \Sigma_{\widetilde a}(v)$, $x' = \Sigma_{\widetilde a}(v')$, and $x_j = \Sigma_{\widetilde a}(v_j)$, for all $j$, so that $x = x_0,\ldots,x_n = x'$
are vertices of the lifted geodesic in the tight section $\Sigma_{\widetilde a}$.

To prove the lemma, it suffices to show that if $Y \subsetneq \dot S$ is any subsurface with $\pi_Y(x_j) \neq \emptyset$ for all $j$, then $d_Y(x,x')$ is uniformly bounded by some constant $K$.
To prove this, we first note that since $[x_j,x_{j+1}]$ is $C$--cobounded by Lemma~\ref{lem:cobounded edges,ex1}, we have
\[ d_Y(x_j,x_{j+1}) \leq C\]
for all $j$.  Therefore by the triangle inequality, we have 
\[ d_Y(x_i,x_j) \leq C|j-i|,\]
for all $0 \leq i < j \leq n$.

On the other hand, $\Sigma_{\widetilde a}([v,v'])$ is a uniform quasi-geodesic in $\hat C^1(\dot S)$.  By Gromov hyperbolicity, this is uniformly close (in $\hat C(\dot S)$) to a geodesic in $\hat C^1(\dot S)$ between the endpoints.  This geodesic in $\hat \C^1(\dot S)$ is itself uniformly close to a geodesic $\gamma$ in $\C^1(\dot S)$, for which the endpoints are curves contained in $x$ and $x'$, respectively, and for which we may assume have non-empty projection to $Y$ realizing $d_Y(x,x')$.

If every vertex of $\gamma$ has non-empty projection to $Y$, then $d_Y(x,x') \leq M$, by Theorem~\ref{thm:MM BGI}.  If some vertex of $\gamma$ has empty projection to $Y$, then we can express $\Sigma_{\widetilde a}([v,v'])$ as a concatenation of three subgeodesics 
\[[x_0,x_i][x_i,x_j][x_j,x_n] = \Sigma_{\widetilde a}([v,v']) \subset  \Sigma_{\widetilde a}(\B),\]
with the following properties.
The difference $|j-i|$ is uniformly bounded by some $N$, and the initial and terminal segments, $[x_0,x_i]$ and $[x_j,x_n]$, are close enough to subsegments $\gamma',\gamma''\subset\gamma$, all of whose vertices have non-empty projections to $Y$, so that we can connect a curve in $x_i$ to a vertex $y'$ of $\gamma'$ and $x_j$ to a vertex $y''$ in $\gamma''$ by geodesics all of whose vertices have non-empty projection to $Y$.  The triangle inequality and several applications of Theorem~\ref{thm:MM BGI} implies
\[ d_Y(x,x') \leq d_Y(x,y') + d_Y(y',x_i) + d_Y(x_i,x_j) + d_Y(x_j,y'') + d_Y(y'',x') \leq  4M + CN.\]
This is the required uniform bound on $d_Y(x,x')$.
\end{proof}

In the next section it will be useful to have an analogous statement for geodesics in $\B$ and subsurfaces of $S$.  The proof is essentially identical and so we do not repeat it, but state the lemma here for later reference.  See also \cite[Proposition~4.3]{Udall} for a stronger version.

\begin{lemma} \label{lem:bounded projections in B}
    There is a constant $K > 0$ with the following property.  Given $v,v' \in \B$, we have
    \[ d_Y(v,v') \leq K,\]
    for all subsurfaces $Y \subsetneq S$, except possibly when $\pi_Y(w) = \emptyset$ for some vertex $w$ in a geodesic $[v,v'] \subset \B$. \qed
\end{lemma}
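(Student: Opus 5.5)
We prove Lemma~\ref{lem:bounded projections in B} by running the argument of Lemma~\ref{lem:projections from base} with $\C(S)$ in place of $\C(\dot S)$ and the orbit image of $\B$ in place of the tight section. As there, it suffices to treat $\B=\B_1$: every vertex of $\B_2$ lies in the cluster of a vertex of $\B_1$, so it is a submulticurve of that vertex, and hence has $d_Y$--bounded projection relative to it whenever both projections are nonempty. Fix a geodesic $v=v_0,\dots,v_n=v'$ in $\B$ and a subsurface $Y\subsetneq S$ with $\pi_Y(v_j)\neq\emptyset$ for all $j$. We aim to bound $d_Y(v,v')$ uniformly.

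\textbf{Step 1 (local coboundedness).} There are finitely many $G$--orbits of edges of $\B$, and intersection number is $G$--invariant. Hence there is a $C'$ with $i(v_j,v_{j+1})\le C'$, and since subsurface projection distance is bounded linearly by intersection number, $d_Y(v_j,v_{j+1})\le C$ for a uniform $C$, exactly as in Lemma~\ref{lem:cobounded edges,ex1}. By the triangle inequality, $d_Y(v_i,v_j)\le C|i-j|$.

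\textbf{Step 2 (quasigeodesic and BGI).} By the PGF hypothesis, the map $\B\to\hat\C^1(S)$ is a $G$--equivariant quasi-isometric embedding, since $\B$ is equivariantly quasi-isometric to $\hat G$. So the image of $[v,v']$ is a uniform quasigeodesic. By the Morse lemma (Lemma~\ref{lem:morse}) it lies uniformly Hausdorff close to a $\hat\C^1(S)$--geodesic, and hence close to a $\C^1(S)$--geodesic $\gamma$ joining curves of $v$ and $v'$, chosen to have nonempty projection to $Y$ and realizing $d_Y(v,v')$ up to bounded error.

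\begin{itemize}
\item If every vertex of $\gamma$ meets $Y$, Theorem~\ref{thm:MM BGI} gives $d_Y(v,v')\le M$ up to constants.
\item Otherwise, let $w$ be a vertex of $\gamma$ with $\pi_Y(w)=\emptyset$. Every vertex of $\gamma$ outside the $2$--neighbourhood of $w$ also meets $Y$, except in a bounded-length window determined by hyperbolicity. Split the $\B$--path as $[v_0,v_i][v_i,v_j][v_j,v_n]$, where $v_i,v_j$ are the first and last vertices within the fellow-travelling distance of that bad window.
\end{itemize}

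\textbf{Step 3 (gluing the pieces).} The middle segment $[v_i,v_j]$ has length $|i-j|\le N$ for a uniform $N$, because the image of $[v,v']$ is a quasigeodesic fellow-travelling a bounded window. Step~1 then bounds $d_Y(v_i,v_j)$ by $CN$. For the outer pieces, connect a curve of $v_i$ to a nearby vertex $y'$ of $\gamma$ by a short $\C(S)$--geodesic, and connect $v_0$ to $y'$ along $\gamma$. We want all vertices on these paths to meet $Y$, so that Theorem~\ref{thm:MM BGI} applies to each. The triangle inequality then gives
\[
d_Y(v,v')\le 4M+CN.
\]

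\textbf{Main obstacle.} The delicate point is Step~3: ensuring that the connecting geodesics from $v_i$ and $v_j$ to $\gamma$ avoid curves disjoint from $Y$. First, choose $v_i,v_j$ far from the bad vertex $w$ (distance more than $2$ plus the Morse constant) but only boundedly far in $\B$. Then, since all curves missing $Y$ lie within distance $1$ of $\partial Y$, and hence within $2$ of each other, any short geodesic staying away from that $2$--ball meets $Y$ at every vertex. This is the same bookkeeping as in Lemma~\ref{lem:projections from base}, and the filling property in Lemma~\ref{lem:vertices filling} is not needed.
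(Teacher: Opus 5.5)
Your proposal is correct and follows the paper's route exactly: the paper proves this lemma by declaring the argument of Lemma~\ref{lem:projections from base} (edge coboundedness from bounded intersection numbers, the Morse lemma, and a three-piece decomposition with bounded geodesic image on the outer pieces) to carry over verbatim with $\C(S)$ in place of $\C(\dot S)$, and that is what you do. One small correction: choose $v_i,v_j$ as in your final paragraph, just outside a ball about the bad vertex $w$ of radius more than $2$ plus the Morse constant, rather than as ``the first and last vertices within the fellow-travelling distance of that bad window'' as in Step~2, since with the latter choice the short connecting geodesics could pass through curves disjoint from $Y$.
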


Let $\delta_\C$ denote the uniform hyperbolicity constant of curve complexes; see the comments after Theorem~\ref{thm:MM hyperbolic}.

\begin{lemma} \label{lem:close sections close}
There exists $R> 0$ depending only on $\delta = \delta_\C$ and the quasi-isometry constants for $\B \to \hat \C(S)$ having the following property.  If $\mu \geq 0$, $u,v \in \B$, and $\Sigma_1,\Sigma_2$ are tight sections with 
\[ d_{\dot S}(\Sigma_1(u),\Sigma_2(u)),d_{\dot S}(\Sigma_1(v),\Sigma_2(v))\leq \mu, \]
and if $w \in [u,v]$ is any point with $d_S(u,w),d_S(v,w) \geq 2(\mu+\delta)$, then
\[ d_{\dot S}(\Sigma_1(w),\Sigma_2(w)) \leq R.\]
\end{lemma}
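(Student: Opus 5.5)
The plan is to work inside the hyperbolic graph $\hat\C^1(\dot S)$ and to use the forgetful map $\Psi$, which is $1$--Lipschitz, together with the fact that tight sections are simplicial copies of $\B$.

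\textbf{Setup.} Set $x_i=\Sigma_i(u)$, $y_i=\Sigma_i(v)$ and $z_i=\Sigma_i(w)$. Since $\B\to\hat\C(S)$ is a quasi-isometric embedding and $\Sigma_i$ is a simplicial section with $\Psi\circ\Sigma_i$ equal to this embedding, each path $\Sigma_i([u,v])$ is a uniform quasigeodesic in $\hat\C^1(\dot S)$. Indeed, $\Psi$ is $1$--Lipschitz, so
\[ d_{\dot S}(\Sigma_i(a),\Sigma_i(b))\ge d_S(a,b)\ge \tfrac1{K}d_\B(a,b)-1, \]
while the upper bound comes from simpliciality.

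\textbf{Step 1: Morse.} By Lemma~\ref{lem:morse}, $\Sigma_i([u,v])$ lies within $M$ of a geodesic $[x_i,y_i]$, and $z_i$ lies within $M$ of some point $p_i\in[x_i,y_i]$. Consider the quadrilateral with vertices $x_1,y_1,y_2,x_2$. Its short sides $[x_1,x_2]$ and $[y_1,y_2]$ have length at most $\mu$, so thinness places $p_1$ within $2\delta$ of a point of $[x_2,y_2]$, unless $p_1$ is within $2\delta+\mu$ of $x_1$ or of $y_1$.

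\textbf{Step 2: rule out the endpoint cases.} Suppose $p_1$ is close to $x_1$. Applying $\Psi$ gives
\[ d_S(w,u)\le d_{\dot S}(z_1,x_1)\le M+2\delta+\mu. \]
This contradicts $d_S(u,w)\ge 2(\mu+\delta)$ only if $M$ can be absorbed. I expect this to be the main obstacle: the threshold in the hypothesis does not visibly contain the Morse constant.

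\textbf{Handling the obstacle.} I would first try a cleaner route that avoids Morse at this point. Project everything to $\hat\C(S)$: $\Psi(p_1)$ lies near the $\hat\C(S)$-geodesic from $u$ to $v$. Then run the thin-quadrilateral argument for the geodesics $[x_i,y_i]$ themselves, using $1$--Lipschitzness of $\Psi$ to locate a point $q\in[x_1,y_1]$ with $d_S(\Psi(q),u)$ and $d_S(\Psi(q),v)$ both at least $\mu+\delta$. This is what the factor $2$ in $2(\mu+\delta)$ seems designed for. For such $q$, the distance to $x_1$ exceeds $\mu+\delta$, so $q$ is $\delta$--close to $[x_2,y_2]$ via the quadrilateral; the precise bookkeeping would set the constants so this margin suffices. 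If the margin fails, I would instead allow $R$ to absorb the slack: when $d_S(u,w)$ is smaller than a constant plus $\mu$, the estimate must still be independent of $\mu$, so the sharper approach is genuinely needed.

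\textbf{Step 3: transfer back to $w$.} Once we have $q_1\in[x_1,y_1]$ and $q_2\in[x_2,y_2]$ with $d(q_1,q_2)\le2\delta$ and $\Psi(q_1)$ close to $w$, use Morse to find a point $\Sigma_1(w_1)$ near $q_1$ and a point $\Sigma_2(w_2)$ near $q_2$. Applying $\Psi$ gives $d_S(w_1,w_2)$ bounded. By the quasi-isometric embedding of $\B$, the distance $d_\B(w_1,w_2)$ is bounded, and likewise $d_\B(w_1,w)$ is bounded. Simpliciality of the sections then bounds
\[ d_{\dot S}(\Sigma_1(w),\Sigma_2(w)) \]
by $R$, which depends only on $\delta$ and the quasi-isometry constants.
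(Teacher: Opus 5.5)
Your overall route is the paper's: the sections $\Sigma_i([u,v])$ are uniform quasigeodesics in $\hat\C(\dot S)$, and a slim quadrilateral with short sides $[x_1,x_2]$ and $[y_1,y_2]$ puts $\Sigma_1(w)$ uniformly close to some $\Sigma_2(w')$. Then $1$--Lipschitzness of $\Psi$ bounds $d_S(w,w')$, and the section structure bounds $d_{\dot S}(\Sigma_2(w'),\Sigma_2(w))$. Steps 1 and 3 are fine.

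The gap is Step 2, which you leave open. You try to \emph{exclude} by contradiction the case that $p_1$ lies within $2\delta+\mu$ of $x_1$ (or $y_1$). You find you cannot when $\mu$ is small compared to $M$, and you conclude that a sharper argument is needed. It is not. Your ``cleaner route'' would not supply one anyway: to tie a point $q\in[x_1,y_1]$ to $w$ you still need the Morse step, so $M$ reappears. Also, a distance $\mu+\delta$ from $x_1$ does not exclude the side $[x_1,x_2]$ of a $2\delta$--slim quadrilateral; you would need more than $\mu+2\delta$.

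The missing observation is that the endpoint case need only be bounded, not ruled out. Suppose $p_1$ is within $2\delta$ of $[x_1,x_2]$. Then $d_{\dot S}(p_1,x_2)\le 2\delta+\mu$ and $d_{\dot S}(p_1,x_1)\le 2\delta+\mu$. On the other hand, $d_{\dot S}(p_1,x_1)\ge d_{\dot S}(z_1,x_1)-M\ge d_S(w,u)-M\ge 2\mu+2\delta-M$. Together these force $\mu\le M$, and hence $d_{\dot S}(p_1,[x_2,y_2])\le d_{\dot S}(p_1,x_2)\le 2\delta+M$. The same holds near $y_1$, and otherwise $d_{\dot S}(p_1,[x_2,y_2])\le 2\delta$. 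So in every case $\Sigma_1(w)$ lies within $2M+2\delta$ of $[x_2,y_2]$, a bound independent of $\mu$, and your Step 3 then finishes the proof. This is exactly what the factor $2$ in $2(\mu+\delta)$ buys; the paper compresses it into ``using a slim quadrilateral.''

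Your closing remark in that paragraph is backwards for the same reason. If $d_S(u,w)<c+\mu$ while also $d_S(u,w)\ge 2(\mu+\delta)$, then $\mu<c$. A $\mu$--dependent estimate is then already uniform, so no sharper approach is required.
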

\begin{proof}
Let $w \in [u,v]$ be any point as in the lemma. Then $\Sigma_1([u,v])$ and $\Sigma_2([u,v])$ are uniform quasi-geodesics in $\hat \C(\dot S)$, each of which connect points coarsely at least distance $4(\mu+\delta)$ apart.  On the other hand, the initial points $\Sigma_1(u),\Sigma_2(u)$ and terminal points $\Sigma_1(v),\Sigma_2(v)$ are coarsely closer (at most $\mu$ apart), and hence there is some point $w' \in [u,v]$ so that  $d_{\dot S}(\Sigma_1(w),\Sigma_2(w'))$ is uniformly bounded by some constant $R'$ (using a slim quadrilateral).  Now observe that since $\Psi$ is $1$--Lipschitz, we have
\[ d_S(w,w') \leq d_{\dot S}(\Sigma_1(w),\Sigma_2(w')) \leq R'.\]
Then by the triangle inequality
\begin{eqnarray*}
d_{\dot S}(\Sigma_1(w),\Sigma_2(w)) & \leq & d_{\dot S}(\Sigma_1(w),\Sigma_2(w')) + d_{\dot S}(\Sigma_2(w'),\Sigma_2(w))\\
& = &d_{\dot S}(\Sigma_1(w),\Sigma_2(w')) + d_S(w',w)\\
& \leq & R'+R'.
\end{eqnarray*}
Therefore, $d_{\dot S}(\Sigma_1(w),\Sigma_2(w))$ is uniformly bounded by $R = 2R'$.
\end{proof}

We will also need the following fact, which essentially states that the required flaring condition holds if we measure distances in $\hat \C(\dot S)$, instead of in $\E$.  

\begin{lemma} \label{lem:weak flare}
    There exists $\sigma_0 > 0$, so that for all $B_0 \geq \sigma_0$, there exists $\lambda_0 > 0$ with the following property.
    If $\gamma = [u,v]$ is a geodesic in $\B$, and $\Sigma_1,\Sigma_2$ are two tight sections with   $d_{\dot S}(\Sigma_1(v),\Sigma_2(v)) \leq B_0$ and $d_{\dot S}(\Sigma_1(w),\Sigma_2(w)) \geq \sigma_0$ for all vertices $w \in \gamma -\{v\}$, then 
    \[ d_{\dot S}(\Sigma_1(u),\Sigma_2(u)) \geq d_S(u,v) - \lambda_0.\]
\end{lemma}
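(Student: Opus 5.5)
The plan is to compare the two lifted paths $\Sigma_1([u,v])$ and $\Sigma_2([u,v])$ inside the hyperbolic graph $\hat\C^1(\dot S)$. We use the fact that $\Psi$ is $1$--Lipschitz and that $\Psi\circ\Sigma_i$ is the identity on $\B\subset\hat\C(S)$.

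\emph{Step 1: the lifted paths are uniform quasigeodesics.} Each $\Sigma_i([u,v])$ is a uniform quasigeodesic in $\hat\C^1(\dot S)$. The map $\Sigma_i$ is simplicial, so its image path is $1$--Lipschitz. Conversely, $d_{\dot S}(\Sigma_i(a),\Sigma_i(b))\ge d_S(a,b)$ because $\Psi$ is $1$--Lipschitz. Also $\B\to\hat\C(S)$ is a quasi-isometric embedding, by the PGF hypothesis together with the construction of $\B_j$. By Lemma~\ref{lem:morse}, each $\Sigma_i([u,v])$ therefore lies uniformly Hausdorff close to a geodesic in $\hat\C^1(\dot S)$. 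Consider the quadrilateral with vertices $\Sigma_1(u),\Sigma_1(v),\Sigma_2(v),\Sigma_2(u)$. Its two lateral sides are these quasigeodesics, and its two short sides are geodesics $[\Sigma_1(v),\Sigma_2(v)]$ and $[\Sigma_1(u),\Sigma_2(u)]$. It is uniformly thin, with constant $c$ depending only on $\delta_\C$ and the quasi-isometry constants.

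\emph{Step 2: choose $\sigma_0$ so the two lateral sides cannot fellow travel.} Suppose a point $\Sigma_1(w)$ with $w\in\gamma$ lies within $c$ of $\Sigma_2(w')$ for some $w'\in[u,v]$. Projecting by $\Psi$ gives $d_S(w,w')\le c$. Then, exactly as in the proof of Lemma~\ref{lem:close sections close},
\[
d_{\dot S}(\Sigma_1(w),\Sigma_2(w))\le c+d_{\dot S}(\Sigma_2(w'),\Sigma_2(w))\le c+K'c+K'
\]
for a uniform $K'$, since $\Sigma_2$ is simplicial and $\B\to\hat\C(S)$ is coarsely Lipschitz. Fix $\sigma_0$ larger than this bound; this choice depends only on the bundle constants. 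Then the hypothesis $d_{\dot S}(\Sigma_1(w),\Sigma_2(w))\ge\sigma_0$ for $w\ne v$ forbids every point $\Sigma_1(w)$ with $w\ne v$ from being $c$--close to the side $\Sigma_2([u,v])$.

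\emph{Step 3: extract the slope-one lower bound.} Thinness puts each such point $\Sigma_1(w)$ within $c$ of $[\Sigma_1(v),\Sigma_2(v)]\cup[\Sigma_1(u),\Sigma_2(u)]$. Now choose $w_0\in\gamma$ with $d_S(w_0,v)$ on the order of $B_0+2c$, adjusted by the quasi-isometry constants; if $d_S(u,v)$ is smaller than this, the conclusion holds trivially after enlarging $\lambda_0$. The side $[\Sigma_1(v),\Sigma_2(v)]$ has length at most $B_0$, and $d_{\dot S}(\Sigma_1(w_0),\Sigma_1(v))\ge d_S(w_0,v)>B_0+2c$. Hence $\Sigma_1(w_0)$ cannot be $c$--close to that side, so it lies within $c$ of some point $z\in[\Sigma_1(u),\Sigma_2(u)]$. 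We then get
\[
d_{\dot S}(\Sigma_1(u),\Sigma_2(u))\ge d_{\dot S}(\Sigma_1(u),z)\ge d_{\dot S}(\Sigma_1(u),\Sigma_1(w_0))-c\ge d_S(u,w_0)-c\ge d_S(u,v)-d_S(w_0,v)-c.
\]
This is $d_S(u,v)-\lambda_0$ with $\lambda_0$ depending only on $B_0$ and the uniform constants.

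\emph{Main obstacle.} The delicate point is Step 3 when the relevant point sits near a vertex of the quadrilateral. Fitting a vertex $w_0$ at the right $d_S$--distance from $v$ uses the quasi-isometric embedding $\B\to\hat\C(S)$, so that $d_S$ changes by uniformly bounded amounts along $\gamma$. The bookkeeping of constants must then be done carefully so that $\sigma_0$ does not depend on $B_0$, while $\lambda_0$ is allowed to.
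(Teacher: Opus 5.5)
Your proof is correct and follows essentially the same route as the paper. Lifts of $\B$--geodesics by tight sections are uniform quasigeodesics in $\hat\C^1(\dot S)$, taking $\sigma_0$ large prevents the two lifts from fellow traveling, and the four-point geometry then gives $d_{\dot S}(\Sigma_1(u),\Sigma_2(u))\ge d_S(u,v)-O(B_0)$; the differences are only in presentation: the paper reads off the final inequality from a Gromov approximating tree for $\Sigma_1(u),\Sigma_1(v),\Sigma_2(u),\Sigma_2(v)$ where you use a thin quadrilateral and an auxiliary vertex $w_0$, and your Step 2 spells out (via the $1$--Lipschitz map $\Psi$, as in Lemma~\ref{lem:close sections close}) the non-fellow-traveling that the paper only asserts follows from the Morse lemma.
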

\begin{proof}
By composing a geodesic $\gamma$ in $\B$ with a tight section $\Sigma_{\widetilde a}$, we obtain a uniform quasi-geodesic in $\hat \C(\dot S)$.  Combining this with Lemma~\ref{lem:morse}, it follows that there exists some $\sigma_1 > 0$ so that if $\gamma = [u,v]$ is a geodesic in $\B$ and $\Sigma_1,\Sigma_2$ are two tight sections with $d_{\dot S}(\Sigma_1(w),\Sigma_2(w)) \geq \sigma_1$ for all $w \in \gamma -\{v\}$, then the $\hat \C(\dot S)$--geodesics, $[\Sigma_1(u),\Sigma_1(v)]$ and $[\Sigma_2(u),\Sigma_2(v)]$ are much farther than $\delta_\C$ apart. 
In particular, we may take $\sigma_1$ large enough so that in a finite Gromov approximating tree for the points $\Sigma_1(u)$, $\Sigma_1(v)$, $\Sigma_2(u)$, and $\Sigma_2(v)$, which approximates distances between the points
up to an additive error $\epsilon =\epsilon(\delta_\C)$, the geodesic between comparison points for $\Sigma_1(u),\Sigma_1(v)$ and the geodesic between comparison points for $\Sigma_2(u),\Sigma_2(v)$, are disjoint.
\begin{figure}[h]
\begin{center}
    \begin{tikzpicture}
    \begin{scope}[scale=.5]
    \draw[thick] (-1,0) -- (3,1) -- (6,0);
    \draw[thick] (-1,3) -- (3,2) -- (6,3);
    \draw[thick] (3,1) -- (3,2);
    \node at (-2,0) {$\Sigma_1(u)$};
    \node at (-2,3) {$\Sigma_2(u)$};
    \node at (7,0) {$\Sigma_1(v)$};
    \node at (7,3) {$\Sigma_2(v)$};
    \node at (5,1.5) {$\leq B_0$};
    \end{scope}
    \end{tikzpicture}    
\caption{The $\epsilon$--approximating tree for $\sigma_0 \geq \sigma_1$ sufficiently large and $B_0 \geq \sigma_0$.} \label{Fig:ambient flaring}
\end{center}
\end{figure}

Now, suppose that $\sigma_0 > \sigma_1$ is large enough so that for any $B_0 \geq \sigma_0$, if $\gamma = [u,v]$ is a geodesic and $\Sigma_1,\Sigma_2$ two tight sections with $d_{\dot S}(\Sigma_1(w),\Sigma_2(w)) \geq \sigma_0$ for all $w \in \gamma - \{v\}$, then $d_{\dot S}(\Sigma_1(w),\Sigma_2(w)) \geq \sigma_1$ for all $w \in \gamma - \{v\}$.
By inspection $\lambda_0 = B_0+3 \epsilon$ suffices to prove the lemma; see Figure~\ref{Fig:ambient flaring}.
\end{proof}

The next lemma establishes Condition~\eqref{item:flaring} (with the associated function a linear function).  This is the only remaining condition needed to verify the hypotheses of an externally flaring graph bundle.

\begin{lemma}
\label{lem:flare}
There exists $\sigma>0$ such that for all $B\geq \sigma$ there exists $\lambda \geq 1$ with the following property. Consider tight sections $\Sigma_1,\Sigma_2$ and a geodesic $\gamma=[u,v]\subset \B$ such that for all $w\in\gamma-\{v\}$ we have $d_\E(\Sigma_1(w),\Sigma_2(w))\geq\sigma$, while $d_\E(\Sigma_1(v),\Sigma_2(v))\leq B$. Then
    $$d_\E(\Sigma_1(u),\Sigma_2(u))\geq d_\B(u,v)/\lambda -\lambda.$$
\end{lemma}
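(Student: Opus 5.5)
Our plan is to transfer the flaring statement of Lemma~\ref{lem:weak flare}, which is phrased in $\hat\C(\dot S)$, to the ambient metric of $\E$. The key comparison is between $d_\E$ and $d_{\dot S}$ on pairs of points in a common fiber. The map $\E\to\hat\C^1(\dot S)$ is $1$--Lipschitz on vertices, because each edge of $\E$ is sent to a geodesic in $\hat\C^1(\dot S)$ of uniformly bounded length. Hence $d_{\dot S}(\Sigma_1(w),\Sigma_2(w))\le L\, d_\E(\Sigma_1(w),\Sigma_2(w))$ for a uniform $L$. This already gives the conclusion in the direction we want: any lower bound on $d_{\dot S}$ at $u$ yields a lower bound on $d_\E$ at $u$.

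The hypotheses, however, are stated in $d_\E$, and we need them in $d_{\dot S}$. The hypothesis at $v$ is harmless, since $d_\E(\Sigma_1(v),\Sigma_2(v))\le B$ gives $d_{\dot S}\le LB$. The real difficulty is the lower bound $d_\E\ge\sigma$ along $\gamma-\{v\}$. A large $\E$--distance does not obviously force a large $\hat\C(\dot S)$--distance, because fibers can be far apart in $\E$ while their images stay close in the curve graph; this is exactly the failure of properness. The main step is therefore a \emph{reverse comparison}: there is a uniform function $\theta$ such that for any two tight sections and any $w\in\B$,
\[
d_{\dot S}(\Sigma_1(w),\Sigma_2(w))\le r \quad\text{implies}\quad d_\E(\Sigma_1(w),\Sigma_2(w))\le\theta(r).
\]
Granting this, choose $\sigma>\theta(\sigma_0)$. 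Then $d_\E\ge\sigma$ forces $d_{\dot S}>\sigma_0$ along $\gamma-\{v\}$. Applying Lemma~\ref{lem:weak flare} with $B_0=\max\{LB,\sigma_0\}$ gives
\[
d_{\dot S}(\Sigma_1(u),\Sigma_2(u))\ge d_S(u,v)-\lambda_0 .
\]
Since $\B\to\hat\C(S)$ is a quasi-isometric embedding, $d_S(u,v)$ is bounded below linearly in $d_\B(u,v)$, and the $L$--Lipschitz comparison above converts this into the claimed linear lower bound on $d_\E(\Sigma_1(u),\Sigma_2(u))$.

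We expect the reverse comparison to be the main obstacle. To prove it, let $x=\Sigma_1(w)$ and $y=\Sigma_2(w)$, both in $T_w$, and look at the tree geodesic $[x,y]_w$.
\begin{enumerate}
\item Since $T_w$ is the Bass--Serre tree dual to $\uncov^{-1}(w)$, each vertex of $[x,y]_w$ is the image under $\Phi_w$ of a component of $\widetilde S\smallsetminus\uncov^{-1}(w)$ crossed by the segment $[\widetilde a_1,\widetilde a_2]$.
\item If $d_{\dot S}(x,y)$ is small but $[x,y]_w$ is long, then $d_Z(x,y)$ must be large for some subsurface $Z$; for vertices of a single fiber we expect $Z$ to be an annulus about a component of $w$ lifted to $\dot S$, i.e.\ a twisting subsurface.
\item That twisting is realized in $\E$ through the peripheral structure. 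The point $w$ corresponds to a coset of some $H_w$, and the multitwists in $H_w$ act on $\E$ and move vertices along $T_w$ while moving adjacent fibers boundedly.
\end{enumerate}
So our first attempt is to show that long tree segments with small curve-graph length can be shortcut in $\E$ by passing to a neighboring fiber $\E_{w'}$, using the primary horizontal edges. Lemma~\ref{lem:vertices filling} should make the regions $\Phi_{w'}^{-1}$ of the neighboring fiber cut the axis transversally, so that many vertices of $T_w$ become adjacent to a single vertex of $\E_{w'}$.

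If a direct shortcut argument fails, the fallback is to avoid the reverse comparison and instead run the argument of Lemma~\ref{lem:weak flare} directly in $\E$. Use Lemma~\ref{lem:projections from base} and Theorem~\ref{thm:MM BGI} to show that whenever $d_\E$ is large, the large distance is caused by twisting accumulated along $\gamma$. Then use Lemma~\ref{lem:close sections close} to show the sections cannot come back together, which again gives linear divergence.
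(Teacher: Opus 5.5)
Your argument rests on the reverse comparison $d_{\dot S}(\Sigma_1(w),\Sigma_2(w))\le r\Rightarrow d_\E(\Sigma_1(w),\Sigma_2(w))\le\theta(r)$. This is false, so the proof cannot be completed as planned.

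Here is a counterexample. Fix $w\in\B^0$ and a curve $c$ disjoint from $w$. No $u\in\B^0$ is contained in $c$ (Lemma~\ref{lem:vertices filling}), so Lemma~\ref{lem:bounded projections in B} bounds $\diam\,\pi_{S-c}(\B^0)$. Hence any simple closed curve $\gamma\subset S-c$ far from this set in $\AC(S-c)$ crosses every multicurve of $\B^0$, while $d_S(w,\gamma)\le 2$. Fix $\widetilde a\in\Omega$ and put $x=\Sigma_{\widetilde a}(w)$ and $\gamma'=\Phi_\gamma(\widetilde a)$. Since $\Phi(\cdot,\widetilde a)$ is simplicial, $d_{\dot S}(x,\gamma')$ is uniformly bounded. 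Let $g\in\pi_1S$ be the deck transformation whose axis $A$ is a lift of $\gamma$ bounding the component of $\widetilde S\smallsetminus\uncov^{-1}(\gamma)$ that contains $\widetilde a$. By $\pi_1S$--equivariance of $\Phi_\gamma$, $g$ fixes $\gamma'$ (in $\Mod(\dot S)$ it is the point-push along a loop parallel to $\gamma'$). Therefore $d_{\dot S}(x,g^nx)\le 2d_{\dot S}(x,\gamma')$ for every $n$, and $g^nx=\Sigma_{g^n\widetilde a}(w)$ lies in the same fiber.

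Yet $d_\E(x,g^nx)$ grows linearly in $n$. An edge path with $m$ edges in $\E$ from $x$ to $g^nx$ gives a chain of $m+1$ closed regions $\overline{\Phi_{u_i}^{-1}(y_i)}\subset\widetilde S$, consecutive ones intersecting, from the region of $x$ to its $g^n$--translate. Because $\gamma$ crosses every $u\in\B^0$, each region meets $A$ in a segment of length at most $\ell(\gamma)$. The twisting of $\B^0$ about $\gamma$ is uniformly bounded (Lemma~\ref{lem:bounded projections in B} for the annulus about $\gamma$), so each region has uniformly bounded closest-point projection to $A$. Meanwhile $g^n$ translates $A$ by $n\ell(\gamma)$, forcing $m$ to grow linearly. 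So large $\E$--distance with bounded $\hat\C(\dot S)$--distance inside one fiber genuinely occurs. The large projection here is to the annulus about $\gamma'$, which is not a lift of a component of $w$. So your steps (2)--(3), shortcutting via multitwists in $H_w$, could not work even in principle.

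Your fallback names the right ingredients but not the mechanism. The paper never converts the $\E$--lower bound into a $\hat\C(\dot S)$--lower bound along $\gamma$. With $L$ your Lipschitz constant, let $w'$ be the vertex of $\gamma$ closest to $u$ with $d_{\dot S}(\Sigma_1(w'),\Sigma_2(w'))\le LB$. On $[u,w')$ the $\hat\C(\dot S)$--distance exceeds $LB$ (which is at least $\sigma_0$ for $\sigma$ large) by the very choice of $w'$. So Lemma~\ref{lem:weak flare} applies to $[u,w']$ with no comparison needed, and it remains only to bound $d_S(w',v)$.

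Suppose that distance were large, and take a vertex $w''\in[w',v]$ far from both ends.
\begin{enumerate}
\item Lemma~\ref{lem:close sections close} gives $d_{\dot S}(\Sigma_1(w''),\Sigma_2(w''))\le R$.
\item Only at this single point is $d_\E\ge\sigma$ used: through intersection numbers it produces a proper subsurface $Y\subset\dot S$ with $d_Y(\Sigma_1(w''),\Sigma_2(w''))>M+2K$.
\item Theorem~\ref{thm:MM BGI} forces $d_Y\le M$ at $w'$ and at $v$. Otherwise the $\hat\C(\dot S)$--geodesics joining the sections at $w''$ and at $w'$ (or $v$) would both pass near $\partial Y$, although these points are far apart.
\item Lemmas~\ref{lem:projections from base} and \ref{lem:vertices filling} show that on at least one side of $w''$ both sections have $K$--bounded projection to $Y$.
\item The triangle inequality then gives $d_Y\le M+2K$, a contradiction.
\end{enumerate}
The missing idea is that configurations which are $\E$--far but $\hat\C(\dot S)$--close, like the one above, cannot be excluded pointwise. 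They can only be excluded at a point flanked on both sides, far away, by points where the sections are $\hat\C(\dot S)$--close.
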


\begin{proof}
The ``inclusion" $\E \to \hat \C(\dot S)$ is $\eta$--Lipschitz for some $\eta \geq 2$.  To see this, we can take $\eta$ to be the maximum of lengths in $\hat \C(\dot S)$ over all horizontal edge of $\E$ (vertical edges are mapped to length $2$ paths in $\hat \C(\dot S)$).  To see that such a maximum exists, observe that this is the same as the maximal length in $\hat \C(S)$ of any edge in $\B$, and this quantity is bounded since there are only finitely many $G$--orbits of edges of $\B$ by assumption.

Let $\sigma_1 = \sigma_0/\eta$, where $\sigma_0$ is from Lemma~\ref{lem:weak flare}.  For any $B$ with $\eta B \geq \eta \sigma_1=\sigma_0$, let $\lambda_0$ be as in the same lemma (i.e.~for $B_0 = \eta B$).
That is, if $\Sigma_1$ and $\Sigma_2$ are tight sections and $\gamma = [u,v]$ is a geodesic in $\B$, such that $d_{\dot S}(\Sigma_1(w),\Sigma_2(w)) > \eta \sigma_1$ for all $w \in \gamma - \{v\}$, and $d_{\dot S}(\Sigma_1(v),\Sigma_2(v)) \leq \eta B$, then we have 
\[ d_{\dot S}(\Sigma_1(u),\Sigma_2(u)) \geq d_S(u,v) - \lambda_0.
\]

Next, let $R$ be as in Lemma~\ref{lem:close sections close}, $K$ as in Lemma~\ref{lem:projections from base}, and $M$ the constant from Theorem~\ref{thm:MM BGI}. Then there exists $\sigma > 0$ so that if $x,y \in p^{-1}(v)$ for some $v \in \B^0$ and if $d_\E(x,y) \geq \sigma$ while $d_{\dot S}(x,y) \leq R$, then for some proper subsurface $Y \subset \dot S$, we have $d_Y(x,y) > M+2K$.  To see this, note that if $d_\E(x,y)$ is large, then the distance between $x$ and $y$ in the tree $p^{-1}(v) \cong T_v$ is large, and hence the curves $x,y$ must have large intersection number.  This then implies the existence of a large subsurface projection to some subsurface  (c.f.~\cite[Corollary D]{ChoiRafi}).  Without loss of generality, we may assume that $\sigma \geq \eta\sigma_1$ and that $\eta \sigma > R+2$.

Now suppose $B \geq \sigma \geq \sigma_1$, let $\lambda_0$ be as above, and suppose we have a geodesic $\gamma = [u,v]$ and two tight sections $\Sigma_1,\Sigma_2$ satisfying the hypotheses of the lemma for our choice of $\sigma$.
For $i=1,2$, let $\gamma_i=\Sigma_i(\gamma)$, and for any vertex $w \in \gamma$ we denote the corresponding point in this section with the associated subscript, $w_i = \Sigma_i(w) \in \gamma_i$.  Thus, from the hypotheses of the lemma, we have
\begin{equation} \label{eqn:hypotheses repeated}
    d_\E(v_1,v_2) \leq B \quad \mbox{ and } \quad d_\E(w_1,w_2) \geq \sigma,
\end{equation}
for all vertices $w \in \gamma - \{v\}$.  Since inclusion of $\E$ into $\hat \C(\dot S)$ is $\eta$--Lipschitz, we have $d_{\dot S}(v_1,v_2) \leq \eta B$.
Let $w'$ be the closest point to $u$ in $\gamma$ such that $d_{\dot S}(w'_1,w'_2)\leq \eta  B$.  
Then $\gamma' = [u,w']$ is a geodesic satisfying $d_{\dot S}(w'_1,w'_2)\le \eta B$ and $d_{\dot S}(w_1,w_2)> \eta B$ for all $w\in \gamma'- \{w'\}$.  
See Figure~\ref{Fig:configuration of sections}.

\def\arr{-{Stealth[length=3mm, width=2mm]}}
\begin{figure}[htb]
\begin{center}
\begin{tikzpicture}
\draw[thick] (0,-.2) -- (1,0) -- (2,-.2) -- (3,-.2) -- (4,.2) -- (5,.2) -- (6,0) -- (7,.1) -- (8,.3) -- (9,.2) -- (10,0) -- (11,.3) -- (12,.3);
\draw[thick] (0,2.5) -- (1,2) -- (2,2) -- (3,2.2) -- (4,1.8) -- (5,2) -- (6,1.8) -- (7,1.8) -- (8,1.6) -- (9,1.8) -- (10,2) -- (11,1.8) -- (12,1.7);
\draw[thick] (0,-1.2) -- (12,-1.2);
\filldraw (0,-.2) circle (.05);
\filldraw (0,2.5) circle (.05);
\filldraw (12,.3) circle (.05);
\filldraw (12,1.7) circle (.05);
\filldraw (4,.2) circle (.05);
\filldraw (4,1.8) circle (.05);
\filldraw (3,-.2) circle (.05);
\filldraw (3,2.2) circle (.05);
\filldraw (0,-1.2) circle (.05);
\filldraw (3,-1.2) circle (.05);
\filldraw (4,-1.2) circle (.05);
\filldraw (12,-1.2) circle (.05);
\node at (0,-.5) {$u_1$};
\node at (0,2.8) {$u_2$};
\node at (12,0) {$v_1$};
\node at (12,2) {$v_2$};
\node at (4,-.1) {$w_1'$};
\node at (4,2.1) {$w_2'$};
\node at (3,-.5) {$w_1$};
\node at (3,2.5) {$w_2$};
\node at (8,1.9) {$\gamma_2$};
\node at (8,0) {$\gamma_1$};
\node at (0,-1.5) {$u$};
\node at (12,-1.44) {$v$};
\node at (4,-1.44) {$w'$};
\node at (3,-1.5) {$w$};
\node at (8,-1.5) {$\gamma$};
\draw[dotted] (12,.3) -- (12,1.7);
\draw[dotted] (4,.2) -- (4,1.8);
\draw[dotted] (3,-.2) -- (3,2.2);
\draw[dotted] (0,-.2) -- (0,2.5);
\node at (12.5,1) {$\leq \eta B$};
\node at (4.5,1) {$\leq \eta B$};
\node at (2.5,1) {$> \eta B$};
\node at (-.5,1.2) {$> \eta B$};
\end{tikzpicture}
\caption{}
\label{Fig:configuration of sections}
\end{center}
\end{figure}

Since $B \geq \sigma_1$, $\eta B \geq \eta \sigma_1 = \sigma_0$ it follows that 
\[ d_{\dot S}(u_1,u_2) \geq d_S(u,w') - \lambda_0.\]
Since $d_S(u,w')$ and $d_\B(u,w')$ are comparable, and since $d_{\dot S}(u_1,u_2)$ is a coarse lower bound for $d_\E(u_1,u_2)$, it suffices to show that $d_S(v,w')$ is bounded above in terms of $B$, since the triangle inequality implies $d_S(u,w') \geq d_S(u,v) - d_S(v,w')$.
    
Suppose $d_S(v,w')$ is not bounded in terms of $B$.  In particular, we may assume that there is some $w'' \in [w',v] \subset \gamma$ with
\[ d_S(w'',w'),d_S(w'',v) \geq 2(\eta B+\delta),\]
where $\delta = \delta_\C$.
Letting $\mu = \eta B$, and applying Lemma~\ref{lem:close sections close}, we have $d_{\dot S}(w''_1,w''_2)\leq R$.  See Figure~\ref{Fig:where is Y}. 

\def\arr{-{Stealth[length=3mm, width=2mm]}}
\begin{figure}[htb]
\begin{center}
\begin{tikzpicture}
\draw[thick] (4,.2) -- (5,.2) -- (6,0) -- (7,.1) -- (8,.3) -- (9,.2) -- (10,0) -- (11,.3) -- (12,.3);
\draw[thick] (4,1.8) -- (5,2) -- (6,1.8) -- (7,1.8) -- (8,1.6) -- (9,1.8) -- (10,2) -- (11,1.8) -- (12,1.7);
\draw[thick] (4,-.95) -- (12,-.95);
\filldraw (12,.3) circle (.05);
\filldraw (12,1.7) circle (.05);
\filldraw (4,.2) circle (.05);
\filldraw (4,1.8) circle (.05);
\filldraw (4,-.95) circle (.05);
\filldraw (8,.3) circle (.05);
\filldraw (8,1.6) circle (.05);
\filldraw (12,-.95) circle (.05);
\filldraw (8,-.95) circle (.05);
\node at (4,-.1) {$w_1'$};
\node at (4,2.1) {$w_2'$};
\node at (8,0) {$w_1''$};
\node at (8,1.9) {$w_2''$};
\node at (12,0) {$v_1$};
\node at (12,2) {$v_2$};
\node at (4,-1.2) {$w'$};
\node at (12,-1.2) {$v$};
\node at (8,-1.2) {$w''$};
\node at (6,-1.3) {$\geq 2(\eta B+\delta)$};
\node at (10,-1.3) {$\geq 2(\eta B+\delta)$};
\draw[dotted] (12,.3) -- (12,1.7);
\draw[dotted] (4,.2) -- (4,1.8);
\draw[dotted] (8,1.6) -- (8,.3);
\node at (12.5,1) {$\leq \eta B$};
\node at (3.5,1) {$\leq \eta B$};
\node at (8.5,1) {$\leq R$};
\end{tikzpicture}
\caption{}
\label{Fig:where is Y}
\end{center}
\end{figure}

By \eqref{eqn:hypotheses repeated} and our choice of $\sigma$, there exists some subsurface $Y \subset \dot S$ so that $d_Y(w_1'',w_2'') > M+ 2K$.  We claim that this $Y$ must satisfy 
\begin{equation}\label{eq:bdd_subsurf_at_ends}
d_Y(w'_1, w'_2) , d_Y(v_1,v_2)\le M.
\end{equation}
Indeed, if $d_Y(w'_1,w'_2) > M$, then Theorem~\ref{thm:MM BGI} implies any $\hat \C(\dot S)$ geodesic between $w'_1,w'_2$ passes within distance $1$ of $\partial Y$. The same holds for any $\hat\C(\dot S)$ geodesic between $w''_1,w''_2$; hence these two geodesics pass within distance $2$ of each other. Since $d_{\dot S}(w'_1,w'_2)\le \eta B$ and $d_{\dot S}(w''_1,w''_2)\le R$, the triangle inequality thus implies $d_{\dot S}(w'_1,w''_1) \le \eta B + R + 2$. However, since we have arranged that $\eta B \ge \eta \sigma > R+2$, this contradicts  $d_{\dot S}(w'_1,w''_1) \ge 2(\eta B + \delta)$. Assuming $d_Y(v_1,v_2) > M$ similarly leads to a contradiction.

Next we claim that
\begin{equation}\label{eq:bdd_subsurf_left_or_right}
d_Y(w_1',w_1''),d_Y(w_2',w_2'') \leq K
\qquad\text{or}\qquad
d_Y(w_1'',v_1),d_Y(w_2'',v_2) \leq K.
\end{equation}
To see this, first suppose $d_Y(w_1',w_1'') > K$.  By Lemma~\ref{lem:projections from base}, there is a vertex $x$ in $[w'_1,w''_1] \subset [w_1',v_1]$ so that $\pi_Y(x) = \emptyset$.  Then $\pi_{p(Y)}(p(x)) = \emptyset$.  It follows from Lemma~\ref{lem:vertices filling} that for any vertex $y$ in either $[w''_1,v_1]$ or $[w''_2,v_2]$, we have 
$\pi_{p(Y)}(p(y)) \neq \emptyset$, and hence $\pi_Y(y) \neq \emptyset$. By Lemma~\ref{lem:projections from base}, the second inequality above must hold.  A similar argument proves the other cases.

By the triangle inequality, combining Equations \eqref{eq:bdd_subsurf_at_ends} and \eqref{eq:bdd_subsurf_left_or_right} now gives 
\[ d_Y(w_1'',w_2'') \leq M+2K.\]
This is a contradiction, hence $d_S(v,w')$ is bounded in terms of $B$, which completes the proof.
\end{proof}

\section{Hierarchical hyperbolicity of extensions of rank 1 PGF groups}
\label{sec:cyclic PGF HHG}

We now turn to verifying the hierarchical hyperbolicity of the extensions of parabolically geometrically finite groups with cyclic peripheral subgroups, Theorem \ref{thm:intro_PGF_hyp}. For this we employ the combinatorial HHS machinery from \cite{BHMS}.

\subsection{Combinatorial hierarchical hyperbolicity}

In this subsection, we recall all relevant definitions regarding combinatorial hierarchical hyperbolicity. See \cite{BHMS}, and in particular the ``User's guide'' for more information and motivation. We start with purely graph theoretical notions.

\begin{definition}[Join, link, and star]
	Let $X$ be a flag simplicial complex. If $Y,Z$ are disjoint flag subcomplexes of $X$ so that every vertex of $Y$ is joined by an edge to $Z$, then the \emph{join} of $Y$ and $Z$, $Y \ast Z$, is the subcomplex of $X$ spanned by $Y$ and $Z$. Given a simplex $\Delta$ of $X$, the \emph{link} of $\Delta$, $\lk(\Delta)$, is the subcomplex of $X$ spanned by the vertices of $X$ that are joined by an edge to all the vertices of $\Delta$. The \emph{star} of $\Delta$, $\st(\Delta)$, is the join $\Delta \ast \lk(\Delta)$. We consider $\emptyset$ as a simplex of $X$ whose link and star are both $X$.
\end{definition}

A combinatorial HHS will be a pair $(X,W)$ where $X$ is a flag simplicial complex, and $W$ is a graph whose vertices are the maximal simplices of $X$. Here, the HHS is $W$, while $X$ encodes the HHS structure of $W$. With this in mind, we give the next definition.

\begin{definition}
	Given a flag simplicial complex $X$, an \emph{$X$--graph} is any graph $W$ whose vertices are maximal simplices of $X$. Here maximal means not contained in a larger simplex.
	
	If $W$ is a $X$--graph, we define the \emph{$W$--augmented graph} $X^{+W}$ as the graph with the same vertex set as $X$ and with two types of edges: 
	\begin{enumerate}
		\item ($X$--edge) If two vertices $x_1,x_2 \in X$ are joined by an edge in $X$, then $x_1$ and $x_2$ are joined by an edge in $X^{+W}$.
		\item ($W$--edge) If $\Delta_1$ and $\Delta_2$ are maximal simplices of $X$ that are joined by an edge in $W$, then each vertex of $\Delta_1$ is joined by an edge to each vertex of $\Delta_2$ in $X^{+W}$.
	\end{enumerate}
	We note that if a group $G$ acts by simplicial automorphisms on $X$ and that action extends to an action by isometries on $X^{+W}$, then there is an induced action by isometries of $G$ on $W$. 
\end{definition}

The next definition is used to encode the index set of the HHS using equivalence classes of simplicies in $X$.

\begin{definition}\label{definition:saturation}
	Let $\Delta$ and $\Delta'$ be simplices of the flag simplicial complex $X$. We write $\Delta \sim \Delta'$ if $\lk(\Delta) = \lk(\Delta')$. We define the \emph{saturation} of $\Delta$, $\Sat(\Delta)$, to be the set of vertices of $X$ contained in a simplex in the $\sim$--equivalence class of $\Delta$. That is $x \in \Sat(\Delta)$  if and only if there exists $\Delta'  \sim \Delta$  so that $x$ is  a vertex of $\Delta'$.
\end{definition}

The spaces $\C(\Delta)$ appearing in the next definition are the hyperbolic spaces of the HHS structure.

\begin{definition}\label{definition:Y_Delta}
	Let $W$ be a $X$--graph. For each simplex $\Delta$ of $X$, define $Y_\Delta$  to be the subgraph of $X^{+W}$ spanned by the vertices of $X^{+W} - \Sat(\Delta)$. 
	
	Define $\C(\Delta)$ to be the subgraph of $Y_\Delta$ spanned by the vertices in $\lk(\Delta)$. Note, we are taking the link in $X$, not in $X^{+W}$, and then considering the subgraph of $Y_\Delta$ induced by those vertices. We give $\C(\Delta)$ its intrinsic path metric (as opposed to the metric induced as a subset of $Y_\Delta$). By construction, we have $\C(\Delta) = \C(\Delta')$ whenever $\Delta \sim \Delta'$.	Note, since $\emptyset$ is a simplex of $Y$ with $\lk(\emptyset) = X$, we have $Y_\emptyset = \C(\emptyset) = X^{+W}$. 
\end{definition}

Finally, we are ready to give the definition of a combinatorial HHS.

\begin{definition}\label{defn:CHHS}
	Let $\delta \geq 0$, $X$ be a flag simplicial complex and $W$ be a $X$--graph. The pair $(X,W)$  is a \emph{$\delta$--combinatorial HHS} if the following are satisfied.
	\begin{enumerate}
		\item \label{CHHS:finite_complexity} Any chain of the form $\lk(\Delta_1) \subsetneq \lk(\Delta_2) \subsetneq \dots$ has length at most $\delta$.
		\item \label{CHHS:hyp_X} For each non-maximal simplex $\Delta \subset X$, the space $\C(\Delta)$ is $\delta$--hyperbolic.
		\item \label{CHHS:geom_link_condition} For each non-maximal simplex $\Delta$, the inclusion $\C(\Delta) \to Y_\Delta$ is a $(\delta,\delta)$--quasi-isometric embedding.
		\item \label{CHHS:comb_nesting_condition} Whenever $\Delta_1$ and $\Delta_2$ are non-maximal simplices of $X$, there exists a (possibly empty) simplex $\Theta$ of $\lk(\Delta_1)$ such that $\lk(\Delta_1 \ast \Theta) \subset \lk(\Delta_2)$ and for all non-maximal simplices $\Lambda$ of $X$ so that $\lk(\Lambda) \subset \lk(\Delta_1) \cap \lk(\Delta_2)$ either
		\begin{enumerate}
			\item $\mathrm{diam}(\C(\Lambda))\leq \delta$ or;
			\item $\lk(\Lambda) \subset \lk(\Delta_1 \ast \Theta)$.
		\end{enumerate}
		\item \label{CHHS:C=C_0_condition} For each non-maximal simplex $\Delta \subset X$ and $x,y \in \lk(\Delta)$, if $x$ and $y$ are not joined by a $X$--edge of $X^{+W}$, but are joined by a $W$--edge of $X^{+W}$, there then exist simplices $\Lambda_x, \Lambda_y \subset \lk(\Delta)$ so that  $x \in \Lambda_x$, $y \in \Lambda_y$, and $\Delta \ast \Lambda_x$ is joined by an edge of $W$ to $\Delta \ast \Lambda_y$.
	\end{enumerate}
\end{definition}

\subsection{The Combinatorial HHS for rank 1 PGF groups.}

Let $G < \Mod(S)$ be a PGF with cyclic peripheral subgroups. Let $\cH$ be the collection of peripheral subgroups of $G$.  For each $H \in \cH$, the $\pi_1S$--extension $\Gamma_H$ is a natural subgroup of $\Gamma_G$. When the peripherals are cyclic, the construction of $\B_1$ and $\B_2$ from Section \ref{subsec_E_for_RGF} are identical, so we let $\B = \B_1=\B_2$.  Recall from Section \ref{subsec_E_for_RGF}, we can identify each vertex  $v \in \B$ with the multicurve on $S$ that is stabilized by $\stab_G(v)$. For each $v \in \B^{0}$ there is a unique $H \in \cH$ so that $\stab_G(v)$ is a conjugate to $H$ in $G$.  As is Section \ref{subsec:cc fibrations}, let $T_v$ be the tree $\Psi^{-1}(v)$ for each $v\in \B^{0}$. 

 For each $H \in \cH$, $\Gamma_H$ is   isomorphic to the fundamental group of a non-geometric graph manifold. Such graph manifold groups have the structure of an \emph{admissible graph of groups}; see Definition 2.13 of\cite{HRSS}. Admissible graph of groups are  given a combinatorial HHS structure in \cite{HRSS}.  We will build the combinatorial HHS structure for $\Gamma_G$ by combining the combinatorial HHS structures for all the cosets of the $\Gamma_H$.

We start by recalling the combinatorial HHS structure  for $\Gamma_H$ that was established in \cite{HRSS}. 
Fix $H \in \cH$ and let $v$ be the multicurve stabilized by $H$. For convenience of notation, let $\Gamma_v$ be $\Gamma_H$. Let $x_1,\dots,x_n$ be representatives of the finitely many $\Gamma_v$--orbits in $T_v^{0}$. Thus each $x \in T_v^{0}$ is contained in a unique $\Gamma_v$--coset of $\stab_{\Gamma_v}(x_i)$ for one of the $x_i$. Let $\A_x$ be a copy of the elements of this coset. We now define the graph $X_v$:
\begin{itemize} 
    \item The vertex set is  $T_v^{0} \cup \bigsqcup_{x \in T_v^{0}} \A_x.$
    \item For each $x \in T_v^{0}$, there is an edge between every element of $\A_x$ and $x$.
    \item If $x,y \in T_v^{0}$ are joined by an edge of $T_v$,  then every vertex in $\A_x \cup \{x\}$ is joined by an edge to every vertex of  $\A_y \cup \{y\}$.
\end{itemize}
The graph $X_v$ is exactly the simplicial complex defined in \cite{HRSS} for the combinatorial HHS structure for $\Gamma_H =\Gamma_v$. Let $W_v$ be the $X_v$--graph described in \cite{HRSS} for $\Gamma_v$. The following proposition contains the salient properties of $W_v$ that we will need:
\begin{proposition}[{\cite[Theorem 3.1]{HRSS}}]\label{prop:HRSS facts}
    The following hold:
    \begin{enumerate}
    \item $(X_v,W_v)$ is a combinatorial HHS;
    \item $\Gamma_v$ acts on $X_v$ with finitely many orbits of links of simplices;
    \item the action of $\Gamma_v$ on $X_v$ induces a metrically proper and cobounded action of $\Gamma_v$ on $W_v$. In particular, there is a $\Gamma_v$--invariant quasi-isometry $P_v \colon W_v \to \Gamma_v$ where $\Gamma_v$ has any finitely generated word metric.
\end{enumerate} 
    
\end{proposition}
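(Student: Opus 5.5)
The plan is to deduce all three items from \cite[Theorem 3.1]{HRSS}. To do so, I first identify $\Gamma_v$ with the fundamental group of an admissible graph of groups, in the sense of \cite[Definition 2.13]{HRSS}, whose Bass--Serre tree is $T_v$. I then check that the complex $X_v$ defined above is literally the complex that \cite{HRSS} builds from this graph of groups.

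\textbf{Step 1: the graph of groups.} Let $\phi$ generate $H$; it is a multitwist about the multicurve $v$. The subgroup $\pi_1S \lhd \Gamma_v$ acts on $T_v$, the Bass--Serre tree dual to $v$, and $\phi$ preserves $v$. Since $\Gamma_v$ acts on $\Psi^{-1}(v)$ equivariantly with respect to the Birman exact sequence, $\Gamma_v$ acts on $T_v$. This action is cocompact, because $\pi_1S$ already acts with finitely many orbits.

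Vertex stabilizers are extensions $1\to \pi_1 Y \to \stab_{\Gamma_v}(x)\to \langle \phi\rangle \to 1$, where $Y$ is the complementary component of $v$ corresponding to $x$. Edge stabilizers are extensions of $\langle\phi\rangle$ by the cyclic group of the corresponding curve of $v$. Geometrically, $\Gamma_v$ is the fundamental group of the mapping torus $M_\phi$ of the multitwist, which is a graph manifold whose JSJ pieces are $Y\times S^1$ glued along tori over the curves of $v$. Concretely, I would use the following:
\begin{itemize}
\item[(a)] Because $\phi|_Y$ is isotopic to the identity rel boundary up to boundary twists, some lift $\hat\phi$ of $\phi$ to $\stab_{\Gamma_v}(x)$ commutes with $\pi_1Y$. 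Hence $\stab_{\Gamma_v}(x)\cong \pi_1Y\times \mathbb Z$, with center $Z_x=\langle\hat\phi\rangle$ and hyperbolic (free) quotient $\pi_1 Y$.
\item[(b)] Edge groups are $\mathbb Z^2$, generated by the boundary curve together with the central element.
\item[(c)] For adjacent $x,y$, the centers $Z_x$ and $Z_y$ are not commensurable. Their lifts differ by a nonzero power of the twist about the shared curve, because the twist exponent is nonzero on every component of $v$.
\end{itemize}
Together with finiteness of the quotient graph, these are the admissibility axioms.

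\textbf{Step 2: matching the complexes.} The complex of \cite{HRSS} has, as vertices, the vertices of the Bass--Serre tree together with, for each tree vertex $x$, a copy of the corresponding coset of the vertex group. Its edges and joins are as in the three bullets defining $X_v$. Choosing the orbit representatives $x_1,\dots,x_n$ as above, these cosets are exactly the sets $\A_x$. So $X_v$ is the \cite{HRSS} complex, and $W_v$ is by definition their $X_v$--graph.

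\textbf{Step 3: deducing the items.} Item (1) is then \cite[Theorem 3.1]{HRSS}. Item (2) follows from cocompactness on $T_v$ together with transitivity of $\Gamma_v$ on each coset-type, since links are determined by finitely many local patterns. Item (3) is the properness and coboundedness statement in the same theorem, and the Milnor--\v{S}varc lemma then supplies the equivariant quasi-isometry $P_v$.

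\textbf{Main obstacle.} I expect the main difficulty to be Step 1(a), together with the non-commensurability in (c). This requires carefully choosing the central lift of $\phi$ in each vertex group and tracking how the lifts for adjacent vertices differ by the boundary Dehn twist. My first approach would be to work in $M_\phi$ and read off the Seifert fiber of each piece directly.
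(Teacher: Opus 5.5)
Your approach is the paper's. The paper gives no argument beyond noting that $\Gamma_v$ is the fundamental group of a non-geometric graph manifold, hence of an admissible graph of groups in the sense of \cite[Definition 2.13]{HRSS}, and then invoking \cite[Theorem 3.1]{HRSS}. Your Steps 2 and 3 match this. Your verifications (a)--(c) are correct; in particular, (c) is exactly the requirement that the two centers generate a finite-index subgroup of each edge group.

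The one gap is your claim that (a)--(c), together with finiteness of the quotient graph, exhaust the admissibility axioms. Two conditions are missing.
\begin{enumerate}
\item You need each $G_x/Z_x\cong\pi_1Y$ to be \emph{non-elementary}. This holds because $\pi_1 Y$ is free of rank at least $2$, since $\chi(Y)<0$.
\item More substantively, you omit the axiom on edge subgroups inside a vertex group. Let $K_1,K_2\subset G_x$ be the images of two distinct directed edges entering $x$ in the quotient graph of groups. The axiom requires that no $G_x$--conjugate of $K_1$ is commensurable with $K_2$. It also requires that $gK_ig^{-1}$ is not commensurable with $K_i$ for $g\in G_x\setminus K_i$.
\end{enumerate}

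The second condition is easy to verify here. We have $G_x=\pi_1Y\times Z_x$ and $K_i=\langle c_i\rangle\times Z_x$, where the $c_i$ are distinct boundary curves of $Y$. For a non-separating curve of $v$, the two directed edges of the corresponding loop give two distinct boundary components of $Y$. Writing $g=(h,z)$, the axiom reduces to the following: $h\langle c_i\rangle h^{-1}\cap\langle c_j\rangle\neq 1$ only if $i=j$ and $h\in\langle c_i\rangle$. This is the standard fact that boundary subgroups of the fundamental group of a compact surface with negative Euler characteristic are malnormal and pairwise non-conjugate.

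So the gap is easily closed. Alternatively, you can skip the axiom check entirely and quote, as the paper does, that such graph manifold groups are admissible. Here every Seifert piece is a product $Y\times S^1$.
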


We now use the combinatorial HHSs $(X_v,W_v)$ to build the combinatorial HHS $(X,W)$ for all of $\Gamma_G$.   Let $u \in \B^{0}$. Recall, there exist  $v \in \B^{0}$ so that $v$ is stabilized by $H \in \cH$ and $u$ is in the $G$ orbit of $v$.  Now fix a $g\in \Gamma_G$ so that $gT_v = T_u$. Here we view $g$ as an element of $\Mod(\dot S)$ and have  $g$ act on each curve that is a vertex of $T_v$.
For each $x\in T_v^{0}$, let $\A_{gx}$ be the set of elements of $\Gamma_G$ obtained by multiplying each element of $\A_x$ on the left by $g$. That is, $\A_{gx} = g\A_x$. We can now define $X_{u}$ completely analogously to how we defined $X_v$ so that $X_u$ is the $g$--translate of $X_v$. We can similarly define $W_{u}$ as the $g$--translate of $W_v$.

Define $X$ to be the disjoint union of the $X_u$ over $u \in \B^{0}$. The space $W$ will be a space obtained from the disjoint union $\bigsqcup_{u \in \B^{0}} W_u$ by adding additional edges. To add these edges, observe that each vertex $\Delta \in W_u$ uses a single edge in $T_u$. As described in Section \ref{subsec:cc fibrations}, the edges of $T_u$ are in bijection with the lifts of the curves in $u$ to bi-infinite geodesics in $\mathbb{H}$. Let $\gamma_\Delta$ be the $\mathbb{H}$--geodesic corresponding to the $T_u$--edge that is in $\Delta$. We now let $W$ be any graph obtained  from $\bigsqcup_{u \in \B^{0}} W_u$ by adding new edges to achieve the following:
\begin{itemize}
    \item $W$ is connected;
    \item the action of $\Gamma_G$ on $X$ induces an action by graphical automorphism on $W$ with only finitely many $\Gamma_G$--orbits of new edges;
    \item whenever $\Delta \in W_u$ and $\Lambda \in W_v$ are joined by a new edge, $u$ and $v$ are adjacent in $\B$;
     \item whenever $\Delta \in W_u$ and $\Lambda \in W_v$ are joined by a new edge $\gamma_\Delta$ intersects $\gamma_\Lambda$ in $\mathbb{H}$.
\end{itemize}

One can make such a choice of new edges as follows. Let $\V$ be a set of representatives of the finitely many $G$--orbits of edges of $\B$. For each pair of vertices $v,u$ that are joined by an edge of $\V$ choose vertices $\Delta\in W_v$ and $\Lambda\in W_u$ so that $\gamma_\Delta$ intersects $\gamma_\Lambda$. Add a new edge between this choice of $\Delta$ and $\Lambda$.  We then add every edge in the $\Gamma_G$--orbit of this finite collection of new edges. Since $\B$ is connected, we can use the edges we described to connect any two given $W_u$ and $W_v$ by an edge-path. Since each $W_u$ is already connected, this choice therefore makes $W$ connected.

To see why we can always find  $\Delta\in W_v$ and $\Lambda\in W_u$ so that $\gamma_\Delta$ intersects $\gamma_\Lambda$, observe that for any choice of lifts $\gamma_v$, $\gamma_u$ of curves in $v$ and $u$ there is $\Delta \in W_v$ and $\Lambda \in W_u$ so that $\gamma_v = \gamma_\Delta$ and $\gamma_u = \gamma_\Lambda$. Since $v$ and $u$ fill $S$, we can find $\Delta$ and $\Lambda$ so that $\gamma_\Delta$ and $\gamma_\Lambda$ intersect.

We now set out some useful notation in parallel to \cite{HRSS}. Let $$\T = \bigsqcup_{v\in \B^{0}} T_v.$$ 
There is a simplical map $\nu \colon X \to \T$ that is the identity on $\T$ and has $\nu(\A_x) = x$ for each $x \in \T^{0}$. Note, that $X_v$ is equal to $\nu^{-1}(T_v)$ for each $v \in \B^{0}$. Let $\A$  denote the vertices of $X$ that are not in $\T$. That is, $\A$ is the disjoint union of all the $\A_x$ for $x \in \T$.  

Since the $X_v$ are precisely the connected components of $X$, every simplex of $X$ is also a simplex of one of the $X_v$. As described in \cite[Corollary 6.1]{HRSS}, there are only eight combinatorial types of simplices of $X_v$ and hence of $X$. We call simplices of the form $\{x,\nu(x)\}$ for some $x \in \A$ the \emph{QT--type} simplices and simplices of the form $\{x,\nu(x),t\}$ for $x \in\A$ and $t \in \T^{0}$ the \emph{QL--type} simplices. These are the ``Type 7'' and ``Type 8'' simplices of \cite{HRSS}. We choose the new names because $\C(\Delta)$ for these simplices will be quasi-trees and quasi-lines respectively; see Proposition \ref{prop:import_CHHS_facts}. The natural $\Gamma_G$ action on $\T$ extends to a simplicial action on $X$ and $\nu$ is $\Gamma_G$--equivariant.

\begin{lemma}\label{lem:action_on_(x,W)}
\
\begin{itemize}
    \item  The action of $\Gamma_G$ on $X$ has finitely many orbits of links of simplices. 
    \item The action of $\Gamma_G$ on $X$ induces a metrically proper and cobounded action on $W$. 
\end{itemize}

\end{lemma}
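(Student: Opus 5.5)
Both bullets will be reduced to the corresponding facts for the peripheral pieces, Proposition~\ref{prop:HRSS facts}, combined with the finiteness of $G$--orbits of vertices and edges in $\B$.

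For the first bullet, I first note that $X$ is the disjoint union of the components $X_u$. So every simplex $\Delta$ lies in a single $X_u$, and $\lk(\Delta)$ computed in $X$ is the same as its link in $X_u$. Since $\B^0$ has finitely many $G$--orbits, and $\Gamma_G \to G$ is surjective, there are finitely many $\Gamma_G$--orbits of components $X_u$. Fix representatives $v_1,\dots,v_m$. Each $X_u$ is a $\Gamma_G$--translate $gX_{v_i}$ of one of these, by the construction $X_u = gX_v$. By Proposition~\ref{prop:HRSS facts}(2), $\Gamma_{v_i}$ acts on $X_{v_i}$ with finitely many orbits of links. Therefore $\Gamma_G$ has finitely many orbits of links of simplices of $X$. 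The one point to check is that the $\Gamma_G$--action on $X$ is well defined, that is, that $g\A_x=\A_{gx}$ is consistent. This holds because $\A_x$ is a coset in $\Gamma_G$ and left multiplication respects cosets, and the stabilizer of $X_u$ in $\Gamma_G$ is exactly the conjugate $g\Gamma_{v_i}g^{-1}$.

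For the second bullet, I would verify coboundedness and properness separately. \emph{Coboundedness}: $W$ is connected and is a union of the $W_u$ plus new edges. Each $W_u = gW_{v_i}$ is acted on coboundedly by its stabilizer $g\Gamma_{v_i}g^{-1}$, by Proposition~\ref{prop:HRSS facts}(3). There are finitely many orbits of the $W_u$. Hence finitely many $\Gamma_G$--orbits of vertices of $W$, giving coboundedness.

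\emph{Properness}: the cleanest route is to show that vertex stabilizers in $\Gamma_G$ are finite and that $W$ is locally finite, since a cocompact action on a locally finite connected graph with finite vertex stabilizers is metrically proper. An element $h \in \Gamma_G$ fixing $\Delta\in W_u$ must preserve the component $X_u$. So $h$ lies in $\stab_{\Gamma_G}(X_u)= g\Gamma_{v_i}g^{-1}$, which acts properly on $W_u$ by Proposition~\ref{prop:HRSS facts}(3); thus $\stab(\Delta)$ is finite. For local finiteness, $W_u$ is locally finite, because a proper cobounded action of a finitely generated group on a graph forces local finiteness up to the choice of $W_u$ from \cite{HRSS}. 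The new edges at each vertex are also finite in number. There are finitely many $\Gamma_G$--orbits of new edges, and each orbit contributes only finitely many edges at a given vertex $\Delta$, since such edges correspond to cosets of the finite group $\stab(\Delta)$.

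The main obstacle is this local-finiteness and stabilizer bookkeeping. In particular, one must confirm that the stabilizer of a component $X_u$ is exactly the conjugate of $\Gamma_{v_i}$ and not larger. I would check this via $\stab_G(u)$ being the conjugate of the peripheral $H$, together with $\pi_1S$ preserving each $T_u$. Then the full preimage of $\stab_G(u)$ in $\Gamma_G$ is $\Gamma_{\stab_G(u)}=g\Gamma_H g^{-1}$.
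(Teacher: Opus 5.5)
Your first bullet agrees with the paper's argument, and coboundedness is fine in substance. The properness argument, however, has a genuine gap. It needs $W_u$ to be locally finite, and that is not merely unproved but false here.

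The vertices of $W_v$ are all the maximal simplices $\{x,y,a,b\}$ of $X_v$, where $x,y$ are adjacent in $T_v$, $a\in\A_x$ and $b\in\A_y$. The group $\Gamma_v$ acts freely on $\A$ (by left multiplication on cosets). Hence, for fixed $x,y,a$, each $\Gamma_v$--orbit contains at most two of the simplices $\{x,y,a,b\}$ with $b\in\A_y$. Since $\A_y$ is infinite, $W_v$ has infinitely many orbits of vertices. A cobounded action on a locally finite graph has only finitely many orbits of vertices, so $W_v$ cannot be locally finite.

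The same count shows that your intermediate claim of finitely many $\Gamma_G$--orbits of vertices of $W$ is wrong. Coboundedness itself still holds, because the bounded fundamental sets in the finitely many $W_{v_i}$ have bounded union in the connected graph $W$. It also shows that the principle ``a proper cobounded action forces local finiteness'' is false. Without local finiteness, finite vertex stabilizers do not give metric properness: $\mathbb{Z}$ acts freely and coboundedly on the complete graph with vertex set $\mathbb{Z}$, but not properly.

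The missing idea, which is what the paper does, is to transport properness from $\Gamma_G$ itself using the quasi-isometries $P_v\colon W_v\to\Gamma_v$ of Proposition~\ref{prop:HRSS facts}(3).
\begin{itemize}
\item Define $P\colon W\to\Gamma_G$ by $P(g\Delta)=gP_v(\Delta)$ for $\Delta\in W_v$, with $v$ one of your representatives. Your computation $\stab_{\Gamma_G}(X_v)=\Gamma_v$, together with the $\Gamma_v$--equivariance of $P_v$, makes $P$ well defined and $\Gamma_G$--equivariant, with $\diam P(\Lambda)$ uniformly bounded.
\item On each $W_u$, $P$ is coarsely Lipschitz because $P_v$ is a quasi-isometry and the inclusion $\Gamma_v\to\Gamma_G$ is Lipschitz for word metrics.
\item Across new edges, equivariance means $\diam(P(\Delta)\cup P(\Lambda))$ takes only finitely many values, since there are finitely many $\Gamma_G$--orbits of new edges.
\end{itemize}
Hence for bounded $K\subset W$ the set $P(K)$ is bounded in $\Gamma_G$. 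If $hK\cap K\neq\emptyset$, then $hP(K)\cap P(K)\neq\emptyset$, and this holds for only finitely many $h\in\Gamma_G$.
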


\begin{proof} 
The fact that the action of $\Gamma_G$ on $X$ induces an action by graph automorphism on $W$ is immediate from the construction.

Let $\V$ be the finite set of multicurves in $\B$ that are stabilized an element of $\cH$.   By \cite[Lemma 6.7]{HRSS}, we have that $\Gamma_v$ acts with finitely many orbits of links of simplices on $X_v$  for each $v \in \V$. Now if $u \in \B^{0}$ there exists $g\in \Gamma_G$ and $v \in \V$ so that $gT_v = T_u$. Thus  $g\Gamma_vg^{-1}$ will act on $X_u$ with the same number orbits of  links of simplices as $\Gamma_v$ acting on $X_v$. Since $\V$ is finite, there are only finitely many $\Gamma_G$--orbits of links of simplices in $X$. 

We now turn our attention to the induced action on $W$. For each $v \in \V$, let $P_v\colon W_v \to \Gamma_v$ be a $\Gamma_v$--invariant quasi-isometry.
 If $\Lambda$ is a vertex of $W_u$ for some $u \in \B^{0}$,  there is $g \in \Gamma_G$, $v \in \V$, and $\Delta \in W_v^{0}$ so that $g\Delta = \Lambda$. Define $P(\Lambda)$ to be $g P_v(\Delta)$, which is a subset of the coset $g\Gamma_v$.
 Since there are only finitely many curves in $\V$ and  the diameter of $P_v(\Delta)$ does not depend the specific vertex $\Delta$, there is a uniform bound on $\diam(P(\Lambda))$ in $\Gamma_G$  (this uses the fact that each $\Gamma_v$ coarsely Lipschitz embeds into $\Gamma_G$).

 Now, given a bounded diameter set $K_W$ in $W$, let $K_G$ be the set of elements of $\Gamma_G$: $\bigcup_{\Delta \in K_W} P(\Delta)$. The previous paragraph implies $K_G$ is a bounded diameter subset of $\Gamma_G$. Since the coarse stabilizer of $K_W$ is a subset of the coarse stabilizer of $K_G$, the fact that $\Gamma_G$ ask metrically properly on itself implies it acts metrically properly on $W$ as well. 
\end{proof}

Lemma \ref{lem:action_on_(x,W)} means that we can verify that $\Gamma_G$ is a hierarchically hyperbolic group by showing that $(X,W)$ is a combinatorial HHS. Because $X$ and $W$ are created by combining the known combinatorial HHS structures $(X_v,W_v)$, many of the requirements of Definition \ref{defn:CHHS} follow directly from the work on $(X_v,W_v)$ in \cite{HRSS}; we collect these in the following proposition.

\begin{proposition}\label{prop:import_CHHS_facts}
Items \eqref{CHHS:finite_complexity},\eqref{CHHS:comb_nesting_condition}, and \eqref{CHHS:C=C_0_condition} of Definition \ref{defn:CHHS} hold for  $(X,W)$. Moreover, if $\Delta$ is a simplex of $X$, then we have one of the following.
 
    \begin{enumerate}
\item (QT--type simplex) If $\Delta=\{s,\nu(s)\}$, for $s\in \A$, then,  $\C(\Delta)$ is uniformly quasi-isometric to a tree and $Sat(\Delta)$ is $\nu^{-1}(\nu(s))$. 
    
\item (QL--type simplex)  if $\Delta=\{s,\nu(s),\nu(t)\}$, for $s,t\in \A$ adjacent, then   $\C(\Delta)$ is uniformly quasi-isometric to a line and  $Sat(\Delta)$ is $\{\nu(t)\}\cup \bigcup_q \{q,\nu(q)\}$, where the union is taken over all $q\in \A$ adjacent to $t$. 

\item (Bounded simplex) If $\Delta$ is not covered by the above two cases, then $\diam(\C(\Delta)) \leq 2$.
  \end{enumerate}
\end{proposition}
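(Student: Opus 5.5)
The plan is to reduce every item to the corresponding statement for a single component $(X_u,W_u)$, which is a $\Gamma_G$--translate of some $(X_v,W_v)$ and hence a combinatorial HHS by Proposition~\ref{prop:HRSS facts}. The basic observation is that $X=\bigsqcup_{u\in\B^0}X_u$ is a disjoint union of flag complexes. So for any nonempty simplex $\Delta\subset X_u$, the link $\lk_X(\Delta)$ equals $\lk_{X_u}(\Delta)$, and $\Delta\sim\Delta'$ forces $\Delta'\subset X_u$. It follows that $\Sat(\Delta)$ computed in $X$ equals $\Sat(\Delta)$ computed in $X_u$. The only simplex whose link escapes a component is $\emptyset$, with $\lk(\emptyset)=X$. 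With this in hand, item \eqref{CHHS:finite_complexity} is immediate. A strictly increasing chain of links either lies in one $X_u$, and is bounded there uniformly by the HRSS constant (finitely many $\Gamma_G$--orbits of components), or it ends with $\lk(\emptyset)=X$, which adds at most one to the length.

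For the ``moreover'' classification, I will import \cite[Corollary 6.1]{HRSS} and the computation of $\C(\Delta)$ given there. The eight simplex types in $X$ are the same as in $X_u$, and so are the links and saturations; hence the descriptions of $\Sat(\Delta)$ for QT-- and QL--type simplices carry over verbatim. The subtle point is that $\C(\Delta)$ is the subgraph of $Y_\Delta\subset X^{+W}$ spanned by $\lk(\Delta)$, and $X^{+W}$ has extra $W$--edges coming from the new edges of $W$ between different $W_u$. However, a new edge joins maximal simplices lying in \emph{different} components, so the $W$--edges it induces join vertices of $X_u$ to vertices of $X_{u'}$ with $u\neq u'$. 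Therefore the induced subgraph on $\lk(\Delta)\subset X_u$ uses only $X_u$--edges and $W_u$--edges. Consequently $\C(\Delta)$ is literally the same graph as in the HRSS structure on $X_u$: a uniform quasi-tree, a uniform quasi-line, or a graph of diameter at most $2$, as appropriate.

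For item \eqref{CHHS:C=C_0_condition} the same remark applies. If $x,y\in\lk(\Delta)$ with $\Delta\neq\emptyset$ are joined by a $W$--edge but not an $X$--edge, then $x,y\in X_u$ and the $W$--edge comes from $W_u$. The required $\Lambda_x,\Lambda_y$ are then provided by the HRSS structure. For $\Delta=\emptyset$ the condition is trivial: take $\Lambda_x,\Lambda_y$ to be the maximal simplices joined in $W$.

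Item \eqref{CHHS:comb_nesting_condition} is where I expect the main care to be needed. I will split into cases. If $\Delta_1,\Delta_2$ are nonempty and lie in the same $X_u$, the condition holds by HRSS. If they lie in different components, then $\lk(\Delta_1)\cap\lk(\Delta_2)=\emptyset$, so no nonempty $\Lambda$ qualifies. The only $\Lambda$ with $\lk(\Lambda)\subset\emptyset$ would have to be maximal or have empty link, and these are excluded or have trivially bounded $\C(\Lambda)$; so $\Theta=\emptyset$ works whenever $\lk(\Delta_1)\subset\lk(\Delta_2)$ holds vacuously. Since that containment may fail, I would instead choose $\Theta$ to be a maximal simplex of $\lk(\Delta_1)$, so that $\lk(\Delta_1\ast\Theta)=\emptyset\subset\lk(\Delta_2)$. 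If $\Delta_1=\emptyset$, take $\Theta=\Delta_2$ when $\Delta_2\neq\emptyset$. Any $\Lambda$ with $\lk(\Lambda)\subset\lk(\Delta_2)$ then satisfies clause (b) with $\lk(\Delta_1\ast\Theta)=\lk(\Delta_2)$. If $\Delta_2=\emptyset$, take $\Theta=\emptyset$. The remaining worry is whether HRSS's $\Theta$, a simplex in $\lk_{X_u}(\Delta_1)$, still works in $X$. It does, because links agree.
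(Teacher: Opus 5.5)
Your proposal is correct and takes essentially the paper's approach: since the new $W$--edges only join distinct components, links, saturations and the graphs $\C(\Delta)$ of nonempty non-maximal simplices coincide with those of $(X_u,W_u)$, and everything is then imported from \cite{HRSS}. Your explicit handling of $\Delta=\emptyset$ in items \eqref{CHHS:comb_nesting_condition} and \eqref{CHHS:C=C_0_condition}, and of cross-component pairs in item \eqref{CHHS:comb_nesting_condition} via a maximal simplex $\Theta$ of $\lk(\Delta_1)$, covers cases the paper's short proof passes over. Indeed, the paper asserts that vertices in different $X_u$ lie in no common link, which fails for $\lk(\emptyset)=X$.
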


\begin{proof}
Recall that there is a uniform $\delta\geq 0$ so that for all $v \in \B^{0}$, $(X_v,W_v)$ is a $\delta$--combinatorial HHS.

By construction, every simplex $\Delta$ of $X$ is contained in $X_v$ for some $v \in \B^{0}$. Thus the statement on the bounded simplices and the saturations of the QT and QL simplices follow from \cite[Lemma 6.2]{HRSS}. Similarly, Items \eqref{CHHS:finite_complexity} and \eqref{CHHS:comb_nesting_condition} of Definition \ref{defn:CHHS} follow from \cite[Lemmas 6.4 and 6.6]{HRSS}.

Now if $x,y \in X^{0}$ are joined by a $W$--edge of $X^{+W}$, then either \begin{itemize}
    \item $x,y \in X_v$ for some $v \in \B^{0}$ and $x,y$ are joined by a $W_v$ edge of $X_v^{+W_v}$; or
    \item $x \in X_v$ and $y \in X_u$ for $v \neq u$ and hence $x$ and $y$ are not contained in $\lk(\Delta)$ for any simplex $\Delta$ of $X$.
\end{itemize}
This implies that Item \eqref{CHHS:C=C_0_condition} of Definition \ref{defn:CHHS} follows from  \cite[Lemma 6.5]{HRSS} and that each $\C(\Delta)$ for $(X,W)$ is also a $\C(\Delta)$ for  $(X_v,W_v)$ for some $v \in \B^{0}$. Thus \cite[Proposition 6.8]{HRSS}, says the QT--type simplices have $\C(\Delta)$ a quasi-tree and the QL--type simplices have $\C(\Delta)$ a quasi-line.
\end{proof}

The remaining facts to verify that $(X,W)$ is a combinatorial HHS are that  $X^{+W}$ is hyperbolic and that the unbounded $\C(\Delta)$'s quasi-isometrically embedded in the complements of their saturations. The latter will be the content of the next subsection. The hyperbolicity of $X^{+W}$ is established by showing is it quasi-isometric to the tree bundle constructed in Section \ref{subsec_E_for_RGF}.

\begin{proposition}
\label{prop:XW_hyp}
    $X^{+W}$ is hyperbolic. In particular, item \eqref{CHHS:hyp_X} of Definition  \ref{defn:CHHS} holds.
\end{proposition}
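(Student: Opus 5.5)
We will show that $X^{+W}$ is quasi-isometric to the tree bundle $\E = \E_1 = \E_2$ of Section \ref{subsec_E_for_RGF}. Hyperbolicity then follows from Theorem \ref{thm:combination_applies} together with Theorem \ref{thrm:combination_theorem}, since hyperbolicity is a quasi-isometry invariant (Lemma \ref{lem:morse}). Item \eqref{CHHS:hyp_X} of Definition \ref{defn:CHHS} is stated for non-maximal simplices. For the empty simplex it is exactly the hyperbolicity of $X^{+W}$. For every other simplex, Proposition \ref{prop:import_CHHS_facts} shows that $\C(\Delta)$ is a uniform quasi-tree, a uniform quasi-line, or has diameter at most $2$, so the whole item reduces to the empty-simplex case.

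The comparison map is $\nu\colon X^{+W}\to \E$ on vertices. The vertex set $\T^0=\bigsqcup_v T_v^0$ is exactly the vertex set of $\E$, and each $a\in \A$ is sent to $\nu(a)\in T_v$. This map is surjective, so it remains to check that it is coarsely Lipschitz and that a coarse inverse (the inclusion $\T^0\hookrightarrow X^{+W}$) is also coarsely Lipschitz.

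\emph{The map $\nu$ is coarsely Lipschitz.} $X$--edges map to edges of $T_v$ or to points. $W$--edges inside a single $W_v$ join maximal simplices of $X_v$, and these lie over a bounded set of $T_v$. To see this, use that $W_v$ is quasi-isometric to $\Gamma_v$ (Proposition \ref{prop:HRSS facts}) and that the action is cocompact on edges, so the edge endpoints have uniformly bounded $T_v$--distance. The new edges of $W$ join $\Delta\in W_u$ and $\Lambda\in W_{u'}$ with $u,u'$ adjacent in $\B$ and $\gamma_\Delta\cap\gamma_\Lambda\neq\emptyset$. Choose a point $\widetilde a\in\Omega$ near this intersection, adjacent to both geodesics. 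Then $\Phi_u(\widetilde a)$ is a vertex of $T_u$ within distance $1$ of the edge of $\Delta$, and $\Phi_{u'}(\widetilde a)$ is a vertex of $T_{u'}$ within distance $1$ of the edge of $\Lambda$. These two vertices are joined by a primary horizontal edge of $\E$, so $\nu$ moves new edges a bounded distance. Equivalently, there are finitely many $\Gamma_G$--orbits of such edges, and each orbit contributes a finite bound since $\nu$ is equivariant.

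\emph{The inclusion is coarsely Lipschitz.} Vertical edges of $\E$ are $X$--edges. The main obstacle is the primary horizontal edges $x\in T_u$, $x'\in T_{u'}$. Such edges need not be $\Gamma_G$--equivariant in an obvious way, but by Lemma \ref{lem:E_1-orbits} there are finitely many $\Gamma$--orbits of them (vertex adjacency, not the chosen geodesic images, is what is equivariant). For each orbit representative, $X^{+W}$ is connected, since $W$ is connected and each $X_v^{+W_v}$ is connected; hence the endpoints lie at some finite distance in $X^{+W}$. By $\Gamma_G$--invariance of $X^{+W}$, the maximum over the finitely many representatives gives a uniform bound. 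The same argument gives $d(a,\nu(a))\le 1$ for every $a\in\A$.

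Combining these, $\nu$ is a quasi-isometry with inverse the inclusion, and the theorem follows. The step I expect to need the most care is the equivariance bookkeeping for horizontal edges, namely that adjacency in $\E$ is $\Gamma_G$--invariant even though the sections are not. This holds because $\Phi_{gv}^{-1}(gx)$ is the $g$--image of $\Phi_v^{-1}(x)$ under the induced action on $\widetilde S$ up to the isotopy lift, so nonempty intersections are preserved.
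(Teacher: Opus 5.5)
Your strategy is the paper's. You collapse the cones $\{x\}\cup\A_x$ (your map $\nu$) and compare the result with $\E_1$ on the common vertex set $\T^0$, using finiteness of edge orbits and connectivity. The paper does this in one stroke via Charney--Crisp's version of \v{S}varc--Milnor, while you treat the two directions separately. The following parts are all fine:
the direction $\E\to X^{+W}$ (Lemma~\ref{lem:E_1-orbits}, connectivity of $X^{+W}$, $\Gamma_G$--invariance of adjacency in $\E_1$);
the explicit treatment of the new $W$--edges via a point $\widetilde a\in\Omega$ near $\gamma_\Delta\cap\gamma_\Lambda$;
the reduction of item~\eqref{CHHS:hyp_X} to the empty simplex.

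The step that does not hold as written is the one for edges of $W_v$. The action of $\Gamma_v$ on $W_v$ is not cocompact on edges. The vertices of $W_v$ are all maximal simplices $\{a,x,b,y\}$ of $X_v$, with $x,y$ adjacent in $T_v$, $a\in\A_x$, $b\in\A_y$. Over a fixed edge $e=xy$, their $\Gamma_v$--orbits correspond to $\stab_{\Gamma_v}(e)$--orbits on $\A_x\times\A_y$. Since $\A_x$ is a free transitive $\stab_{\Gamma_v}(x)$--set and $\stab_{\Gamma_v}(e)$ has infinite index in $\stab_{\Gamma_v}(x)$, there are infinitely many such orbits. So the connected graph $W_v$ has infinitely many orbits of vertices, hence of edges, and you cannot bound displacement by taking a maximum over orbit representatives.

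The abstract properties in Proposition~\ref{prop:HRSS facts} do not fill this in either. Composing the equivariant quasi-isometry $P_v$ with an orbit map $\Gamma_v\to T_v$ gives a coarsely Lipschitz map $W_v\to T_v$. But nothing there makes it uniformly close to $\Delta\mapsto\nu(\Delta)$, precisely because $W_v^0$ has infinitely many orbits.

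What you need is a concrete fact, read off from the definition of $W_v$ in \cite{HRSS}: $W_v$--adjacent maximal simplices have uniformly close edges in $T_v$. Equivalently, $\nu\colon X_v^{+W_v}\to T_v$ is a quasi-isometry. The paper relies on a strengthening of the same input, namely its assertion that the cone-collapsed graph $\E'$ has finitely many $\Gamma_G$--orbits of edges. Once this is supplied, your argument goes through.
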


\begin{proof}
    We argue that $X^{+W}$ is quasi-isometric to the graph $\mathcal E_1$ described in Subsection \ref{subsec_E_for_RGF}, which is hyperbolic by Theorem \ref{thm:combination_applies} and Theorem \ref{thrm:combination_theorem}. First of all, $X^{+W}$ is quasi-isometric to the graph obtained collapsing each cone in $X$ to a single vertex. This way we obtain a new graph $\E'$, with the same vertex set as $\T$, which in turn is exactly the same as the vertex set of $\mathcal E_1$. Both $\E'$ and $\mathcal E_1$ are connected graphs with finitely many $\Gamma_G$--orbits of edges (see Lemma \ref{lem:E_1-orbits}), and therefore they are quasi-isometric by the version of the Schwartz-Milnor Lemma given by \cite[Theorem 5.1]{CharneyCrisp}.

    Links of non-empty, non-maximal simplices are hyperbolic by Proposition \ref{prop:import_CHHS_facts}, so this was the last remaining fact to check item \eqref{CHHS:hyp_X} of Definition \ref{defn:CHHS}.
\end{proof}

\subsection{Quasi-isometric embedding of unbounded links}
The last remaining axiom of Definition \ref{defn:CHHS} to check is Item \eqref{CHHS:geom_link_condition}, that $\C(\Delta)$ quasi-isometrically embeds in $Y_\Delta$  for non-empty, non-maximal simplices. First, we will need a preliminary lemma on a version of subsurface projections.

For every vertex $v \in \B^0$, let $H_v < G$ be the stabilizer of $v$, and as usual, identify $v$ with the associated multicurve in $\C(S)$ so that $H_v$ is  generated by a multitwists along the curves in $v$.

Given $v\in \B^0$, and a multicurve $\alpha$, define $\Pi_v(\alpha)$ to be the union of all arcs with endpoints in $v$ that are subarcs of $\alpha$ intersecting $v$ in exactly 4 points, where we put $\alpha$ in minimal position with $v$ and we consider the arcs $\alpha_0$ up to homotopy of maps $(\alpha_0,\partial \alpha_0) \to (S,v)$.  For brevity, we refer to this homotopy as {\em homotopy relative to $v$}.

We consider the set $\Pi^2_v$ defined to be
\[ \Pi^2_v = \bigcup_{\stackrel{u,w\in \B^0-\{v\},}{d_{\B}(u,w) = 1}}\Pi_v(u)\times \Pi_v(w). \]

\begin{lemma} \label{lem:finitely many long arcs}
 There exists $C > 0$ so that for all $v \in \B^{0}$ with, the set $\Pi^2_v$ contains at most $C$ pairs of homotopy classes of arcs up to the (diagonal) action of $H_v$.
\end{lemma}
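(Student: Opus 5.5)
We reduce to finitely many cases using cocompactness of the $G$--action on $\B$, and then use a bounded-intersection argument relative to the twist group $H_v$.

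\textbf{Step 1: reduce to a bounded neighborhood of $v$.} A pair in $\Pi^2_v$ comes from an edge $\{u,w\}$ of $\B$ together with a long arc of $u$ and a long arc of $w$, each relative to $v$. The relevant configurations are triples $(v,u,w)$ with $u,w$ adjacent and different from $v$, considered up to the action of $G$. We will normalize $v$ to one of the finitely many $G$--orbit representatives. Then $H_v=\stab_G(v)$ is what remains of the group action. The difficulty is that $u$ and $w$ need not be close to $v$ in $\B$, so there may be infinitely many $H_v$--orbits of edges $\{u,w\}$.

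The key point is that $\Pi_v(\alpha)$ only records arcs of $\alpha$ that run from $v$ back to $v$ through four consecutive intersection points, taken up to homotopy relative to $v$. Two curves whose homotopy classes of such arcs agree contribute the same set. So we want to bound the number of arc classes, not the number of curves.

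\textbf{Step 2: bound the arc classes via subsurface projections.} An arc from $v$ to $v$ meeting $v$ in four points is determined, up to homotopy relative to $v$, by three pieces of data. First, the sequence of complementary components of $v$ it crosses, of which there are finitely many. Second, the three transverse crossings. Third, the twisting around the components of $v$ at the two interior intersection points. The first two pieces of data take finitely many values. The twisting is measured by the annular projections $d_{v_i}$ between the arc and a fixed reference.

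Applying elements of $H_v$, which is generated by the multitwist along $v$ (cyclic peripherals), shifts all twisting parameters simultaneously. We therefore need the differences in twisting among the components of $v$, and between the two coordinates of a pair, to be uniformly bounded. For a single arc, the twisting parameters at different components differ by a bounded amount after normalization. The reason is that $u$ and $v$ are vertices of $\B$ at distance at most $d_\B(u,v)$, and the annular projections of $u$ and $w$ to the components of $v$ are controlled by Lemma~\ref{lem:bounded projections in B}. Since $u,w\neq v$ and $\B_1$ vertices fill pairwise (Lemma~\ref{lem:vertices filling}), no vertex on a geodesic $[u,w]$ other than possibly $v$ misses the annulus. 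Here I would use a geodesic from $u$ to $w$ that avoids $v$, which is just the edge itself, and combine it with Lemma~\ref{lem:cc projection lipschitz}-type bounds from bounded intersection numbers along edges. As in Lemma~\ref{lem:cobounded edges,ex1}, this gives $d_{v_i}(u,w)\le C'$ for every component $v_i$ of $v$.

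So, after twisting by $H_v$, the pair $(\Pi_v(u),\Pi_v(w))$ has its twisting coordinates at one component normalized to lie in a bounded window. The remaining coordinates at the other components are within bounded distance, using the fact that $H_v$ twists every component simultaneously. Relating twist at different components requires that each arc already has bounded relative twisting between its endpoints, which holds since a single arc passes through only a bounded number of components.

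\textbf{Main obstacle.} The hard step is controlling relative twisting between different components of $v$ for the curve $u$ itself, since $H_v$ only provides a one-parameter twist. I would handle this by choosing a long arc that spans at most two components and observing that $\Pi_v(u)$, up to the $H_v$ action, is determined by the twist differences. I would try to bound these differences using the bounded geodesic image theorem (Theorem~\ref{thm:MM BGI}) applied to a $\B$--geodesic from $u$ to a fixed neighbor of $v$, exploiting that the stabilizer of $v$ in $G$ is virtually cyclic, so that $u$ lies within bounded distance of an $H_v$--translate of a finite set in the link of $v$. Counting then yields the constant $C$.
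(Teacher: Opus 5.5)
There are two genuine gaps. First, you treat the pieces of a long arc that lie in the complementary components of $v$ as taking finitely many values. This is not automatic, and it is exactly where real input is needed. An arc of $\Pi_v(u)$ is obtained by gluing three arcs of $\pi_{S-v}(u)$. Isotopy classes of essential arcs in a component $Y$ of $S-v$ form an infinite set on which $H_v$ acts trivially, since twisting along $\partial Y$ does not change isotopy classes of arcs whose endpoints may slide. So these pieces cannot be normalized by $H_v$; they must be shown to come from a finite set. The paper does this in Lemma~\ref{lem:finitely many subsurface projections to link}. It combines Udall's uniform bound on projections of $\B^0-\{v\}$ to subsurfaces of $S-v$ with the Choi--Rafi comparison between projections and intersection numbers, and concludes that $\pi_Y(\B^0-\{v\})$ contains at most $C_0$ arcs. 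A bounded-diameter statement in $\AC(Y)$ alone would not suffice.

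Second, the step you flag as the main obstacle is the heart of the lemma, and the fix you sketch does not work as stated. A geodesic from $u$ to a \emph{fixed} neighbor of $v$ may pass through $v$, where Theorem~\ref{thm:MM BGI} gives no control. Also, $u$ need not be $\B$--close to the link of $v$. Your assertion that twisting at different components ``differ by a bounded amount after normalization'' is what needs proving, and the justification ``since a single arc passes through only a bounded number of components'' is a non sequitur. The missing idea is that no separate control of relative twisting is required. Choose representatives $w_1,\dots,w_k$ of the finitely many $H_v$--orbits of neighbors of $v$ in $\B$. A $\B$--geodesic from $u$ to $v$ has penultimate vertex $hw_j$ for some $h\in H_v$, and the subgeodesic $[u,hw_j]$ avoids $v$. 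By Lemma~\ref{lem:vertices filling}, each of its vertices fills with $v$, so it meets every annulus whose core is a component $v_i$ of $v$. Lemma~\ref{lem:bounded projections in B} then gives $d_{v_i}(h^{-1}u,w_j)\le K$ for \emph{every} $i$ at once. Thus a single element of the cyclic group $H_v$ normalizes all twisting parameters simultaneously. Two gluings of the same three arcs differ by twists bounded linearly in the $d_{v_i}$. Together with the finiteness above, this yields finitely many $H_v$--orbits in $\Pi_v(\B^0-\{v\})$. Your edge bound $d_{v_i}(u,w)\le C'$ then handles pairs: apply the same $h$ to both $u$ and $w$, and run the argument on the $1$--neighborhood of the normalized set.
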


Before proving this lemma, we prove a stronger statement for the usual subsurface projections. Given distinct vertices $u,v \in \B^0$, and a component $Y \subset S -v$, the arc-and-curve projection of $u$ to $Y$, $\pi_Y(u)$, is a union of arcs cutting $Y$ into disks, since $u$ and $v$ fill $S$.  

\begin{lemma} \label{lem:finitely many subsurface projections to link}
    There exist $C_0 > 0$ so that for all $v \in \B^0$ and any component $Y \subset S-v$, the set $\pi_Y(\B^0-\{v\})$ consists of at most $C_0$ isotopy classes.
\end{lemma}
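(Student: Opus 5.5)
The plan is to combine the projection estimate for geodesics in the base, Lemma~\ref{lem:bounded projections in B}, with finiteness coming from the $G$--action. Cocompactness alone does not suffice, because $\B^0-\{v\}$ is infinite even modulo $H_v$. However, $H_v$ is generated by twists along the curves of $v$, so it acts trivially on the isotopy classes of arcs in a complementary component $Y\subset S-v$: twisting about $\partial Y$ does not change arcs up to isotopy that is free on the boundary. Hence $\pi_Y(\B^0-\{v\})=\pi_Y(H_v\cdot(\B^0-\{v\}))$, and we must bound the number of classes coming from all of $\B^0-\{v\}$, not just from orbit representatives.

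\textbf{Step 1 (reduce to neighbors).} Fix $u\in\B^0-\{v\}$ and a geodesic $[v,u]\subset\B$, and let $u_1$ be the vertex following $v$. Every vertex $w\neq v$ on $[v,u]$ lies in a different cluster from $v$ (here $\B_1=\B_2$), so by Lemma~\ref{lem:vertices filling} $w\cup v$ fills $S$, and therefore $\pi_Y(w)\neq\emptyset$. Applying Lemma~\ref{lem:bounded projections in B} to the subgeodesic $[u_1,u]$, which avoids $v$, gives $d_Y(u_1,u)\le K$ for each component $Y$ of $S-v$. (For annular $Y$ one should note that the components of $S-v$ considered here are the non-annular ones; if annuli were intended, twisting would matter, but the statement concerns components of $S-v$.)

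\textbf{Step 2 (finitely many neighbors modulo $H_v$).} The graph $\B$ has finitely many $G$--orbits of edges, and the stabilizer of $v$ is $H_v$. Hence the set of edges at $v$ consists of finitely many $H_v$--orbits, say with representatives $e_1,\dots,e_m$, where $m$ is uniform over $v$ because there are finitely many $G$--orbits of vertices. Since $H_v$ fixes $\pi_Y$ classes, the neighbors $u_1$ of $v$ contribute at most $m$ distinct projection sets $\pi_Y(u_1)$, each of uniformly bounded diameter.

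\textbf{Step 3 (count).} Combining the two steps, $\pi_Y(\B^0-\{v\})$ lies in the $(K+1)$--neighborhood of a union of at most $m$ bounded sets in $\AC(Y)$. This bounds the diameter but not the cardinality, since $\AC(Y)$ is locally infinite. This is the main obstacle. To get finiteness, I would upgrade the bound to a bound on intersection numbers. The surfaces $Y$ range over finitely many $G$--types and $\mathrm{Stab}_G(Y)\supseteq H_v$ acts trivially on arcs, so diameter bounds alone cannot give finiteness.

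Instead, I would use the fact that every $u\in\B^0-\{v\}$ fills with $v$ and argue by contradiction. Suppose there are infinitely many distinct classes $\pi_Y(u^{(n)})$ with $d_Y(u_1^{(n)},u^{(n)})\le K$. Passing to a subsequence, the $u^{(n)}$ share the neighbor class, and then $i(\pi_Y(u^{(n)}),\partial Y)$ is unbounded. This forces a proper subsurface $Z\subsetneq Y$ with $d_Z(u_1,u^{(n)})\to\infty$, by the Choi--Rafi type estimate used in Lemma~\ref{lem:flare}. Such a $Z$ is also a proper subsurface of $S$, and every vertex of $[u_1,u^{(n)}]$ projects nontrivially to $Z$: if some vertex $w$ missed $Z$, then $w$ would miss $Y$ too, contradicting the filling established in Step 1. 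So Lemma~\ref{lem:bounded projections in B} bounds $d_Z$ by $K$, a contradiction. Applying this to all subsurfaces of $Y$ at once via the distance formula bounds the intersection numbers, and hence gives at most $C_0$ classes, with $C_0$ uniform because there are finitely many $G$--orbits of pairs $(v,Y)$.
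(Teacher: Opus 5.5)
Your argument is correct in substance, but it obtains the key input differently from the paper.

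\textbf{The paper's route.} The paper quotes Udall's Proposition~4.3, which bounds $\diam\pi_Z(\B^0-\{v\})$ uniformly for every $Z$ that is not an annulus about a component of a vertex of $\B$. This covers every $Z\subseteq Y$. Choi--Rafi then converts infinitely many arcs into a subsurface of $Y$ with large projection, which is a contradiction.

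\textbf{Your route.} You derive the projection bounds from the paper's own weaker Lemma~\ref{lem:bounded projections in B}, which only controls geodesics all of whose vertices meet the subsurface. The difficulty is that a geodesic between two vertices of $\B^0-\{v\}$ may pass through $v$, which misses $Y$. You get around this in four moves:
\begin{enumerate}
\item you route each $u$ through the vertex $u_1$ following $v$ on a geodesic $[v,u]$;
\item you note that $H_v$ fixes $\pi_Y$ pointwise, since the twists can be supported off $Y$;
\item you use that there are finitely many $H_v$--orbits of neighbors of $v$;
\item you run Choi--Rafi relative to a fixed $u_1$.
\end{enumerate}
This is the same device the paper itself uses in the proof of Lemma~\ref{lem:finitely many long arcs}. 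Your version is self-contained, using only in-paper lemmas plus Choi--Rafi, at the cost of the neighbor reduction. The paper's version is shorter because Udall's bound is already global over $\B^0-\{v\}$.

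\textbf{A justification to fix.} The sentence ``if some vertex $w$ missed $Z$, then $w$ would miss $Y$ too'' is backwards. Since $Z\subsetneq Y$, missing $Y$ implies missing $Z$, not conversely. The correct reason is that $Z\subset Y$ is disjoint from $v$. So if $\pi_Z(w)=\emptyset$, an essential curve of $Z$ (its core, if $Z$ is an annulus) would be disjoint from $w\cup v$. This contradicts the fact that $w\cup v$ fills $S$ (Lemma~\ref{lem:vertices filling}).

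\textbf{Two smaller corrections.}
\begin{itemize}
\item The quantity $i(\pi_Y(u^{(n)}),\partial Y)$ is not meaningful for arcs. You want intersection with a fixed filling arc system of $Y$, such as $\pi_Y(u_1)$, which fills because $u_1\cup v$ fills $S$.
\item Choi--Rafi does not produce a single $Z$ with $d_Z\to\infty$. It only gives, for large $n$, some $Z_n\subseteq Y$ with $d_{Z_n}(u_1,u^{(n)})>K$. Your closing formulation (bound all $d_Z$, $Z\subseteq Y$, by $K$ at once, hence bound intersection numbers) is the right one and suffices.
\end{itemize}
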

\begin{proof}
    In \cite[Proposition 4.3]{Udall}, Udall showed there is a constant $K$ so that  $\diam( \pi_Z(\B^0-\{v\})) \leq K$ for any subsurface $Z$ that is not an annulus whose core curve is a component of some $u \in \B^0$.  If $\pi_Y(\B^0-\{v\})$ contained infinitely many arcs, then there would be two for which the distance in a subsurface $Z \subset Y$ would have to be larger than $K$ (cf. \cite[Corollary D]{ChoiRafi}), contradicting Udall's result.  There are only finitely many $G$--orbits of components $Y$ of complements of multicurves $v$ in $\B^0$, so we may take $C_0$ to be the maximum of $\pi_Y(\B^0-\{v\})$ over $G$--orbit representatives of such subsurfaces $Y$.
\end{proof}

\begin{proof}[Proof of Lemma~\ref{lem:finitely many long arcs}]
Since there are only finitely many $G$--orbits of vertices in $\B^0$, it suffices to prove the lemma for a fixed $v \in \B^0$.  Let $v_1,\ldots,v_n$ be the components of $v$ and $\pi_{v_i}$ and $d_{v_i}$ denote the projection and distance in the arc complex of the annulus with core $v_i$.

Given any $u \in \B^0-\{v\}$, observe that every arc in $\Pi_v(u)$ is obtained by gluing together three arcs of $\pi_{S-v}(u)$ (which is the union of the arcs of in $\pi_Y(u)$ over the components $Y$ of $S-v$).
Only arcs of $\pi_{S-v}(u)$ that have endpoints on the same component of $v$ can be glued together.  Not all such pairs get glued to form subarcs of arcs in $\Pi_v(u)$.  

Given a triple of arcs that {\em can} be glued to produce an arc of $\Pi_v(u)$ for some $u$, we can consider {\em all} possible ways that they can be glued together.  Any two of these differ by a product of Dehn twists in the components of $v$ on which the glued endpoints lie.  If both endpoint-pairs that are to be glued are on the same component of $v$, then there there is only one Dehn twist ambiguity.
Furthermore, if $u,w \in \B^0$ are two vertices such that arcs $\alpha_u \in \Pi_v(u)$ and $\alpha_w \in \Pi_v(w)$ differ by Dehn twists in components $v_i,v_j$ of $v$ as just described, then the number of Dehn twists needed to obtain $\alpha_w$ from $\alpha_u$ is is bounded by a (linear) function of $d_{v_i}(u,w)$ and $d_{v_j}(u,w)$.  

Next, we prove that $\Pi_v(\B^0-\{v\})$ contains only finitely many $H_v$--orbits.  Let $w_1,\ldots,w_k$ be representatives of the finitely many $H_v$--orbits of vertices in the link of $v$ in $\B$.  Let $W \subset \B^0- \{v\}$ be the set of vertices for which some geodesic to $v$ passes through one of $w_1,\ldots, w_k$.  By Lemma~\ref{lem:bounded projections in B}, for each component $v_i$ of $v$, we see that $\diam(\pi_{v_i}(W))$ is bounded.   Now observe that all arcs of $\Pi_v(W)$ are constructed from a triple of arcs of the set of arcs in $\pi_{S-v}(W)$.  There are at most $C_0$ arcs in this set by Lemma~\ref{lem:finitely many subsurface projections to link}.  The previous paragraph, and the bound on $\diam(\pi_{v_i}(W))$ implies a bound on the number of possible choices we can make in gluing the arcs, and hence we deduce a bound on $\Pi_v(W)$.  Since $\Pi_v(W)$ contains the $H_v$--orbit representative of any element of $\Pi_v(\B^0-\{v\})$, it follows that $\Pi_v(\B^0-\{v\})$ contains only finitely many $H_v$--orbits.

If $u,w \in \B^0-\{v\}$ are adjacent, we can apply an element of $H_v$ and assume that $w$ is in $W$, and that (the image of) $u$ is distance $1$ from $W$ in $\B$.  Repeating the argument in the previous paragraph with $W$ replaced by its $1$--neighborhood proves finiteness of pairs $\Pi_v(u) \times \Pi_v(w)$ where $w \in W$, and hence finiteness of the number of $H_v$--orbits of elements of $\Pi_v^2$.
\end{proof}

\begin{proposition}
\label{prop:links_qie}
Let $z \in \A$ and $s\in \T^{0}$ so that $s$ and $ z$ are joined by an edge of $X$.
    \begin{enumerate}
        \item The QT--type simplex $\Delta=\{z,\nu(z)\}$ has $\C(\Delta)$ uniformly quasi-isometrically embedded in $Y_\Delta$.
        \item The QL--type simplex $\Delta=\{s,z,\nu(z)\}$ has $\C(\Delta)$ uniformly quasi-isometrically embedded in $Y_\Delta$.
    \end{enumerate}
    In particular, Item \eqref{CHHS:geom_link_condition} of Definition \ref{defn:CHHS} holds.
\end{proposition}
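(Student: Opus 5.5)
The plan is to compare $\C(\Delta)$ with $Y_\Delta$ by building a coarsely Lipschitz retraction $Y_\Delta\to\C(\Delta)$ that restricts to a map uniformly close to the identity on $\C(\Delta)$. Since the inclusion $\C(\Delta)\to Y_\Delta$ is $1$--Lipschitz, such a retraction gives the quasi-isometric embedding. Both $\C(\Delta)$ and $Y_\Delta$ sit inside a single $X_v$ plus whatever of $X^{+W}$ lies outside $\Sat(\Delta)$. We already know from \cite{HRSS} (Proposition \ref{prop:HRSS facts}) that $\C(\Delta)$ quasi-isometrically embeds in the analogous space $Y^v_\Delta\subset X_v^{+W_v}$. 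The new difficulty is paths in $Y_\Delta$ that leave $X_v$, travel through other fibers $X_u$ ($u\ne v$) along new $W$--edges, and come back.

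\textbf{Retraction on $X_v$.} On $X_v\setminus\Sat(\Delta)$, use the retraction supplied by the HHRS structure. In the QT case $\Sat(\Delta)=\nu^{-1}(\nu(z))$ and $\C(\Delta)$ is a quasi-tree coming from the star of the vertex $x=\nu(z)$ in $T_v$. Here the retraction records which edge at $x$ (equivalently, which lift of a curve of $v$ adjacent to the complementary region $\Phi_v^{-1}(x)$) a point of $T_v$ lies beyond, together with the $\A_x$--coordinate. In the QL case $\C(\Delta)$ is a quasi-line recording twisting about a single lift $\tilde\gamma$ of a component of $v$, namely the edge $[\nu(z),s]$ of $T_v$.

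\textbf{Retraction on the other fibers.} For $u\ne v$, send all of $X_u$ to a bounded set in $\C(\Delta)$ determined by how $u$ crosses the lift in question. Via $\Phi_v$, the tree $T_u$ corresponds to lifts of $u$ in $\widetilde S$. The relevant data are the arcs of $u$ that enter the region $\Phi_v^{-1}(x)$ (QT case), or the arcs of $u$ crossing the geodesic $\tilde\gamma$ (QL case). These are precisely the arcs in $\Pi_v(u)$: arcs with endpoints on $v$ crossing $v$ four times, so that they pass through an edge and both adjacent regions. Lemma \ref{lem:finitely many long arcs} says that pairs $(\Pi_v(u),\Pi_v(w))$ for adjacent $u,w\in\B^0\setminus\{v\}$ take only finitely many values up to $H_v$. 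Using equivariance, this should give a uniform bound on the image diameter of an entire fiber $X_u$. It should also bound the jump across a new $W$--edge between $X_u$ and $X_w$, and across the finitely many orbits of edges joining $X_v$ to a neighbouring fiber.

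The main obstacle is checking compatibility at the junction. Whenever a path in $Y_\Delta$ exits $X_v$ through a new $W$--edge (or a cone/edge of $X^{+W}$) and later re-enters, the projections at the exit and re-entry points must be uniformly close. This requires the chain $X_v\to X_{u_1}\to\cdots\to X_{u_k}\to X_v$. If $v$ does not reappear in the chain, then every intermediate $u_i\in\B^0\setminus\{v\}$ and adjacent pairs project via Lemma \ref{lem:finitely many long arcs}. However, finitely many orbits per step does not immediately bound the total displacement.

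I would first try to show that the retraction of $X_u$ depends only on $\Pi_v(u)$ modulo bounded error, and that for $\Pi_v$ these projections are coarsely constant along $\B$--paths avoiding $v$. The tool here is the bounded geodesic image style statement, Lemma \ref{lem:bounded projections in B}, applied to the annuli around components of $v$. Together with Lemma \ref{lem:finitely many subsurface projections to link} for complementary components, this controls the arc gluing. The case where the path passes through a different fiber $X_{v'}$ in the $G$--orbit of $v$ is harmless, since $v'\ne v$ fills with $v$ (Lemma \ref{lem:vertices filling}). Uniformity across all $v$ then follows from there being finitely many $\Gamma_G$--orbits of links (Lemma \ref{lem:action_on_(x,W)}).
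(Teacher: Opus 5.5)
Your overall architecture is the one the paper uses: a coarsely Lipschitz retraction $f\colon Y_\Delta\to\C(\Delta)$ which, on the fiber containing $\Delta$, is the HRSS retraction composed with $\nu$. But the step ``send all of $X_u$ to a bounded set'' is impossible, and with it the whole junction/chain analysis collapses. Write $w$ for the vertex of $\B$ with $\Delta\subset X_w$ (your $v$). In the QT case put $q=\nu(z)$, so that $\Sat(\Delta)=\nu^{-1}(q)$ and $\lk_{T_w}(q)$ is $1$--dense in the unbounded quasi-tree $\C(\Delta)$. Let $u$ be adjacent to $w$ in $\B$.
\begin{itemize}
\item Since $W$ is $\Gamma_G$--invariant and $\pi_1S$ acts trivially on $\B$, if one new edge joins $W_w$ to $W_u$, then so does every $\pi_1S$--translate of it.
\item Since $\pi_1S$ acts cocompactly on $T_w$ and $T_w$ has no leaves, each $y\in\lk_{T_w}(q)$ has, within uniformly bounded distance inside the branch of $T_w$ beyond $y$, an edge lying in a maximal simplex that carries such a new edge.
\item The path from $y$ to that edge avoids $\Sat(\Delta)$. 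So every $y\in\lk_{T_w}(q)$ lies within uniformly bounded $Y_\Delta$--distance of $X_u$.
\end{itemize}
Hence any coarsely Lipschitz $f$ that is coarsely the identity on $\C(\Delta)$ has \emph{unbounded} image on $X_u$. This is also why your chain $X_v\to X_{u_1}\to\cdots\to X_v$ looks intractable, and why Lemma \ref{lem:bounded projections in B} cannot rescue it. The arcs $\Pi_v(u)$ and the annular projections are data on $S$, invariant under $\pi_1S$. The $\C(\Delta)$--coordinate instead records \emph{which} boundary line of the region $C_q=\overline{\Phi_w^{-1}(q)}\subset\widetilde S$ is involved.

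The missing idea is to define $f$ vertex by vertex on the other fibers, using the geometry of $\widetilde S$. For $x\in X_u$ with $u\ne w$:
\begin{itemize}
\item let $C_x$ be the closure of the region $\Phi_u^{-1}(\nu(x))$;
\item let $\Theta_x=\Phi_w(C_x)$ be the set of vertices of $T_w$ whose regions meet $C_x$;
\item set $f(x)=\rho(\Theta_x-\{q\})$, with $\rho$ the closest-point projection in $T_w$ to $\lk_{T_w}(q)$.
\end{itemize}
In the QL case, replace $q$ by $\st_{T_w}(s)$, $C_q$ by $D_s$, and $\rho$ by the HRSS retraction. Because $u$ and $w$ fill, $C_x\cap C_q$ is a compact polygon with boundedly many sides. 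Each side is a lift of an arc of the subsurface projection of $u$ to the component under $C_q$, so Lemma \ref{lem:finitely many subsurface projections to link} bounds $\diam f(x)$ (Claim \ref{claim:cross}). Coarse Lipschitzness then becomes a purely local check on single edges of $Y_\Delta$, and no chain argument is needed.
\begin{itemize}
\item For an edge inside one fiber, $C_x\cap C_{x'}$ contains a bi-infinite line. This line cannot lie in $C_q$, so $\Theta_x\cap\Theta_{x'}$ contains a vertex other than $q$.
\item For an edge between fibers, either $C_x\cap C_{x'}$ meets the complement of $C_q$, or boundary arcs of $C_x$ and $C_{x'}$ crossing $C_q$ meet at a point. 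Only in this second situation does Lemma \ref{lem:finitely many long arcs} on pairs in $\Pi^2_v$ enter.
\end{itemize}
Relatedly, in the QT case the arcs of $u$ entering the complementary region are arcs of the ordinary subsurface projection, not the four-point arcs of $\Pi_v(u)$ as you assert.
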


\begin{proof} In Subsection \ref{subsec:cc fibrations}, we defined an open convex region $\Phi_{v}^{-1}(t)$ for each $v \in \B^{0}$ and each $t \in T_v^{0}$. Thus for each $t\in\T$ we can define $C_t$ to be the closure of $\Phi_{v}^{-1}(t)$ in $\mathbb{H}^2$ where $v$ is the vertex of $\B$ where $t \in T_{v}^0$.  We extend this to all vertices of $X$ by setting $C_x=C_{\nu(x)}$ for each vertex  $x \in \A$. By slight abuse of notation, we use $\Phi_v(C_x)$ to denote the image under $\Phi_v$ of the intersection of $C_x$ with the domain of $\Phi_v$ (which is $\mathbb{H}^2$ minus the lifts of $v$).

In each case of the proposition,  we prove the quasi-isometric embedding of $\C(\Delta)$ by defining a course retraction $f \colon Y_{\Delta} \to \C(\Delta)$. 

\medskip
\emph{Case 1: The QT--type simplex.}  Let $\Delta = \{z ,\nu(z)\}$ be a QT--type simplex of $X$. Let $q = \nu(z)$ and $w$ be the vertex of $\B$ so that $q,s\in T_w$. Let $\rho\colon T_w-\{q\}\to \lk_{T_w}(q)$ be the closest point projection. For a vertex $x \in Y_{\Delta}$ with  $x \in X_w$, define $f(x)=\rho(\nu(x))$.
    
    For $x \in X^{0} - X_w$, let $\Theta_x$ be the set of vertices $t \in T_w$ so that $C_t \cap C_x$ is non-empty.
    That is, $\Theta_x = \Phi_{w}(C_x)$. We define $f(x) =\rho(\Theta_x-\{q\})$.

    For later purposes, observe  that for each $x \in \lk_{T_w}(q)$, $C_x$ and $C_q$  intersect in a unique boundary component of $C_q$. Thus there is a bijection between the boundary components of $C_q$ and the vertices of $\lk_{T_w}(q)$.

    We need to prove that the diameter of $f(x)$ is uniformly bounded for all $x \in X^{0}$. Consider $C_x\cap C_q$, which is a polygon in $\widetilde S\cong \mathbb H^2$. Since  $C_x\cap C_q$ is a lift to $\mathbb H^2$ of the intersection of two subsurfaces, the number of sides of $C_x\cap C_q$ is bounded in terms of the ambient surface $S$.

Removing $C_x\cap C_q$ from $C_x$, we obtain connected components each of which corresponds to a  connected component of $\Theta_x-\{q\}$. The closest-point projection $\rho$ is constant on these components, and therefore we are left to prove the following. 

\begin{claim}
\label{claim:cross}
    Given a side $\sigma$ of $C_x\cap C_q$ connecting boundary components of $C_q$, the vertices of $T_w$ corresponding to these components lie within uniformly bounded distance in $\C(\Delta)$.
\end{claim}

\begin{proof}
    Let $V$ be the image of $C_q$ under the covering map $\mathbb H^2\to S$, and suppose that $x$ lies in the tree $T_{u}$.  Then the side $\sigma$ is a lift to $\mathbb H^2$ of an arc of $\pi_V(u)$. By Lemma \ref{lem:finitely many subsurface projections to link}, there are only finitely many $\stab_\Gamma(s)$--orbits of lifts of projection arcs in $V$, each orbit giving pairs of points at some finite distance in $\C(\Delta)$. Since there are also finitely many orbits of vertices in $\T$, there is in fact a uniform bound for any $x \in X$.
\end{proof}

We now need to prove that $f$ is coarsely Lipschitz. There are two cases depending if the adjacent vertices are in the same or different $X_v$'s. For adjacent vertices $x,x'\in Y_\Delta$ that lie in the same $X_{v}$, the intersection of the corresponding convex regions $C_x \cap C_{x'}$ contains a bi-infinite line. Since $w$ and $v$ fill $S$, this bi-infinite line cannot be contained in $C_q$. Thus the image of this line under  $\Phi_{w}$ contains a vertex of $\Theta_x \cap \Theta_{x'}$ that is not the vertex $q$. Since $f$ is coarsely well-defined, the fact that the subtrees $\Theta_x$ and $\Theta_{x'}$ share a vertex outside $q$ implies  that $f(x)\cup f(x')$ has uniformly bounded diameter in $\C(\Delta)$. 

Now suppose  that $x,x' \in Y_\Delta^{0}$ are adjacent with $x \in X_v$ and $x' \in X_u$ for $u\neq v$.  If $C_x \cap C_{x'}$ contains a point outside $C_q$, we can again reduce to $f$ being coarsely well-defined. If not, there are two intersection arcs $\alpha_1,\alpha_2$ of boundary components of $C_x,C_{x'}$ with $C_q$ such that $\alpha_1\cap\alpha_2$ is a point of $C_s$. This pair of arcs is the lift of a pair of arcs on $S$ which, by Lemma \ref{lem:finitely many long arcs}, comes from a finite set of possible pairs up to $H_{p(s)}$ ($p$ is the map $p \colon \E \to \B$). Therefore, there are only finitely many $\stab_\Gamma(s)$--orbits of possible pairs $\alpha_1,\alpha_2$, each orbit resulting in some finite distance between $f(x),f(x')$ in $\C(\Delta)$. Thus , there is a uniform bound over all pairs $x,x'$ on the distance between $f(x),f(x')$, as required.

\medskip

\emph{Case 2: The QL--type simplex.} Let $\Delta = \{s,z,\nu(z)\}$ be a QL--type simplex of $X$. Our candidate retraction  is constructed similarly to the previous case, except we replace $q$ with $\st_{T_w}(s)$ and we replace the closest-point projection in $T_w$ with a retraction used in \cite{HRSS}.  The proof of \cite[Lemma 6.15]{HRSS} gives a coarse retraction $\rho \colon X_w -\st_{T_w}(s)\to \mathcal C(\Delta)$. By construction,  if $\nu(x)$ is more than 2 away from $q$ in $T_w$, then $\rho(x)$ is the unique vertex on the geodesic from $\nu(x)$ to $s$ that is distance 2 from $s$; see the paragraph above \cite[Lemma 6.17]{HRSS}. Similarly to the first case, for $x \in Y_{\Delta}^{0}$ with $\nu(x)\in T_w$, define $f(x)=\rho(\nu(x))$, while setting $f(x) =\rho(\Theta_x-\st(s))$ if $\nu(x)\notin T_w$.

We first show that $f$ is coarsely well-defined for $x \in Y_{\Delta}^{0}$. If $x \in X_w$, this is immediate from $\rho$ be coarsely well defined, thus we can assume $x \in X_v$ for some $v \neq w$. Let $D_s$ to be the union of $C_{s}$ with all the $C_{x'}$ for every vertex $x' \in \lk_{T_w}(s)$. Consider the intersection $D_s\cap C_x$. Since $C_x \cap C_s$ has uniformly finitely many sides, $D_s \cap C_x$ is the union of $C_s \cap C_x$ with a uniformly finite number of polygons of the form $C_x \cap C_{x'}$ for some $x ' \in \lk_{T_w}(\Delta)$. Thus $D_s \cap C_x$ is a (possibly non-convex) polygon with a uniformly finite number of sides. As in the first case, it now suffices to consider an arbitrary side $\sigma$ of $D_s\cap C_x$ connecting boundary components of $D_s$, and argue that the endpoints connect convex regions $C_y,C_{y'}$ with $\rho(y)$ uniformly close to $\rho(y')$. 

 As in the previous case, Lemma \ref{lem:finitely many long arcs} ensures there are finitely many $\stab_\Gamma(s)$--orbits of such arcs $\sigma$ up to homotopy keeping endpoints on a boundary component of a convex region. Therefore, if $y,y' \in X_w - \st_{T_w}(s)$ are the vertices so the end points of $\sigma$ are in $C_y$ and $C_{y'}$, then $y$ and $y'$ are uniformly bounded distance in $Y_\Delta$. Since  $X_w - \st_{T_w}(s)$ is where $\rho$ is defined and coarsely Lipschitz, $\rho(y)$ and $\rho(y')$ are uniformly bounded as desired.

The argument for $f$ being coarsely Lipschitz is now nearly identical to the first case, replacing $q$ with $\st_{T_w}(s)$, and $C_q$ with $D_s$.   
\end{proof}

We have now proved the more precise version of Theorem \ref{thm:intro_PGF_hyp}.

\begin{theorem}\label{thm:HHG detailed}
Let $G <\Mod(S)$ be a parabolically geometrically finite group with cyclic peripheral subgroups and $\Gamma_G$ the corresponding surface group extension.
\begin{enumerate}

    \item     The pair $(X,W)$ from Section \ref{sec:cyclic PGF HHG} is a combinatorial HHS.
    \item  The action of $\Gamma_G$  on $X$ has finitely many orbits of links of simplices, and the induced action of $\Gamma_G$ on $W$ is metrically proper and cobounded.
    
\end{enumerate}
 As a consequence, $\Gamma_G$ is a hierarchically hyperbolic group.
    
\end{theorem}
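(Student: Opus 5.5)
This theorem is essentially an assembly of results already established in this section, so the plan is to check each item of Definition~\ref{defn:CHHS} against the corresponding earlier statement. Then I would invoke the general machinery of \cite{BHMS} to pass from a combinatorial HHS with a suitable group action to a hierarchically hyperbolic group.

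For item (1), I would go through the axioms of Definition~\ref{defn:CHHS} in order. Items \eqref{CHHS:finite_complexity}, \eqref{CHHS:comb_nesting_condition} and \eqref{CHHS:C=C_0_condition} are supplied by Proposition~\ref{prop:import_CHHS_facts}. These are inherited from \cite{HRSS}, using the fact that every simplex and every $\C(\Delta)$ for a non-empty simplex lives inside a single $X_v$, and that $W$--edges between different $X_v$ never join two vertices of a common link.

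Item \eqref{CHHS:hyp_X} has two parts. For the non-empty simplices it comes from the trichotomy in Proposition~\ref{prop:import_CHHS_facts}: each $\C(\Delta)$ is a uniform quasi-tree, a uniform quasi-line, or has diameter at most $2$, so all are uniformly hyperbolic. For $\Delta=\emptyset$ it is Proposition~\ref{prop:XW_hyp}. That proof identifies $X^{+W}$ up to quasi-isometry with the tree bundle $\E_1$ via the Milnor--Schwarz type lemma \cite[Theorem 5.1]{CharneyCrisp}, and $\E_1$ is hyperbolic by combining Theorem~\ref{thm:combination_applies} with Theorem~\ref{thrm:combination_theorem}.

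Item \eqref{CHHS:geom_link_condition} also splits by simplex type. For the empty simplex it is trivial, since $\C(\emptyset)=Y_\emptyset$. For bounded simplices it is automatic: a space of diameter at most $2$ includes quasi-isometrically with constants depending only on that bound, because the inclusion is $1$--Lipschitz and distances in $\C(\Delta)$ are at most $2$. For QT-- and QL--type simplices it is Proposition~\ref{prop:links_qie}. One bookkeeping point here is to take a single $\delta$ as the maximum of all the uniform constants.

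For item (2), I would cite Lemma~\ref{lem:action_on_(x,W)} directly. Cocompactness on $W$ comes from the finitely many $\Gamma_G$--orbits of vertices of each $W_u$ together with the finitely many orbits of added edges, and properness from the map $P$ to $\Gamma_G$ built there.

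The final consequence comes from the theorem in \cite{BHMS}: if $(X,W)$ is a combinatorial HHS and a group acts on $X$ by simplicial automorphisms with finitely many orbits of links of simplices, inducing a proper cobounded action on $W$, then the group is hierarchically hyperbolic. I expect no real obstacle, since the substantive work is contained in Propositions~\ref{prop:XW_hyp} and~\ref{prop:links_qie}. The only point needing care is confirming that the $\Gamma_G$--action on $X$ extends to an isometric action on $X^{+W}$ as \cite{BHMS} requires. This holds because $W$ was built to be $\Gamma_G$--invariant, so $W$--edges are permuted.
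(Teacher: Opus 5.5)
Your proposal is correct and follows the paper's proof: item (1) is assembled from Propositions~\ref{prop:import_CHHS_facts}, \ref{prop:XW_hyp}, and~\ref{prop:links_qie}, item (2) is Lemma~\ref{lem:action_on_(x,W)}, and the conclusion comes from \cite[Theorem 1.18 and Remark 1.19]{BHMS}. Your separate handling of the empty and bounded simplices in item~\eqref{CHHS:geom_link_condition} spells out cases the paper leaves implicit.
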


\begin{proof}
    Item (1) is the combination of Proposition \ref{prop:import_CHHS_facts}, Proposition \ref{prop:XW_hyp}, and Proposition \ref{prop:links_qie}.
    Item (2) is Lemma \ref{lem:action_on_(x,W)}.  Theorem 1.18 plus Remark 1.19 of \cite{BHMS} give  that these two items imply $\Gamma_G$ is a hierarchically hyperbolic group.
\end{proof}

\bibliographystyle{alpha}
\bibliography{biblio}

\end{document}